\numberwithin{equation}{section}
\let\OLDthebibliography\thebibliography
\renewcommand\thebibliography[1]{
  \OLDthebibliography{#1}
  \setlength{\parskip}{0pt}
  \setlength{\itemsep}{0pt plus 0.3ex}
}
\newenvironment{myenumerate}
{
% begin
  \begin{enumerate}
  \setlength{\itemsep}{1pt}
  \setlength{\parskip}{0pt}
  \setlength{\parsep}{0pt}
}
{
  % end
\end{enumerate}
}
\newenvironment{myitemize}{
\begin{itemize}
  \setlength{\itemsep}{1pt}
  \setlength{\parskip}{0pt}
  \setlength{\parsep}{0pt}
}{\end{itemize}}
\newcommand{\itemref}[1]{\noindent \eqref{#1}}
\theoremstyle{plain}            % body italics
\newtheorem{theorem}[equation]{Theorem}
\newtheorem{proposition}[equation]{Proposition}
\newtheorem{lemma}[equation]{Lemma}
\theoremstyle{definition}       % body roman
\newtheorem{definition}[equation]{Definition}
\newtheorem{example}[equation]{Example}
\newtheorem{remark}[equation]{Remark}
\newcommand{\Sec}[1]{Section~\ref{sec:#1}}
\newcommand{\Secs}[2]{Sections~\ref{sec:#1} and~\ref{sec:#2}}
\newcommand{\SecS}[2]{Sections~\ref{sec:#1}--~\ref{sec:#2}}
\newcommand{\Thm}[1]{Theorem~\ref{thm:#1}}
\newcommand{\Thms}[2]{Theorems~\ref{thm:#1} and~\ref{thm:#2}}
\newcommand{\Thmenum}[2]{Theorem~\ref{thm:#1}~(\ref{#2})}
\newcommand{\Thmenums}[3]{Theorem~\ref{thm:#1}~(\ref{#2}) and~(\ref{#3})}
\newcommand{\Ex}[1]{Example~\ref{ex:#1}}
\newcommand{\Lem}[1]{Lemma~\ref{lem:#1}}
\newcommand{\Prp}[1]{Proposition~\ref{prp:#1}}
\newcommand{\Prpenum}[2]{Proposition~\ref{prp:#1}~(\ref{#2})}
\newcommand{\PrpenumS}[3]{Proposition~\ref{prp:#1}~(\ref{#2})--(\ref{#3})}
\newcommand{\Rem}[1]{Remark~\ref{rem:#1}}
\newcommand{\Remenum}[2]{Remark~\ref{rem:#1}~(\ref{#2})}
\newcommand{\Def}[1]{Definition~\ref{def:#1}}
\newcommand{\Defs}[2]{Definitions~\ref{def:#1} and~\ref{def:#2}}
\newcommand{\Defenum}[2]{Definition~\ref{def:#1}~(\ref{#2})}
\newcommand{\abs}[2][{}]{\lvert{#2}\rvert_{{#1}}}    % abs value
\newcommand{\abssqr}[2][{}]{\lvert{#2}\rvert^2_{#1}} % abs squared
\newcommand{\normsymb}{\|}
\newcommand{\bignormsymb}[1]{#1|\!#1|}
\newcommand{\norm}[2][{}]{\normsymb{#2}\normsymb_{{#1}}}    % norm
\newcommand{\normsqr}[2][{}]{\normsymb{#2}\normsymb^2_{#1}} % norm squared
\newcommand{\bignorm}[2][{}]{\bignormsymb{\bigl}{#2}\bignormsymb{\bigr}_{#1}}
\newcommand{\iprod}[3][{}]{\langle{#2},{#3}\rangle_{#1}}  % inner product
\newcommand{\bigiprod}[3][{}]{\bigl\langle{#2},{#3}\bigr\rangle_{#1}}
\newcommand{\set}[2]{\{ \, #1 \, | \, #2 \, \} }      % set { #1 | #2 }
\newcommand{\bigset}[2]{\bigl\{ \, #1 \, \bigl|\bigr. \, #2 \, \bigr\} }
\newcommand{\map}[3]{ #1 \colon #2 \longrightarrow #3}    % maps
\newcommand{\embmap}[3]{ #1 \colon #2 \hookrightarrow #3} % embedding map
\newcommand{\bd}  {\partial}             % symbol for boundary of a set
\newcommand{\clo}[2][]{\overline{{#2}}^{#1}} %  symbol for closure
\newcommand{\restr}[1]{{\restriction}_{#1}} % symbol for map restriction
\newcommand{\card}[1]{\lvert#1\rvert}   % from AMS proceedings file
\newcommand{\dd}    {\, \mathrm d}    % not optimal: no \, if at beginning
\DeclareMathOperator{\dom}    {dom}
\DeclareMathOperator{\ran}    {ran}
\DeclareMathOperator{\id}     {id}   % identity map
\newcommand{\de} {\mathord{\mathrm d}} % exterior derivative
\newcommand{\specsymb} {\sigma} % symbol for spectrum
\newcommand{\spec}[2][{}]   {\specsymb_{\mathrm{#1}}(#2)}
\newcommand{\essspec}[1]{\spec[ess] {#1}}
\newcommand{\disspec}[1]{\spec[disc]{#1}}
\newcommand{\eps}{\varepsilon} % shortcut
\renewcommand{\phi}{\varphi}   % shortcut
\renewcommand{\rho}{\varrho}   % shortcut
\renewcommand{\theta}{\vartheta}% shortcut
\DeclareMathOperator{\myRe}     {Re}   % more professional solution!
\renewcommand{\Re}     {\myRe}
\DeclareMathOperator{\myIm}     {Im}   % more professional solution!
\renewcommand{\Im}     {\myIm} % poor man's solution
\newcommand{\conj}[1]{\overline {{#1}}}  % symbol for complex conjugation
\newcommand{\R}{\mathbb{R}} % symbol for real numbers
\newcommand{\C}{\mathbb{C}} % symbol for complex numbers
\newcommand{\N}{\mathbb{N}} % symbol for natural numbers
\newcommand{\1}{\mathbbm 1}                    % blackboard 1
\newcommand{\e}{\mathrm e}  %Euler number
\newcommand{\im}{\mathrm i} % complex unit
\newcommand{\orth}{\bot}                    % symbol for orthogonality
\newcommand{\normdersymb}{\mathrm{n}}  % symbol for normal derivative
\newcommand{\normder}{\partial_\normdersymb}  % symbol for normal derivative
\newcommand{\wt}{\widetilde}           % shortcut
\newcommand {\qf}[1]{\mathfrak{#1}}    % font for quadratic forms
\newcommand{\HS}{\mathscr H}           % symbol for Hilbert space
\newcommand{\HSaux}{\mathscr G}        % symbol for auxiliary Hilbert space
\newcommand{\Lin}[2][{}]{\mathscr L_{#1}({#2})}% symbol for bdd linear
\newcommand{\Sobsymb} {\mathsf H}      % symbol for Sobolev space
\newcommand{\sobsymb} {\mathsf h}      % symbol for discrete 
\newcommand{\Sobnsymb} {\ring{\mathsf H}}   % symbol for Sobolev space
\newcommand{\Contsymb} {\mathsf C}     % symbol for cont. space
\newcommand{\Lsymb}    {\mathsf L}     % symbol for int L-spaces
\newcommand{\lsymb}    {\ell}          % symbol for int l-spaces
\newcommand{\Sobspace}[1][1]{\Sobsymb^{#1}} 
\newcommand{\sobspace}[1][1]{\sobsymb^{#1}} 
\newcommand{\Sobnspace}[1][1]{\Sobnsymb^{#1}}
\newcommand{\Contspace}[1][{}]{\Contsymb^{#1}}     % symbol for cont. space
\newcommand{\Lpspace}[1][p]    {\Lsymb_{#1}}     % symbol for int L-spaces
\newcommand{\lpspace}[1][p]    {\lsymb_{#1}}     % symbol for int L-spaces
\newcommand{\Lsqrspace}    {\Lpspace[2]}     % symbol for int L-spaces
\newcommand{\lsqrspace}    {\lpspace[2]}          % symbol for int l-spaces
\newcommand{\BdOpsymb} {\mathscr B}       % symbol for bounded linear operators
\newcommand{\BdOp}[2][{}]{\BdOpsymb_{#1}({#2})}
\newcommand{\Ci} [2][{}]{\Contspace [\infty]_{#1} ({#2})}
\newcommand{\Lsqr}[2][{}]{\Lsqrspace^{#1}({#2})} % L_2^{#1}(#2)-spaces
\newcommand{\lsqr}[2][{}]{\lsqrspace^{#1}({#2})}   % l_2(#1)-spaces
\newcommand{\Sob}[2][1]{\Sobspace [#1]({#2})}         % Sobolev space
\newcommand{\Sobn}[2][1]{\Sobnspace [#1]({#2})}  % Sob space ^0
\newcommand{\Sobx}[3][1]{\Sobspace [#1]_{{#2}}({#3})} % Sobolev space
\newcommand{\sob}[2][1]{\sobspace [#1]({#2})} % discrete Sobolev space
\newcommand{\Neu}{{\mathrm N}}              % symbol for Neumann bd cond
\newcommand{\Dir}{{\mathrm D}}              % symbol for Dirichlet bd cond
\newcommand{\laplacian}[2][{}]{\Delta_{{#2}}^{{#1}}} 
\newcommand{\laplacianD}[1]{\laplacian[\Dir]{#1}} % symb f Dir-Laplacian
\newcommand{\laplacianN}[1]{\laplacian[\Neu]{#1}} % symb f Neu-Laplacian
\newcommand{\Krein}{{\mathrm K}}              % symbol for Krein/soft ext.
\newcommand{\err}{\mathrm o}  % not used ... better font!
\newcommand{\Err}{\mathrm O}
\newcommand{\quadtext}[1]{\quad\text{#1}\quad}
\newcommand{\qquadtext}[1]{\qquad\text{#1}\qquad}
\newcommand{\HDir}{H^\Dir}   % Dirchlet operator (with label)
\newcommand{\RDir}{R^\Dir}   % Resolvent of Dirichlt operator (with label)
\newcommand{\HNeu}{H}   % Neumann operator (without label)
\newcommand{\RNeu}{R}   % Neumann operator (without label)
\newcommand{\LS}{\mathscr N}           % symbol for ``loesungs''space
\newcommand{\dplus}{\mathop{\dot+}}
\newcommand{\HKrein}{H^\Krein}   % Krein operator (with label)
\newcommand{\discr}[1]{\dddot #1}         % label for discrete obj
\newcommand{\dlapl}[1][{}]{\discr \Delta_{#1}}
\newcommand{\oneadj}{{1*}}
\newcommand{\adjone}{{*1}}
\newcommand{\weakop}[1]{\breve {#1}}   % symbol for ``weak operator'' 
\newcommand{\wHDir}{\weakop H^\Dir}   % weak Dirchlet operator (with label)
\newcommand{\wRDir}{\weakop R^\Dir}  % weak Resolvent of Dirichlt operator
\newcommand{\wHNeu}{\weakop H}   % Neumann operator (without label)
\newcommand{\wRNeu}{\weakop R}   % Neumann resolvent (without label)
\newcommand{\SDir}{S}          % Dir solution operator (without label)
\newcommand{\eSDir}{\clo    S} % ext. Dir solution operator (without label)
\newcommand{\wLambda}{\weakop \Lambda}  % weak DtN operator
\newcommand{\WS}{\mathscr W}          % symbol for W-space in bd3
\newcommand{\Hmax}{H^{\max}}  % max operator
\newcommand{\Hmin}{H^{\min}}   % min operator
\newcommand{\Deltamax}{\Delta^{\max}}  % max Laplace operator
\newcommand{\wtHDir}{\wt H^\Dir}   % Dirchlet operator (with label)
\newcommand{\wtHNeu}{\wt H}   % Neumann operator (without label)
\newcommand{\wtRDir}{\wt R^\Dir}   % tilde Dirchlet operator (with label)
\newcommand{\wtRNeu}{\wt R^\Dir}  % tilde Neumann operator
\newcommand{\DtN}{Dirichlet-to-Neumann}   % unified notation
\newcommand{\NtD}{Neumann-to-Dirichlet}   % unified notation
\newcommand{\dec}{\mathrm{dec}} % label for decoupled
\newcommand{\yes}{\Checkmark}
\newcommand{\no}{\XSolidBrush}
\begin{document}

%------------------------------------------------------------
% Title page
%------------------------------------------------------------

\title{Boundary pairs associated with quadratic forms}

\author{Olaf Post}

\dedication{\normalsize{School of Mathematics, Cardiff University,
    Senghennydd Road, Cardiff, CF24 4AG, Wales, UK\newline %
    \emph{On
      sabbatical leave from:} Department of Mathematical Sciences,
    Durham University, England, UK\newline %
    \emph{From April 2015:}
    Fachbereich 4 -- Mathematik, Universit\"at Trier, 54286 Trier,
    Germany\newline %
    E-mail: \texttt{olaf.post@uni-trier.de}}}

%------------------------------------------------------------
% Subject classifications
%------------------------------------------------------------

%\subjclass{35P20, 58G18, 47F05}

%------------------------------------------------------------
% Abstract.
%------------------------------------------------------------
\maketitle

\begin{abstract}
  We introduce an abstract framework for elliptic boundary value
  problems in a variational form.  Given a non-negative quadratic form
  in a Hilbert space, a boundary pair consists of a bounded operator,
  the \emph{boundary operator}, and an auxiliary Hilbert space, the
  \emph{boundary space}, such that the boundary operator is bounded
  from the quadratic form domain into the auxiliary Hilbert space.

  These data determine a Neumann and Dirichlet operator, a Dirichlet
  solution and a \DtN\ operator.  The basic example we have in mind is
  a manifold with boundary, where the quadratic form is the integral
  over the squared derivative, and the boundary map is the restriction
  of a function to (a subset of) the boundary of the manifold.
  
  As one of the main theorems, we derive a resolvent formula relating
  the difference of the resolvents of the Neumann and Dirichlet
  operator with the Dirichlet solution and the \DtN\ operator.  From
  this, we deduce a spectral characterisation for a point being in the
  spectrum of the Neumann operator in terms of the family of \DtN\
  operators.  The relation can be generalised also to Robin-type
  boundary conditions.

  We identify conditions expressed purely in terms of boundary pairs,
  which allow us to relate our concept with existing concepts such as
  boundary triples.  We illustrate the theory by many examples
  including Jacobi operators, Laplacians on spaces with (non-smooth)
  boundary and the Zaremba (mixed boundary conditions) problem.
\end{abstract}

%\keywords{Boundary triples, abstract boundary value problems, abstract
%  Dirichlet problem, variational Dirichlet-to-Neumann operator,
%  Krein-type resolvent formulas, spectral characterisation, Zaremba
%  problem}

\tableofcontents

%----------------------------------------------------------------------
% aaaa
\section{Introduction}
\label{sec:intro}
%
%----------------------------------------------------------------------

The following article provides a unified language and approach to very
different types of ``boundary value problems'' based on quadratic
forms, which (to our believe) is very natural and starts with minimal
assumptions on the model.

%----------------------------------------------------------------------
\subsection{The basic example for a boundary pair}
\label{sec:intro.ex}
%----------------------------------------------------------------------
Probably the best way to illustrate the main subject of this article
is to start with our main basic example, from which we borrow the
names for the abstract setting: Let $X$ be a manifold with
sufficiently smooth boundary $\bd X$ (e.g.\ Lipschitz, see
\Sec{lapl.mfd} for details).  We set
\begin{equation*}
  \HS := \Lsqr X, \quad \qf h(u):=\normsqr{\de u}, \quad
  u \in \dom \qf h = \HS^1 := \Sob X,
\end{equation*}
where $\Sob X$ is the completion of the set of smooth function with
respect to the norm given by $\normsqr[\HS^1] u := \normsqr[\HS] u +
\qf h(u)$.  The operator $\HNeu$ associated with $\qf h$ is the usual
\emph{Neumann} Laplacian on $X$.  As boundary space and operator we
set
\begin{equation*}
  \HSaux := \Lsqr Y \quadtext{and} \Gamma u := u \restr Y,
\end{equation*}
respectively, where $Y=\bd X$ (or $Y$ is a suitable subset of $\bd
X$).  By a Sobolev trace estimate, $\map \Gamma{\HS^1} \HSaux$ is
bounded, its kernel is dense in $\HS=\Lsqr X$, as well as its range
$\HSaux^{1/2}:=\Sob[1/2] Y$ (the space of those elements $\phi \in
\Lsqr Y$ having an extension $\wt \phi \in \Sob[1/2]{\bd X}$, i.e.,
$\wt \phi \restr Y=\phi$) is dense in $\HSaux$.

The operator $\HDir$ associated with the form $\qf h^\Dir := \qf h
\restr {\ker \Gamma}$ is the usual \emph{Dirichlet} Laplacian (or, if
$Y \subsetneq \bd X$, a mixed boundary value problem, also called
\emph{Zaremba problem}).  The Sobolev space $\HS^1$ decomposes into
the sum of $\HS^{1,\Dir}:=\ker \Gamma$ and $\LS^1(z)$, where
$\LS^1(z)$ is the \emph{space of weak solutions}, i.e.,
\begin{equation}
  \label{eq:def.ls.intro}
  \LS^1(z) 
  := \bigset{h \in \HS^1}
  {\qf h(h,f) = z \iprod[\HS] h f
    \quad\forall f \in \ker \Gamma=:\HS^{1,\Dir}},
\end{equation}
and the sum is direct if $z \in \C$ is \emph{not} in the spectrum of
the Dirichlet operator $\HDir$.  In the latter case, we can invert the
boundary operator on the weak solution space and call
\begin{equation}
  \label{eq:dsol.intro}
  \map{\SDir(z) := (\Gamma \restr {\LS^1(z)})^{-1}} {\HSaux^{1/2}} {\HS^1}
\end{equation}
the \emph{(weak) Dirichlet solution operator} or \emph{Poisson
  operator}, because $h=\SDir(z) \phi$ is the (unique) weak solution
of the Dirichlet problem $h \in \LS^1(z)$ with $\Gamma h=\phi$.

The \DtN\ operator $\Lambda(z)$ is now defined as the operator
associated with the sesquilinear form
\begin{equation}
\label{eq:dtn.intro}
  \qf l_z(\phi,\psi)
  := (\qf h - z\qf 1)(\SDir(z) \phi, \SDir(-1) \psi),
\end{equation}
i.e., $\qf l_z(\phi,\psi)=\iprod[\HSaux]{\Lambda(z)\phi} \psi$ for all
$\psi \in \HSaux^{1/2}$ and $\phi \in \dom \Lambda(z) \subset
\HSaux^{1/2}$.  Applying the (first) Green's identity for sufficiently
regular $\phi$ one can see that $\Lambda(z)\phi = \normder h \restr
Y$, i.e., that $\Lambda(z)$ acts as the usual \DtN\ operator
associating the normal derivative of the Dirichlet solution of a
boundary value $\phi$ (see~\eqref{eq:why.dtn} below).
 
As one of the main theorems, we derive a resolvent formula and relate
the spectrum of $\HNeu$ with the one of the operator pencil
$\Lambda(\cdot)$, see \Thm{krein.intro} below.

%----------------------------------------------------------------------
\subsection{The abstract theory of boundary pairs}
\label{sec:intro.bd2}
%----------------------------------------------------------------------
In order to develop an analogue abstract theory, we only need the
following data:
\begin{myitemize}
\item a closed, non-negative quadratic form $\qf h$ with domain
  $\HS^1:=\dom \qf h$ in a Hilbert space $\HS$; where $\HS^1$ is
  endowed with its natural norm, see~\eqref{eq:norm.qf};
\item an  operator $\map \Gamma {\HS^1} \HSaux$ (the
  \emph{boundary operator}) into another Hilbert space (the
  \emph{boundary space}).
\end{myitemize}
%----------------------------------------------------------------------
\begin{definition}
  \label{def:bd2.intro}
  We say that $(\Gamma,\HSaux$) is a \emph{boundary pair associated
    with $\qf h$} if $\Gamma$ is bounded, 
  \begin{equation*}
    \text{$\HS^{1,\Dir}:=\ker \Gamma$ is dense in $\HS$ and}
    \qquad
    \text{$\HSaux^{1/2}:=\ran \Gamma$ is dense in $\HSaux$.}
  \end{equation*}

  We say that the boundary pair is \emph{unbounded} if the operator
  $\Gamma$ is \emph{not} surjective.
\end{definition}
%----------------------------------------------------------------------
In particular, our example above leads to an unbounded boundary pair
as the range of the Sobolev trace map $\Gamma$ is $\Sob[1/2] Y$, a
space strictly smaller than $\Lsqr Y$.  It can be seen that a boundary
pair is unbounded iff the \DtN\ operator $\Lambda(z)$ is unbounded
(see \Thm{dn}).  Moreover, in our example, the Dirichlet solution
operator (for $z=0$) defined in~\eqref{eq:dsol.intro} is also called
\emph{Poisson operator}, and this operator extends to a bounded
operator $\map{\eSDir(z)}{\HSaux=\Lsqr Y}{\HS=\Lsqr X}$ onto the
corresponding $\Lsqrspace$-spaces; we call such boundary pairs
\emph{elliptically regular} (see \Def{ell.pos.intro} below).

Let us now formulate one of our main results of this article (see
\Prp{dn.z.inv} and \Thms{krein.res}{krein2}):
%----------------------------------------------------------------------
\begin{theorem}
  \label{thm:krein.intro}
  Let $(\Gamma,\HSaux)$ be an elliptically regular boundary pair, and
  let $z \in \C \setminus (\spec \HDir \cup \spec \HNeu)$ then the
  \DtN\ operator $\Lambda(z)$ has a bounded inverse
  $\map{\Lambda(z)^{-1}}\HSaux \HSaux$ and we have
  \begin{equation}
    \label{eq:krein.intro}
    \RNeu(z) - \RDir(z) = \eSDir(z) \Lambda(z)^{-1} \eSDir(\conj z)^*,
  \end{equation}
  relating the difference of the resolvents $\RNeu(z)=(\HNeu-z)^{-1}$
  and $\RDir(z)=(\HDir-z)^{-1}$ of the Neumann and Dirichlet operator,
  respectively, with the (extended) Dirichlet solution operator
  $\map{\eSDir(z)}\HSaux \HS$ and the \DtN\ operator $\Lambda(z)$.  As
  a consequence, we deduce a spectral characterisation for a point
  being in the spectrum of the Neumann operator, namely
  \begin{equation}
    \label{eq:spec.rel.intro}
    \lambda \in \spec \HNeu \quadtext{iff}
    0 \in \spec {\Lambda(\lambda)},
  \end{equation}
  provided $\lambda \notin \spec \HDir$.
\end{theorem}
%----------------------------------------------------------------------
One can extend this result in a weaker form also for non-elliptically
regular boundary pairs (see \Thms{krein.res}{krein1}); we give an
example of such a boundary triple below.  Resolvent formulas as
in~\eqref{eq:krein.intro} have been shown in many previous works, see
e.g.~\cite[Cor.~3.14]{behrndt-micheler:14}, \cite{agw:14},
\cite[Thm.~6.16, Cor.~6.17]{behrndt-langer.in:12},
\cite[Thm.~7.26]{dhms.in:12}, \cite{dhms:09, behrndt-de-snoo:09},
\cite[Sec.~2]{bgw:09}, \cite[Thm.~1.29]{bgp:08},
\cite[Thm.~2.1]{posilicano:08}, \cite[Thm.2.8]{behrndt-langer:07},
\cite{posilicano:01}, to name a few publications.  We would also like
to stress that one can easily consider Robin-type boundary conditions
instead of Neumann boundary conditions, see \Rem{robin.intro}.  In our
approach, the self-adjointness of \emph{both} operators $\HNeu$ and
$\HDir$ is automatically given.

The spectral relation~\eqref{eq:spec.rel.intro}, a consequence of the
resolvent formula, extends known results for (ordinary) boundary
triples to our general context (see e.g.~\cite[Thm.~2.8]{malamud:10},
\cite[Cor.~2.3]{bgw:09}, \cite{posilicano:08, bgp:08, bmn:02}, going
back to ideas already contained in~\cite{vishik:52}
and~\cite{grubb:68}).  We would like to stress that our approach here
allows the \emph{natural} boundary operators and \DtN\ operator in
this context.

The family of \DtN\ (sesquilinear) forms $(\qf l_z)_{z \in \C \setminus
  \spec \HDir}$ defined in~\eqref{eq:dtn.intro} fulfils the following
important relation
\begin{equation}
  \label{eq:dtn.z.w.intro}
  \frac{\qf l_z(\phi,\psi)-\qf l_w(\phi,\psi)}{z-w}
  = - \iprod[\HS]{\SDir(z) \phi}{\SDir(\conj w) \psi}
\end{equation}
for $\phi,\psi \in \HSaux^{1/2}$ and $z,w \in \C \setminus \spec
\HDir$ (see \Thm{dn.z.qf}).  In particular, for $\conj w=z$, this
formula implies that
\begin{equation}
  \label{eq:dtn.nevanlinna}
   - \frac {(\Im \qf l_z)(\phi,\phi)}{\Im z} 
   = \normsqr[\HS]{\SDir(z) \phi}
   =: \qf q_z(\phi) \ge 0
\end{equation}
for $\phi \in \HSaux^{1/2}$ and $z \in \C \setminus \R$, i.e., $\Im
z>0$ implies that $-\Im \qf l_z \ge 0$; We call such a family $(-\qf
l_z)_z$ a \emph{sesquilinear-form-valued Nevanlinna function}.

%----------------------------------------------------------------------
\subsubsection*{Elliptically regular boundary pairs}
%----------------------------------------------------------------------
In order to relate our concept of boundary pairs with various concepts
of boundary triples in the next subsection, we identify additional
properties, expressed in terms of the quadratic form $\qf q:= \qf
q_{-1}$, respectively the Dirichlet solution operator $\SDir :=
\SDir(-1)$ (see \Defs{ell.bd2}{pos.bd2}):
%----------------------------------------------------------------------
\begin{definition}
  \label{def:ell.pos.intro}
  A boundary pair is \emph{elliptically regular} (resp.\
  \emph{positive}) if $\qf q$ is a bounded (resp.\ uniformly positive)
  quadratic form, i.e., if there is a constant $c \in (0,\infty)$ such
  that
  \begin{equation*}
    \qf q(\phi)
    :=\normsqr[\HS]{\SDir \phi} \le c^2 \normsqr[\HSaux] \phi
    \qquad
    (\text{resp.}\; 
    \qf q(\phi)
    \ge c^2 \normsqr[\HSaux] \phi)
  \end{equation*}
  for all $\phi \in \HSaux^{1/2}=\ran \Gamma$.
\end{definition}
%----------------------------------------------------------------------
One can equivalently use the form $\qf q_z$ in \Def{ell.pos.intro}
(with $z$-depending constant $c$) for some other value $z \in \C
\setminus \spec \HDir$ instead of the form $\qf q:= \qf q_{-1}$ for
$z=-1$ (see \eqref{eq:dsol.z.norm.hs}).  A simple consequence of the
above formula~\eqref{eq:dtn.nevanlinna} is that a boundary pair is
elliptically regular (resp.\ positive) iff $-\Im \qf l_z$ is a bounded
(resp.\ uniformly positive) form for $\Im z>0$.  In particular,
elliptic regularity and positivity of a boundary pair can be seen from
the family $(\qf l_z)_z$ of \DtN\ forms.  One purpose of this article
is to provide further equivalent characterisation of elliptic
regularity and positivity in terms of boundary pairs (see
\Thms{ell.bd2}{pos.bd2}).

%----------------------------------------------------------------------
\begin{remark}
  \label{rem:why.ell.reg.intro}
  Let us mention that the notation ``elliptic regularity'' is inspired
  by our basic example of a Laplacian with smooth boundary $Y=\bd X$
  in \Sec{intro.ex}: the notation relates to the fact that for
  elliptic regular boundary pairs, $\dom \HDir$ and $\dom \HNeu$ are
  both ``nice'' spaces, i.e., they have enough regularity in the sense
  that $\dom \HDir$ and $\dom \HNeu$ are subsets of $\Sob[2] X$.  In
  particular, if $u \in \dom \HDir$ then the normal derivative
  $\Gamma' u=\normder u \restr Y$ is an element of the Hilbert space
  $\HSaux=\Lsqr Y$ (see e.g.~\cite{lions-magenes:68}).  Moreover, we
  have
  \begin{equation*}
    \qf h(u,h)-\iprod[\HS]{\HDir u} h
    = -\iprod[\HS]{(\HDir + 1)u} h
  \end{equation*}
  for $h=S\phi=S(-1)\phi$, as $\qf h(u,h)=-\iprod u h$
  by~\eqref{eq:def.ls.intro} for $z=-1$.  We can rewrite the left hand
  side as $\iprod[\HSaux] {\Gamma'u} \phi$, where $\Gamma'u=\normder u
  \restr Y$ is the restriction of the normal derivative onto the
  boundary $Y$ and $\Gamma h=\phi$ is the boundary value of the
  Dirichlet solution $h$.  Substituting $u=\RDir v = (\HDir+1)^{-1} v
  \in \dom \HDir$, we obtain
  \begin{equation}
    \label{eq:dsol.ell}
    \iprod[\HSaux] {\Gamma' \RDir v} \phi 
    = -\iprod[\HS] v {\SDir \phi}.
  \end{equation}
  hence~\eqref{eq:dsol.ell} defines a bounded functional $\phi \mapsto
  \iprod[\HSaux]{\Gamma' \RDir v} \phi = \iprod[\HS] v {\SDir \phi}$
  with bound $\norm[\HSaux]{\Gamma' \RDir v}$ and $\SDir$ can be
  extended to a bounded operator $\map \eSDir \HSaux \HS$.  The latter
  fact is equivalent with the elliptic regularity of the boundary pair
  (see \Thm{ell.bd2}).  For an equivalent characterisation of elliptic
  regularity in terms of the range of $\Gamma' \RDir$, see
  \Remenum{bd2.ell}{bd3-ell}.
\end{remark}
%----------------------------------------------------------------------
\emph{We would like to stress that we will show the elliptic
  regularity in our examples mostly by applying the definition, and
  not as in the previous remark.} Especially for the Zaremba problem
or manifolds with Lipschitz boundary (see \Secs{lapl.mfd}{zaremba}),
the regularity of the Dirichlet operator is rather delicate, and our
approach avoids questions about the operator domains almost
completely.

%----------------------------------------------------------------------
\subsubsection*{A non-elliptically regular boundary pair: The Zaremba
  problem}
%----------------------------------------------------------------------
There are interesting examples of \emph{non-elliptically regular}
boundary pairs: Consider the \emph{Zaremba problem} of \Sec{intro.ex}
(see \Sec{zaremba} for more details): Let $X$ be a compact Riemannian
manifold with smooth boundary $\bd X$ and $Y$ a ``good'' proper subset
of $\bd X$ (e.g. $\bd Y$ is a smooth submanifold in the manifold $\bd
X$).  The corresponding boundary pair $(\Gamma,\HSaux)$ (with
$\HS^1=\Sob X$, $\Gamma f = f \restr Y$ and $\HSaux=\Lsqr Y$) is
\emph{not} elliptically regular: the Neumann operator of this boundary
pair is the usual Neumann Laplacian on $X$ while the Dirichlet
operator is the \emph{Zaremba Laplacian} (Dirichlet conditions on $Y$
and Neumann conditions on $\bd X \setminus Y$).  Functions in the
domain of the Zaremba operator do \emph{not} all belong to $\Sob[3/2]
X$ (a fact which we dub here ``non-elliptically regular''), hence the
normal derivative $\Gamma'$ of such a function is \emph{not} an
element of the boundary Hilbert space $\HSaux=\Lsqr Y$.  By a similar
argument as in \Rem{why.ell.reg.intro} in the elliptically regular
case, one can then conclude that the Dirichlet solution operator
cannot have a bounded extension onto $\HSaux \to \HS$, and hence the
boundary pair is not elliptically regular (see also
\Remenum{bd2.ell}{bd3-ell}).

We would like to stress that this example cannot be treated with the
quasi-boundary triple method developed by Behrndt and
Langer~\cite{behrndt-langer:07} (see \Def{quasi.bd3}), but our method
still gives a weaker version of \Thm{krein.intro} (see
\Thms{krein.res}{krein1}).

There is also an interesting equivalent character of ``elliptic
regularity'' of a boundary pair in terms of an optimal convergence
speed for Robin-type resolvents in the works of Brasche et
al~\cite{brasche-demuth:05, benamor-brasche:08, bbb:11} (see
\Remenum{eq.ell}{brasche}).  Elliptic regularity in a slightly
different context appeared also in the works of Arlinskii
(\cite{arlinskii:00, arlinskii.in:12}, see
\Remenum{eq.ell}{arlinskii}).

%----------------------------------------------------------------------
\subsection{Purpose of this article and outlook}
\label{sec:purpose}
%----------------------------------------------------------------------

Let us explain here why we believe that our approach is useful:
\begin{myitemize}
\item Boundary pairs give a \emph{simple and unified language}
  bringing together \emph{very different approaches} such as boundary
  triples, Weyl-Titchmarsh functions, Jacobi operators, elliptic
  boundary value problems (even with low regularity as for the Zaremba
  problem or with non-smooth Robin boundary conditions), \DtN\
  operators, boundary conditions for differential form Laplacians,
  non-negative form perturbations, Dirichlet forms, discrete
  Laplacians;

\item The concept of boundary pairs uses only \emph{very little
    information} on the model, namely only quadratic form domains, but
  still allows to develop a \emph{reasonable spectral analysis} of the
  problem.  In particular, one can \emph{avoid} discussions about
  \emph{operator domains} and the rather \emph{delicate analysis of
    regularity} on Lipschitz domains or for the Zaremba problem.

\item To our knowledge, the \emph{elliptic regularity} condition for
  boundary pairs in \Def{ell.pos.intro} and its consequences (see
  \Thm{ell.bd2.impl}) have not yet been recognised as an important
  feature.

\item The form-based approach fits perfectly into the
  \emph{two-Hilbert space convergence scheme} developed
  in~\cite[Ch.~4]{post:12}; especially in the case of
  parameter-depending spaces such as manifolds shrinking to a metric
  graph.
  
\item We see our approach as a starting point for ongoing research,
  and this article is meant to provide the basic tools for the
  following aspects:

  \begin{myitemize}
  \item We provide conditions under which a boundary pair \emph{fits
      into existing concepts} such as \emph{boundary triples} (see
    \Sec{rel.bd3} below); and in what sense it is more general than
    existing concepts.

    Moreover, we believe that the elliptic regularity and positivity
    expressed as in \Def{ell.pos.intro} will lead to further fruitful
    research.

  \item As an example of convergence of operators in different Hilbert
    spaces and boundary pairs, we mention the forthcoming publication
    on \emph{graph-like manifolds}~\cite{behrndt-post:pre14}).
    Similarly, we will use these techniques to show convergence of
    \emph{Dirichlet-Laplacians on domains shrinking to a graph}.

  \item Our approach allows an easy \emph{decomposition} or
    \emph{coupling technique} (see \Sec{coupl.bd2}
    or~\cite[Sec.~3.9]{post:12}), expressing global objects such as
    the spectrum or resolvent in terms of local building blocks if the
    space can be decoupled according to a graph structure (see also
    the example in \Sec{dtn.mg}).

  \item Given a form-valued Nevanlinna function $(-\qf l_z)_{z \in \C
      \setminus [0,\infty)}$, \emph{reconstruct the boundary pair} up
    to unitary equivalence in the spirit of~\cite{langer-textorius:77}
    and \cite{dhms.in:12} (uniqueness will only be possible if $\HNeu
    \cap \HDir$ is simple, i.e., if $\HNeu \cap \HDir$ has no
    non-trivial self-adjoint part).

  \item Relate the concept with the theory of \emph{Dirichlet forms}
    (cf.~\cite{bbb:11} and references therein); assuming that
    $\HS=\Lsqr X$ and $\HSaux = \Lsqr Y$, and $\Gamma$ is compatible
    with the (order-theoretic) lattice structure of the
    $\Lsqrspace$-spaces.
  \end{myitemize}
\end{myitemize}
It is clear that such a general concept cannot avoid deep analysis on
certain classes of problems such as elliptic regularity questions for
partial differential operators (\emph{``There is no free lunch
  \dots''}).  But we believe that we can provide interesting new links
between very different subjects; e.g.\ the property of a boundary pair
to be elliptically regular is equivalent to an optimal convergence
rate for Robin-type resolvents as mentioned earlier (see also
\Remenum{eq.ell}{brasche}).

%----------------------------------------------------------------------
\subsection{Relation with boundary triples and similar concepts}
\label{sec:rel.bd3}
%----------------------------------------------------------------------

The concept of boundary pairs associated with a non-negative quadratic
form is in some sense a generalisation of the concept of
\emph{boundary triples} (also called \emph{boundary value spaces})
associated with a closed operator $\Hmax$.  We will provide the
details of the following results in a forthcoming publication,
see~\cite{post:pre14a}.  Nevertheless, we believe that it is useful to
state the concepts and results already here.

Let $(\Gamma,\HSaux)$ be a boundary pair associated with a quadratic
form $\qf h$.  We define its \emph{minimal operator} by $\Hmin :=
\HDir \cap \HNeu$.  We assume for now on in this section that $\Hmin$
is densely defined (i.e., $\dom \HDir \cap \dom \HNeu$ is dense in
$\HS$).  In this case, the \emph{maximal operator} $\Hmax$ defined by
$\Hmax:=(\Hmin)^*$ is again a (single-valued) operator.
%----------------------------------------------------------------------
\begin{definition}[{cf.~\cite[Def.~3.4.3]{post:12}}]
  \label{def:ass.bd3}
  We say that $(\Gamma,\Gamma',\HSaux)$ is a \emph{boundary triple}
  associated with a quadratic form $\qf h$ if $(\Gamma,\HSaux)$ is a
  boundary pair associated with $\qf h$ such that $\Hmin=\HDir \cap
  \HNeu$ is densely defined, and if $\map{\Gamma'}\WS \HSaux$ is a
  bounded operator, where $\WS$ is dense in $\HS^1=\dom \qf h$ and
  continuously embedded, such that the (first) \emph{Green's (first)
    identity}
  \begin{equation}
    \label{eq:green}
    \qf h(f,g) 
    = \iprod[\HS] {\Hmax f} g + \iprod[\HSaux]{\Gamma' f} {\Gamma g}
  \end{equation}
  holds for all $f \in \WS$ and $g \in \HS^1$.  We also call
  $(\Gamma,\Gamma',\HSaux)$ a \emph{boundary triple associated with
    $(\Gamma,\HSaux)$ (and $\qf h$)}.  We say that the boundary triple
  is \emph{maximal} if the space $\WS$ cannot be enlarged without
  violating the boundedness of $\Gamma'$ or the validity of Green's
  identity in $\HSaux$.
\end{definition}
%----------------------------------------------------------------------
For a given boundary pair $(\Gamma,\HSaux)$ associated with $\qf h$
such that $\Hmin$ is densely defined, there always exists an
associated \emph{maximal} boundary triple $(\Gamma,\Gamma',\HSaux)$
(similar as in~\cite[Def.~2.1]{arlinskii:00}
or~\cite[Def.~3.12]{arlinskii.in:12}).  We will prove this and related
results in~\cite{post:pre14a}.

In the manifold example above,~\eqref{eq:green} is the usual Green's
(first) identity where $\Gamma'$ is the normal derivative restricted
to the boundary.  A simple consequence of Green's identity is a
justification of the name ``Dirichlet-to-Neumann'': Let $z \in \C
\setminus \spec \HDir$ and let $\phi$ be sufficiently ``regular'' in
the sense that $h=\SDir(z) \phi \in \WS$, then
\begin{equation}
  \label{eq:why.dtn}
  \qf l_z(\phi, \psi)
  = (\qf h - z \qf 1)(\SDir(z) \phi, g)
  = \iprod{(\Hmax - z) h} g + \iprod[\HSaux]{\Gamma' h} {\Gamma g}
  = \iprod [\HSaux]{\Gamma' h} \psi,
\end{equation}
where $g=S(-1)\psi$ (or any other function $g \in \HS^1$ with $\Gamma
g = \psi$).  Note that as $\LS^1(z) \subset \ker(\Hmax - z)$, we have
$(\Hmax - z)h=0$.  In particular, we have shown that $\phi$ is in the
domain of the associated \DtN\ operator, and
\begin{equation}
  \label{eq:why.dtn2}
  \Lambda(z) \phi = \Gamma' h,
\end{equation}
i.e., the \DtN\ operator associates to a boundary value $\phi$ the
``normal derivative'' $\Gamma'h$ of the solution of the Dirichlet
problem $h=\SDir(z)\phi$.

%----------------------------------------------------------------------
\begin{remark}
  \label{rem:robin.intro}
  The choice of ``Neumann'' and ``Dirichlet'' conditions for boundary
  pairs is not as restrictive as it looks like at first glance: one
  can replace the quadratic form $\qf h$ by $\qf h_L$, where $\qf
  h_L(u):= \qf h(u) + \iprod{L \Gamma u} {\Gamma u}$ for some suitable
  operator $L$ in $\HSaux$ (e.g.\ bounded).  It can then be shown that
  $(\Gamma,\HSaux)$ is also a boundary pair associated with $\qf h_L$
  (for suitable operators $L$), and that the associated Neumann
  operator is of \emph{Robin-type}, i.e., functions in its domain
  fulfil $\Gamma' u + L \Gamma u =0$, while the Dirichlet operator is
  the same as before (see \Sec{robin}).
\end{remark}
%----------------------------------------------------------------------

%----------------------------------------------------------------------
\subsubsection*{Quasi- and generalised boundary triples}
%----------------------------------------------------------------------
 
Let us now shortly review different concepts of \emph{boundary
  triples} and relate them to boundary pairs.  We will provide the
proofs in a forthcoming publication, see~\cite{post:pre14a}.  For a
recent treatment of \mbox{(quasi-)}boundary triples and related
concepts we refer to the nice surveys~\cite{dhms.in:12} and
\cite{behrndt-langer.in:12} and the references therein (see also
\cite{schmuedgen:12, bgp:08, behrndt-langer:07, dhms:06, bmn:02,
  arlinskii:96, derkach-malamud:95}).  Note that the following
concepts are formulated on the operator level:
% ----------------------------------------------------------------------
\begin{definition}
  \label{def:quasi.bd3}
  Let $ \Hmin$ be a closed, densely defined and symmetric operator
  in $\HS$ and set $\Hmax = (\Hmin)^*$.
  \begin{myenumerate}
  \item A triple $(\Gamma_0,\Gamma_1,\HSaux)$ is a
    \emph{quasi-boundary triple} associated with $\Hmax$
    (see~\cite[Def.~2.1]{behrndt-langer:07} or
    \cite[Def.~6.10]{behrndt-langer.in:12}) if there is a subspace
    $\WS$ of $\dom \Hmax$, dense in $\dom \Hmax$ (with its graph norm),
    such that
    \begin{myenumerate}
    \item
      \label{quasi.bd3.i}
      $\map{(\Gamma_0,\Gamma_1)}{\WS} {\HSaux \oplus \HSaux}$ has
      dense range, where $(\Gamma_0,\Gamma_1) f:=(\Gamma_0 f ,\Gamma_1
      f)$ \emph{(``joint dense range'')}.
    \item
      \label{quasi.bd3.ii}
      $H_0 := \Hmax \restr{\ker \Gamma_0}$ is self-adjoint
      \emph{(``self-adjointness'')},
    \item
      \label{quasi.bd3.iii}
      \emph{Green's (second) identity}
      \begin{equation}
        \label{eq:green.op}
        \iprod[\HS]{\Hmax f} g - \iprod[\HS] f {\Hmax g}
        = \iprod[\HSaux]{\Gamma_0 f}{\Gamma_1 g}
        - \iprod[\HSaux]{\Gamma_1 f}{\Gamma_0 g}
      \end{equation}
      holds for $f,g \in \WS$.
    \end{myenumerate}
    
  \item The triple $(\Gamma_0,\Gamma_1,\HSaux)$ is called a
    \emph{generalised boundary triple} associated with $\Hmax$ (in the
    sense of \cite[Def.~6.1]{derkach-malamud:95}) if it fulfils
    Green's (second) identity~\eqref{eq:green.op} and if the joint
    dense range condition~\eqref{quasi.bd3.i} is replaced by the
    surjectivity of $\Gamma_0$, i.e., by $\Gamma_0(\WS) = \HSaux$.

  \item
  \label{ord.bd3}
    The triple $(\Gamma_0,\Gamma_1,\HSaux)$ is (here) called an
    \emph{ordinary boundary triple} associated with $\Hmax$ if it
    fulfils Green's (second) identity~\eqref{eq:green.op}, if
    $\WS=\dom \Hmax$ and if the ``joint dense range''
    condition~\eqref{quasi.bd3.i} is replaced by the
    \emph{``joint surjectivity''} condition, i.e., by
    $(\Gamma_0,\Gamma_1)(\WS)=\HSaux \oplus \HSaux$.

  \end{myenumerate}
\end{definition}
%----------------------------------------------------------------------
A generalised boundary triple is a quasi-boundary triple (see
\cite[Cor.~3.7]{behrndt-langer:07}
and~\cite[Lem.~6.1]{derkach-malamud:95}), and obviously, an ordinary
boundary triple is a generalised and quasi-boundary triple.  We will
prove the following converse results on boundary pairs with boundary
triples in~\cite{post:pre14a}:
%----------------------------------------------------------------------
\begin{theorem}[\cite{post:pre14a}]
  \label{thm:bd3.intro}
  Let $(\Gamma,\HSaux)$ be a boundary pair associated with $\qf h$ and
  let $(\Gamma,\Gamma',\HSaux)$ be its maximal associated boundary
  triple (as in \Def{ass.bd3}).  Then the following holds:
  \begin{myenumerate}
  \item
    \label{bd3.intro.i}
    The boundary pair $(\Gamma,\HSaux)$ is elliptically regular iff
    $(\Gamma \restr {\WS},\Gamma',\HSaux)$ is a quasi-boundary triple
    associated with $\Hmax$;

  \item
    \label{bd3.intro.ii}
    The boundary pair $(\Gamma,\HSaux)$ is bounded iff $(\Gamma \restr
    {\WS},\Gamma',\HSaux)$ is a generalised boundary triple associated
    with $\Hmax$.

  \item
    \label{bd3.intro.iii}
    The boundary pair $(\Gamma,\HSaux)$ is bounded and positive iff
    $(\Gamma\restr {\WS},\Gamma',\HSaux)$ is an ordinary boundary
    triple.
  \end{myenumerate}
\end{theorem}
%----------------------------------------------------------------------

In~\cite{dhms:06}, the more general notation of \emph{boundary
  relation} has been introduced (basically, multi-valued operators and
boundary maps are allowed).  One of the results of~\cite{dhms:06}
states a characterisation of boundary relations by their
\emph{Nevanlinna} ``functions'' $-\Lambda(\cdot)$ (see also the
survey~\cite{dhms.in:12}).  In particular, generalised boundary
triples have a Nevanlinna function $-\Lambda(\cdot)$ with values in
$\BdOp \HSaux$ (the space of \emph{bounded} operators) such that $-\Im
\Lambda(z)$ is \emph{injective}, while ordinary boundary triples have
again values in $\BdOp \HSaux$ and \emph{uniformly positive} operators
$-\Im \Lambda(z)$ (so-called \emph{uniformly strict Nevanlinna}
functions), see~\cite{dhms:06} for more such characterisations.  One
can now characterise \emph{elliptically regular} boundary pairs in
terms of $\Lambda(\cdot)$ and reconstruct $(\Gamma,\HSaux)$ from it in
the spirit of~\cite{langer-textorius:77} (see the outlook list of
\Sec{purpose} above).

\paragraph{To summarise:} Our concept of boundary pairs is more general as
it includes \emph{non-elliptically regular} boundary pairs; this
concept is not covered by quasi-boundary triples.  On the other hand,
our concept is only suitable for \emph{non-negative} operators $\HNeu$
and $\HDir$ (although there is a natural extension of the theory to
\emph{semi-bounded} or \emph{sectorial} operators).

Let us illustrate the different properties of boundary pairs and its
relation to boundary pairs in a tabular: we also refer to (classes of)
examples showing that the above properties of boundary pairs are all
independent except the following: A boundary pair $(\Gamma,\HSaux)$
with finite-dimensional boundary space is automatically bounded (i.e.,
$\ran \Gamma=\Gamma(\HS^1)=\HSaux$) and positive; and a bounded
boundary pair is automatically elliptically regular.
%----------------------------------------------------------------------
\begin{center}
  \begin{tabular}[h]{|c|c|c|c|p{0.42\textwidth}||c|c|c|}
    \hline
    \multicolumn{4}{|l|}{boundary pair is}& Examples &
    \multicolumn{3}{|l|}{ass.\ boundary triple is}\\
    \texttt{fin} & \texttt{bdd} & \texttt{ell} & \texttt{pos} & &
    \texttt{o-bd3} & \texttt{g-bd3} & \texttt{q-bd3}\\
    \hline \hline
    \yes & \yes & \yes & \yes & \Sec{2dim} (Sturm-Liouville)  & % bd2
    \yes & \yes & \yes \\                   % bd3 
%    \yes & \yes & \yes & \no & \Sec{}\\ geht nicht
%    \yes & \yes & \no & \yes & \Sec{}\\ geht nicht
%    \yes & \yes & \no & \no & \Sec{}\\  geht nicht
    \no & \yes & \yes & \yes & metric graphs with infinitly many vertices,
    \newline see~\cite{post:12} & % bd2
    \yes & \yes & \yes \\                   % bd3 
    \no & \yes & \yes & \no & \Ex{bdd.non-pos}:
                   modified manifold example &% bd2\\
    \no  & \yes & \yes \\                   % bd3
%    \no & \yes & \no & \yes & \Sec{}\\  geht nicht
%    \no & \yes & \no & \no & \Sec{}\\   geht nicht
    \no & \no & \yes & \yes & \Ex{bd2.ell.pos} (Jacobi) &% bd2
    \no  & \no & \yes \\                       % bd3
    \no & \no & \yes & \no & \Secs{lapl.mfd}{dtn.mg}, \Thm{zaremba.bd2} (mfds)
             &% bd2
    \no  & \no & \yes \\                   % bd3
    \no & \no & \no & \yes & \Ex{non-ell.bd2} (Jacobi) &% bd2
    \no  & \no & \no \\                   % bd3
    \no & \no & \no & \no & \Thm{zaremba2.bd2} (Zaremba) &% bd2
    \no  & \no & \no \\                   % bd3
    \hline
  \end{tabular}
  \parbox{0.9\textwidth}{\yes --- yes; \no --- no.  \texttt{fin}:
    finite dimensional boundary space $\HSaux$; \texttt{bdd}: bounded
    boundary pair ($\ran \Gamma=\HSaux)$; \texttt{ell}: elliptically
    regular boundary pair; \texttt{pos}: positive boundary
    pair.  \texttt{o-bd3}: the associated boundary triple is an
    ordinary boundary triple; \texttt{g-bd3}: \dots generalised
    boundary triple; \texttt{q-bd3}: \dots quasi-boundary triple.}
\end{center}
%----------------------------------------------------------------------

%----------------------------------------------------------------------
\subsection{Extension theory and related other works}
\label{sec:ext.th.intro}
%----------------------------------------------------------------------

Let us briefly comment on other concepts, in particular, the concept
of extension theory: the characterisation of all possible
(self-adjoint) extensions of a given symmetric operator.

%----------------------------------------------------------------------
\subsubsection*{Grubb's extension theorem via forms}
%----------------------------------------------------------------------
There is a close relation of our form approach with extension theory,
especially for non-negative operators: Grubb proved
in~\cite[Sec.~1]{grubb:70} that all non-negative and self-adjoint
extensions of a minimal, closed, uniformly positive operator $\Hmin$ lie
in between the \emph{Friedrichs extension} $\HDir$ (defined as the
operator associated with the closure of the quadratic form $\qf
h^\Dir(f):=\iprod {\Hmin f} f$) and the so-called \emph{Krein} or
\emph{soft extension} defined via the quadratic form
\begin{equation}
%  \label{eq:krein.qf}
  \dom \qf h^\Krein := \dom \qf h^\Dir \dplus \ker \Hmax, \qquad
  \qf h^\Krein(u):= \qf h^\Dir(u^\Dir),
\end{equation}
where $u^\Dir$ is the projection of $u \in \dom \qf h^\Krein$ onto
$\dom \qf h^\Dir$ along $\ker \Hmax$ ($\Hmax:=(\Hmin)^*$).  The
associated operator $\HKrein$ acts on $\dom \HKrein = \dom \Hmin
\dplus \ker \Hmax$ as $\HKrein u = \Hmin u^\Dir$ (a result of Krein,
see~\cite[Thm.~14]{krein:47i}) and is self-adjoint and non-negative.

To be more precise, Grubb's result is as follows
(cf.~\cite[Cor.~1.3]{grubb:70}): If $\qf h$ is a closed non-negative
quadratic form in $\HS$ and $\qf h \restr {\dom \qf h^\Dir}=\qf
h^\Dir$, such that its domain lies in between $\dom \qf h^\Dir$ and
$\dom \qf h^\Krein$ and $\qf h^\Krein(u) \le \qf h(u)$ for all $u \in
\dom \qf h$, then the associated operator $H$ is an extension of
$\Hmin$ ($\Hmin \subset H \subset \Hmax$), and all positive extensions
of $\Hmin$ appear in this way.  One can also characterise the
extension in terms of the quadratic form $\qf t$, the restriction of
$\qf h$ to the space $\ker \Hmax$. This form then corresponds to our
family of \DtN\ forms at $z=0$.

There are many more approaches to self-adjoint extensions of positive
closed operators, see e.g.~\cite{malamud:92b, alonso-simon:80,
  ando-nishio:70, birman:56, vishik:52, krein:47i}.

Grubb also contributed to the Zaremba (mixed) boundary problem
in~\cite{grubb:11}; she provides Krein-type formula (with the
Dirichlet Laplacian as a reference operator, hence it is in our
elliptically regular setting, see \Thm{zaremba.bd2}); and also
Weyl-type asymptotics for the resolvent difference).

%----------------------------------------------------------------------
\subsubsection*{Arlinskii's boundary pairs}
%----------------------------------------------------------------------
The notion of \emph{boundary pair} appears also in works of (see also
the survey~\cite{arlinskii.in:12} and references therein), but with
the Krein extension as Neumann operator and the additional condition
that $\ran \Gamma=\HSaux$.  In other words, given a minimal operator
$\Hmin$, Arlinskii defines a boundary pair $(\Gamma,\HSaux)$
associated with $\qf h^\Krein$ such that $\Gamma(\dom \qf
h^\Krein)=\HSaux$, i.e., the boundary pair is \emph{bounded} in our
notation.  Arlinskii deals with sectorial operators, and not with
non-negative operators, as we do.  In our opinion, Arlinskii is mainly
interested in characterising all possible variational extensions of
the associated minimal operator for sectorial operators (we just
mentioned Grubb's result for non-negative operators
above~\cite{grubb:70}).  Arlinskii~\cite{arlinskii:99} also associates
a boundary triple with a boundary pair in the same spirit as we do in
\Sec{rel.bd3}.

Lyantse and Storozh~\cite{lyantse-storozh:83} use a similar notion of
boundary pairs for operators corresponding basically to \emph{bounded}
boundary pairs in our notation.  There is another approach for first
order systems in~\cite{mogilevskii:12}.  Malamud and
Mogilevskii~\cite{malamud-mogilevskii:02} discuss the extension theory
for dual pairs of operators or even relations and also provide a
Krein-type formula for the resolvents.

%----------------------------------------------------------------------
\subsubsection*{Posilicano's approach}
%----------------------------------------------------------------------
Posilicano~\cite{posilicano:08} (see also~\cite{posilicano:04,
  posilicano:01}) considers (in our notation) a self-adjoint extension
$\HDir$ and a bounded operator $\map {\Gamma'} {\dom \HDir} \HSaux$
($\dom \HDir$ with its graph norm) which is surjective and has dense
kernel.  He then describes all self-adjoint extensions of the
associated minimal operator $\Hmin := \HDir \restr {\ker \Gamma'}$ via
a Krein-type formula.  He also applies his concept to Laplacians on
open subsets $X \subset \R^n$ with smooth boundary, but the
surjectivity condition on $\Gamma'$ enforces $\HSaux=\Sob[1/2] {\bd
  X}$. Other results on Krein-type resolvent formulae are also
considered in~\cite{agw:14, dhms:09, behrndt-de-snoo:09,
  posilicano:01}.

%----------------------------------------------------------------------
\subsubsection*{Arendt's and ter Elst's $\Gamma$-elliptic forms}
%----------------------------------------------------------------------
Arendt and ter Elst~\cite{arendt-ter-elst:12} define a generalised
notion for sesquilinear forms and associated operators, using an
operator playing the role of our boundary operator $\map \Gamma
{\HS^1}\HSaux$.  Namely, they say that a sesquilinear form $\qf a$ is
is \emph{$\Gamma$-elliptic} if there exist $\alpha>0$ and $\omega \in
\R$ such that
\begin{equation}
  \label{eq:g-ell}
  \Re \qf a(u) + \omega \normsqr[\HSaux]{\Gamma u}
  \ge \alpha \normsqr[\HS^1] u
\end{equation}
holds for all $u \in \HS^1$.

To such a $\Gamma$-elliptic form $\qf a$, one can associate an
operator $A$ on $\HSaux$ by setting $\phi \in \dom A$ and $A\phi =
\psi$ iff there exists $u \in \HS^1$ such that $\Gamma u=\phi$ and
$\qf a(u,v)= \iprod[\HSaux] \psi {\Gamma v}$ for all $v \in \HS^1$.
We say that \emph{$A$ is the operator associated with
  $(\HS^1,\Gamma,\qf a)$}.

We can apply this definition in the following way: Let
$(\Gamma,\HSaux)$ be a boundary pair associated with $\qf h$.  Set now
$\qf a= \qf h - z \qf 1$, then it is not difficult to see that the
operator associated with $(\HS^1,\Gamma,\qf h-z\qf 1)$ is the \DtN\
operator $\Lambda(z)$.

The notation ``$\Gamma$-elliptic'' does not refer to the Hilbert space
$\HS$, and hence, there is no direct relation to our notion of
``elliptic regularity'' in the sense of \Def{ell.pos.intro}, but it
can easily be seen that if $\Re z<0$, then $\qf h - z\qf 1$ is
$\Gamma$-elliptic (with $\alpha=\min\{1,-\Re z\}$ and $\omega=0$).  On
the other hand, if $(\Gamma,\HSaux)$ is an elliptically regular
boundary pair, and if $0 \notin \spec \HDir$, then $\qf h$ is
$\Gamma$-elliptic.

Arendt and ter Elst use this abstract concept to define a \DtN\
operator even on very rough domains, see~\cite{arendt-ter-elst:11},
assuming e.g.\ that $\Gamma$ is only a \emph{closed} map in $\HS^1 \to
\HSaux$.  They also relate the concept with operator semi-groups
(see~\cite{arendt-ter-elst:12b}).  In an older paper, Greiner 
considers perturbations of semi-groups by boundary conditions in the
setting of Banach spaces (see~\cite{greiner:87}).

%----------------------------------------------------------------------
\subsubsection*{Brasche et al's non-negative form perturbations}
%----------------------------------------------------------------------
A different approach using the notion of Dirichlet forms is used in
the works of Brasche et al~\cite{brasche-demuth:05,
  benamor-brasche:08, bbb:11}, see
also~\cite[Ex.~3.6]{posilicano:01}).  Their concept is called
\emph{non-negative form perturbations} and can equivalently be given
(in our notation) by a non-negative closed quadratic form $\qf h$ in
$\HS$ with domain $\HS^1:=\dom \qf h$, an auxiliary Hilbert space
$\HSaux$ and an identification operator $\Gamma$, \emph{closed} as
operator $\HS^1 \to \HSaux$ and densely defined in $\HS^1$ with dense
range $\ran \Gamma$ (\cite[Ex.~2.1 and Lem.~2.2]{bbb:11}).  It is more
general than our concept since $\Gamma$ is not assumed to be bounded
as operator $\HS^1 \to \HSaux$ and since $\ker \Gamma$ is not assumed
to be dense in $\HS$ (we have one example where we also drop the
latter density condition, see \Sec{large.bd2}).  We would like to
stress that Brasche et al only consider (what we call) the \DtN\
operator $\Lambda=\Lambda(-1)$ at $z=-1$ and not \emph{families} of
\DtN\ operators $(\Lambda(z))_z$ as we do.

One of the main examples in~\cite{bbb:11} is the Laplacian on $\R$
with infinitely many delta interactions on it (in other words,
Robin-type perturbations of the Neumann Laplacian on $\R$, see
\Remenum{eq.ell}{brasche}), a case also treated in detail
in~\cite{kostenko-malamud:10} (see \Rem{delta.malamud} and
also~\cite{malamud-schmuedgen:12,cmp:13,bll:13a} for other recent
applications of boundary triple methods to delta-type Schr\"odinger
operators).  Brasche et al also introduce an interesting equivalent
characterisation of ``elliptic regularity'' of a boundary pair in
terms of an optimal convergence speed for Robin-type resolvents in the
works of Brasche et al~\cite{brasche-demuth:05, benamor-brasche:08,
  bbb:11} (see again \Remenum{eq.ell}{brasche}).

%----------------------------------------------------------------------
\subsubsection*{Applications of boundary triples to elliptic boundary
  value problems}
%----------------------------------------------------------------------

There have been many attempts to apply the concept of boundary triples
to elliptic boundary value problems on $X$.  Most attempts try to keep
the concept of boundary triples (namely the ``joint surjectivity
condition'' of \Defenum{quasi.bd3}{ord.bd3}), for example
by``regularising'' the boundary maps by applying some isomorphism
$\Sob[1/2]{\bd X}$ onto $\Lsqr{\bd X}$ (e.g. the square root of the
\DtN\ map $\Lambda^{1/2}$), see also \Sec{bdd.bd2} for an abstract
``regularisation'' for boundary pairs.  This approach has been used
e.g.\ in~\cite{malamud:10, bgw:09, pankrashkin:06b}).

Abstract formulations of elliptic boundary value problems have already
been considered by Grubb in~\cite{grubb:68,grubb:70} where she
considers self-adjoint extensions of non-negative (and more general)
operators, being elliptic on a subset $X$ of $\R^n$ with \emph{smooth}
boundary $\bd X$.  Recently, Gesztesy and
Mitrea~\cite{gesztesy-mitrea:11} (see also~\cite{gesztesy-mitrea:08,
  gesztesy-mitrea:09}) provided a detailed analysis of all extensions
of Laplacians on a subclass of Lipschitz domains, also providing
Krein-type formulae as in~\eqref{eq:krein.intro}.  They are not using
the language of boundary triples, but their results could easily be
embedded e.g.\ in the concept of boundary pairs or quasi-boundary
triples.  The latter has been successfully applied to elliptic boundary
value problems, even in the setting of Lipschitz domains in $\R^n$,
see~\cite{behrndt-micheler:14} and the references therein.  There are
similar results by Grubb~\cite{grubb:08}, providing Krein-type
resolvent formulae also for certain non-smooth domains, using
pseudo-differential boundary operator methods adopted to the non-smooth
case.  Ryzhov~\cite{ryzhov:07} uses a similar concept as boundary
triples (in a weaker version) and also defines what we call \DtN\
operator.  He assumes what we call elliptic regularity (see
\cite[Prp.~1.16~(2)]{ryzhov:07}).

%----------------------------------------------------------------------
\subsection{Structure of this article}
%----------------------------------------------------------------------
\Sec{bd2.qf} contains the basic notion of a boundary pair, the
Dirichlet solution operator and the \DtN\ operator and properties
related with these operators.  In \Sec{bd2.add} we find additional
properties of boundary pairs needed in order to prove certain
Krein-type resolvent formulae and spectral relations in
\Sec{bd2.krein}.  \Sec{bd2.constr} consists of boundary pairs
constructed from others, such as the Robin-type perturbation in
\Sec{robin} and coupled boundary pairs in \Sec{coupl.bd2}; and a
construction how to turn an unbounded boundary pair in a bounded one
in \Sec{bdd.bd2}.  In \Sec{examples} we provide many examples
including Laplacians on intervals, Jacobi operators, Laplacians on
sets with Lipschitz boundary, a \DtN\ operator supported on an embedded
metric graph, the Zaremba problem and discrete Laplacians.

%----------------------------------------------------------------------
\subsection{Acknowledgements}
%----------------------------------------------------------------------
The author is indebted to many colleagues for helpful discussions and
comments while working on this project and for carefully reading
earlier versions.  These thanks go especially to Yury Arlinskii, Jussi
Behrndt, Gerd Grubb, Matthias Langer, Mark Malamud, Marco Marletta,
Till Micheler, Jonathan Rohleder, Karl-Michael Schmidt and Michael
Strauss.  The author acknowledges the financial support of the SFB~647
``Space --- Time --- Matter'' while the author was at the Humboldt
University at Berlin and first ideas for this article were developed.
The author enjoyed the hospitality at Cardiff University and thanks
for the financial support given by the Marie Curie grant
``COUPLSPEC''.  Last but not least the author would like to thank the
referees for many useful comments on the article.

%----------------------------------------------------------------------
% bbbb
\section{Boundary pairs associated with quadratic forms}
\label{sec:bd2.qf}
%
%----------------------------------------------------------------------

%----------------------------------------------------------------------
\subsection{Preliminaries: quadratic and sesquilinear forms, scales of
  Hilbert spaces}
%----------------------------------------------------------------------
\label{sec:prelim}

In this article, $\HS$, $\HSaux$ etc.\ denote Hilbert spaces with norm
and inner product denoted by $\norm[\HS] \cdot$, $\iprod[\HS] \cdot
\cdot$ etc.  Let $\HS^1$ be a subspace of $\HS$ with another inner
product $\iprod[\HS^1]\cdot \cdot$, such that the inclusion $\HS^1
\hookrightarrow \HS$ is bounded.  A \emph{sesquilinear form} $\map
{\qf h}{\HS^1 \times \HS^1}\C$ is a map linear in its first and
anti-linear in its second argument.  A sesquilinear form determines a
\emph{quadratic form} by $\qf h(u):= \qf h(u,u)$ (denoted by the same
symbol), its domain is denoted by $\dom \qf h = \HS^1$.  A quadratic
form uniquely determines its associated sesquilinear form $\map{\qf
  h}{\HS^1 \times \HS^1}\C$ by the polarisation identity $\qf h(u,v):=
\sum_{k=0}^3 \im^k \qf h(u+\im^kv)$.  Therefore, we will mostly speak
of ``forms'' in the sequel, meaning either ``quadratic form'' or
``sesquilinear form''.

A form is called \emph{non-negative} resp.\ \emph{(uniformly)
  positive} if $\qf h(u) \ge 0$ resp.\ $\qf h(u) \ge c \normsqr[\HS]
u$ for all $u \in \dom \qf h=\HS^1$ and some $c>0$.  
We sometimes write $\qf 1$ for the form $\qf 1(u):=\normsqr[\HS] u$.
A non-negative
form is called \emph{closed} if the norm defined by
\begin{equation}
  \label{eq:norm.qf}
  \normsqr[\qf h] u := \qf h(u) + \normsqr[\HS] u
\end{equation}
is equivalent with the norm on $\HS^1$, i.e., if $\HS^1$ is a Hilbert
space w.r.t.\ the norm $\norm[\qf h] \cdot$.

The \emph{adjoint} $\qf a^*$ of a form $\qf a$ is defined by $\qf
a^*(\phi,\psi):= \conj{\qf a(\psi,\phi)}$.  The \emph{real} resp.\
\emph{imaginary part} of a form $\qf a$ is defined as $\Re \qf a :=
\frac 12 (\qf a + \qf a^*)$ resp.\ $\Im \qf a := \frac 1{2 \im} (\qf a
- \qf a^*)$.  Let $D$ be an open subset of $\C$ invariant under
complex conjugation.  A family $(\qf l_z)_{z \in D}$ of sesquilinear
forms is called \emph{symmetric}, if $\qf l_z^* = \qf l_{\conj z}$.

If $\qf h$ is a closed, non-negative quadratic form, then there is a
unique self-adjoint, non-negative operator $H$ defined for those $u
\in \HS$ such that there is $v \in \HS$ with $\qf h(u,w)=\iprod[\HS] v
w$ for all $w \in \dom \qf h$, and we set $Hu := v$.  We also call $H$
the \emph{strong} operator associated with $\qf h$.

A self-adjoint, non-negative operator defines a \emph{scale of Hilbert
  spaces} $\HS^k:=\dom (H+1)^{k/2}$ with norm $\norm[\HS^k] u :=
\norm{(H+1)^{k/2} u}$ for $k \ge 0$ such that $\dom \qf h = \HS^1$.
For negative $k$, we set $\HS^{-k} := (\HS^k)^*$, where $(\cdot)^*$
refers to the pairing induced by the inner product of $\HS$.  In
particular, we can interpret $\HS^{-k}$ as the completion of $\HS$
with respect to the norm $\norm[\HS^{-k}] u = \norm{(H+1)^{-k/2}u}$;
for details on scales of Hilbert spaces, see~\cite[Sec.~3.2]{post:12}.
We sometimes use the notation $A^{k \to m}$ (or similar ones) to
indicate that $\map A {\HS^k}{\HS^m}$ is a bounded operator.  In this
sense, $H^{k \to k-2}$ is the extension/restriction of $H$, and a
bounded operator.  Especially, if $k=1$, we write $\weakop H := H^{1
  \to -1}$, called the \emph{weak} operator associated with $\qf h$,
and defined by $(\weakop H u) v := \qf h(u,v)$.  There is a very nice
summary about these facts in~\cite[App.~B]{gesztesy-mitrea:08}.

Finally, we denote by $\HS = \HS_1 \dplus \HS_2$ the \emph{topological
  sum} of $\HS_1$ and $\HS_2$, i.e., the sum is direct (but not
necessarily orthogonal), and $\HS_1$, $\HS_2$ are closed in $\HS$.

%----------------------------------------------------------------------
\subsection{Boundary pairs, Dirichlet solution operators and \DtN\
  operators}
%----------------------------------------------------------------------

We start with our basic object, using only quadratic form domains for
the moment.
%----------------------------------------------------------------------
\begin{definition}
%  \addtocounter{equation}{-1} %
  \label{def:bd2}
  Let $\qf h$ be a closed non-negative and densely defined quadratic
  form in the Hilbert space $\HS$ with domain $\HS^1 := \dom \qf h$.
  We endow $\HS^1$ with its natural norm given by~\eqref{eq:norm.qf}.
  Moreover, let
  \begin{equation*}
    \map \Gamma{\HS^1}\HSaux
  \end{equation*}
  be a bounded map, where $\HSaux$ is another
  Hilbert space. We denote the norm of the operator $\Gamma$ by
  $\norm[1 \to 0] \Gamma$.
  \begin{myenumerate}
  \item We say that $(\Gamma,\HSaux)$ is an \emph{(ordinary) boundary
      pair} (or $\Gamma$ is a \emph{boundary map}) associated with the
    quadratic form $\qf h$ if the following conditions are fulfilled:
    \begin{myenumerate}
    \item $\HS^{1,\Dir}:=\ker \Gamma$ is dense in $\HS$.
    \item $\HSaux^{1/2} :=\ran \Gamma$ is dense in $\HSaux$.
    \end{myenumerate}
    If the first condition is not fulfilled, i.e., if $\ker \Gamma$ is
    not dense in $\HS$ then we say that the boundary space is
    \emph{large in $\HS$} or shortly, that $(\Gamma,\HSaux)$ is a
    \emph{generalised boundary pair}.

  \item If $\HSaux^{1/2} \subsetneq \HSaux$ (i.e., if the boundary map
    $\Gamma$ is not surjective), then we call the boundary pair
    $(\Gamma, \HSaux)$ \emph{unbounded}.  Otherwise, if the boundary
    map is \emph{surjective}, then we call the boundary pair
    \emph{bounded}.

  \item We call the self-adjoint and non-negative operator $\HNeu$
    associated with $\qf h$ (see~\cite[Thm.~VI.2.1]{kato:66}) the
    \emph{Neumann operator}.  Its resolvent is denoted by $\RNeu(z) :=
    (\HNeu-z)^{-1}$ for $z \in \C \setminus \spec \HNeu$ and we set
    $\RNeu:=\RNeu(-1)$.  The associated scale of Hilbert spaces is
    denoted by $\HS^{k,\Neu}:=\dom \HNeu^{k/2}$ with norm $\norm
    [k,\Neu] u := \norm{(\HNeu+1)^{k/2} u}$.

  \item
    \label{bd2.dir}
    Denote by $\qf h^\Dir := \qf h \restr{\HS^{1,\Dir}}$ the form
    restricted to $\HS^{1,\Dir}$. (Note that $\qf h^\Dir$ is a closed
    form since $\map \Gamma {\HS^1} \HSaux$ is bounded, hence $\dom
    \qf h^\Dir=\ker \Gamma$ is closed in $\HS^1$.)  Moreover, we call
    the self-adjoint and non-negative operator $\HDir$ associated with
    $\qf h^\Dir$ in $\HS$ the \emph{Dirichlet operator}.  We denote
    its resolvent by $\RDir(z) := (\HDir-z)^{-1}$ for $z \in \C
    \setminus \spec \HDir$ and set $\RDir:=\RDir(-1)$.  The associated
    scale of Hilbert spaces in $\HS^{0,\Dir}$ is denoted by
    $\HS^{k,\Dir}:=\dom (\HDir)^{k/2}$ with norm $\norm [k,\Dir] u :=
    \norm{(\HDir+1)^{k/2} u}$.

    If $(\Gamma,\HSaux)$ is a generalised boundary pair, then we
    denote by $\HS^{0,\Dir}$ the closure of $\ker \Gamma=\HS^{1,\Dir}$
    in $\HS$.   For consistency, we extend $\RDir(z)$ by $0$
    on $(\HS^{0,\Dir})^\orth$, and denote the extended resolvent by
    the same symbol, i.e., we set $\RDir(z) f := (\HDir-z)^{-1} f^\Dir
    \oplus 0$ for $f=f^\Dir \oplus f^\orth$.
  \end{myenumerate}
\end{definition}
%----------------------------------------------------------------------

%----------------------------------------------------------------------
\begin{remark}
  \label{rem:bd2}
  \indent
  \begin{myenumerate}
  \item
    \label{bd2.ran}
    If $\ran \Gamma$ is not dense in $\HSaux$, then we can replace
    $\HSaux$ by $\HSaux_0 := \clo{\ran \Gamma}$.

  \item
    \label{bd2.ker}
    In most of our examples, $\ker \Gamma$ is dense in $\HS$, and we
    mean by ``boundary pair'' an ``ordinary boundary pair''.  An
    example of a generalised boundary pair is presented in
    \Ex{basic.ex} and in \Sec{large.bd2}.

  \item We mostly work with \emph{unbounded} quadratic forms $\qf h$.
    In \Sec{large.bd2}, we present an example with a \emph{bounded}
    form $\qf h$ related to a \emph{discrete} Laplacian on a graph; in
    this case, $\HS=\HS^1$.

  \item Note that if the boundary pair $(\Gamma,\HSaux)$ associated
    with $\qf h$ is not bounded (i.e., $\ran \Gamma \subsetneq
    \HSaux$), then we can turn it into a \emph{bounded} boundary pair
    $(\wt \Gamma,\wt \HSaux)$ (i.e., $\ran \wt \Gamma = \wt \HSaux$)
    associated with $\qf h$ by changing the boundary space $\wt
    \HSaux$ and its norm, see \Prp{bd2.bdd}.  Nevertheless, after this
    modification, the boundary space is less natural in many
    applications, and no longer an ordinary $\Lsqrspace$-space as in
    the following basic example.
  \end{myenumerate}
\end{remark}
%----------------------------------------------------------------------

%----------------------------------------------------------------------
\begin{example}
  \label{ex:basic.ex}
  Let us illustrate the above setting by a prototype we have in mind
  (see also~\cite{bbb:11} and references therein): Assume that
  $(X,\mu)$ is a measured space.  As quadratic form we choose an
  ``energy form'', i.e., $\qf h(f) = \int_X \abssqr{\de f} \dd \mu$,
  where $\abssqr{\de f}$ is usually a sort of squared norm of a
  ``derivative'' on $X$.  Moreover, we assume that $Y \subset X$ is
  measurable (the ``boundary'' of $X$) and $\nu$ is a measure on $Y$.
  We set $\HS := \Lsqr{X,\mu}$ and $\HSaux := \Lsqr {Y,\nu}$.  As
  boundary map we choose $\Gamma f := f \restr Y$.  One has to check
  now that $\Gamma$ is bounded as operator $\HS^1 \to \HSaux$, i.e.,
  that there is a constant $C>0$ such that
  \begin{equation*}
    \int_Y \abssqr f \dd \nu
    \le C \int_X \bigl(\abssqr{\de f} + \abssqr f \bigr) \dd \mu.
  \end{equation*}
  If $\mu(Y)>0$, then $(\Gamma,\HSaux)$ is a \emph{generalised}
  boundary pair, and we may choose as measure $\nu$ the measure
  induced by $X$ (i.e., $\nu(B):=\mu(B)$ for measurable sets $B
  \subset Y$).  In this article, we mostly are interested in the case
  when $\mu(Y)=0$, i.e., when $\nu$ is supported on a set of
  $\mu$-measure $0$ only.  This leads to a boundary map for which
  $\ker \Gamma$ is dense in $\HS=\Lsqr{X,\mu}$, i.e., to an
  \emph{ordinary} boundary pair.
\end{example}
%----------------------------------------------------------------------

%----------------------------------------------------------------------
\begin{definition}
  \label{def:dsol}
  Let $\LS^1$ be the orthogonal complement of $\ker \Gamma$ in
  $\HS^1$.  We call the inverse
  \begin{equation*}
    \map {\SDir :=(\Gamma\restr{\LS^1})^{-1}}
            {\HSaux^{1/2}}{\LS^1 \subset \HS^1}
  \end{equation*}
  of the bijective map $\map \Gamma {\LS^1}{\HSaux^{1/2}:=\ran
    \Gamma}$ the \emph{(weak) Dirichlet solution map} (at the point
  $z=-1$).
\end{definition}
%----------------------------------------------------------------------

Clearly, $h=\SDir\phi$ with $\phi \in \HSaux^{1/2}$ is the \emph{weak
  solution of the Dirichlet problem}, i.e.,
\begin{equation*}
  (\qf h + \qf 1)(h,f)= \qf h (h,f) + \iprod h f
  = 0 \quad \forall f \in \HS^{1,\Dir},
  \qquad
  \Gamma h=\phi.
\end{equation*}

The Dirichlet solution operator $\SDir$ allows us to define a natural norm
on the range $\HSaux^{1/2}$ of $\Gamma$, namely we
set
\begin{equation}
  \label{eq:norm.1/2}
  \norm[\HSaux^{1/2}] \phi
  := \norm[\HS^1] {\SDir\phi},
\end{equation}
i.e.\ the norm of the boundary element $\phi$ is given by the
$\HS^1$-norm of its (weak) Dirichlet solution.  Clearly, the operator
$\map \SDir {\HSaux^{1/2}}{\HS^1}$ and its left inverse $\map \Gamma
{\LS^1} {\HSaux^{1/2}}$ are isometric.  In particular, $\HSaux^{1/2}$
is itself a Hilbert space (with its inner product induced by
$\norm[\HSaux^{1/2}] \cdot$).  Moreover, the natural inclusion
$\HSaux^{1/2} \hookrightarrow \HSaux$ is bounded, since
\begin{equation}
  \label{eq:hsaux.0.12}
 \norm[\HSaux] \phi 
 = \norm[\HSaux] {\Gamma \SDir \phi}
 \le \norm[1 \to 0]{\Gamma} \norm[\HS^1] {\SDir \phi}
  = \norm[1 \to 0]{\Gamma} \norm[\HSaux^{1/2}] \phi,
\end{equation}
where $\norm[1 \to 0] \Gamma$ is the norm of $\Gamma$ as operator
$\map \Gamma {\HS^1} \HSaux$.

%----------------------------------------------------------------------
\begin{proposition}
  \label{prp:s.dn.closed}
%   \addtocounter{equation}{-1} %
  Let $(\Gamma,\HSaux)$ be a boundary pair associated with $\qf h$,
  then we have:
  \begin{myenumerate}
  \item
    \label{s.closed}
    The Dirichlet solution operator $\SDir$ is closed and densely
    defined as operator in $\HSaux \to \HS^1$. Its domain is given by
    $\dom \SDir=\HSaux^{1/2}$.
  \item
    \label{l.closed}
    The quadratic form $\qf l$ defined by $\qf
    l(\phi):=\normsqr[1]{\SDir \phi}$ with $\dom \qf l=\HSaux^{1/2}$ is a
    closed quadratic form in $\HSaux$.  Moreover,
    \begin{equation}
      \label{eq:l.lower}
      \qf l(\phi)
      \ge \frac 1 {\normsqr[1 \to 0] \Gamma} \normsqr[\HSaux] \phi
    \end{equation}
    for $\phi \in \HSaux^{1/2}$.
  \end{myenumerate}
\end{proposition}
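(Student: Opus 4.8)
The plan is to read off both assertions from the continuity of $\map \Gamma {\HS^1}\HSaux$, deducing the closedness of the form $\qf l$ from the closedness of $\SDir$. For (i), density is immediate: $\dom \SDir = \ran \Gamma = \HSaux^{1/2}$, which is dense in $\HSaux$ by the second boundary-pair condition. For closedness I would take a sequence $(\phi_n)_n$ in $\HSaux^{1/2}$ with $\phi_n \to \phi$ in $\HSaux$ and $\SDir \phi_n \to h$ in $\HS^1$ and show $\phi \in \HSaux^{1/2}$ and $\SDir \phi = h$. Applying the bounded map $\Gamma$ to $\SDir \phi_n \to h$ gives $\phi_n = \Gamma \SDir \phi_n \to \Gamma h$ in $\HSaux$, whence $\phi = \Gamma h \in \ran \Gamma = \HSaux^{1/2}$. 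Since $\ker \Gamma$ is closed in $\HS^1$ (boundedness of $\Gamma$), its orthogonal complement $\LS^1$ is a closed, hence complete, subspace of $\HS^1$, so $h = \lim_n \SDir \phi_n \in \LS^1$; together with $\Gamma h = \phi$ this forces $h = \SDir \phi$, as $\SDir$ is by definition the inverse of the bijection $\map{\Gamma \restr{\LS^1}}{\LS^1}{\HSaux^{1/2}}$. Hence the graph of $\SDir$ is closed in $\HSaux \times \HS^1$.

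For (ii), the bound~\eqref{eq:l.lower} is simply the square of~\eqref{eq:hsaux.0.12}, using $\qf l(\phi) = \normsqr[1]{\SDir \phi} = \normsqr[\HSaux^{1/2}] \phi$ from~\eqref{eq:norm.1/2}. For closedness the point is that the form norm of $\qf l$, i.e.\ $\phi \mapsto \qf l(\phi) + \normsqr[\HSaux]\phi = \normsqr[1]{\SDir \phi} + \normsqr[\HSaux]\phi$, is precisely the graph norm of $\SDir$ regarded as an operator $\HSaux \to \HS^1$ with domain $\dom \SDir = \HSaux^{1/2}$; by (i) this operator is closed, so $\HSaux^{1/2}$ is complete for the graph norm, which is exactly the statement that $\qf l$ is closed. (Alternatively, by~\eqref{eq:l.lower} the form norm is equivalent to $\norm[\HSaux^{1/2}]\cdot$, for which $\HSaux^{1/2}$ is already a Hilbert space.)

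I do not anticipate a genuine obstacle. The two points needing care are the structural fact that boundedness of $\Gamma$ makes $\ker \Gamma$, hence $\LS^1$, closed in $\HS^1$ — the same observation flagged in the footnote to \Def{bd2} — and the bookkeeping identifying the form norm of $\qf l$ with the graph norm of $\SDir$, which is what lets (ii) follow formally from (i).
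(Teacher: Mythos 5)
Your proof is correct and takes essentially the same route as the paper: the paper disposes of (i) by noting that $\SDir$ has a bounded inverse and is therefore closed, which is the compact form of your direct graph-closedness check via the closed subspace $\LS^1$, and it proves (ii) by a Cauchy-sequence argument that is just the unpacked version of your identification of the form norm of $\qf l$ with the graph norm of $\SDir$, with the lower bound likewise read off from~\eqref{eq:hsaux.0.12}.
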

%----------------------------------------------------------------------
\begin{proof}
  \itemref{s.closed}~The operator $\SDir$ has (by definition) a bounded
  inverse, hence $\SDir$ is closed.

  \itemref{l.closed}~The lower bound on $\qf l$ follows immediately
  from~\eqref{eq:hsaux.0.12}.  In order to show that $\qf l$ is
  closed, let $(\phi_n)_n$ be a Cauchy sequence in $\HSaux^{1/2}$ with
  respect to $\qf l$, then $(\phi_n)_n$ is also a Cauchy sequence in
  $\HSaux$, hence converges in $\HSaux$ to an element $\phi \in
  \HSaux$.  Moreover, $(\SDir \phi_n)_n$ is a Cauchy sequence in
  $\HS^1$, hence also convergent to $h \in \HS^1$. Since $\SDir$ is
  closed it follows that $\phi \in \dom \SDir = \HSaux^{1/2}$ and
  $\SDir\phi=h$, i.e., $\phi \in \dom \qf l$ and $\qf l(\phi_n - \phi)
  \to 0$.
\end{proof}
%----------------------------------------------------------------------

Let us now associate a natural operator $\Lambda$ to a boundary pair
$(\Gamma,\HSaux)$.  It turns out (cf.~\eqref{eq:why.dtn2}) that
$\Lambda$ is the \DtN\ operator, i.e., $\Lambda \phi$ associates to a
suitable boundary value $\phi$ the ``normal derivative'' of the
associated solution of the Dirichlet problem $h=S \phi$.

%----------------------------------------------------------------------
\begin{definition}
  \label{def:dn}
  Let $\Lambda$ be the operator associated with the quadratic form
  $\qf l$.  Then $\Lambda$ is called the \emph{\DtN\ operator} (at
  the point $z=-1$) associated with the boundary map $\Gamma$ and the
  quadratic form $\qf h$.  We denote by $\HSaux^k$ the natural scale
  of Hilbert spaces associated with the self-adjoint operator
  $\Lambda$, i.e.\ we set
  \begin{equation*}
    \HSaux^k := \dom \Lambda^k, \qquad
    \norm[k] \phi := \norm[\HSaux]{\Lambda^k \phi}.
  \end{equation*}
\end{definition}
%----------------------------------------------------------------------
Note that $\normsqr[1/2] \phi = \normsqr[\HSaux]{\Lambda^{1/2} \phi} =
\qf l(\phi) = \normsqr[\HSaux^{1/2}] \phi$, i.e.\ the setting is
compatible with our previously defined norm in~\eqref{eq:norm.1/2}.
The exponents in the scale of Hilbert spaces $\HS^k$ and $\HSaux^k$
will be consistent with the regularity order of Sobolev spaces in our
main examples in \Sec{lapl.mfd}, a boundary pair associated with a
Laplacian on a manifold with (smooth) boundary.

In the following theorem, we denote the adjoints of $\map
\Gamma{\HS^1}\HSaux$ w.r.t.\ the inner products in $\HS^1$ and
$\HSaux$ by $\Gamma^\oneadj$.  Similarly, the adjoint of the operator
$\SDir$ viewed as (possibly unbounded) operator from $\HSaux$ into
$\HS^1$ with domain $\HSaux^{1/2}$ is denoted by $\SDir^\adjone$.  It
is easy to see that $\map {\Gamma^\oneadj=R \Gamma^*}\HSaux {\HS^1}$
and $\map {\SDir^\adjone=\SDir^*(\wHNeu+1)}{\HS^1}\HSaux$, where
$\map{\Gamma^*}{\HS^{-1}}\HSaux$ and $\map{\SDir^*}\HSaux{\HS^{-1}}$
are the duals with respect to the pairing $\map{\iprod[-1,1] \cdot
  \cdot}{\HS^{-1} \times \HS^1}\C$.

%----------------------------------------------------------------------
\begin{theorem}
  \label{thm:dn}
%   \addtocounter{equation}{-1} %
  Let $\Gamma$ be a boundary map associated with $\qf h$.
  \begin{myenumerate}
  \item 
    \label{dn.i}
    We have $(\HSaux^1={})\dom \Lambda = \dom \SDir^\adjone \SDir$
    and
    \begin{equation}
      \label{eq:dn.-1}
      \Lambda = \SDir^\adjone \SDir
      \ge \frac 1 {\normsqr[1 \to 0] \Gamma}.
    \end{equation}
    In particular, $\Lambda^{-1}=\Gamma \Gamma^\oneadj$ exists and is
    a bounded operator in $\HSaux$ with norm bounded by $\normsqr[1
    \to 0] \Gamma$.

  \item
    \label{dn.ii}
    We have $\normsqr[1 \to 0] \Gamma = 1/\inf \spec \Lambda$; in
    particular, the lower bounds in~\eqref{eq:l.lower}
    and~\eqref{eq:dn.-1} are optimal.

  \item
    \label{dn.iii}
    The boundary pair is unbounded iff $\Lambda$ is
    unbounded.
  \end{myenumerate}
\end{theorem}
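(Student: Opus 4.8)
The plan is to establish the three assertions in sequence, using the relation $\SDir = (\Gamma\restr{\LS^1})^{-1}$ together with the fact that $\Gamma$ is isometric from $\LS^1$ onto $\HSaux^{1/2}$ equipped with the norm $\norm[\HSaux^{1/2}]\cdot$.

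First I would prove \eqref{dn.i}. The operator $\Lambda$ is, by \Def{dn}, the operator associated with the closed form $\qf l(\phi) = \normsqr[1]{\SDir\phi}$ on $\HSaux^{1/2}$. Since $\map\SDir{\HSaux^{1/2}}{\HS^1}$ is closed and densely defined with dense range in $\LS^1$ (dense as a subspace of $\HS^1$ once we note $\HS^1 = \LS^1 \oplus \ker\Gamma$ — actually $\LS^1$ need not be dense, but for the form-representation argument one works in the Hilbert space $\HSaux$ throughout), one has $\qf l(\phi,\psi) = \iprod[\HS^1]{\SDir\phi}{\SDir\psi} = \iprod[\HSaux]{\SDir^\adjone\SDir\phi}{\psi}$ for $\phi \in \dom(\SDir^\adjone\SDir)$. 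By the standard theory identifying the operator associated with a closed form (\cite[Thm.~VI.2.1]{kato:66}) and the fact that $\SDir^\adjone\SDir$ is the self-adjoint operator canonically associated with the form $\phi \mapsto \normsqr[1]{\SDir\phi}$ (this is exactly the polar/modulus-squared construction, valid since $\SDir$ is closed), we get $\Lambda = \SDir^\adjone\SDir$ and $\HSaux^1 = \dom\Lambda = \dom(\SDir^\adjone\SDir)$. The lower bound $\Lambda \ge 1/\normsqr[1\to 0]\Gamma$ is just \eqref{eq:l.lower} translated into operator form via $\iprod[\HSaux]{\Lambda\phi}\phi = \qf l(\phi) \ge \normsqr[\HSaux]\phi/\normsqr[1\to 0]\Gamma$. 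Since $\Lambda$ is self-adjoint with $\inf\spec\Lambda \ge 1/\normsqr[1\to 0]\Gamma > 0$, it is boundedly invertible; and $\Lambda^{-1} = (\SDir^\adjone\SDir)^{-1} = \SDir^{-1}(\SDir^\adjone)^{-1}$. Now $\map\SDir{\HSaux^{1/2}}{\LS^1}$ has inverse $\Gamma\restr{\LS^1}$, and dualizing the identity $\SDir = (\Gamma\restr{\LS^1})^{-1}$ one checks $(\SDir^\adjone)^{-1} = \Gamma^\oneadj$ — here I would use that $\Gamma^\oneadj = P_{\LS^1}$ composed appropriately, or more directly compute $\iprod[\HSaux]{\phi}{\Gamma u}_{\HSaux} = \iprod[\HS^1]{\Gamma^\oneadj\phi}{u}$ and show $\Gamma^\oneadj$ maps into $\LS^1$ with $\SDir^\adjone\Gamma^\oneadj\phi$ recoverable. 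Putting these together gives $\Lambda^{-1} = \Gamma\Gamma^\oneadj$, bounded with norm $\le \normsqr[1\to 0]\Gamma$.

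For \eqref{dn.ii}: from \eqref{dn.i} we have $\norm{\Lambda^{-1}} \le \normsqr[1\to 0]\Gamma$, i.e.\ $1/\inf\spec\Lambda \le \normsqr[1\to 0]\Gamma$. For the reverse, I would use \Thmenum{dn}{dn.ii}'s companion — actually it is \emph{this} item — so instead: the optimality claim in \Prpenum{s.dn.closed}{l.closed} combined with \eqref{eq:hsaux.0.12}. Concretely, $\normsqr[1\to 0]\Gamma = \sup_{u \in \HS^1, u \ne 0} \normsqr[\HSaux]{\Gamma u}/\normsqr[\HS^1]{u}$; since $\ker\Gamma \perp \LS^1$ in $\HS^1$, the supremum is attained over $u \in \LS^1$, i.e.\ $\normsqr[1\to 0]\Gamma = \sup_{\phi \in \HSaux^{1/2}} \normsqr[\HSaux]{\phi}/\normsqr[\HSaux^{1/2}]{\phi} = \sup_\phi \normsqr[\HSaux]\phi/\qf l(\phi) = 1/\inf_\phi \bigl(\qf l(\phi)/\normsqr[\HSaux]\phi\bigr) = 1/\inf\spec\Lambda$, where the last equality is the variational characterization of the bottom of the spectrum of $\Lambda$ (valid since $\HSaux^{1/2}$ is a form core for $\Lambda$).

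For \eqref{dn.iii}: by definition the boundary pair is unbounded iff $\HSaux^{1/2} = \ran\Gamma \subsetneq \HSaux$, i.e.\ iff $\dom\qf l \subsetneq \HSaux$, which (for a closed, semibounded, boundedly invertible form) holds iff the associated operator $\Lambda$ is unbounded — a bounded self-adjoint operator has form domain all of $\HSaux$, and conversely if $\dom\Lambda^{1/2} = \HSaux$ then by the closed graph theorem $\Lambda^{1/2}$, hence $\Lambda$, is bounded. The footnote variant for general $z$ follows by the same argument once $\Lambda(z)$ and its form are set up in \Sec{dtn.z}. I expect the main obstacle to be the bookkeeping in \eqref{dn.i}: carefully justifying $\Lambda = \SDir^\adjone\SDir$ as an operator identity (not merely a quadratic-form identity) requires knowing that the form $\phi\mapsto\normsqr[1]{\SDir\phi}$ of a closed operator $\SDir$ is represented precisely by $\SDir^\adjone\SDir$ on $\dom(\SDir^\adjone\SDir)$, and that this domain equals the form-operator domain — standard but worth stating — and then identifying $(\SDir^\adjone)^{-1}$ with $\Gamma^\oneadj$, which is where the interplay between the $\HS^1$-inner product, the $\HSaux$-inner product, and the orthogonal decomposition $\HS^1 = \ker\Gamma \oplus \LS^1$ must be handled with care.
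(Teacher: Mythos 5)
Your proposal is correct and follows essentially the same route as the paper: you identify $\Lambda$ with $\SDir^\adjone\SDir$ through the closed form $\qf l$, obtain $\Lambda^{-1}=\Gamma\Gamma^\oneadj$ by inverting ($(\SDir^\adjone)^{-1}=\Gamma^\oneadj$), prove the optimality in (ii) by restricting the supremum defining $\norm[1\to 0]\Gamma$ to $\LS^1$ and using the variational characterisation of $\inf\spec\Lambda$, and settle (iii) by comparing $\ran\Gamma$ with the form domain of $\Lambda$. The only cosmetic differences are that you invoke the $T^*T$-theorem for closed operators and the closed graph theorem applied to $\Lambda^{1/2}$, where the paper instead checks the domain of the associated operator directly and uses the open mapping theorem on $\Gamma\restr{\LS^1}$.
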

%----------------------------------------------------------------------
\begin{proof}
  \itemref{dn.i}~The lower bound on $\Lambda$ follows
  from~\eqref{eq:l.lower}.  Moreover, by definition of the
  associated operator (see e.g.~\cite[Thm.~VI.2.1]{kato:66}) $\phi \in
  \dom \Lambda$ iff
  \begin{equation*}
    \qf l(\cdot,\phi)
    =\iprod[\HSaux^{1/2}] \cdot \phi
    =\iprod[\HS^1]{\SDir \cdot}{\SDir\phi}
  \end{equation*}
  extends to a bounded functional $\HSaux \to \C$, i.e., iff $\SDir \phi
  \in \dom \SDir^\adjone$.  Moreover,
  \begin{equation*}
    \iprod[\HSaux] \phi {\Lambda \phi}
    = \iprod[\HSaux^{1/2}] \phi \phi
    = \iprod[\HS^1] {\SDir \phi} {\SDir \phi}
    = \iprod[\HSaux] \phi {\SDir^\adjone \SDir \phi}
  \end{equation*}
  for $\phi \in \dom \Lambda$.  Since $\SDir$ is closed, densely
  defined and $\map {\SDir^{-1}=\Gamma}{\LS^1}\HSaux$ is bounded, it
  follows that $\SDir^\adjone$ is invertible and
  $(\SDir^\adjone)^{-1}=\Gamma^\oneadj$
  (cf.~\cite[Thm.~III.5.30]{kato:66}), hence $\Lambda^{-1}=\Gamma
  \Gamma^\oneadj$.

  \itemref{dn.ii}~From~\eqref{eq:dn.-1} we conclude immediately the
  inequality ``$\ge$''.  For the inequality ``$\le$'', note that there
  is a sequence $h_n \in \HS^1$ such that $\norm[\HS^1]{h_n}=1$ and
  $\norm{\Gamma h_n} \to \norm[1 \to 0] \Gamma$.  Moreover, we can
  assume that $h_n \in \LS^1$, since the component in $\ker
  \Gamma=\HS^{1,\Dir}$ does not contribute to the norm of $\Gamma$.
  Let $\phi_n := \Gamma h_n$, then we have
  \begin{equation*}
    \frac {\qf l(\phi_n)}{\normsqr{\phi_n}}
    = \frac {\normsqr[\HS^1] {h_n}}{\normsqr{\Gamma h_n}}
    \to \frac 1 {\normsqr[1 \to 0] \Gamma},
  \end{equation*}
  hence $\inf \spec \Lambda \le 1/\normsqr[1 \to 0] \Gamma$ by the
  variational characterisation of the spectrum of $\Lambda$.

  \itemref{dn.iii}~Assume that $\ran \Gamma=\HSaux$.  Since $\map {\Gamma
    \restr{\LS^1}}{\LS^1} \HSaux$ is bounded and bijective, its
  inverse $\SDir$ is bounded as well by the open mapping theorem.
  Hence $\Lambda= \SDir^\adjone \SDir$ is bounded.  On the other
  hand, if $\Lambda$ is bounded, then $\qf
  l(\phi)=\normsqr[\HS^1]{\SDir\phi}$ is a bounded and everywhere
  defined quadratic form.  In particular, $\map \SDir \HSaux {\HS^1}$
  is everywhere defined and bounded.  For $\phi \in \HSaux$ we then
  have $\phi = \Gamma \SDir \phi \in \ran \Gamma$, i.e., $\ran \Gamma
  = \HSaux$.
\end{proof}
%----------------------------------------------------------------------
A similar proof as the one for \Thmenum{dn}{dn.iii} shows that the
boundary is unbounded iff $\Lambda(z)$ is unbounded for some (any) $z
\in \C \setminus \spec \HDir$, see \Sec{dtn.z} for the definition of
$\Lambda(z)$.

For a \emph{bounded} boundary pair, the scale of Hilbert spaces
$\HSaux^k$ consists of one vector space only, and all norms are
equivalent, i.e.\
\begin{equation}
  \label{eq:scale.aux.eq}
  \norm[1 \to 0] \Gamma^{-2k} \norm[\HSaux] \phi
  \le \norm[\HSaux^k] \phi
  \le \norm \Lambda^k \norm[\HSaux] \phi.
\end{equation}

%----------------------------------------------------------------------
\subsection{The Dirichlet solution operator at arbitrary points}
%----------------------------------------------------------------------

Let us now extend the Dirichlet solution operator to arbitrary
spectral points $z \in \C \setminus \spec \HDir$.
%----------------------------------------------------------------------
\begin{definition}
  \label{def:weak.dsol.z}
  Let $z \in \C$.
  \begin{myenumerate}
  \item We call
    \begin{equation*}
      \LS^1(z)
      := \bigset{h \in \HS^1} 
            {\qf h(h,f) - z \iprod[\HS] h f = 0 \quad
              \forall \; f \in \HS^{1,\Dir}}
    \end{equation*}
    the set of \emph{weak solutions} in $z \in \C$ (with respect to
    the boundary pair $(\Gamma,\HSaux)$ and the quadratic form $\qf
    h$).
  \item Let $\phi \in \HSaux^{1/2}$.  We say that $h$ is a \emph{weak
      solution of the Dirichlet problem} at the point $z$, if $h \in
    \LS^1(z)$ and $\Gamma h = \phi$.
  \end{myenumerate}
\end{definition}
%----------------------------------------------------------------------
Note that $\LS^1(-1)=\LS^1$ (see \Def{dsol}).  Moreover, it is easy
to see that $\LS^1(z)$ is a closed subspace of $\HS^1$.

%----------------------------------------------------------------------
\begin{proposition}
%   \addtocounter{equation}{-1} %
  \label{prp:wsol.dec.z}
  Let $z \in \C \setminus \spec \HDir$.
  \begin{subequations}
    \begin{myenumerate}
    \item
      \label{wsol.dec.z.i}
      Let $h_1, h_2 \in \LS^1(z)$ be two weak solutions of the same
      Dirichlet problem $\Gamma h_1 =\Gamma h_2$, then $h_1=h_2$.

    \item
      \label{wsol.dec.z.ii}
      The spaces $\HS^{1,\Dir}$ and $\LS^1(z)$ are closed as subspaces
      of $\HS^1$ and we have the decomposition
      \begin{equation}
        \label{eq:dec.ker.sol}
        \HS^1 = \HS^{1,\Dir} \dplus \LS^1(z),
      \end{equation}
      (where ``$\dplus$'' denotes the direct topological sum, see the
      end of \Sec{prelim}) and
      \begin{equation}
        \label{eq:dec.z.proj}
        P(z) g := S \Gamma g + (z+1)\RDir(z) S \Gamma g
      \end{equation}
      is the projection of $g$ onto $\LS^1(z)$ with respect to the
      above decomposition.  The sum is orthogonal if $z=-1$.
    \end{myenumerate}
  \end{subequations}
\end{proposition}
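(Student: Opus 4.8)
The plan is to prove (i) directly, deduce from it that $\HS^{1,\Dir}\cap\LS^1(z)=\{0\}$, and then verify that the explicit operator $P(z)$ of \eqref{eq:dec.z.proj} realises the complementary projection; this simultaneously gives the decomposition \eqref{eq:dec.ker.sol} and identifies $P(z)$.

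First I would prove (i). If $h_1,h_2\in\LS^1(z)$ solve the same Dirichlet problem, then $h:=h_1-h_2$ lies in $\ker\Gamma=\HS^{1,\Dir}$ and, as $\LS^1(z)$ is a linear subspace, also in $\LS^1(z)$; thus $\qf h(h,f)=z\iprod[\HS]hf$ for all $f\in\HS^{1,\Dir}$, which is precisely the statement that $h\in\dom\HDir$ with $\HDir h=zh$. Since $z\notin\spec\HDir$ and $\HDir$ is self-adjoint, $\HDir-z$ is injective, so $h=0$.

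Next, for (ii): closedness of $\HS^{1,\Dir}=\ker\Gamma$ is immediate because $\Gamma$ is bounded on $\HS^1$, and $\LS^1(z)$ is closed as already noted (it is the intersection over $f\in\HS^{1,\Dir}$ of the kernels of the bounded functionals $h\mapsto\qf h(h,f)-z\iprod[\HS]hf$ on $\HS^1$); the relation $\HS^{1,\Dir}\cap\LS^1(z)=\{0\}$ is (i) applied with $h_2=0$. To see that the sum exhausts $\HS^1$, fix $g\in\HS^1$, put $\phi:=\Gamma g\in\HSaux^{1/2}=\ran\Gamma$ and $h:=P(z)g=\SDir\phi+(z+1)\RDir(z)\SDir\phi$, where $\RDir(z)$ denotes the resolvent of $\HDir$ extended by $0$ on $(\HS^{0,\Dir})^\orth$ as in \Def{bd2}. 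Since $(z+1)\RDir(z)\SDir\phi\in\dom\HDir\subset\HS^{1,\Dir}$, we obtain $\Gamma h=\Gamma\SDir\phi=\phi=\Gamma g$, hence $g-h\in\ker\Gamma=\HS^{1,\Dir}$; it then remains only to check $h\in\LS^1(z)$. For $f\in\HS^{1,\Dir}$ one has $\qf h(\SDir\phi,f)=-\iprod[\HS]{\SDir\phi}f$ because $\SDir\phi\in\LS^1(-1)$, and, writing $u:=\RDir(z)\SDir\phi$, the relation $(\HDir-z)u=(\SDir\phi)^\Dir$ together with $f\in\HS^{1,\Dir}\subset\HS^{0,\Dir}$ gives $\qf h(u,f)-z\iprod[\HS]uf=\iprod[\HS]{(\SDir\phi)^\Dir}f=\iprod[\HS]{\SDir\phi}f$; adding these, $\qf h(h,f)-z\iprod[\HS]hf=-(z+1)\iprod[\HS]{\SDir\phi}f+(z+1)\iprod[\HS]{\SDir\phi}f=0$. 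This yields $\HS^1=\HS^{1,\Dir}\dplus\LS^1(z)$ --- a topological sum, both summands being closed and algebraically complementary --- and shows that $P(z)g=h$ is the $\LS^1(z)$-component of $g$, so $P(z)$ is the asserted projection. For $z=-1$ one has $P(-1)=\SDir\Gamma$ and $\LS^1(-1)=\LS^1$, which by \Def{dsol} is the orthogonal complement of $\ker\Gamma$ in $\HS^1$; hence the decomposition is orthogonal.

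The step I expect to be the main obstacle is the cancellation showing $h=P(z)g\in\LS^1(z)$: one must handle the extended, possibly non-densely defined resolvent $\RDir(z)$ and the splitting $\SDir\phi=(\SDir\phi)^\Dir\oplus(\SDir\phi)^\orth$, exploiting that a test function $f\in\HS^{1,\Dir}$ is orthogonal to $(\HS^{0,\Dir})^\orth$ so that $\iprod[\HS]{(\SDir\phi)^\Dir}f=\iprod[\HS]{\SDir\phi}f$; everything else is bookkeeping, and the two terms cancel exactly because the factor $(z+1)$ compensates the spectral parameter $-1$ built into $\SDir\phi$.
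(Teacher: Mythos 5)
Your proposal is correct and follows essentially the same route as the paper: uniqueness via the invertibility of $\HDir-z$ (the paper phrases this as non-degeneracy of $\qf h^\Dir - z\qf 1$ on $\HS^{1,\Dir}$), followed by the same cancellation computation showing that $P(z)g = \SDir\Gamma g + (z+1)\RDir(z)\SDir\Gamma g$ lies in $\LS^1(z)$ while $g-P(z)g \in \ker\Gamma$. Your extra care with the extended resolvent and the splitting $\SDir\phi = (\SDir\phi)^\Dir \oplus (\SDir\phi)^\orth$ is a welcome refinement of a step the paper treats more briskly, but it is not a different argument.
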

%----------------------------------------------------------------------
The choice of $P(z)$ becomes more clear with
\Thmenum{dsol.z}{dsol.z.iii} (choosing $w=-1$).
%----------------------------------------------------------------------
\begin{proof}
  \itemref{wsol.dec.z.i}~Assume that $h_1$ and $h_2$ are two solutions of
  the Dirichlet problem with $\Gamma h_1=\Gamma h_2$.  Then $h:= h_1 -
  h_2 \in \HS^{1,\Dir} \cap \LS^1(z)$.  Since $\qf h^\Dir - z \qf 1$
  is non-degenerative on $\HS^{1,\Dir} \times \HS^{1,\Dir}$ for $z
  \notin \spec \HDir$, we conclude $h=0$.

  \itemref{wsol.dec.z.ii}~ The space $\HS^{1,\Dir}=\ker \Gamma$ is
  closed since $\Gamma$ is bounded, and $\LS^1(z)$ is easily seen to
  be closed as subspace of $\HS^1$, too.  Moreover, for $g \in \HS^1$,
  we set $g_z = P(z) g = h + (z+1)\RDir(z) h$ with $h := S \Gamma g$
  and $g^\Dir := g - g_z$.  Then $\Gamma g^\Dir = \Gamma g - \Gamma h
  = 0$ and
  \begin{align*}
    (\qf h - z \qf 1)(g_z,f)
    &= (\qf h - z \qf 1)(h + (z+1) \RDir(z) h, f)\\
    &= (\qf h +1)(h, f)
    - (z+1) \iprod h f
    + (z+1) (\qf h^\Dir -z)( \RDir(z) h, f)
    = 0
   \end{align*}
   for $f \in \HS^{1,\Dir}$.  Here, the first term vanishes since
   $\LS^1$ and $\HS^{1,\Dir}$ are orthogonal in $\HS^1$; and the
   second and third term cancel each other. Therefore, we have shown
   that $g = g^\Dir + g_z \in \HS^{1,\Dir}+\LS^1(z)$.  Finally, the
   sum is direct, since $\LS^1(z) \cap \HS^{1,\Dir}=\{0\}$ by the
   uniqueness of the weak Dirichlet solution,
   cf.~\eqref{wsol.dec.z.i}.
\end{proof}
%----------------------------------------------------------------------

We now define a ``solution'' operator $\SDir(z)$ as the inverse of the
boundary map $\Gamma$, i.e., $h=\SDir(z)\phi$ is the unique solution
of the weak Dirichlet problem $h \in \LS^1(z)$ and $\Gamma h=\phi$ for
$z \notin \spec \HDir$.

%----------------------------------------------------------------------
\begin{definition}
  \label{def:dsol.z}
  Let $\map{\SDir(z)} {\HSaux^{1/2}} {\HS^1}$ be given by
  \begin{equation*}
    \map {\SDir(z) := (\Gamma \restr{\LS^1(z)})^{-1}}
    {\HSaux^{1/2}} {\LS^1(z) \subset \HS^1}.
  \end{equation*}
\end{definition}
%----------------------------------------------------------------------

Let us now relate the Dirichlet solution operator in different points
$z, w \in \C \setminus \spec \HDir$.  For $f \in \HS$ we set
\begin{equation}
  \label{eq:def.dsol.u}
  U(z,w) f := f + (z-w) \RDir (z) f.
\end{equation}
Recall the convention $\RDir(z) f = (\HDir-z)^{-1} f^\Dir \oplus 0$ if
$f=f^\Dir \oplus f^\orth \in \HS^{0,\Dir} \oplus
(\HS^{0,\Dir})^\orth=\HS$ if $\ker \Gamma$ is not dense in $\HS$ (see
\Defenum{bd2}{bd2.dir}), so that we can also write
\begin{equation*}
  \map{U(z,w)= (\HDir - w) \RDir(z) \oplus \id_{(\HS^{0,\Dir})^\orth}}
  \HS \HS.
\end{equation*}
It is clear from the spectral calculus that this operator
extends respectively restricts to a topological isomorphism $\map{U(z,w)^{k,\Dir
    \to k,\Dir}}{\HS^{k,\Dir}}{\HS^{k,\Dir}}$ for all $k$ with norm
bounded by $C^\Dir(z,w)$, where
\begin{equation}
  \label{eq:c.dn}
  C^\Dir(z,w)
  := \bignorm{(\HDir-w)(\HDir-z)^{-1}}
  = \sup_{\lambda \in \spec \HDir} \frac{\abs{\lambda-w}}{\abs{\lambda-z}}
  \le 1 + \frac {\abs {z-w}}{d(z,\spec \HDir)}.
\end{equation}
Moreover, an easy calculation using the resolvent equality
$\RDir(z)-\RDir(w)=(z-w)\RDir(z)\RDir(w)$ shows that $U(z,w)
U(w,z)f=f$ for $f \in \HS$, i.e., the inverse of $U(z,w)$ is $U(w,z)$.
In particular, we have
\begin{subequations}
  \begin{equation}
    \label{eq:u.zw.iso}
    \frac 1 {C^\Dir(w,z)}  \norm[\HS] f
    \le \norm[\HS]{U(z,w) f}
    \le C^\Dir(z,w) \norm[\HS] f
  \end{equation}
  for all $f \in \HS$.  If $z,w$ are both real, then 
  \begin{equation}
    \label{eq:u.zw.iso2}
    \frac 1 {C^\Dir(w,z)}  \normsqr[\HS] f
    \le \iprod[\HS]{U(z,w) f} f
    \le C^\Dir(z,w) \normsqr[\HS] f
  \end{equation}
  for all $f \in \HS$, as $U(z,w)$ is self-adjoint in this case.
\end{subequations}
If we restrict $U(z,w)$ to functions $f \in \HS^1$, then we also have
$U(z,w) f \in \HS^1$ since $\RDir(z) f \in \HS^{2,\Dir} \subset
\HS^1$.  Therefore, $U(z,w)$ is also an operator in $\HS^1$ denoted by
\begin{equation*}
  \map{U(z,w)^{1 \to 1}}{\HS^1}{\HS^1}.
\end{equation*}

The following theorem shows that this operator is a topological
isomorphism respecting the solution space splitting; and therefore
also relates solution operators at different points:
%----------------------------------------------------------------------
\begin{theorem}
%%   \addtocounter{equation}{-1} %
  \label{thm:dsol.z}
  Let $z, w \in \C \setminus \spec \HDir$.
  \begin{subequations}
    \begin{myenumerate}
    % \item
    %   \label{dsol.u.i}
    \item
      \label{dsol.z.iii}
      We have $U(z,w)^{1 \to 1}\SDir(w) = \SDir(z)$ or equivalently
      \begin{equation}
        \label{eq:dsol.zw}
        \map{\SDir(z) - \SDir(w)
          = (z-w) \RDir(z)^{1\to 1} \SDir(w)
          = (z-w)  \RDir(w)^{1\to 1}\SDir(z)} {\HSaux^{1/2}}{\HS^1}.
      \end{equation}
      Moreover, the projection onto $\LS^1(z)$ in $\HS^1=\HS^{1,\Dir}
      \dplus \LS^1(z)$ is given by $P(z)=\SDir(z) \Gamma$.
    \item
      \label{dsol.u.ii}
      The map
      \begin{equation*}
        \map{U(z,w)^{1 \to 1}} {\HS^{1,\Dir} \dplus \LS^1(w)}
        {\HS^{1,\Dir} \dplus \LS^1(z)}
      \end{equation*}
      is a topological isomorphism (extending $U(z,w)^{1,\Dir \to
        1,\Dir}$), respecting the splitting, and its inverse
      is $U(w,z)^{1 \to 1}$. Moreover, $U(z,w)^{1 \to 1}$ has norm bounded by 
      \begin{equation}
        \label{eq:c1.dn}
        C^1(z,w) := \norm[1 \to 1]{U(z,w)^{1 \to 1}}
        \le 1 + \abs{z-w} 
          \sup_{\lambda \in \spec \HDir}
           \frac{(\lambda+1)^{1/2}}{\abs{\lambda-z}}.
      \end{equation}

    \item
      \label{dsol.z.iia}
      The solution operator $\map{\SDir(z)}{\HSaux^{1/2}}{\LS^1(z)}$
      is a topological isomorphism with left inverse given by
      $\Gamma$.  Moreover,
      \begin{equation}
        \label{eq:dsol.z.norm}
        \frac 1 {C^1(-1,z)} \norm[\HSaux^{1/2}] \phi
        \le \norm[\HS^1]{\SDir(z) \phi} \le 
        C^1(z,-1) \norm[\HSaux^{1/2}] \phi
      \end{equation}
      with $C^1(z,w)$ defined in~\eqref{eq:c1.dn}.

    \item
      \label{dsol.z.v}
      The solution operator as function $z \mapsto S(z)$ is
      holomorphic and the $k$-th derivative is given by
      $\map{S^{(k)}(z)=k!\RDir(z)^k\SDir(z)}{\HSaux^{1/2}}{\HS^{2k,\Dir}
        \hookrightarrow \HS^1}$ ($k \ge 1$).
    \end{myenumerate}
  \end{subequations}
\end{theorem}
%----------------------------------------------------------------------
\begin{proof}
  \itemref{dsol.z.iii}~Let $f:= \SDir(w) \phi \in \LS^1(w)$ and $g :=
  U(z,w)^{1 \to 1} f = f + (z-w) \RDir(z)f$.  We have to show that $g$
  is the weak solution of the Dirichlet problem in $z$.  The fact that
  $\Gamma g= \Gamma f = \phi$ is obvious since $\Gamma \RDir(z)=0$.
  It remains to show that $g \in \LS^1(z)$.  Let $u \in \HS^{1,\Dir}$,
  then
  \begin{equation*}
    (\qf h - z \qf 1)(g,u)
    = (\qf h - z \qf 1)(f,u)
    + (z-w) (\qf h - z \qf 1)(\RDir(z)f,u).
  \end{equation*}
  But the latter summand equals $(z-w)\iprod f u$, so that $(\qf h - z
  \qf 1)(g,u)= (\qf h - w \qf 1)(f,u)=0$ since $f \in \LS^1(w)$.  In
  particular, we have shown that $g \in \LS^1(z)$.  Moreover,
  $P(z)=\SDir(z) \Gamma$ follows from~\eqref{eq:dsol.zw} with $w=-1$
  (see~\eqref{eq:dec.z.proj}).

  \itemref{dsol.u.ii} We have $U(z,w) U(w,z)f=f$ for $f \in \HS^1$ since
  this equality is already true for $f \in \HS$.  The norm bound can
  be seen by the estimate
  \begin{align*}
    \norm[1 \to 1]{U(z,w)^{1 \to 1}}
    &= \bignorm{1 + (z-w)(\HNeu+1)^{1/2} 
                   \RDir(z)(\HNeu+1)^{-1/2}}\\
    &\le \bignorm{1+ (z-w) (\HDir+1)^{1/2}\RDir(z)},
  \end{align*}
  and this norm bound shows that $U(z,w)^{1 \to 1}$ is a topological
  isomorphism from $\HS^1$ onto $\HS^1$ with inverse given by $U(z,w)^{1 \to
    1}$.  That $U(z,w)^{1\to1}$ respects the splitting actually can be
  seen as follows: From~\eqref{dsol.z.iii} we know that $U(z,w)^{1 \to
    1} \LS^1(w) \subset \LS^1(z)$, and by the definition of $U(z,w)^{1
    \to 1}$, we have $U(z,w)^{1 \to 1} \HS^{1,\Dir}\subset
  \HS^{1,\Dir}$.

  \itemref{dsol.z.iia}~That $\SDir(z)$ is a topological isomorphism
  follows already from the fact that $\Gamma$ restricted as map
  $\LS^1(z) \to \HSaux^{1/2}$ is bounded and bijective.  Finally, the
  norm bounds on $\SDir(z)\phi$ follow easily from
  $\SDir(z)=U(z,-1)^{1\to 1} \SDir$.

  \itemref{dsol.z.v}~The formula for the derivative follows immediately
  from~\eqref{eq:dsol.zw}.
\end{proof}
%----------------------------------------------------------------------

%----------------------------------------------------------------------
\subsection{The \DtN\ form at arbitrary points}
\label{sec:dtn.z}
%----------------------------------------------------------------------
Let us now define a sesquilinear form which will be associated with
the \DtN\ operator at $z \in \C \setminus \spec \HDir$.  We have seen
in~\eqref{eq:why.dtn2} that this form and its associated operator is
indeed what we expect from a \DtN\ operator: Roughly speaking,
$\Lambda(z) \phi$ is the ``normal derivative'' $\Gamma'$ restricted to
the boundary of the Dirichlet solution associated with $\phi$ at $z$,
i.e., $\Lambda(z)=\Gamma'\SDir(z) \phi$.

Let us start first with what we call the \emph{\DtN\ form} later on:
%----------------------------------------------------------------------
\begin{theorem}
  \label{thm:dn.z.qf}
  Let $(\Gamma,\HSaux)$ be a boundary pair, $z, w \in \C \setminus
  \spec \HDir$, and $\lambda \in \R \setminus \spec \HDir$.
%   \addtocounter{equation}{-1} %
  \begin{subequations}
    \begin{myenumerate}
    \item
      \label{dn.z.qf.i}
      The expression
      \begin{equation}
        \label{eq:def.dn.z}
        \qf l_z(\phi,\psi):= (\qf h - z \qf 1) (\SDir(z) \phi, g),
      \end{equation}
      where $g \in \HS^1$ with $\Gamma g=\psi$, is well-defined (i.e.,
      independent of the choice of $g$) and defines a \emph{symmetric}
      family $(\qf l_z)_{z \in \C \setminus \spec \HDir}$ of
      sesquilinear forms $\map{\qf l_z}{\HSaux^{1/2} \times
        \HSaux^{1/2}} \C$ (see \Sec{prelim}).

    \item
      \label{dn.z.qf.ii}
      We have
      \begin{equation}
        \label{eq:dn.qf.zw}
        \qf l_z(\phi,\psi)-\qf l_w(\phi,\psi)
        = -(z - w) \iprod{\SDir(z) \phi} {\SDir(\conj w) \psi}
        = -(z - w) \iprod{\SDir(w) \phi} {\SDir(\conj z) \psi}.
      \end{equation}

    \item
      \label{dn.z.qf.iii}
      The sesquilinear form $\map{\qf l_z}{\HSaux^{1/2} \times
        \HSaux^{1/2}} \C$ is bounded, i.e., $\abs{\qf l_z(\phi,\psi)}
      \le L(z) \norm[1/2] \phi \norm[1/2] \psi$, where
      \begin{equation}
        \label{eq:def.m.z}
        L(z) := 1 + \abs{z+1} \norm[1/2 \to 1]{S(z)}
        \le 1 + \abs{z+1} C^1(z,-1).
      \end{equation}

    \item
      \label{dn.z.qf.iv}
      The quadratic form $\map{\qf l_\lambda}{\HSaux^{1/2}} \C$ (see
      \Sec{prelim}) is non-negative for $\lambda \le 0$ and closed for
      $\lambda<0$ as form in $\HSaux$ with $\dom \qf
      l_\lambda=\HSaux^{1/2}$.

    \item
      \label{dn.z.qf.vi}
      If $\lambda_1 \le \lambda_2$ and $\lambda_1, \lambda_2 \notin
      \spec \HDir$ then $\qf l_{\lambda_1} \ge \qf l_{\lambda_2}$.
    
    \item
      \label{dn.z.qf.v}
      % was \label{dn.z.iii}
      The \DtN\ form is holomorphic in $z$, i.e., $z \mapsto \qf
      l_z(\phi,\psi)$ depends holomorphically on $z \in \C \setminus
      \spec \HDir$ for all $\phi,\psi \in \HSaux^{1/2}$.  Its
      derivatives (denoted by $\qf l_z'$ and $\qf l_z^{(k)}$) are
      bounded sesquilinear forms on $\HSaux^{1/2} \times \HSaux^{1/2}$
      given by
      \begin{equation*}
        \qf l_z'(\phi,\psi)
        = -\iprod[\HS]  {\SDir(z)\phi}{\SDir(\conj z)\psi}
        \qquadtext{resp.}
        \qf l_z^{(k)}(\phi,\psi)
        = - k! \bigiprod[\HS] {\RDir(z))^{k-1}\SDir(z)\phi}
                             {\SDir(\conj z)\psi}.
      \end{equation*}
      In particular, if $\lambda \in \R \setminus \spec \HDir$ then
      $\qf l_\lambda' \le 0$.

    \item
      \label{dn.z.qf.vii}
      % was \label{dn.z.iv}
      We have (see \Sec{prelim} for the notion $\Im \qf l_z$)
      \begin{equation*}
        (\Im \qf l_z)(\phi) = -(\Im z) \normsqr[\HS] {\SDir(z) \phi} \le 0
      \end{equation*}
      for $\phi \in \HSaux^{1/2}$ provided $\Im z \ge 0$.  In
      particular, $(-\qf l_z)_z$ is a form-valued Nevanlinna function.
    \end{myenumerate}
  \end{subequations}
\end{theorem}
%----------------------------------------------------------------------
\begin{proof}
  \itemref{dn.z.qf.i}~Let $g_1, g_2 \in \HS^1$ such that $\Gamma
  g_1=\Gamma g_2 =\phi$.  Then $g_1 - g_2 \in \HS^{1,\Dir}$, and
  \begin{equation*}
    (\qf h - z \qf 1) (\SDir(z) \phi, g_1-g_2)=0
  \end{equation*}
  since $\SDir(z) \phi \in \LS^1(z)$ is a weak solution by
  \Defs{weak.dsol.z}{dsol.z}.  The symmetry of the family is obvious.

  \itemref{dn.z.qf.ii}~Choosing $g=\SDir(\conj w) \phi$ and
  using~\eqref{eq:dsol.zw}, we have
  \begin{align*}
    \qf l_z(\phi,\psi)
    &= (\qf h - z\qf 1)(\SDir(z) \phi, g)\\
    &= (\qf h - w\qf 1)(\SDir(w) \phi, g)
      +  (z-w) (\qf h - w \qf 1) (\RDir(z) \SDir(w) \phi, g)
      - (z-w) \iprod {\SDir(z) \phi} g\\
    &= \qf l_w(\phi) - (z-w) \iprod {\SDir(z) \phi} g.
  \end{align*}
  Note that by this choice of $g$, the middle term in the second line
  vanishes (by definition of $\LS^1(w)$).

  For~\itemref{dn.z.qf.iii}, we set $w=-1$ in~\eqref{eq:dn.qf.zw} and
  obtain
  \begin{equation*}
    \abs{\qf l_z(\phi)}
    \le 
    \normsqr[1/2] {\phi} 
     + \abs{z+1} \norm[\HS]{\SDir(z) \phi}\norm[\HS]{\SDir \phi}.
  \end{equation*}
  Using the estimates $\norm[\HS]{\SDir(z) \phi} \le
  \norm[\HS^1]{\SDir(z)\phi}$ and $\norm[\HS]{\SDir \phi} \le
  \norm[\HS^1]{\SDir \phi} = \norm[1/2] \phi$, we obtain the desired
  bound.

  \itemref{dn.z.qf.iv} Obviously, $\qf l_\lambda (\phi) \ge 0$ as
  $(\qf h-\lambda \qf 1)(f) \ge 0$ for $\lambda \le 0$.  Moreover, if
  $\lambda<0$, then
  \begin{equation*}
    \min \{1,-\lambda \}(\qf h + \qf 1)(f) 
    \le (\qf h - \lambda \qf 1)(f)
    \le \max \{1,-\lambda \}(\qf h + \qf 1)(f) ,
  \end{equation*}
  hence $\qf l_\lambda(\phi)$ defines a squared norm equivalent with
  $\qf l_{-1}(\phi)=\normsqr[\HS^1]{\SDir \phi}$, hence $\qf
  l_\lambda$ is closed on $\HSaux^{1/2}=\dom \qf l_{-1}$.

  \itemref{dn.z.qf.vi} We have
  \begin{align*}
    (\qf l_{\lambda_1}-\qf l_{\lambda_2})(\phi)
    &= (\lambda_2 - \lambda_1) 
      \iprod[\HS]{\SDir(\lambda_1)\phi}{\SDir(\lambda_2)\phi}\\
    &= (\lambda_2 - \lambda_1) 
      \iprod[\HS]{\SDir(\lambda_1)\phi}
      {U(\lambda_2,\lambda_1)\SDir(\lambda_1)\phi}\\
    &\ge \frac{\lambda_2 - \lambda_1}{C^\Dir(\lambda_1,\lambda_2)} 
      \normsqr[\HS]{\SDir(\lambda_1)\phi}
  \end{align*}
  using~\eqref{eq:dn.qf.zw} for the first equality,
  \Thmenum{dsol.z}{dsol.z.iii} for the second and~\eqref{eq:u.zw.iso2}
  for the last inequality.  Since $\lambda_2-\lambda_1 \ge 0$, the
  result follows.

  \itemref{dn.z.qf.v}~and~\itemref{dn.z.qf.vii} follow
  from~\eqref{eq:dn.qf.zw} by letting $w \to z$ and $w=\conj z$,
  respectively.
\end{proof}
%----------------------------------------------------------------------

%----------------------------------------------------------------------
\begin{definition}
  \label{def:dn.z}
  We call the sesquilinear form $\qf l_z$ defined
  in~\eqref{eq:def.dn.z} the \emph{\DtN\ form at $z \in \C \setminus
    \spec \HDir$} associated with the boundary pair $(\Gamma,\HSaux)$
  and the quadratic form $\qf h$.

  By the previous proposition, $\qf l_z$ is bounded as form on
  $\HSaux^{1/2} \times \HSaux^{1/2}$.  We therefore can define an
  operator
  \begin{equation*}
    \map {\wLambda(z)} 
    {\HSaux^{1/2}} {\HSaux^{-1/2}},
    \qquad
    \phi \mapsto \qf l_z(\phi,\cdot),
  \end{equation*}
  called the \emph{weak \DtN\ operator}, i.e.,
  \begin{equation*}
    \iprod[-1/2,1/2] {\wLambda(z) \phi} \psi
    = \qf l_z(\phi, \psi)
    = (\qf h - z \qf 1)(\SDir(z) \phi, \SDir \psi),
  \end{equation*}
  or $\wLambda(z) = \SDir^*(\wHNeu-z)\SDir(z) \colon \HSaux^{1/2} \to
  \HS^1 \to \HS^{-1} \to \HSaux^{-1/2}$ in the scale of Hilbert
  spaces.
\end{definition}
%----------------------------------------------------------------------

We always have an \emph{associated operator} with $\qf l_z$, defined
by
\begin{equation}
  \label{eq:dn.ass.op}
  \dom \Lambda(z)
  := \bigset{\phi \in \HSaux^{1/2}}
  {\exists \eta \in \HSaux \;
    \forall \psi \in \HSaux^{1/2} \colon \;
    \qf l_z(\phi,\psi)= \iprod[\HSaux] \eta \psi}
\end{equation}
and $\Lambda(z) \phi := \eta$, and that the latter definition is
well-defined (since $\HSaux^{1/2}=\ran \Gamma$ is dense in $\HSaux$ by
definition of a boundary pair).  We call this operator the
\emph{(strong) \DtN\ operator} associated with a boundary pair.
Actually, $\Lambda(z)$ is the restriction of $\wLambda(z)$ to those
$\phi$ such that $\wLambda(z) \phi \in \HSaux$.  It is easily seen
that $\phi \in \dom \Lambda(z)$ can equivalently expressed by
\begin{equation}
  \label{eq:dn.ass.op2}
  \exists u \in \HS^1, \Gamma u = \phi \;\; \exists \eta \in \HSaux \;
    \forall v \in \HSaux^{1/2} \colon \quad
    (\qf h - z \qf 1)(u,v) = \iprod[\HSaux] \eta {\Gamma v}
\end{equation}
without referring to the solution operator (see also the text
after~\eqref{eq:g-ell} for this Arendt and ter Elst
approach in~\cite{arendt-ter-elst:12}).

We use the notation $\wLambda(z)$ when we want to stress that we mean
the \emph{weak} \DtN\ operator, and not the \emph{strong} \DtN\
operator.

Note that $\qf l_{-1}=\qf l$ where $\qf l$ is defined in
\Prpenum{s.dn.closed}{l.closed} and that $\Lambda$ is the operator
associated with $\qf l$ by \Def{dn}, therefore $\Lambda=\Lambda(-1)$.

%----------------------------------------------------------------------
\begin{remark}
  \label{rem:dtn-qf}
  \indent
  \begin{myenumerate}
  \item
%2    \label{dtn-qf.i}
    The \DtN\ form $\qf l_z$ with domain $\dom \qf l_z=\HSaux^{1/2}$ is
    closed for $z<0$ by \Thmenum{dn.z.qf}{dn.z.qf.iv}.  We will see in
    \Thm{ell.bd2.impl} that for so-called \emph{elliptically regular}
    boundary pairs, $\qf l_z$ is a closed (and sectorial) form for
    \emph{all} $z \in \C \setminus \spec \HDir$; in particular,
    $\Lambda(z)$ is the associated operator and hence closed.
    Moreover, the domain of $\Lambda(z)$ is independent of $z$ in this
    case.

  \item
    \label{dtn-qf.ii}
    In general, the forms $\qf l_z$ are not closed or sectorial resp.\
    bounded from below: In \Ex{dtn.unbdd.below}, we construct an
    example where $\qf l_z$ is unbounded from both sides for all $z>0$
    not in $\spec \HDir$.  In this situation, we cannot speak of the
    \emph{closure} of a form (in the classical sense) anymore, and
    more advanced techniques are necessary,
    see~\cite{mcintosh:69,mcintosh:70} or~\cite{gkmv:13} and
    references therein: A sesquilinear form $\map{\qf a}{\HSaux_0
      \times \HSaux_0} \C$ is said to be \emph{$0$-closed} iff $\qf a$
    is non-degenerative ($\qf a(\phi,\cdot)=0$ implies $\phi=0$ and
    $\qf a(\cdot,\psi)=0$ implies $\psi=0$) and if $\HSaux_0 \subset
    \HSaux$ is densely and continuously embedded.

    We can show here that $\qf l_z$ is non-degenerative if $z \notin
    \spec \HNeu$, an argument very similar to the proof of
    \Prpenum{dn.z.inv}{dn.z.inv.i} (using the non-degeneracy of the
    form $\qf h - z \qf 1$ as $z \notin \spec \HNeu$).  It follows
    from Theorem~3.2 of~\cite{mcintosh:69} that the associated
    operator $\Lambda(z)$ is closed and has a bounded inverse ($0
    \notin \spec{\Lambda(z)}$) (this is actually
    \Thmenum{krein1}{krein1.i0}).  We will show these facts
    independently in \Prp{dn.z.inv}.

  \item It is a priori not clear whether $\Lambda(\lambda)$ as
    operator associated with $\qf l_z$ is closed also for $\lambda \in
    \spec \HNeu$ (we need this fact in \Thm{krein2} in order to speak
    of the spectrum of $\Lambda(z)$).

    With a trick, we can show that $\Lambda(z)$ is closed for all $z
    \in \C \setminus \spec \HDir$ even for non-elliptically regular
    boundary pairs under the additional assumption that $\RNeu$ is
    compact, see \Thmenum{nd.comp}{comp.e}
  \end{myenumerate}
\end{remark}
%----------------------------------------------------------------------

We state more results on the strong \DtN\ operator later on (see
\Prp{dn.z.inv}, \Thm{ell.bd2.impl} and \Prp{bd2.b.z}).

%----------------------------------------------------------------------
\begin{remark}
  \label{rem:reconstr.bd2}
  \Thmenum{dn.z.qf}{dn.z.qf.vii} allows us to express the quadratic
  form $\qf q_z$ defined by $\qf q_z(\phi):= \normsqr[\HS]{\SDir(z)
    \phi}$ in terms of the \DtN\ form $\qf l_z$, namely, $\qf q_z= -
  \Im \qf l_z/\Im z$ for $z \in \C \setminus \R$.  This fact allows us
  to detect elliptic regularity and positivity (see \Sec{bd2.add} and
  \Def{ell.pos.intro}) from the family of \DtN\ forms. This fact is
  also useful in reconstructing a boundary pair from a given
  form-valued Nevanlinna function in the sense
  of~\cite{langer-textorius:77}.  We will treat this and related
  questions in a forthcoming publication.%INVERSE
\end{remark}
%----------------------------------------------------------------------

Let us close this subsection with the following result, needed in the
proof of \Thm{ell.bd2}
\itemref{ell.bd2.ix}~$\Rightarrow$~\itemref{ell.bd2.viii}:
%----------------------------------------------------------------------
\begin{proposition}
  \label{prp:dtn.sdir}
  Let $z \in \C \setminus (\spec \HDir \cup \spec \HNeu)$.  Then we have
  \begin{equation*}
    \map{(\wLambda(z) \Gamma \wRNeu(z))^*=\SDir(\conj z)}
    {\HSaux^{1/2}}{\HS^1}
  \end{equation*}
\end{proposition}
%----------------------------------------------------------------------
\begin{proof}
  Let $u \in \HS^{-1}$ and $\phi \in \HSaux^{1/2}$, then the result
  follows from
  \begin{align*}
    \iprod{\wLambda(z) \Gamma \wRNeu(z) u} \phi
    = \qf l_z(\Gamma \wRNeu(z) u, \phi)
    &= (\qf h - z\qf 1)(\SDir(z) \Gamma \wRNeu(z) u, 
         \SDir(\conj z) \phi)\\
    &= (\qf h - z\qf 1)(\wRNeu(z) u, \SDir(\conj z) \phi)
    = \iprod u {\SDir(\conj z) \phi}
  \end{align*}
  using the fact that $\SDir(z)\Gamma \wRNeu(z) u - \wRNeu(z)u \in
  \HS^{1,\Dir}$ and the definition of $\LS^1(z)$ for the third equality.
\end{proof}
%----------------------------------------------------------------------

%----------------------------------------------------------------------
\subsection{The \NtD\ operator}
%----------------------------------------------------------------------

Let us first show that the weak \DtN\ operator $\wLambda(z)$ is
invertible if $z$ is not in the Neumann spectrum, and that the
function $z \mapsto \Lambda(z)^{-1}$ extends continuously into the
Dirichlet spectrum $z \in \spec \HDir$.

%----------------------------------------------------------------------
\begin{proposition}
%%   \addtocounter{equation}{-1} %
  \label{prp:dn.z.inv}
  Let $z \in \C \setminus (\spec \HDir \cup \spec \HNeu)$.  Then we
  have:
  \begin{myenumerate}
  \item
    \label{dn.z.inv.i}
    The weak \DtN\ operator $\map
    {\wLambda(z)}{\HSaux^{1/2}}{\HSaux^{-1/2}}$ is bijective with
    inverse
    \begin{equation}
      \label{eq:dn.z.inv}
      \map{\wLambda(z)^{-1} = \Gamma \wRNeu(z) \Gamma^*}
      {\HSaux^{-1/2}}{\HSaux^{1/2}}.
    \end{equation}
  \item
    \label{dn.z.inv.ii}
    The operator-valued function $z \mapsto \wLambda(z)^{-1}$ extends
    into $z \in \spec \HDir$, and the value is again a bounded
    operator denoted by the same symbol $\map {\wLambda(z)^{-1}}
    {\HSaux^{-1/2}} {\HSaux^{1/2}}$.  The norm of $\wLambda(z)^{-1}$
    is bounded by $C^\Neu(z,-1)$ where $C^\Neu(z,w)$ is defined as
    in~\eqref{eq:c.dn} with $\HDir$ replaced by $\HNeu$.

  \item
    \label{dn.z.inv.iv}
    Denote by $\map{\Lambda(z)^{-1}} \HSaux \HSaux$ the operator
    $\wLambda(z)^{-1}$ restricted to $\HSaux$ and with range space
    $\HSaux$, then $\Lambda(z)^{-1}$ is the inverse of the strong \DtN\
    operator $\map{\Lambda(z)}{\dom \Lambda(z)} \HSaux$ and
    $\Lambda(z)^{-1}$ is bounded by $\normsqr[1 \to 0] \Gamma
    C^\Neu(z,-1)$.  Moreover, $\Lambda(z)$ is closed for all $z \in \C
    \setminus (\spec \HDir \cup \spec \HNeu)$ and $0 \notin \spec
    {\Lambda(z)}$.
  \end{myenumerate}
\end{proposition}
%----------------------------------------------------------------------
\begin{proof}
  \itemref{dn.z.inv.i}~Let $\phi \in \ker \wLambda(z)$, i.e., $\qf
  l_z(\phi,\eta)=0$ for all $\eta \in \HSaux^{1/2}$.  Therefore,
  \begin{equation*}
    0 = \qf l_z(\phi,\eta) = (\qf h - z \qf 1)(\SDir(z)\phi, g)
  \end{equation*}
  for all $g \in \HS^1$ with $\Gamma g = \eta$.  Since $z \notin \spec
  \HNeu$, the form $\qf h - z \qf 1$ is non-degenerative, and
  therefore $\SDir(z)\phi =0$, i.e., $\phi=0$.  In particular, we
  have shown that $\wLambda(z)$ is injective.

  For the surjectivity, let $\psi \in \HSaux^{-1/2}$.  Set $\phi:=
  \Gamma \wRNeu(z) \Gamma^* \psi$, then $\phi \in \HSaux^{1/2}$, and
  \begin{equation*}
    \iprod[-1/2,1/2]{\wLambda(z) \phi} \eta
    = \qf l_z(\phi,\eta)
    = (\qf h - z \qf 1)(\SDir(z) \Gamma \wRNeu(z) \Gamma^* \psi,g), 
  \end{equation*}
  where $g \in \HS^1$ with $\Gamma g=\eta$.  Moreover, $h=\wRNeu(z)
  \Gamma^* \psi \in \LS^1(z)$, since $(\qf h - z \qf 1)(h,f)=\iprod
  \psi {\Gamma f}=0$ for all $f \in \HS^{1,\Dir}$, hence $h=\SDir(z)
  \Gamma h$ by \Thmenum{dsol.z}{dsol.z.iii}, and we have
  \begin{equation*}
    \iprod[-1/2,1/2]{\wLambda(z) \phi} \eta
    = (\qf h - z \qf 1)(h, g)
    = \iprod[-1/2,1/2] \psi \eta,
  \end{equation*}
  i.e., we have shown that $\wLambda(z)\phi=\psi$ and $\phi=
  \Gamma \wRNeu(z) \Gamma^* \psi$, i.e., that $\wLambda(z)^{-1}
  = \Gamma \wRNeu(z) \Gamma^*$.

  \itemref{dn.z.inv.ii}~is obvious from the representation of
  $\wLambda(z)^{-1}$.

  \itemref{dn.z.inv.iv}~By~\itemref{dn.z.inv.i},
  $\map{\Lambda(z)}{\dom \Lambda(z)} \HSaux$ is bijective as a
  restriction of a bijective function and since $\dom
  \Lambda(z)=\set{\phi \in \HSaux^{1/2}}{\wLambda(z) \phi \in
    \HSaux}$.  Its inverse $\Lambda(z)^{-1}$ is bounded as map $\HSaux
  \to \HSaux$ provided $z \notin \spec \HNeu$.  The strong \DtN\
  operator $\Lambda(\lambda)$ is closed as its inverse is bounded.  In
  particular, it makes sense to speak of the spectrum of
  $\Lambda(\lambda)$, and by definition of the spectrum, if
  $\Lambda(\lambda)$ has a bounded inverse then $0 \notin \spec
  {\Lambda(\lambda)}$.
\end{proof}
%----------------------------------------------------------------------

%----------------------------------------------------------------------
\begin{definition}
  \label{def:nd.z}
  We call $\map {\Lambda(z)^{-1}} \HSaux \HSaux$ the \NtD\ operator.
\end{definition}
%----------------------------------------------------------------------
Let us now look at the \NtD\ operator at different points:
%----------------------------------------------------------------------
\begin{proposition}
  \label{prp:nd.z}
%   \addtocounter{equation}{-1} %
  Let $z,w \in \C \setminus \spec \HNeu$,
  \begin{myenumerate}
  \item
    \label{nd.z.i}
    We have
    \begin{equation}
      \label{eq:nd.zw}
      \map{\Lambda(z)^{-1}-\Lambda(w)^{-1}
        = (z - w) \Gamma \RNeu(w) (\Gamma \RNeu( \conj z))^*
        = (z - w) \Gamma \RNeu(z) (\Gamma \RNeu(\conj w))^*} 
              \HSaux \HSaux.
    \end{equation}

  \item
    \label{nd.z.iii}
    We have $\Im \Lambda(z)^{-1} := \frac 1 {2 \im} (\Lambda(z)^{-1} -
    \Lambda(\conj z)^{-1}) = (\Im z) (\Gamma \RNeu(z))(\Gamma
    \RNeu(z))^* \ge 0$.  In particular, the map
    $\map{\Lambda(\cdot)^{-1}} {\C \setminus \spec \HNeu}{\BdOp
      \HSaux}$, $z \mapsto \Lambda(z)^{-1}$ is an operator-valued
    Nevanlinna function.
  \end{myenumerate}
\end{proposition}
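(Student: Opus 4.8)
The plan is to derive both displayed formulas from the resolvent-type identity for the weak \DtN operator established in \Thm{dn.z.qf}, namely \eqref{eq:dn.qf.zw}, together with the inversion formula $\wLambda(z)^{-1} = \Gamma \wRNeu(z) \Gamma^*$ from \Thmenum{dn.z.inv}{dn.z.inv.i}. First I would write, for $z, w \in \C \setminus (\spec \HDir \cup \spec \HNeu)$, the standard algebraic identity
\begin{equation*}
  \wLambda(z)^{-1} - \wLambda(w)^{-1}
  = \wLambda(z)^{-1} \bigl( \wLambda(w) - \wLambda(z) \bigr) \wLambda(w)^{-1},
\end{equation*}
which is valid as an identity of bounded operators $\HSaux^{-1/2} \to \HSaux^{1/2}$ since all three factors compose correctly in the scale. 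Then I substitute $\wLambda(w) - \wLambda(z)$ using \eqref{eq:dn.qf.zw}: as a form, $(\qf l_w - \qf l_z)(\phi,\psi) = (z-w) \iprod{\SDir(z)\phi}{\SDir(\conj w)\psi}$, so as an operator $\wLambda(w) - \wLambda(z) = (z-w)\, \SDir(\conj w)^{\adjone} \SDir(z)$ in the appropriate scale of spaces. Composing with the inversion formula and using $\SDir(z) \Gamma \wRNeu(z) \Gamma^* = \wRNeu(z)\Gamma^*$ (which was essentially shown inside the proof of \Thm{dn.z.inv}, since $\wRNeu(z)\Gamma^*\psi \in \LS^1(z)$ and hence equals $\SDir(z)$ applied to its own boundary value), one collapses the expression to $(z-w)\, \Gamma\wRNeu(w)(\SDir(\conj w)^\adjone)^* \cdots$; unwinding the adjoints carefully yields $(z-w)\,\Gamma\RNeu(w)(\Gamma\RNeu(\conj z))^*$. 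The second equality in \eqref{eq:nd.zw} follows by swapping the roles of $z$ and $w$ in the same computation (or equivalently by using the other form of \eqref{eq:dn.qf.zw}).

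For part~\eqref{nd.z.iii}, I would specialize \eqref{nd.z.i} by taking $w = \conj z$. Then $z - w = z - \conj z = 2\im (\Im z)$, and the first line of \eqref{eq:nd.zw} gives
\begin{equation*}
  \Lambda(z)^{-1} - \Lambda(\conj z)^{-1}
  = 2\im (\Im z)\, \Gamma \RNeu(\conj z)\, (\Gamma \RNeu(\conj{\conj z}))^*
  = 2\im (\Im z)\, (\Gamma\RNeu(z))^{**} \cdots,
\end{equation*}
and using $\conj{\conj z} = z$ and $\RNeu(\conj z)^* = \RNeu(z)$ this simplifies to $2\im(\Im z)\,(\Gamma\RNeu(z))(\Gamma\RNeu(z))^*$; dividing by $2\im$ gives the stated formula for $\Im \Lambda(z)^{-1}$. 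Non-negativity is then immediate since $AA^* \ge 0$ for any bounded operator $A$, here $A = \Gamma\RNeu(z)$. The Herglotz property requires additionally that $z \mapsto \Lambda(z)^{-1}$ is holomorphic on the upper half-plane (which follows from holomorphy of $z\mapsto\RNeu(z)$ and $z\mapsto\qf l_z$, cf.\ \Thmenum{dn.z.qf}{dn.z.qf.v} and \Thmenum{dn.z.inv}{dn.z.inv.ii}) and maps the upper half-plane into operators with non-negative imaginary part, which is exactly what we just proved; one should also note the consistency with \Thmenum{dn.z.qf}{dn.z.qf.vii}, where $-\qf l_z$ was already identified as a form-valued Herglotz function and hence its ``inverse'' $\Lambda(z)^{-1}$ should be $-(-\Lambda(z))^{-1}$-Herglotz.

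The main obstacle I anticipate is purely bookkeeping: keeping track of which adjoint ($\cdot^*$ in the scale $\HS^{-1}\leftrightarrow\HS^1$ versus $\cdot^\adjone$ as an unbounded operator between Hilbert spaces) appears at each stage, and verifying that the composition $\Gamma \wRNeu(w)(\Gamma\wRNeu(\conj z))^*$ is genuinely bounded $\HSaux\to\HSaux$ rather than only between scale spaces — this uses that $\Gamma$ is bounded $\HS^1\to\HSaux$, that $\Gamma^*$ maps $\HSaux \to \HS^{-1}$, and that $\wRNeu(z)$ maps $\HS^{-1}\to\HS^1$, so the full composition lands back in $\HSaux$. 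A second small point is justifying that the algebraic resolvent identity for $\wLambda(\cdot)^{-1}$ is legitimate when one of $\wLambda(z),\wLambda(w)$ is only an operator in the scale and not on $\HSaux$; this is handled by working throughout at the level of the \emph{weak} \DtN operator $\wLambda$ (where everything is a genuine bounded bijection between the $\pm 1/2$ spaces by \Thm{dn.z.inv}) and only restricting to $\HSaux$ at the very end, exactly as in the statement.
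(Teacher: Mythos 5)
Your argument is correct in substance but takes a noticeably longer route than the paper. The paper proves \eqref{nd.z.i} in one line: by \eqref{eq:dn.z.inv} one has $\wLambda(z)^{-1}-\wLambda(w)^{-1}=\Gamma\bigl(\wRNeu(z)-\wRNeu(w)\bigr)\Gamma^* =(z-w)\,\Gamma\RNeu(z)\RNeu(w)\Gamma^*$ by the resolvent equation for $\RNeu$, and $\RNeu(w)\Gamma^*=(\Gamma\RNeu(\conj w))^*$ gives \eqref{eq:nd.zw} at once (the other line by symmetry of the resolvent product). You instead run the algebraic identity $\wLambda(z)^{-1}-\wLambda(w)^{-1}=\wLambda(z)^{-1}(\wLambda(w)-\wLambda(z))\wLambda(w)^{-1}$ through the form difference \eqref{eq:dn.qf.zw} and the collapse $\SDir(w)\Gamma\wRNeu(w)\Gamma^*=\wRNeu(w)\Gamma^*$; this works, but it reintroduces exactly the adjoint/scale bookkeeping you were worried about, and it contains one slip: with the representation $\wLambda(w)-\wLambda(z)=(z-w)\SDir(\conj w)^*\SDir(z)$ the right factor becomes $\SDir(z)\Gamma\wRNeu(w)\Gamma^*$, whose parameters do not match, so it does \emph{not} collapse; you must take the other representation in \eqref{eq:dn.qf.zw} (with $\SDir(w)$ on that side, and $\SDir(\conj z)$ paired with $\wRNeu(z)$ on the left via adjoints), after which everything reduces to $(z-w)\Gamma\RNeu(z)\RNeu(w)\Gamma^*$ as in the paper. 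In short, your key lemma \eqref{eq:dn.qf.zw} is dispensable here; the inversion formula \eqref{eq:dn.z.inv}, which you already cite, together with the ordinary resolvent equation for $\RNeu$ does the whole job and automatically lands in bounded operators on $\HSaux$, since $\Gamma\RNeu(z)$ is bounded $\HS\to\HSaux$.

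For \eqref{nd.z.iii} your specialisation $w=\conj z$ is exactly the paper's (unstated) argument; note only that the cleanest match with the stated formula comes from the \emph{second} line of \eqref{eq:nd.zw}, giving $(\Im z)\,\Gamma\RNeu(z)\RNeu(\conj z)\Gamma^*=(\Im z)(\Gamma\RNeu(z))(\Gamma\RNeu(z))^*$; your first-line version yields $(\Im z)(\Gamma\RNeu(\conj z))(\Gamma\RNeu(\conj z))^*$, which agrees with it because $\RNeu(z)$ and $\RNeu(\conj z)$ commute, but the expression $(\Gamma\RNeu(\conj{\conj z}))^*$ you wrote does not actually arise from that line. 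The non-negativity and the Herglotz conclusion are fine.
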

%----------------------------------------------------------------------
\begin{proof}
  \itemref{nd.z.i}~follows immediately from~\eqref{eq:dn.z.inv} and
  the resolvent equation.  \itemref{nd.z.iii}~is obvious
  from~\itemref{nd.z.i}.
\end{proof}
%----------------------------------------------------------------------

%----------------------------------------------------------------------
\begin{theorem}
  \label{thm:nd.comp}
% \item
%   \label{nd.z.iv}
  The following assertions are equivalent:
  \begin{myenumerate}
  \item
    \label{comp.a}
    $\map{\Lambda^{-1}}\HSaux \HSaux$ is compact,
  \item
    \label{comp.b}
    $\map{\Lambda(z)^{-1}} \HSaux \HSaux$ is compact for all $z \in \C
    \setminus \spec \HNeu$ (for some $z \in (-\infty,0] \setminus
    \spec \HNeu$)
  \item
    \label{comp.c}
    $\map \Gamma {\HS^1}\HSaux$ is compact
 \end{myenumerate}
 Assume additionally that $\RNeu$ is compact. Then any of the above
 condition is also equivalent with the following:
 \begin{myenumerate}
   \addtocounter{enumi}{3}
 \item 
   \label{comp.d}
   For all $z \in \C \setminus \spec \HDir$, there exists $a
   \ge 0$ such that $(\Lambda(z)+a)^{-1}$ is compact,
 \item
   \label{comp.e}
   For all $z \in \C \setminus \spec \HDir$, the operator $\Lambda(z)$
   is closed and has purely discrete spectrum.
 \end{myenumerate}
\end{theorem}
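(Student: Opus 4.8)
The plan is to prove the chain of equivalences in two blocks, exploiting the resolvent-type formula $\wLambda(z)^{-1} = \Gamma\wRNeu(z)\Gamma^*$ from \Thmenum{dn.z.inv}{dn.z.inv.i}, the factorizations $\Lambda = \SDir^\adjone\SDir$ and $\Lambda^{-1}=\Gamma\Gamma^\oneadj$ from \Thmenum{dn}{dn.i}, and the relation $P(z)=\SDir(z)\Gamma$ (\Thmenum{dsol.z}{dsol.z.iii}). For the first block \itemref{comp.a}$\Leftrightarrow$\itemref{comp.b}$\Leftrightarrow$\itemref{comp.c}, the key device is \Prpenum{nd.z}{nd.z.i}: since $\Lambda(z)^{-1}-\Lambda(w)^{-1}=(z-w)\Gamma\RNeu(w)(\Gamma\RNeu(\conj z))^*$ and the difference of two resolvents is always bounded (indeed $\RNeu(w)$ maps $\HSaux$-bounded images into $\HS$), compactness of $\Lambda(w)^{-1}$ for one $w$ (in particular $w=-1$) propagates to all $z\notin\spec\HNeu$; this gives \itemref{comp.a}$\Leftrightarrow$\itemref{comp.b} immediately. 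For \itemref{comp.c}$\Rightarrow$\itemref{comp.a}: if $\Gamma\colon\HS^1\to\HSaux$ is compact, then $\Lambda^{-1}=\Gamma\Gamma^\oneadj$ is compact because $\Gamma^\oneadj$ is bounded ($\map{\Gamma^\oneadj}{\HSaux}{\HS^1}$, see \Footnote{adjone}) and the composition of a compact with a bounded operator is compact. For the converse \itemref{comp.a}$\Rightarrow$\itemref{comp.c}: write $\Gamma = \Gamma P(-1) = \Gamma\SDir\,\Gamma = \Lambda^{-1}\SDir^\adjone$ on $\LS^1$ — more precisely, using $\SDir^\adjone\SDir=\Lambda$ and that $\Gamma$ kills $\HS^{1,\Dir}$, factor $\Gamma$ through $\LS^1$ as $\Gamma\restr{\LS^1}=(\SDir)^{-1}$ and note $(\SDir)^{-1}=\Lambda^{-1}\SDir^\adjone$ since $\SDir^\adjone\SDir=\Lambda$ forces $\Lambda^{-1}\SDir^\adjone\SDir=\id$; as $\SDir^\adjone$ is bounded from $\HS^1$ to $\HSaux$, compactness of $\Lambda^{-1}$ yields compactness of $\Gamma\restr{\LS^1}$, hence of $\Gamma$ (it vanishes on the orthogonal complement $\HS^{1,\Dir}$).

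For the second block, assume $\RNeu$ compact. The plan for \itemref{comp.a}$\Rightarrow$\itemref{comp.e}: fix $z\notin\spec\HDir$. If additionally $z\notin\spec\HNeu$ then $\Lambda(z)$ is already known to be closed (\Thmenum{dn.z.inv}{dn.z.inv.v}) with bounded inverse $\Lambda(z)^{-1}$, which is compact by block one and \Prpenum{nd.z}{nd.z.i}; a compact inverse forces purely discrete spectrum. The subtle case is $z\in\spec\HNeu$: here one invokes the structure of elliptic regularity is \emph{not} assumed, so instead use $\RNeu$ compact directly — since $\Lambda^{-1}=\Gamma\RNeu\Gamma^*$ is compact and $\RNeu$ compact, one shows $\qf l_z$ differs from $\qf l_{-1}$ by a form that is relatively form-compact (the perturbation term in \Eq{dn.qf.zw} involves $\iprod{\SDir(z)\phi}{\SDir(-1)\psi}$, and $\SDir(z),\SDir$ are bounded $\HSaux^{1/2}\to\HS$ with $\HS\hookleftarrow\HS^1$ compactly embedded into $\HS$ when $\RNeu$ is compact); then $\Lambda(z)$ is closable with discrete spectrum via a standard relatively-compact-perturbation argument applied to the closed form $\qf l$. \itemref{comp.e}$\Rightarrow$\itemref{comp.d} is immediate (take $a$ larger than $-\inf\spec\Lambda(z)$ so $\Lambda(z)+a$ is boundedly invertible with compact inverse since the spectrum is discrete). \itemref{comp.d}$\Rightarrow$\itemref{comp.a}: pick any $z\notin\spec\HDir\cup\spec\HNeu$ and $a\ge0$ with $(\Lambda(z)+a)^{-1}$ compact; then $\Lambda(z)^{-1} = (\Lambda(z)+a)^{-1}\bigl(\id + a\Lambda(z)^{-1}\bigr)$, and since $\Lambda(z)^{-1}$ is bounded (\Thmenum{dn.z.inv}{dn.z.inv.iv}), the right-hand side is compact; then block one gives \itemref{comp.a}.

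The main obstacle I expect is the case $z\in\spec\HNeu$ in proving \itemref{comp.a}$\Rightarrow$\itemref{comp.e}: when $z$ lies in the Neumann spectrum, $\wLambda(z)$ need not be invertible, so one cannot argue via a compact inverse, and one must instead control $\qf l_z$ as a form perturbation of $\qf l=\qf l_{-1}$. The right tool is \Eq{dn.qf.zw}, which expresses $\qf l_z - \qf l_{-1}$ as $-(z+1)\iprod{\SDir(z)\cdot}{\SDir(-1)\cdot}$; one must verify this perturbation is form-bounded with relative bound $0$ (or at least relatively form-compact) with respect to the positive closed form $\qf l$, using that compactness of $\Gamma$ (equivalently of $\RNeu$, here assumed) makes the embedding $\HSaux^{1/2}\hookrightarrow\HSaux$ compact via $\HSaux^{1/2}\xrightarrow{\SDir}\HS^1\hookrightarrow\HS$, and then that $\SDir(z)=U^{1\to1}(z,-1)\SDir$ is bounded. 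Packaging this as ``$\qf l_z$ is a closed sectorial form for every $z\notin\spec\HDir$ whose associated operator has compact resolvent'' is the technical heart; everything else reduces to composing compact operators with bounded ones and to the first resolvent identity.
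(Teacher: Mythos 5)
Your first block contains two steps whose justifications fail in the stated generality. First, your implication from compactness of $\Lambda^{-1}$ to compactness of $\Gamma$ rests on the claim that $\SDir^\adjone$ is bounded from $\HS^1$ to $\HSaux$. This is false for unbounded boundary pairs: $\SDir^\adjone$ is the adjoint of $\SDir$ regarded as an unbounded operator $\HSaux \supset \HSaux^{1/2} \to \HS^1$, and such an adjoint is everywhere defined and bounded if and only if $\SDir$ itself is bounded $\HSaux \to \HS^1$, i.e.\ if and only if $\norm[\HSaux^{1/2}]\phi \le C \norm[\HSaux]\phi$, which by \Thmenum{dn}{dn.iii} means precisely that the boundary pair is bounded; also the identity $\Lambda^{-1}\SDir^\adjone\SDir=\id$ is only available on $\SDir(\dom\Lambda)$, not on $\LS^1$. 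Second, in the step from compactness of $\Lambda^{-1}$ to compactness of $\Lambda(z)^{-1}$ you argue that compactness ``propagates'' because the difference in \Prpenum{nd.z}{nd.z.i} is \emph{bounded}; but a compact operator plus a bounded operator need not be compact — you must show the difference term itself is compact. Both defects are repaired by the factorisation the paper uses: $\Lambda^{-1}=\Gamma\Gamma^\oneadj$, where $\Gamma^\oneadj$ is the honest Hilbert-space adjoint of the \emph{bounded} map $\map\Gamma{\HS^1}\HSaux$, so $\Lambda^{-1}=TT^*$ with $T=\Gamma$, and compactness of $TT^*$ is equivalent to compactness of $T$; equivalently, write $\Lambda^{-1}=K^*K$ with $K=(\Gamma\RNeu^{1/2})^*$, deduce $K$ compact, hence $\Gamma=K^*(\HNeu+1)^{1/2}$ and $\Gamma\RNeu(w)$ compact, and only then does the difference formula give compactness of $\Lambda(z)^{-1}$ for all $z\notin\spec\HNeu$.

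For the second block your route genuinely differs from the paper's: the paper avoids the case $z\in\spec\HNeu$ entirely by the Robin shift — by \Prp{robin.nspec} (this is where compactness of $\RNeu$ enters) one finds $a$ with $z\notin\spec{\HNeu_a}$, applies the first block to the boundary pair associated with $\qf h_a$ to get $(\Lambda(z)+a)^{-1}=\Lambda_a(z)^{-1}$ compact, and recovers (i) from $\Lambda^{-1}=(1+a\Lambda^{-1})(\Lambda+a)^{-1}$ at $z=-1$. Your alternative — treating $\qf l_z-\qf l_{-1}$ as a form perturbation of $\qf l$ — can in principle be made to work, but its heart is only asserted: the bound you get directly from \eqref{eq:dn.qf.zw} has relative bound $\abs{z+1}\,C^1(z,-1)$, which is not small, so you would need an Ehrling-type estimate $\normsqr[\HS]{\SDir\phi}\le \eps \normsqr[\HSaux^{1/2}]\phi + C_\eps\normsqr[\HSaux]\phi$ (using compactness of $\HS^1\hookrightarrow\HS$, i.e.\ of $\RNeu$, together with compactness of $\Gamma$) before KLMN applies; moreover ``closable'' is weaker than the closedness demanded in assertion (v), and your argument for (v)$\Rightarrow$(iv) via ``$a>-\inf\spec{\Lambda(z)}$'' is meaningless for non-real $z$, where $\Lambda(z)$ is not self-adjoint. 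So the skeleton is salvageable, but as written the two steps in block one are errors and the key estimate in block two is missing.
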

%----------------------------------------------------------------------
\begin{proof}
  %  \itemref{nd.z.iv},
  \itemref{comp.a}~$\Rightarrow$~\itemref{comp.b}\&\itemref{comp.c}:~We
  have the factorisation $\Lambda^{-1}= K^* K$ with $\map{K = (\Gamma
    \RNeu^{1/2})^*}\HSaux \HS$ by \Thmenum{dn}{dn.i} (or
  \Prp{dn.z.inv}).  Assume now that $\Lambda^{-1}$ is compact, then
  $K$ is compact, and therefore also $\Gamma = K^* (\HNeu+1)^{1/2}$
  (hence~\itemref{comp.c} is shown).  Moreover, $\map {(\Gamma
    \RNeu)^* = \RNeu^{1/2} K} \HSaux \HS$ is compact, too, and
  $\map{\Gamma\RNeu(z)}\HS\HSaux$ is bounded.  Hence,
  by~\eqref{eq:nd.zw}, we have
  \begin{equation*}
    \Lambda(z)^{-1}
    = \Lambda^{-1} + (z+1) (\Gamma \RNeu(z)) (\Gamma \RNeu)^*
  \end{equation*}
  which shows that $\Lambda(z)^{-1}$ is compact as operator $\HSaux
  \to \HSaux$ for any $z \in \C \setminus \spec \HNeu$.
  
  \itemref{comp.b}~$\Rightarrow$~\itemref{comp.a} is obvious if the
  statement is true for all $z \in \C \setminus \spec \HNeu$.  If it
  is only true for one $z \in (-\infty,0] \setminus \spec \HNeu$, then
  we can use a factorisation $\Lambda(z)^{-1}=K^*K$ similarly as in
  the proof of \itemref{comp.a}~$\Rightarrow$~\itemref{comp.b} to show
  the compactness of $\Lambda^{-1}$.

  \itemref{comp.c}~$\Rightarrow$~\itemref{comp.a}: The compactness of
  $\Lambda^{-1}$ follows from $\map {\Lambda^{-1}=\Gamma
    \Gamma^\oneadj} \HSaux \HSaux$ (see \Thmenum{dn}{dn.i}).

  For the assertions~\itemref{comp.d} and~\itemref{comp.e} we need
  some more notation: Denote by $\qf h_a$ the quadratic form $\qf
  h_a(u)=\qf h(u) + a \normsqr{\Gamma u}$.  It can be seen that $\qf
  h_a$ is closed, and its associated operator $\HNeu_a$ is
  non-negative; moreover $(\Gamma,\HSaux)$ is still a boundary pair
  associated with $\qf h_a$ (see \Sec{robin} for details).

  \itemref{comp.a}~$\Rightarrow$~\itemref{comp.d}: Let $z \notin \spec
  \HDir$.  By \Prp{robin.nspec}, there exists $a>0$ such that $z
  \notin \spec {\HNeu_a}$.  By assumption, $\Lambda^{-1}$ is compact,
  hence $\Lambda_a^{-1}=(\Lambda+a)^{-1} \le \Lambda^{-1}$ is also
  compact.  We can now apply
  \itemref{comp.a}~$\Rightarrow$~\itemref{comp.b} for the boundary
  pair $(\Gamma, \HSaux)$ associated with $\qf h_a$ and obtain that
  $\Lambda_a(z)^{-1}=(\Lambda(z)+a)^{-1}$ is compact.

  \itemref{comp.d}~$\Rightarrow$~\itemref{comp.a}: Set $z=-1$, then
  $\Lambda^{-1} = (1+a\Lambda^{-1})(\Lambda+a)^{-1}$ and this operator
  is compact, if $(\Lambda+a)^{-1}$ is.  The equivalence
  \itemref{comp.d}~$\Leftrightarrow$~\itemref{comp.e} is a general
  fact from operator theory.
\end{proof}
%----------------------------------------------------------------------

We want to remark that, in general, the compactness of $\RNeu$ and
$\Lambda^{-1}$ are independent of each other as the following tabular shows:

%----------------------------------------------------------------------
\begin{center}
  \begin{tabular}[h]{|c|c|c||l|l|}
    \hline
    \multicolumn{3}{|l||}{Operator is compact:}& Examples & Remarks\\
    $\RNeu$ & $\RDir$ & $\Lambda^{-1}$ & &\\
    \hline \hline
    \yes & \yes & \yes & \Sec{lapl.mfd} (compact manifold)  &\\
    \yes & \yes & \no &  \Ex{neu.comp.dtn.not} 
               (modified manifold example) &\\
%    \yes & \no  & \yes & \\ geht nicht
%    \yes & \no  & \no &  \\ geht nicht
    \no  & \yes & \yes & \Ex{dir.comp.neu.not} ($\alpha>2$) &
    Examples cannot be ell.\ reg.\\ 
    \no  & \yes & \no & \Ex{dir.comp.neu.not} ($\alpha=2$)&\\
    \no  & \no  & \yes & \Ex{cyl.comp} cylindrical manifold &\\
    \no  & \no  & \no &  \Ex{cyl.comp.mod} modified cylindrical manifold&\\
    \hline
  \end{tabular}
  \parbox{0.9\textwidth}{\yes --- yes; \no --- no.  Note that $\RNeu$
    compact implies $\RDir$ compact as $\RNeu \ge \RDir \ge
    0$. Moreover, for elliptically regular boundary pairs (see next
    section) the compactness of $\Lambda^{-1}$ and $\RDir$ implies the
    compactness of $\RNeu$ by the resolvent
    formula~\eqref{eq:krein.res}.}
\end{center}
%----------------------------------------------------------------------

%----------------------------------------------------------------------
% cccc
\section{Boundary pairs with additional properties}
\label{sec:bd2.add}
%
%----------------------------------------------------------------------

Let us now describe further properties of boundary pairs described in
terms of the Dirichlet solution operator and an associated quadratic
form.  It turns out that these properties allow us to relate the
concept of boundary pairs to other concepts such as boundary triples
(see \Sec{rel.bd3}).  Moreover, one of these properties, called
\emph{elliptic regularity}, allows us to show stronger results on
resolvent formulae and spectral characterisations (see
\Thmenum{krein.res}{krein.res.iii} and \Thm{krein2}).

%----------------------------------------------------------------------
\subsection{Elliptically regular boundary pairs}
%----------------------------------------------------------------------
\label{sec:ell.bd2}
An important case is when the solution form $\qf q$ (defined by $\qf
q(\phi):=\normsqr[\HS]{\SDir\phi}$ for $\phi \in \HSaux^{1/2}$) is
\emph{bounded} as form in $\HSaux$.  Surprisingly, this property has
already been recognised as important in a different context by Brasche
et al, and also by Arlinskii, see \Rem{eq.ell}.
 
One of the main consequences of the boundedness of $\qf q$ is that the
\DtN\ form $\qf l_z$ is sectorial, and hence, the strong \DtN\ operator
(the operator associated with the form $\qf l_z$) is closed and
sectorial.  Moreover, its domain $\dom \Lambda(z)$ is independent of
$z$ (see \Thm{ell.bd2.impl}).

%----------------------------------------------------------------------
\begin{definition}
  \label{def:ell.bd2}
  Let $(\Gamma,\HSaux)$ be a boundary pair.  We say that the boundary
  pair is \emph{elliptically regular} if the \emph{solution form} $\qf
  q$ is bounded, i.e., if there is a constant $C>0$ such that
  \begin{equation*}
    \qf q(\phi)
    = \normsqr[\HS]{\SDir \phi}
    \le C^2 \normsqr[\HSaux] \phi
  \end{equation*}
  for all $\phi \in \HSaux^{1/2}$.
\end{definition}
%----------------------------------------------------------------------

Let us first present a simple result for \emph{bounded} boundary
pairs:
%----------------------------------------------------------------------
\begin{proposition}
  \label{prp:ell.bd2.simple}
  Let $(\Gamma,\HSaux)$ be a boundary pair and let $\SDir(z)$ be the
  corresponding weak Dirichlet solution operator.  If the boundary
  pair $(\Gamma,\HSaux)$ is bounded, then it is elliptically regular.
\end{proposition}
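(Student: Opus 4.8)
The plan is to read the statement straight off the bounded inverse theorem. First I would unwind the definitions: since the boundary pair is bounded we have $\HSaux^{1/2} = \ran \Gamma = \HSaux$, and by \Def{dsol} the space $\LS^1$ is the orthogonal complement of $\ker\Gamma$ in $\HS^1$, hence a closed subspace of the Hilbert space $\HS^1$ and in particular complete, while $\SDir$ is by definition the inverse of the bijection $\map{\Gamma\restr{\LS^1}}{\LS^1}{\HSaux^{1/2}}$. Because $\Gamma$ is bounded from $\HS^1$ to $\HSaux$, its restriction $\Gamma\restr{\LS^1}$ is a bounded bijection between Banach spaces, so the open mapping theorem yields that $\map{\SDir}{\HSaux}{\LS^1 \subset \HS^1}$ is bounded; write $\norm[\HS^1]{\SDir\phi} \le C\norm[\HSaux]\phi$. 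This is precisely the argument already used in the proof of \Thmenum{dn}{dn.iii}.

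It then remains only to weaken the $\HS^1$-norm on the left to the $\HS$-norm. From the definition~\eqref{eq:norm.qf} of the form norm one has $\normsqr[\HS]{f} \le \normsqr[\HS]{f} + \qf h(f) = \normsqr[\HS^1]{f}$, so the embedding $\HS^1 \hookrightarrow \HS$ is norm non-increasing and therefore
\begin{equation*}
  \norm[\HS]{\SDir\phi} \le \norm[\HS^1]{\SDir\phi} \le C\norm[\HSaux]\phi
  \quadtext{for all} \phi \in \HSaux^{1/2} = \HSaux,
\end{equation*}
which is exactly the inequality required in \Def{ell.bd2}.

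I do not expect a genuine obstacle; if one had to name the delicate point it is simply that completeness of $\LS^1$ (a closed subspace of the complete space $\HS^1$) is what licenses the open mapping theorem. As a complement one can record an explicit constant: by~\eqref{eq:norm.1/2} and \Def{dn}, $\normsqr[\HS^1]{\SDir\phi} = \qf l(\phi) = \normsqr[\HSaux]{\Lambda^{1/2}\phi} \le \norm[\HSaux]{\Lambda}\,\normsqr[\HSaux]{\phi}$, and $\Lambda$ is bounded precisely because the boundary pair is (\Thmenum{dn}{dn.iii}), so $C = \norm[\HSaux]{\Lambda}^{1/2}$ works — though for the statement the mere existence of a finite $C$ is all that is needed.
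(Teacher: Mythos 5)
Your proof is correct and is essentially the paper's argument: the paper also obtains boundedness of $\SDir$ on $\HSaux$ via the open mapping theorem (packaged inside \Thmenum{dn}{dn.iii} as boundedness of $\Lambda$) and then weakens the $\HS^1$-norm to the $\HS$-norm, yielding the same constant $C=\norm{\Lambda}^{1/2}$ that you record at the end.
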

%----------------------------------------------------------------------
\begin{proof}
  If the boundary pair is bounded, then $\Lambda$ is a bounded
  operator in $\HSaux$ by \Thmenum{dn}{dn.iii}.  Moreover,
  \begin{equation*}
    \qf q(\phi)
    = \normsqr[\HS]{\SDir \phi}
    \le \normsqr[\HS^1]{\SDir \phi}
    = \normsqr[\HSaux^{1/2}] \phi
    = \iprod[\HSaux]{\Lambda \phi} \phi
    \le \norm \Lambda \normsqr[\HSaux] \phi,
  \end{equation*}
  for $\phi \in \HSaux$, i.e., we can choose $C^2:=\norm \Lambda$.
\end{proof}
%----------------------------------------------------------------------

%----------------------------------------------------------------------
\begin{remark}
  \label{rem:ell.bd2}
  \indent
  \begin{myenumerate}
  \item Not all boundary pairs are elliptically regular, see the
    unbounded Jacobi operator examples in \Sec{jacobi} or the Zaremba
    problem in \Thm{zaremba2.bd2}.  Moreover, not all elliptic
    boundary pairs are bounded (see the manifold examples in
    \SecS{lapl.mfd}{dtn.mg}).

  \item The notion ``elliptically regular'' for boundary pairs is
    actually inspired by our basic example presented in detail in
    \Sec{lapl.mfd}, see in particular \Rem{why.ell.reg} and also
    \Rem{why.ell.reg.intro}.
  \end{myenumerate}
\end{remark}
%----------------------------------------------------------------------

The name ``elliptically regular'' is also justified by the following
definition and \Thmenum{ell.bd2}{ell.bd2.vii}:
%----------------------------------------------------------------------
\begin{definition}
  \label{def:dn.ell}
  We say that the sesquilinear form $\map{\qf l_z} {\HSaux^{1/2}
    \times \HSaux^{1/2}} \C$ is \emph{elliptic} in $\HSaux$ if there
  exist $\alpha>0$ and $\omega(z) \in \R$ such that
  \begin{equation*}
    (\Re \qf l_z)(\phi) + \omega(z) \normsqr[\HSaux] \phi
    \ge \alpha \normsqr[\HSaux^{1/2}] \phi
  \end{equation*}
  for all $\phi \in \HSaux^{1/2}$.
\end{definition}
% ----------------------------------------------------------------------

Let us now present some equivalent characterisations:
%----------------------------------------------------------------------
\begin{theorem}
  \label{thm:ell.bd2}
  Let $(\Gamma,\HSaux)$ be a boundary pair, then the following
  assertions are equivalent:
  \begin{myenumerate}
  \item 
    \label{ell.bd2.i}
    The boundary pair is elliptically regular.
  \item 
    \label{ell.bd2.iv}
    The solution form $\qf q_z$ (defined by $\qf
    q_z(\phi):=\normsqr{\SDir(z)\phi}$ for $\phi \in \HSaux^{1/2}$)
    extends to a bounded form, and is associated with a \emph{bounded}
    operator $Q(z)$ on $\HSaux$ for some (any) $z \in \C \setminus
    \spec \HDir$.
  \item
    \label{ell.bd2.v}
    The imaginary part $\Im \qf l_z := \frac 1 {2 \im} (\qf l_z - \qf
    l_z^*) \; {(=}-(\Im z) \qf q_z)$ of the \DtN\ form $\qf l_z$ is
    bounded, and hence associated with a \emph{bounded} operator
    $-(\Im z) Q(z)$ on $\HSaux$ for some (any) $z \in \C \setminus
    \R$.
  
  \item
    \label{ell.bd2.vi}
    The derivative $\qf l_\lambda'$ of the \DtN\ form is bounded, and
    hence associated with a \emph{bounded} operator on $\HSaux$ (given
    by $\Lambda'(\lambda):=-Q(\lambda)$) for some (any) $\lambda \in
    \R \setminus \spec \HDir$.

  \item
    \label{ell.bd2.vii}
    For $z \in \R$ in a neighbourhood of $-1$ (resp.\ for all $z \in
    \C \setminus \spec \HDir$), there exists $\omega(z)$ such that
    $\omega(-1)=0$ and $\limsup_{a \to -1} \frac{\omega(a)}{a+1}$ is
    finite, and the sesquilinear form $\qf l_z$ is elliptic (in the
    sense of \Def{dn.ell}) with constants $\alpha=1$ and $\omega(z)$.

  \item
    \label{ell.bd2.viii}
    We have $\Gamma(\dom \HNeu) \subset \HSaux^1 (=\dom \Lambda)$.

  \item
    \label{ell.bd2.ix}
    The operator $\Lambda \Gamma \RNeu$ maps $\HS$ into $\HSaux$ and
    is bounded as operator $\HS \to \HSaux$.
  \end{myenumerate}
\end{theorem}
%----------------------------------------------------------------------
\begin{remark}
  \label{rem:bd2.ell}
  We have another characterisation using the second boundary map
  $\Gamma'$ and some more rather obvious equivalent characterisations:
  \begin{myenumerate}
    \item 
      \label{bd3-ell}
      In \Rem{why.ell.reg.intro} we showed that if a boundary pair is
      elliptically regular, then $\Gamma'\RDir f \in \HSaux$ for all
      $f \in \HS$.  One can extend the definition of $\Gamma'$ to
      $\check \Gamma'$ such that Green's formula~\eqref{eq:green}
      remains true, but $\check \Gamma' f\in \HSaux^{-1/2}$ (hence
      $\iprod {\check \Gamma'f}{\Gamma g}$ is the dual pairing
      $\HSaux^{-1/2} \times \HSaux^{1/2}$).  Then we have the
      equivalent characterisation that $(\Gamma,\HSaux)$ is
      elliptically regular iff $\check \Gamma' u \in \HSaux$ for all
      $u \in \dom \HDir$, or, formulated in analogy
      with~\itemref{ell.bd2.viii},
      \begin{equation*}
        \check \Gamma'(\dom \HDir) \subset \HSaux.
      \end{equation*}

  \item[\ref{ell.bd2.iv}']
    \label{ell.bd2.ii}
    The weak Dirichlet solution operator $\SDir(z)$ extends to a
    bounded operator $\map{\eSDir(z)} \HSaux \HS$ for some (any) $z
    \in \C \setminus \spec \HDir$.

  \item[\ref{ell.bd2.iv}'']
    \label{ell.bd2.ii'}
    The dual $\map{\SDir(\conj z)^*}{\HS^{-1}}{\HSaux^{-1/2}}$ of the
    Dirichlet solution operator $\SDir(\conj z)$ restricts to a
    bounded operator $\HS \to \HSaux$ (denoted by $B(z)$) for some
    (any) $z \in \C \setminus \spec \HDir$.

  \item[\ref{ell.bd2.iv}''']
    \label{ell.bd2.ii''}
    There is a constant $c>0$ such that $\norm[\HSaux]{\Gamma h} \ge c
    \norm[\HS] h$ for all $h \in \LS^1$.
  \end{myenumerate}
\end{remark}
%----------------------------------------------------------------------
There are equivalent characterisations of elliptic regularity by other
authors:
%----------------------------------------------------------------------
\begin{remark}
  \label{rem:eq.ell}
  \indent
  \begin{enumerate}
  \item 
    \label{arlinskii}
    Arlinskii expressed in~\cite[Sec.~2.4]{arlinskii:00}) a condition
    under the name \emph{condition (e)}
    (in~\cite[p.~62]{arlinskii.in:12} it is called \emph{condition
      (F)}) which is equivalent to what we call ``elliptic
    regularity''.  Note that Arlinskii considers (in our notation)
    \emph{bounded} boundary pairs associated with the quadratic form
    of the \emph{Krein} extension (see \Sec{ext.th.intro}).
     
  \item
    \label{brasche}
    The characterisations~\Thmenums{ell.bd2}{ell.bd2.viii}{ell.bd2.ix}
    are due to Ben Amor and Brasche~\cite{benamor-brasche:08} (see
    also the publication~\cite[Thm.~2.7]{bbb:11} and the references
    therein).  They showed that elliptic regularity (more precisely,
    that \Thmenums{ell.bd2}{ell.bd2.viii}{ell.bd2.ix}) are also
    equivalent to
    \begin{equation*}
      \lim_{a \to \infty} 
      a \norm{\RNeu_a - \RDir} < \infty,
    \end{equation*}
    where $\RNeu_a=(\HNeu_a+1)^{-1}$ and where $\HNeu_a$ is the
    operator associated with the quadratic form $\qf h_a (f):= \qf
    h(f) + a \normsqr[\HSaux] {\Gamma f}$ for $a \ge 0$ (see
    \Sec{robin} for such Robin-type boundary conditions).
  \end{enumerate}
\end{remark}
%----------------------------------------------------------------------

%----------------------------------------------------------------------
\begin{proof}[Proof of \Thm{ell.bd2}]
  \itemref{ell.bd2.i}~$\Leftrightarrow$~\itemref{ell.bd2.iv}: This is
  a direct consequence of the estimate
  \begin{equation}
    \label{eq:dsol.z.norm.hs}
    \frac 1 {C^\Dir(-1,z)^2} \qf q(\phi)
    \le \qf q_z(\phi)
    \le C^\Dir(z,-1)^2 \qf q(\phi)
  \end{equation}
  for all $\phi \in \HSaux^{1/2}$: $\qf q_z$ is bounded iff $\qf q$ is
  bounded.

  For the equivalence of \itemref{ell.bd2.v} with
  \itemref{ell.bd2.iv}, note that $\Im \qf l_z = -(\Im z) \qf q_z$ by
  \Thmenum{dn.z.qf}{dn.z.qf.vii}.  Similarly, for \itemref{ell.bd2.vi}
  we note that $\qf l_\lambda'=-\qf q_\lambda$ by
  \Thmenum{dn.z.qf}{dn.z.qf.v}.
  
  \itemref{ell.bd2.i}~$\Rightarrow$~\itemref{ell.bd2.vii}~We have
  \begin{equation*}
    \Re \qf l_z(\phi) - \qf l(\phi)
    = - \Re \iprod{(z+1) U(z,-1) \SDir \phi} {\SDir \phi}
    \ge -\omega(z) \normsqr \phi
  \end{equation*}
  for $\phi \in \HSaux^{1/2}$ (see \Thmenum{dn.z.qf}{dn.z.qf.ii} and
  \Thmenum{dsol.z}{dsol.z.iii}), where
  \begin{equation}
    \label{eq:def.omega.z}
    \omega(z)
    :=  C^2
        \max \Bigl\{ \sup_{\lambda \in \spec \HDir}
               \Re \Bigl(\frac {(z+1)(\lambda+1)}{\lambda-z} \Bigr),
               0
             \Bigr\},
  \end{equation}
  where $C=\norm[\HSaux \to \HS]{\eSDir}$.  Note that the inequality
  holds by the spectral calculus.  Since the real part of the fraction
  as a function in $\lambda \in \spec \HDir$ is continuous and has a
  limit as $\lambda \to \infty$, it attains a maximum $C_+(z) \in \R$
  (and a minimum $C_-(z) \in \R$) for $z \in \C \setminus \spec
  \HDir$.  In particular, $\omega(z) = C^2 \max\{C_+(z),0\} < \infty$.

  If $z=a<0$, then $C_+(a)=-(a+1)$ and $C_-(a)=(a+1)/a$.  Moreover,
  $\omega(a)/(a+1) \le 0$ and the limes superior is
  finite.

  \itemref{ell.bd2.vii}~$\Rightarrow$~\itemref{ell.bd2.i}~From
  \Thmenum{dn.z.qf}{dn.z.qf.v} we know that $\qf l_{-1}' = - \qf
  q_{-1}$.  Moreover, from the assumption, we have $-(\qf
  l_a(\phi)-\qf l(\phi)) \le \omega(a) \normsqr \phi$ for $a<0$ near
  $-1$ and $\phi \in \HSaux^{1/2}$.  Therefore, we conclude
  \begin{equation*}
    0 \le \normsqr{\SDir \phi}
    = \qf q_{-1}(\phi)
    = -\lim_{a  \to -1} 
           \frac {\qf l_a(\phi) - \qf l(\phi)}{a+1}
    \le \limsup_{a \to -1} 
           \frac {\omega(a)}{a+1} \cdot  \normsqr \phi,
  \end{equation*}
  hence the boundary pair is elliptically regular with $C^2=\limsup_{a
    \to -1} \omega(a)/(a+1)$.

  \itemref{ell.bd2.viii}~$\Rightarrow$~\itemref{ell.bd2.ix}~By
  assumption, $\wLambda \Gamma \wRNeu(\HS) = \Lambda \Gamma \RNeu
  (\HS) \subset \HSaux$.  Assume that $f_n \to f$ in $\HS$ and $\psi_n
  := \Lambda \Gamma \RNeu f_n \to \psi$ in $\HSaux$, hence also in
  $\HSaux^{-1/2}$.  Moreover, $\psi_n = \wLambda \Gamma \wRNeu f_n \to
  \wLambda \Gamma \wRNeu f$ in $\HSaux^{-1/2}$ since $\wLambda \Gamma
  \wRNeu$ is bounded as operator $\HS \hookrightarrow \HS^{-1}
  \stackrel \wRNeu \to \HS^1 \stackrel \Gamma \to \HSaux^{1/2}
  \stackrel \wLambda \to \HSaux^{-1/2}$.  Since limits in
  $\HSaux^{-1/2}$ are unique, we have $\wLambda \Gamma \wRNeu f = \psi
  \in \HSaux$.  In particular, $\map{\Lambda \Gamma \RNeu} \HS \HSaux$
  is closed, hence bounded by the closed graph theorem.

  \itemref{ell.bd2.ix}~$\Rightarrow$~\itemref{ell.bd2.viii}~Since
  $\Lambda \Gamma (\dom \HNeu)=\Lambda \Gamma \RNeu(\HS) \subset
  \HSaux$ by assumption, we have $\Gamma(\dom \HNeu) \subset
  \HSaux^1=\dom \Lambda$.

  \itemref{ell.bd2.i}~$\Leftrightarrow$~\itemref{ell.bd2.ix}~We have
  $\map{(\wLambda \Gamma \wRNeu)^* = \SDir}{\HSaux^{1/2}}{\HS^1}$ by
  \Prp{dtn.sdir} for $z=-1$, hence $S$ extends to a bounded operator
  $\HSaux \to \HS$ (i.e., $\qf q$ is a bounded form) iff $\wLambda
  \Gamma \wRNeu$ restricts to a bounded operator $\HS \to \HSaux$.
\end{proof}
%----------------------------------------------------------------------

Here are some consequences of elliptic regularity:
%----------------------------------------------------------------------
\begin{theorem}
  \label{thm:ell.bd2.impl}
  Let $(\Gamma,\HSaux)$ be an elliptically regular boundary pair and
  $z \in \C \setminus \spec \HDir$, then the following assertions are
  true:
  \begin{myenumerate}
  \item
    \label{ell.bd2.impl.i}
    The norms $\norm[\qf l_z] \cdot$  and
    $\norm[\HSaux^{1/2}] \cdot$ are equivalent, i.e.,
    \begin{equation*}
      \normsqr[\HSaux^{1/2}] \phi
      \le \normsqr[\qf l_z] \phi
      := \Re \qf l_z(\phi) + \omega(z) \normsqr[\HSaux] \phi 
      \le( L(z)+  \omega(z) \normsqr[1 \to 0] \Gamma)
                     \normsqr[\HSaux^{1/2}] \phi,
    \end{equation*}
    where $L(z)$ is defined in~\eqref{eq:def.m.z}.

  \item
    \label{ell.bd2.impl.i'}
    The form $\qf l_\lambda$ resp.\ the associated operator
    $\Lambda(\lambda)$ is bounded from below for all $\lambda \in \R
    \setminus \spec \HDir$.

  \item
    \label{ell.bd2.impl.ii}
    The form $\qf l_z$ is closed and sectorial, i.e., $\qf l_z(\phi)
    \in \Sigma_\vartheta -\omega(z)$ for all $\phi \in \HSaux^{1/2}$,
    where
    \begin{equation}
      \label{eq:def.sector}
      \Sigma_\vartheta
      := \bigset{w \in \C} {\abs{\arg w} \le \vartheta}
    \end{equation}
    for $\vartheta = \vartheta_z := \arctan L(z)$.
  \item
    \label{ell.bd2.impl.iii}
    The associated operator family $(\Lambda(z))_{z \in \C \setminus
      \spec \HDir}$ is self-adjoint, i.e., $\Lambda(z)^*=\Lambda(\conj
    z)$.  In particular, $\Lambda(z)$ is closed and self-adjoint for
    $z \in \R \setminus \spec \HDir$.  Moreover, the domain is $\dom
    \Lambda(z)=\HSaux^1$, i.e., \emph{independent} of $z$, and
    $\Lambda(z)$ considered as operator $\map{\Lambda(z)^{1 \to 0}}
    {\HSaux^1} \HSaux$ is bounded.
  \item
    \label{ell.bd2.impl.iv}
    The operator $\Lambda(z)$ is sectorial, i.e., we have $\spec
    {\Lambda(z)} \subset \Sigma_\vartheta -\omega(z)$, i.e., the
    spectrum of the \DtN\ map is contained in the sector
    $\Sigma_\vartheta-\omega(z)$ for $\vartheta = \vartheta_z$.
  \item
    \label{ell.bd2.impl.v}
    The operator $\Lambda(z)$ is $m$-sectorial in the sense of Kato,
    i.e.,
    \begin{equation*}
      \norm[\Lin \HSaux]{(\Lambda(z) - w)^{-1}}
      \le \frac 1 {\abs{w + \omega(z)} \sin (\vartheta_0 - \vartheta)}
    \end{equation*}
    for $\vartheta_0 \in (\vartheta,\pi)$ and $w \in \C \setminus
    (\Sigma_{\vartheta_0} - \omega(z))$.
  \end{myenumerate}
\end{theorem}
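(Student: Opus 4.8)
The plan is to prove the six assertions in sequence, exploiting that elliptic regularity gives a bounded extension $\eSDir(z)\colon\HSaux\to\HS$ (by \Thmenum{ell.bd2}{ell.bd2.ii}) and hence a bounded operator $Q(z)$ representing $\qf q_z(\phi)=\normsqr[\HS]{\SDir(z)\phi}$ (by \Thmenum{ell.bd2}{ell.bd2.iv}). First, for \itemref{ell.bd2.impl.i}, I would combine the upper bound $\abs{\qf l_z(\phi)}\le L(z)\normsqr[1/2]\phi$ from \Thmenum{dn.z.qf}{dn.z.qf.iii} with $\normsqr[\HSaux]\phi\le\normsqr[1\to0]\Gamma\cdot\normsqr[1/2]\phi$ from~\eqref{eq:hsaux.0.12} to get the right inequality $\normsqr[\qf l_z]\phi\le(L(z)+\omega(z)\normsqr[1\to0]\Gamma)\normsqr[\HSaux^{1/2}]\phi$; the left inequality $\normsqr[\HSaux^{1/2}]\phi\le\normsqr[\qf l_z]\phi$ is exactly the statement \itemref{ell.bd2.vii} (elliptic regularity of $\qf l_z$ with $\alpha=1$), which holds by \Thm{ell.bd2} together with the explicit $\omega(z)$ from~\eqref{eq:def.omega.z}.

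Next, \itemref{ell.bd2.impl.i'} follows because for real $\lambda$ the form $\qf l_\lambda$ is symmetric (by \Thmenum{dn.z.qf}{dn.z.qf.iv}, $\qf l_\lambda^*=\qf l_{\conj\lambda}=\qf l_\lambda$), and the left inequality of \itemref{ell.bd2.impl.i} gives $\qf l_\lambda(\phi)\ge\normsqr[\HSaux^{1/2}]\phi-\omega(\lambda)\normsqr[\HSaux]\phi\ge-\omega(\lambda)\normsqr[\HSaux]\phi$, so $\Lambda(\lambda)\ge-\omega(\lambda)$. For \itemref{ell.bd2.impl.ii}, the two inequalities of \itemref{ell.bd2.impl.i} show that the $\qf l_z$-norm is equivalent to the $\HSaux^{1/2}$-norm, under which $\HSaux^{1/2}$ is complete (by \Prpenum{s.dn.closed}{l.closed}, $\qf l=\qf l_{-1}$ is closed); hence $\qf l_z$ is closed. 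Sectoriality: write $\qf l_z=\qf l_{-1}+(\qf l_z-\qf l_{-1})$; the correction term has $\abs{\qf l_z(\phi)-\qf l_{-1}(\phi)}=\abs{z+1}\abs{\iprod{\eSDir(z)\phi}{\eSDir\phi}}$ by~\eqref{eq:dn.qf.zw}, which is bounded, while $\Re\qf l_z(\phi)+\omega(z)\normsqr[\HSaux]\phi\ge\normsqr[\HSaux^{1/2}]\phi\ge0$ controls the real part from below; combining these with the bound $\abs{\Im\qf l_z(\phi)}=\abs{\Im z}\,\normsqr[\HS]{\SDir(z)\phi}\le L(z)\normsqr[\HSaux^{1/2}]\phi$ (cf.~\Thmenum{dn.z.qf}{dn.z.qf.vii} and~\eqref{eq:def.m.z}) places $\qf l_z(\phi)+\omega(z)\normsqr[\HSaux]\phi$ in the sector with half-angle $\arctan L(z)$.

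For \itemref{ell.bd2.impl.iii}, since $\qf l_z$ is now closed and sectorial, it has a unique $m$-sectorial associated operator by the first representation theorem (\cite[Thm.~VI.2.1]{kato:66}), which must coincide with the $\Lambda(z)$ defined in~\eqref{eq:dn.ass.op}; the adjoint relation $\Lambda(z)^*=\Lambda(\conj z)$ follows from $\qf l_z^*=\qf l_{\conj z}$ (\Thmenum{dn.z.qf}{dn.z.qf.iv}). The domain independence $\dom\Lambda(z)=\HSaux^1$: the form domain $\HSaux^{1/2}$ is the same for all $z$, and by \Thmenum{ell.bd2}{ell.bd2.viii} (equivalent to elliptic regularity) $\Gamma(\dom\HNeu)\subset\HSaux^1=\dom\Lambda=\dom\Lambda(-1)$; I would identify $\dom\Lambda(z)$ with $\dom\Lambda(-1)$ by showing $\wLambda(z)-\wLambda(-1)$ maps $\HSaux^{1/2}$ into $\HSaux$ (indeed into a bounded operator on $\HSaux$), which is precisely \Thmenum{ell.bd2}{ell.bd2.v}--\itemref{ell.bd2.vi} applied via~\eqref{eq:dn.qf.zw}, so that $\phi\in\dom\Lambda(z)$ iff $\wLambda(-1)\phi=\wLambda(z)\phi-(\wLambda(z)-\wLambda(-1))\phi\in\HSaux$ iff $\phi\in\dom\Lambda(-1)=\HSaux^1$; boundedness of $\Lambda(z)^{1\to0}$ then follows from the closed graph theorem. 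Finally \itemref{ell.bd2.impl.iv} and \itemref{ell.bd2.impl.v} are the standard consequences for the operator associated with a closed sectorial form (\cite[Thm.~VI.3.2 and the resolvent estimate in Sec.~VI.3]{kato:66}): the numerical range of $\Lambda(z)$ lies in $\overline{\qf l_z(\text{unit ball})}\subset\Sigma_\vartheta-\omega(z)$, giving the spectral inclusion, and the resolvent bound outside a larger sector is the quantitative $m$-sectoriality estimate.

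The main obstacle I anticipate is \itemref{ell.bd2.impl.iii}, specifically the domain-independence claim $\dom\Lambda(z)=\HSaux^1$: one must carefully argue that the difference $\wLambda(z)-\wLambda(-1)$, a priori only bounded $\HSaux^{1/2}\to\HSaux^{-1/2}$, actually extends to a bounded operator on $\HSaux$, and this is exactly where elliptic regularity (through the boundedness of $Q(z)$, hence of $\iprod{\SDir(z)\phi}{\SDir(\conj w)\psi}$ as a form on $\HSaux\times\HSaux$) is indispensable; without it the domains genuinely can differ, as \Rem{dn.closed} warns. Everything else is bookkeeping with Kato's representation theorems and the already-established formulas~\eqref{eq:dn.qf.zw} and~\eqref{eq:def.m.z}.
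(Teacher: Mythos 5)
Your proposal is correct and follows essentially the same route as the paper's proof: part (i) from the ellipticity statement \Thmenum{ell.bd2}{ell.bd2.vii} together with~\eqref{eq:def.m.z} and~\eqref{eq:hsaux.0.12}, closedness and sectoriality from the resulting norm equivalence and the bound $\abs{\Im \qf l_z(\phi)} \le L(z)\normsqr[1/2]\phi$, and the domain independence in (iii) via the difference relation~\eqref{eq:dn.qf.zw} at $w=-1$, whose right-hand side is a bounded form on $\HSaux$ precisely because of elliptic regularity (your phrasing as boundedness of $\wLambda(z)-\wLambda(-1)$ on $\HSaux$ is the same estimate the paper writes as~\eqref{eq:ell.bd2.impl.iiia}). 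The only cosmetic differences are that you bound $\Im\qf l_z$ through the identity of \Thmenum{dn.z.qf}{dn.z.qf.vii} rather than through $\abs{\Im\qf l_z}\le\abs{\qf l_z}$, and your citation of \Thmenums{ell.bd2}{ell.bd2.v}{ell.bd2.vi} for the correction term should really point to the boundedness of $\eSDir(z)$, $\eSDir$ (i.e.\ \Thmenum{ell.bd2}{ell.bd2.ii}/\Prp{bd2.b.z}); neither affects the validity of the argument.
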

%----------------------------------------------------------------------
\begin{proof}
  \itemref{ell.bd2.impl.i}~follows from the ellipticity of $\qf l_z$
  shown in \Thmenum{ell.bd2}{ell.bd2.vii},~\eqref{eq:hsaux.0.12} and
  \Thmenum{dn.z.qf}{dn.z.qf.iii}.  \itemref{ell.bd2.impl.i'}~follows
  immediately from~\itemref{ell.bd2.impl.i} and
  again~\eqref{eq:hsaux.0.12}.  \itemref{ell.bd2.impl.ii}~The
  closeness of $\qf l_z$ on $\HSaux^{1/2}$ follows
  from~\itemref{ell.bd2.impl.i}.  Moreover, for $\phi \in \HSaux^{1/2}
  \setminus \{0\}$ we have
  \begin{equation*}
    \frac{\abs{\Im \qf l_z(\phi)}}
         {\Re \qf l_z(\phi) + \omega(z) \normsqr \phi}
    \le \frac {L(z)} \alpha = L(z)
  \end{equation*}
  using again \Thm{dn.z.qf}.  In particular, $\qf l_z(\phi)$ lies in
  the sector $\Sigma_\vartheta-\omega(z)$.

  \itemref{ell.bd2.impl.iii}~Note that $\Lambda(z)$ is the operator
  associated with $\qf l_z$ in the sense of sesquilinear forms,
  see~\cite[Thm.~VI.2.1]{kato:66}; in particular, $\Lambda(z)$ is
  closed and sectorial, and $\dom \Lambda(z)$ is a form core (i.e.,
  dense in $\HSaux^{1/2}$.  Moreover, $\Lambda(z)$ is the (strong)
  operator associated with $\qf l_z$, see~\eqref{eq:dn.ass.op}
  
  For $\HSaux^1 = \dom \Lambda(z)$ we use the equality
  \begin{equation*}
    \qf l_z(\phi,\psi) - \qf l(\phi,\psi)
    = -(z+1) \iprod{\SDir(z) \phi} {\SDir \psi}.
  \end{equation*}
  for $\phi,\psi \in \HSaux^{1/2}$ (see~\eqref{eq:dn.qf.zw}).  The
  inclusion ``$\subseteq$'' follows from
  \begin{multline}
    \label{eq:ell.bd2.impl.iiia}
    \norm {\Lambda(z) \phi}
    = \sup_{\psi \in \HSaux^{1/2}}
       \frac{\qf l_z(\phi,\psi)}{\norm[\HSaux] \psi}
    \le \sup_{\psi \in \HSaux^{1/2}}
       \frac{\iprod[\HSaux] {\Lambda \phi} \psi 
         + \abs{z+1} \sqrt{\qf q_z(\phi) \qf q(\psi)}}
       {\norm[\HSaux] \psi}\\
    \le \norm{\Lambda \phi} 
     + \abs{z+1}\sqrt{C^\Dir(z,1)}C^2\norm[\HSaux] \phi
  \end{multline}
  for $\phi \in \dom \Lambda=\HSaux^1$ using~\eqref{eq:dsol.z.norm.hs}
  and the boundedness of $\qf q$.

  For the inclusion ``$\supseteq$'' we argue similarly.  The
  boundedness of $\Lambda(z)$ as operator $\HSaux^1 \to \HSaux$
  follows also from~\eqref{eq:ell.bd2.impl.iiia}.

  \itemref{ell.bd2.impl.iv}--\itemref{ell.bd2.impl.v} can be deduced
  similarly as in~\cite[Sec.~2]{mnp:13}.
\end{proof}
%----------------------------------------------------------------------

Note that if the boundary pair is not elliptically regular then $\qf
l_z$ is not necessarily closed on $\HSaux^{1/2}$; even worse, $\qf
l_z$ might not be bounded from below or sectorial, see \Rem{dtn-qf}.

We denote by $\map {B(z)} \HS \HSaux$ the adjoint of
$\map{\eSDir(\conj z)} \HSaux \HS$, i.e., the restriction of
$\map{\SDir(\conj z)^*}{\HS^{-1}}{\HSaux^{-1/2}}$ to $\HS$ for an
elliptically regular boundary pair.  The proof of the following is
straightforward from \Thms{dsol.z}{dn.z.qf}:
%----------------------------------------------------------------------
\begin{proposition}
  \label{prp:bd2.b.z}
  Assume that $(\Gamma,\HSaux)$ is an elliptically regular boundary
  pair, and that $z, w \notin \spec{H^\Dir}$, then the following
  assertions are true:
  \begin{myenumerate}
  \item
    \label{bd2.b.z.i}
    We have $\map{B(z) - B(w)=(z-w) B(w) \RDir(z)} \HS \HSaux$ and the
    operator is bounded.
  \item
    \label{bd2.b.z.ii}
    The map $z \to B(z)$ is holomorphic and the derivatives
    $\map{B^{(k)}(z)= k! B(z)\RDir(z)^k}\HS \HSaux$ are bounded.
  \item
    \label{bd2.b.z.iii}
    We have $\map{\Lambda(z) - \Lambda(w) = -(z-w) \eSDir(\conj z)^*
      \eSDir(w) = -(z-w) B(z) B(\conj w)^*} \HSaux \HSaux$ and the
    operator is bounded.
  \item
    \label{bd2.b.z.iv}
    The derivatives of $\Lambda(\cdot)$ are bounded, i.e.,
    \begin{equation*}
      \map{\Lambda^{(k)}(z)= - k! \eSDir(\conj z)^* \RDir(z)^k \eSDir(z)
        = - k! B(z) \RDir(z)^k B(\conj z)^*}\HS \HSaux;
    \end{equation*}
    in particular, $\Lambda'(z)= - \eSDir(\conj z)^* \eSDir(z) = - B(z)
    B(\conj z)^*$.
  \item
    \label{bd2.b.z.v}
    The imaginary part $\Im \Lambda(z) = -(\Im z) \eSDir(z)^*
    \eSDir(z) = -(\Im z) B(\conj z) B(\conj z)^*$ is bounded and
    non-positive for $\Im z \ge 0$.
  \item
    \label{bd2.b.z.vi}
    Assume that $0 \notin \spec{\Lambda(z)}$ and $0 \notin \spec
    {\Lambda(w)}$, then we have
    \begin{equation*}
      \Lambda(z)^{-1}-\Lambda(w)^{-1}
      =(z-w) \Lambda(z)^{-1}\eSDir(\conj w)^*\eSDir(z)\Lambda(w)^{-1}
      =\Lambda(z)^{-1}B(z)B(\conj w)^*\Lambda(w)^{-1}.
  \end{equation*}

  \end{myenumerate}
\end{proposition}
%----------------------------------------------------------------------
\begin{proof}
  Let us just give a proof for the last assertion: it follows from
  $\Lambda(z)^{-1}-\Lambda(w)^{-1}=-\Lambda(z)^{-1} (\Lambda(z) -
  \Lambda(w)) \Lambda(w)^{-1}$ and~\itemref{bd2.b.z.iii} and the fact
  that $\dom \Lambda(z)=\dom \Lambda(w)=\HSaux^1$.
\end{proof}
%----------------------------------------------------------------------

%----------------------------------------------------------------------
\begin{remark}
  \label{rem:non-ell.ntd}
  The last assertion is useful if we know that $\Lambda(\lambda)$ has
  a bounded inverse without a priori knowing that $\lambda \notin
  \spec \HNeu$, this is needed in the proof of
  \Thmenum{krein2}{krein2.ii}.  Note that the formula in
  \Prpenum{nd.z}{nd.z.i} requires that $z$ and $w$ are \emph{not} in
  the Neumann spectrum $\spec \HNeu$.  
\end{remark}
%----------------------------------------------------------------------

%----------------------------------------------------------------------
\subsection{Positive boundary pairs}
%----------------------------------------------------------------------
\label{sec:pos.bd2}

We have a sort of ``converse'' notion of elliptic regularity, namely,
that the solution form $\qf q$ is (uniformly) positive:

%----------------------------------------------------------------------
\begin{definition}
  \label{def:pos.bd2}
  We say that the boundary pair $(\Gamma,\HSaux)$ is \emph{positive},
  if the solution form $\qf q$ ($\qf q(\phi) := \normsqr[\HS]{\SDir
    \phi}$, $\phi \in \HSaux^{1/2}$) is uniformly positive, i.e., if
  there is a constant $c>0$ such that
  \begin{equation*}
    \qf q(\phi)
    = \normsqr[\HS]{\SDir \phi} 
    \ge c^2 \normsqr[\HSaux] \phi
  \end{equation*}
  for all $\phi \in \HSaux^{1/2}$.
\end{definition}
%----------------------------------------------------------------------

%----------------------------------------------------------------------
\begin{remark}
  \label{rem:pos.bd2}
  Not all boundary pairs are positive: a counterexample is given by
  the manifold model of \Sec{lapl.mfd}.  We have indicated in
  \Thm{bd3.intro} how this notion relates to \emph{ordinary boundary
    triples}.
\end{remark}
%----------------------------------------------------------------------

For the positivity of a boundary pair, we have the following
equivalent characterisations.  The proof is very much the same as the
one of \Thm{ell.bd2}, hence we omit it:
%----------------------------------------------------------------------
\begin{theorem}
  \label{thm:pos.bd2}
  Let $(\Gamma,\HSaux)$ be a boundary pair, then the following
  assertions are equivalent:
  \begin{myenumerate}
  \item 
    \label{pos.bd2.i}
    The boundary pair is positive.

  \item 
    \label{pos.bd2.iv}
    The solution form $\qf q_z$ is uniformly positive for some (any)
    $z \in \C \setminus \spec \HDir$.

 \item 
   \label{pos.bd2.iii}
   There is a constant $c(z)>0$ such that $\norm[\HS]{\SDir(z)} \ge
   c(z) \norm[\HSaux] \phi$ for all $\phi \in \HSaux^{1/2}$ for some
   (any) $z \in \C \setminus \spec \HDir$.

  \item 
    \label{pos.bd2.v}
    The form-valued function $z \to -\qf l_z$ is a uniformly strict
    Nevanlinna function, i.e., $-\Im \qf l_z$ is uniformly positive for
    some (any) $z \in \C$ with $\Im z>0$.

  \item
    \label{pos.bd2.vi}
    The negative derivative $-\qf l'_\lambda$ is a uniformly positive
    form for some (any) $\lambda \in \R \setminus \spec \HDir$.
  \end{myenumerate}
\end{theorem}
%----------------------------------------------------------------------
Operator-valued functions $-\Lambda(\cdot)$ with $-\Im \Lambda(z)$
being uniformly positive are called \emph{uniformly strict Nevanlinna}
functions in~\cite[p.~5354]{dhms:06}.  In~\itemref{pos.bd2.v}, we have
a form equivalent notion.

%----------------------------------------------------------------------
% dddd
\section{Resolvent formulae and spectral relations}
\label{sec:bd2.krein}
%
%----------------------------------------------------------------------

In this section we present some of our main results: a Krein-type
resolvent formula (\Thm{krein.res}) and spectral relations between the
Neumann operator and the family of \DtN\ operators
(\Thms{krein1}{krein2}).

%----------------------------------------------------------------------
\subsection{Resolvent formula for boundary pairs}
%----------------------------------------------------------------------

We will prove a resolvent formula in a rather abstract way.
To do so, we first need some technical preparation.  Denote by $\map
{\pi_z} {\HS^1}{\HS^{1,\Dir}}$ the canonical map associating with $f=w
+ h \in \HS^1=\HS^{1,\Dir} \dplus \LS^1(z)$ the component $w \in
\HS^{1,\Dir}$ (see~\eqref{eq:dec.ker.sol}) and by $\pi_z^*$ its dual.
We denote by $\wHNeu$, $\wHDir$ and $\wLambda(z)$ the operators
$\HNeu$, $\HDir$ and $\Lambda(z)$ extended to $\HS^1 \to \HS^{-1}$,
$\HS^{1,\Dir} \to \HS^{-1,\Dir}$, $\HSaux^{1/2} \to \HSaux^{-1/2}$.
respectively.  The natural inclusion $\embmap{\iota = \iota_{1,\Dir
    \to 1}}{\HS^{1,\Dir}}{\HS^1}$ is an isometry and induces an
operator $\map{\iota^*}{\HS^{-1}}{\HS^{-1,\Dir}}$.  Note that $\map
{\iota^* \wt f}{\HS^{1,\Dir}} \C$ is defined by $(\iota^* \wt f )(g) =
\wt f(\iota g)$ for $\wt f \in \HS^{-1}$ and $g \in \HS^{1,\Dir}$.  If
$f \in \HS$ is embedded in $\HS^{-1}$ via $\wt f= \iprod f \cdot$,
then
\begin{equation}
  \label{eq:iota.restr}
  (\iota^* \wt f)(g)
  = \iprod f {\iota g} 
  = \iprod f g
  = \wt f (g),
\end{equation}
i.e., $\iota^*$ restricted to $\HS$ acts as the identity on $\HS$.

We have now the following relation between the operators $\wHNeu$ and
$\wHDir$ and their resolvents extended to the scale of Hilbert spaces.
%----------------------------------------------------------------------
\begin{lemma}
  \label{lem:pi.iota.z}
  Let $z \in \C \setminus (\spec \HDir \cup \spec \HNeu$, then we have
  the following identities:
  \begin{myenumerate}
  \item
    \label{pi.iota.z.i}
    $\map{\iota^*(\wHNeu - z) = (\wHDir - z) \pi_z}
    {\HS^1}{\HS^{-1,\Dir}}$,
  \item
    \label{pi.iota.z.ii}
    $\map{\wRDir(z) \iota^* = \pi_z \wRNeu(z)}
    {\HS^{-1}}{\HS^{1,\Dir}}$,
    
  \item
    \label{pi.iota.z.iii}
    $\map{\iota \pi_z = \iota \wRDir(z) \iota^*(\wHNeu -
      z)}{\HS^1} {\HS^1}$ is the projection onto $\HS^{1,\Dir}$ with
    kernel $\LS^1(z)$, Moreover, the complementary projection is given
    by $\id_{\HS^1} - \iota \pi_z = P(z) (= S(z) \Gamma)$.
  \item
    \label{pi.iota.z.iv}
    $\map{P(z) \wRNeu(z) = \wRNeu(z) P(\conj z)^*}{\HS^{-1}}{\HS^1}$.
  \end{myenumerate}
\end{lemma}
%----------------------------------------------------------------------
\begin{proof}
  \itemref{pi.iota.z.i}~Let $f \in \HS^1$ and $g \in \HS^{1,\Dir}$.
  Then $f=w + h$ with $w = \pi_z f$ according to the decomposition
  $\HS^1=\HS^{1,\Dir} \dplus \LS^1(z)$.  Moreover,
  \begin{equation*}
    \iprod[\HS^{-1,\Dir},\HS^{1,\Dir}] {\iota^* (\wHNeu - z)f} g
    = (\qf h - z \qf 1)(f,g)
    = (\qf h - z \qf 1)(w,g)
    = \iprod[1,-1] {(\wHDir - z) \pi_z f} g,
  \end{equation*}
  since $(\qf h - z \qf 1)(h,g)=0$ ($h \in \LS^1(z)$ and $g \in
  \HS^{1,\Dir}$, see \Def{weak.dsol.z}).

  \itemref{pi.iota.z.ii}~follows from~\eqref{pi.iota.z.i} by multiplying
  with the resolvents.

  For~\eqref{pi.iota.z.iii}, note that $(\id_{\HS^1} - \iota \pi_z)f=
  h = P(z)f$ is the complementary projection onto $\LS^1(z)$.
  Moreover, we have $\iota \wRDir(z) \iota^*(\wHNeu-z)=\iota \pi_z
  \wRNeu(z)(\wHNeu-z)=\iota\pi_z$ by~\eqref{pi.iota.z.ii}.

  \itemref{pi.iota.z.iv}~follows from
  \begin{multline*}
    P(z)\wRNeu(z) 
    = (\id_{\HS^1}-\iota \pi_i) \wRNeu(z)
    = \wRNeu(z) - \iota \wRDir(z) \iota^*
    = \wRNeu(z) - (\wRDir(\conj z)\iota^*)^* \iota^*\\
    = \wRNeu(z) - (\pi_{\conj z} \wRNeu(\conj z))^* \iota^*
    = \wRNeu(z)(\id_{\HS^{-1}} - (\iota \pi_{\conj z})^*
    = \wRNeu P(\conj z)^*,
  \end{multline*}
  where we used~\eqref{pi.iota.z.ii} twice (once for $z$ in the second
  and once in its dual version for $\conj z$ in the fourth equality).
\end{proof}
%----------------------------------------------------------------------

We have now prepared all ingredients in order to prove one of the main
theorems for boundary pairs, a weak version of the so-called
\emph{Krein's resolvent formula}.  This formula allows us to detect
the Neumann spectrum as the ``zeros'' (\Thms{krein1}{krein2}) of the
\DtN\ operator. 
%----------------------------------------------------------------------
\begin{theorem}
  \label{thm:krein.res}
%   \addtocounter{equation}{-1} %
  Assume that $(\Gamma,\HSaux)$ is a boundary pair associated with the
  quadratic form $\qf h$ and that $z \in \C \setminus (\spec \HDir
  \cup \spec \HNeu)$.
  \begin{subequations}
    \begin{myenumerate}
    \item
      \label{krein.res.i}
      We have the following weak versions of Krein's resolvent
      identity:
      \begin{align}
        \nonumber
        \wRNeu (z) - \iota \wRDir(z) \iota^*
        &= P(z) \wRNeu(z)\\
        \label{eq:krein.res.w}
        &= \SDir(z) \wLambda(z)^{-1} \SDir(\conj z)^*
        \colon \HS^{-1}
        \stackrel {\SDir(\conj z)^*} \longrightarrow
        \HSaux^{-1/2}
        \stackrel {\wLambda(z)^{-1}} \longrightarrow
        \HSaux^{1/2}
        \stackrel {\SDir(z)} \longrightarrow 
        \HS^1,\\
        \label{eq:krein.res0}
        \RNeu (z) - \RDir(z)
        &= \SDir(z) \wLambda(z)^{-1} \SDir(\conj z)^*
        \colon \HS
        \stackrel {\SDir(\conj z)^*} \longrightarrow
        \HSaux^{-1/2}
        \stackrel {\wLambda(z)^{-1}} \longrightarrow
        \HSaux^{1/2}
        \stackrel {\SDir(z)} \longrightarrow 
        \HS.
      \end{align}
    \item
      \label{krein.res.iii}
      If, in addition, the boundary pair is elliptically regular, then
      \begin{equation}
        \label{eq:krein.res}
        \RNeu (z) - \RDir(z)
        = \eSDir(z) \Lambda(z)^{-1} \eSDir(\conj z)^*
        \colon
        \HS \stackrel {\eSDir(\conj z)^*} \longrightarrow 
        \HSaux \stackrel {\Lambda(z)^{-1}} \longrightarrow 
        \HSaux \stackrel {\eSDir(z)} \longrightarrow \HS,
      \end{equation}
      i.e., these operators do not leave the original Hilbert spaces
      $\HSaux$ and $\HS$.
    \end{myenumerate}
  \end{subequations}
\end{theorem}
%----------------------------------------------------------------------
\begin{proof}
  \itemref{krein.res.i}~We have
 \begin{align*}
   \wRNeu(z) - \iota \wRDir(z) \iota^*
   &= (\id_{\HS^1} - \iota \pi_z) \wRNeu(z)\\
   &= P(z) \wRNeu(z)
   = P(z)^2 \wRNeu(z)%\\
   = P(z) \wRNeu(z) P(\conj z)^*
   = S(z) \Gamma \wRNeu(z) \Gamma^* S(\conj z)^*
 \end{align*}
 using \Lem{pi.iota.z}.  Finally, in \Prp{dn.z.inv} we showed that
 \begin{equation*}
   \Gamma \wRNeu(z) \Gamma^*
   = \wLambda(z)^{-1}
 \end{equation*}
 as an operator $\HSaux^{-1/2} \to \HSaux^{1/2}$, and the resolvent
 formula~\eqref{eq:krein.res.w} follows, as well
 as~\eqref{eq:krein.res0}.

 \itemref{krein.res.iii}~is just a consequence
 of~\eqref{eq:krein.res.w} together with \Rem{bd2.ell} and
 \Prpenum{dn.z.inv}{dn.z.inv.iv}.
\end{proof}
%----------------------------------------------------------------------

%----------------------------------------------------------------------
\subsection{Operator pencils}
\label{sec:sp.rel.gen}
%----------------------------------------------------------------------
% 

For the spectral relations in \Thm{krein2}, we need some results on
operator pencils on $\HSaux$, which we define here in the form we need
it (see e.g.~\cite{tretter:00, eschwe-langer:04} and references
therein).  We restrict ourselves to the case where $T(z)-T(z_0)$ are
bounded operators for all $z$ (which corresponds to the elliptic
regular case in our application later on, see
\Remenum{op.pencil}{op.pencil.ii}):
%----------------------------------------------------------------------
\begin{definition}
  \label{def:op.pencil}
  Let $D \subset \C$ be open with $\conj D=D$.  We say that
  $T(\cdot)=\{T(z)\}_{z \in D}$ is a \emph{holomorphic self-adjoint
    operator pencil} if $T(z)^*=T(\conj z)$ and $z \mapsto
  T(z)-T(z_0)$ is holomorphic with values in the set of \emph{bounded}
  operators on $\HSaux$ for some $z_0 \in D$.

  The \emph{spectrum of the operator pencil $T(\cdot)$} is given by
  \begin{equation*}
    \spec {T(\cdot)} :=
    \set{z \in \C}{\text{$T(z)$ is not invertible}}.
  \end{equation*}
  We say that $\lambda$ is an \emph{eigenvalue} of $T(\cdot)$ (shortly
  $\lambda \in \spec[p]{T(\cdot)}$) if $T(\lambda)$ is not injective.
\end{definition}
%----------------------------------------------------------------------
Note that the usual spectrum of an operator $A$ is the operator pencil
spectrum of the operator pencil $z \mapsto A-z$.

Fix $\lambda \in D \cap \R$.  It follows that if
\begin{equation*}
  T(z) = A_0 - (z-\lambda) A_1 - (z-\lambda)^2A_2(z)
\end{equation*}
then $A_0=T(\lambda)$ is self-adjoint (and possibly unbounded), where
$A_1=-T'(\lambda)$ is self-adjoint and bounded and where $z \mapsto
A_2(z)$ is holomorphic with values in the bounded operators on
$\HSaux$.  Then it is obvious that $\lambda \in \spec[p]{T(\cdot)}$
iff $\lambda \in \spec[p]{A_0}$, but it is not clear whether $\lambda
\in \spec {T(\cdot)}$ iff $\lambda \in \spec {A_0}$ or whether
$\lambda$ is isolated in $\spec {T(\cdot)}$ iff $\lambda$ is isolated
in $\spec {A_0}$.  For the latter assertion, we need more assumptions:
%----------------------------------------------------------------------
\begin{proposition}
  \label{prp:op.pencil}
  Assume that $T(\cdot)$ is a holomorphic self-adjoint operator pencil
  on $D$ and that $\lambda \in D \cap \R$.  Assume that $A_1 :=
  -T'(\lambda)$ is (bounded and) uniformly positive (i.e., there are
  constants $0<c \le C < \infty$ such that $c\normsqr \phi \le
  \iprod{A_1 \phi} \phi \le C \normsqr \phi$).  Then the following
  assertions are equivalent:
  \begin{myenumerate}
  \item
    \label{op.pencil.iia}
    $\lambda$ is isolated in $\spec {T(\cdot)}$ and 
    \begin{equation}
      \label{eq:ass.op.pencil}
      T(z)^{-1}
      = \frac 1 {z - \lambda}
      T_\lambda(z) + \hat T_\lambda(z),
    \end{equation}
    where $T_\lambda(\cdot)$ and $\hat T_\lambda(\cdot)$ are
    holomorphic (bounded) operator functions near $z=\lambda$;
  \item
    \label{op.pencil.iib}
    $0$ is isolated in the individual operator spectrum $\spec
    {T(\lambda)}$.
  \end{myenumerate}
\end{proposition}
%----------------------------------------------------------------------
\begin{proof}
  Without loss of generality, we can assume that $\lambda=0$;
  otherwise consider $\wt T(z):=T(z+\lambda)$.

  Let us first consider the situation $A_2(z)=0$ for all $z \in D$. In
  this case, we do not need the expression~\eqref{eq:ass.op.pencil}
  for $T(z)^{-1}$.  Note first that $\ker T(0)=\ker A_0$, hence $0 \in
  \spec[p] {T(\cdot)}$ iff $0 \in \spec[p] {A_0}$.  Moreover, we have
  \begin{equation}
    \label{eq:op.pencil2}
    T(z)= A_0-zA_1 = A_1^{1/2}( A_1^{-1/2} A_0 A_1^{-1/2} - z) A_1^{1/2}.
  \end{equation}
  Now, $T(z)$ is invertible iff $A_1^{-1/2} A_0 A_1^{-1/2} - z$ is
  invertible, and $0$ is isolated in $\spec{A_1^{-1/2} A_0 A_1^{-1/2}}$ iff
  $0$ is isolated in $\spec {A_0}$ (see e.g.~\cite[Lem~3.1]{bgp:08}).

  Let us now consider the general situation and prove
  \itemref{op.pencil.iia}$\Rightarrow$\itemref{op.pencil.iib}: We have
  \begin{equation*}
    S(z) :=  T(z)^{-1} (A_0 - z A_1)
    = T(z)^{-1} (T(z) + z^2 A_2(z))
    = \id_\HSaux + z^2 T(z)^{-1} A_2(z).
  \end{equation*}
  Using~\eqref{eq:ass.op.pencil} we have
  \begin{equation*}
    z^2 T(z)^{-1} A_2(z)
    = (z T_0(z) + z^2 \hat T_0(z)) A_2(z)
  \end{equation*}
  and this operator family is bounded near $z=0$ (including $z=0$). In
  particular, if $\abs z$ is small enough, then $S(z)$ is invertible,
  hence $A_0-z A_1 = T(z) S(z)$ is invertible, too.  Therefore, we
  have shown that $0$ is isolated in the spectrum of the operator
  pencil $z \mapsto A_0 - z A_1$.  From the first part (the case
  $A_2(z)=0$), it follows that $0 \in \spec {A_0}$ is isolated.

  \itemref{op.pencil.iib}$\Rightarrow$\itemref{op.pencil.iia}: Let $0
  \in \spec{A_0}$ be isolated then $0$ is isolated in the spectrum of
  $z \mapsto A_0 - z A_1$, again by the first part (the case
  $A_2(z)=0$).  Using~\eqref{eq:op.pencil2} we have
  \begin{equation*}
    (A_0 - z A_1)^{-1}
    = A_1^{-1/2} (B - z)^{-1} A_1^{-1/2}
    = A_1^{-1/2} \Bigl(-\frac 1 z \1_{\{0\}}(B) + B_0^{-1} \Bigr) 
      A_1^{-1/2}
  \end{equation*}
  where $B:= A_1^{-1/2} A_0 A_1^{-1/2}$ and where $B_0$ is the
  restriction of $B$ onto $\ker B^\orth$.  In particular, we have a
  representation of the inverse as in~\eqref{eq:ass.op.pencil}, and in
  order to show that $T(z)$ has a bounded inverse for $0 < \abs z$
  small enough we can argue similarly as in
  \itemref{op.pencil.iia}$\Rightarrow$\itemref{op.pencil.iib}.
\end{proof}
%----------------------------------------------------------------------

%----------------------------------------------------------------------
\begin{remark}
  \label{rem:op.pencil}
  \indent
  \begin{myenumerate}
  \item The non-trivial assertion in \Prp{op.pencil} is the fact that
    $\lambda$ and $0$ are \emph{isolated} in the spectra, i.e., that
    $T(z)$ and $T(\lambda)-z$ are invertible for all $z \ne \lambda$
    near $\lambda$.

  \item
    \label{op.pencil.ii}
    In our application, the operator pencil will be $\Lambda(\cdot)$.
    If we assume that the boundary pair is elliptically regular, then
    $\Lambda(\cdot)$ is a holomorphic self-adjoint operator pencil on
    $D=\C \setminus \spec \HDir$ as in \Def{op.pencil} since
    $\Lambda(z)^*=\Lambda(\conj z)$
    (\Thmenum{ell.bd2.impl}{ell.bd2.impl.iii}) and
    \begin{align*}
%      \nonumber
      \Lambda(z) 
      &= \Lambda(\lambda) - (z-\lambda) \eSDir(\lambda)^* \eSDir(z)\\
%      \nonumber
      &= \Lambda(\lambda) - (z-\lambda) \eSDir(\lambda)^*
      \bigl( \eSDir(\lambda) + (z-\lambda) \RDir(z) \eSDir(\lambda) \bigr)\\
%      \label{eq:dn.z.hol}
      &= \Lambda(\lambda) - (z-\lambda) Q(\lambda)
      - (z-\lambda)^2 A_{2,\lambda}(z), 
      \qquad A_{2,\lambda}(z):= \eSDir(\lambda)^* \RDir(z) \eSDir(\lambda)
    \end{align*}
    by \Thm{dsol.z} and \Prp{bd2.b.z}.  In particular,
    $A_0=\Lambda(\lambda)$ and $A_1=Q(\lambda)$.  Moreover, $z \to
    A_{2,\lambda}(z)$ is holomorphic and $\norm{A_{2,\lambda}(z)} \le
    \normsqr{\eSDir(\lambda)}/d(z,\spec \HDir)$.  In particular,
    $\Lambda(z)-\Lambda(\lambda)$ is a bounded operator.

    The boundedness of $A_1=Q(\lambda)$ means that the boundary pair
    is elliptically regular and the uniform positivity of $A_1$ means
    that the boundary pair is positive.

  \item We are not aware of a counterexample of an operator pencil
    where $A_1=-T'(\lambda)$ is uniformly positive, but \emph{not}
    bounded.  However, we use \Prp{op.pencil} in
    \Thmenum{krein2}{krein2.v}, and \Ex{non-ell.bd2.dis.spec} is a
    counterexample to the assertion of the theorem (the boundary pair
    is positive, but not elliptically regular), see also
    \Remenum{krein2}{rem.krein2.iii}.
    
  \item Also, we cannot drop the \emph{uniform positivity} of $A_1$,
    as the counterexample given below in \Ex{op.pencil} shows
    (e.g. assuming that $A_1$ is bounded and injective, but \emph{not}
    uniformly positive).  This means that it is not enough for the
    application of \Prp{op.pencil} that the boundary pair is
    elliptically regular, but not positive (as in the case of our
    basic example, a Laplacian on a manifold with boundary).
    Nevertheless, we are not aware of a \emph{boundary pair} which is
    elliptically regular but not positive, such that the assertion of
    \Prp{op.pencil} is false for $T(\cdot)=\Lambda(\cdot)$.
  \end{myenumerate}
\end{remark}
%----------------------------------------------------------------------

The author is indebted to Michael Strauss for the following example:
%----------------------------------------------------------------------
\begin{example}
  \label{ex:op.pencil}
  We give here a counterexample for
  ``\itemref{op.pencil.iib}$\Rightarrow$\itemref{op.pencil.iia}'' in
  \Prp{op.pencil} violating the uniform positivity of $A_1$, even in
  the situation when $A_2(z)=0$: Let $T(z):= A_0 - z A_1$ with
  $A_0=A_0^*$ and $A_1=A_1^* \ge 0$ specified below:

  Let $0$ be an \emph{isolated} eigenvalue of $A_0$ of infinite
  multiplicity in the essential spectrum of $A_0$.  Denote by
  $(\phi_n)_n$ an orthonormal basis of the eigenspace $\ker A_0$.  Set
  \begin{equation*}
    A_1 := \1_{\R \setminus \{0\}}(A_0)
    + \sum_n \frac 1 n \iprod{\phi_n} \cdot \phi_n
  \end{equation*}
  then $A_1$ is bounded, non-negative and injective (but not
  \emph{uniformly} positive).  Moreover, $T(z) \phi_n = -(z/n)
  \phi_n$, hence $-z/n \in \spec{T(\cdot)}$ and $\norm{T(z) \phi_n}
  \to 0$ showing that $T(z)$ does not have a bounded inverse for any
  $z \in \C$.  In particular, $0 \in \spec {T(\cdot)}$, but $0$ is
  \emph{not} isolated.
\end{example}
%----------------------------------------------------------------------

%----------------------------------------------------------------------
\subsection{Spectral relations between the Neumann and the \DtN\
  operator family}
\label{sec:sp.rel}
%----------------------------------------------------------------------
We can now prove some important consequences of Krein's resolvent
formula and other formulae for the \DtN\ operator.  In particular, we
have the following spectral relations:
%----------------------------------------------------------------------
\begin{theorem}
  \label{thm:krein1}
%   \addtocounter{equation}{-1} %
  Assume that $(\Gamma,\HSaux)$ is a boundary pair associated with the
  quadratic form $\qf h$ and let $\lambda \in \C \setminus \spec
  \HDir$.  Then the following assertions are true:
  \begin{subequations}
    \begin{myenumerate}
    \item
      \label{krein1.i}
      The Dirichlet solution operator $S(\lambda)$ is a topological
      isomorphism from $\ker \Lambda(\lambda)$ onto $\ker (\HNeu -
      \lambda)$ with inverse $\Gamma$, i.e.,
      \begin{equation}
        \label{eq:krein1.ia}
        \ker (\HNeu - \lambda) = S(\lambda) \ker \Lambda(\lambda).
      \end{equation}
      In particular, we have the spectral relation
      \begin{equation}
        \label{eq:krein1.ib}
        \lambda \in \spec[p] \HNeu
             \quad \Leftrightarrow \quad
        0 \in \spec[p]{\Lambda(\lambda)}.
      \end{equation}
      for the point spectrum (the set of eigenvalues).  Moreover, the
      multiplicity of an eigenspace is preserved.

    \item
      \label{krein1.ia}
      Assume that $\map \RNeu \HS \HS$ and $\map \Gamma {\HS^1}
      \HSaux$ are compact operators, then the spectra of $\HNeu$,
      $\HDir$ and $\Lambda(\lambda)$ are purely discrete.  Moreover,
      the spectral relation~\eqref{eq:krein1.ib} is true for the
      discrete (hence the entire) spectrum, i.e.,~\eqref{eq:krein1.ib}
      holds with $\spec[p] \cdot$ replaced by $\spec[disc] \cdot$ or
      $\spec \cdot$.

    \item
      \label{krein1.i0}
      If $\lambda \notin \spec \HNeu$, then $\Lambda(\lambda)$ has a
      bounded inverse.  In particular, $\Lambda(\lambda)$ is closed
      and $0 \notin \spec {\Lambda(\lambda)}$.  (The converse is in
      general false, see \Remenum{krein2}{krein2.ii} and
      \Ex{non-ell.bd2.spec}, but true for \emph{isolated} eigenvalues,
      see~\itemref{krein1.i} and the next
      assertion~\itemref{krein1.v}).

    \item
      \label{krein1.v}
      Assume that $\lambda$ is isolated in $\spec \HNeu$, then there
      exist $r_1>0$ and $C_\lambda>0$ (depending only on $\lambda$,
      $\spec \HNeu$ and $r_1$) such that
      \begin{equation}
        \label{eq:krein1.x}
        \norm[\HSaux \to \HSaux] {\Lambda(z)^{-1}}
        \le \normsqr[1 \to 0] \Gamma 
           \frac {C_\lambda} {\abs{z - \lambda}}
      \end{equation}
      for all $z \ne \lambda$ and $\abs{z-\lambda} \le r_1$.
      Moreover, $\lambda$ is an isolated eigenvalue in the operator
      pencil spectrum $\spec{\Lambda(\cdot)}$, i.e.,
      \begin{equation*}
        \text{$\lambda$ is isolated in $\spec \HNeu$}
        \quad \Rightarrow \quad
        \text{$\lambda$ is isolated in the operator pencil spectrum 
           $\spec {\Lambda(\cdot)}$}.
      \end{equation*}
    \end{myenumerate}
  \end{subequations}
\end{theorem}
%----------------------------------------------------------------------
\begin{proof}
 \itemref{krein1.i}~Let $\phi \in \ker \Lambda(\lambda)$ and
  $h:=S(\lambda) \phi$, then
  \begin{equation*}
    (\qf h - \lambda \qf 1)(h, g)
    = \qf l_\lambda(\phi,\Gamma g)
    = \iprod{\Lambda(\lambda)\phi} {\Gamma g}
    =0
  \end{equation*}
  for $g \in \HS^1$ by the definition of $\qf l_\lambda$
  in~\eqref{eq:def.dn.z}, hence $h \in \dom \HNeu$ and
  $(\HNeu-\lambda) h =0$.

  On the other hand, if $h \in \ker (\HNeu-\lambda)$ then it is
  easily seen that $h \in \LS^1(\lambda)$.  Set $\phi:=\Gamma h$, then
  $h=S(\lambda) \phi$ and a similar calculation as above shows that
  $\phi \in \dom \Lambda(\lambda)$ and $\Lambda(\lambda) \phi =0$.

  The spectral equivalence~\eqref{eq:krein1.ib} for the point spectrum
  is obvious from~\eqref{eq:krein1.ia}, as well as the preserved
  multiplicity.

  \itemref{krein1.ia}~It follows from \Thm{nd.comp} that the spectrum
  of $\Lambda(\lambda)$ is discrete (see also
  \Prpenum{bd2.robin}{nd.z.v} for the discreteness of $\spec \HDir$).
  The spectral relation is then a consequence of
  part~\itemref{krein1.i}.

  \itemref{krein1.i0}~Assume that $\lambda \notin \spec \HNeu$, then
  $\Lambda(\lambda)^{-1} = \Gamma \RNeu(\lambda) \Gamma^*$ exists and
  is bounded as operator $\HSaux \to \HSaux$ by
  \Prpenum{dn.z.inv}{dn.z.inv.iv}.  In particular, $\Lambda(\lambda)$
  is closed and has a bounded inverse, hence $0 \notin
  \spec{\Lambda(\lambda)}$.

  \itemref{krein1.v} If $\lambda$ is isolated in $\spec \HNeu$, then
  there exists $r_1>0$ such that, for all $z \in \C$ with $0 <
  \abs{z-\lambda} \le r_1$, the resolvent $\RNeu(z)$ has a first order
  pole at $\lambda$, i.e.,
  \begin{equation*}
    \RNeu(z)
    = \frac 1 {\lambda - z} \1_{\{\lambda\}}(\HNeu) + R_\lambda(z),
  \end{equation*}
  where $R_\lambda(z)$ is the resolvent of $\HNeu \restr{\ker(\HNeu -
    \lambda)^\orth}$, and the same equation is true with $\RNeu(z)$,
  $\1_{\{\lambda\}}(\HNeu)$ and $\map{R_\lambda(z)}\HS \HS$ replaced
  by $\wRNeu(z)$, $\weakop \1_{\{\lambda\}}(\HNeu)$ and
  $\map{\wRNeu_\lambda(z)}{\HS^{-1}}{\HS^1}$.
  % and $\norm {R_z(\HNeu)} \le 1/r_1$.
  By \Prp{dn.z.inv}, we have
  \begin{equation}
    \label{eq:dtn.pole}
    \Lambda(z)^{-1}
    = \Gamma \wRNeu(z) \Gamma^*
    = \frac 1 {\lambda - z} \Gamma \weakop \1_{\{\lambda\}}(\HNeu) \Gamma^*
      + \Gamma \wRNeu_\lambda(z) \Gamma^*
  \end{equation}
  as operator in $\HSaux$.  In particular, $\Lambda(z)^{-1}$ has a
  first order pole at $z=\lambda$ and
  \begin{multline*}
    \norm{(\lambda-z)\Lambda(z)^{-1}}
    \le \normsqr[1 \to 0] \Gamma
    \bigl(\norm[-1 \to 1]{\weakop \1_{\{\lambda\}}(\HNeu)}
    + \abs{\lambda-z} \norm[-1 \to 1]{\wRNeu_\lambda(z)}
    \bigr) = \normsqr[1 \to 0] \Gamma C_\lambda(z),
    \\\text{where}\qquad
    C_\lambda(z):=
    \abs{\lambda+1} 
    + \abs{\lambda-z} \sup_{\lambda' \in \spec \HNeu \setminus \{\lambda\}}
    \frac {\abs{\lambda'+1}} {\abs{z - \lambda'}}
  \end{multline*}
  Note that $C_\lambda(z)< \infty$ since $\lambda$ is isolated in
  $\spec \HNeu$ and since the fraction tends to $1$ as $\lambda' \to
  \infty$.  Since $z \to C_\lambda(z)$ is continuous, $C_\lambda :=
  \sup C_\lambda(\set {z \in \C} {\abs{z - \lambda} \le r_1}) <
  \infty$.
  
  We have therefore shown that $z \notin \spec {\Lambda(\cdot)}$ for
  $0<\abs z \le r_1 $.  For the last assertion, it remains to show
  that $\lambda$ is an eigenvalue of $\Lambda(\lambda)$, but this
  follows immediately from~\itemref{krein1.i}.
\end{proof}
%----------------------------------------------------------------------

If we assume elliptic regularity for the boundary pair then we can
conclude the following \emph{stronger} spectral relations:
%----------------------------------------------------------------------
\begin{theorem}
  \label{thm:krein2}
%   \addtocounter{equation}{-1} %
  Assume that $(\Gamma,\HSaux)$ is an \emph{elliptically regular}
  boundary pair associated with the quadratic form $\qf h$ and let
  $\lambda \in \C \setminus \spec \HDir$.  Then the following
  assertions are true:
%  \begin{subequations}
    \begin{myenumerate}
    \item
      \label{krein2.ii}
      The spectral relation
      \begin{equation}
        \label{eq:krein2.ib}
        \lambda \in \spec \HNeu
             \quad \Leftrightarrow \quad
        0 \in \spec{\Lambda(\lambda)}.
      \end{equation}
      holds for the entire spectrum.

    \item
      \label{krein2.ia}
      Assume that $\map \Gamma {\HS^1} \HSaux$ is a compact operator
      (see \Thm{nd.comp} for equivalent characterisations) then
      $\essspec \HNeu=\essspec \HDir$, and $\spec \HNeu \setminus
      \spec \HDir$ consists of discrete eigenvalues of $\HNeu$, only.
      In particular, if $\lambda \notin \spec \HDir$, then the
      relation~\eqref{eq:krein2.ib} is true for the discrete spectrum,
      i.e., with $\spec \cdot$ replaced by $\spec[disc] \cdot$.

    \item
      \label{krein2.iii}
      An eigenvalue $\lambda$ is isolated in the spectrum of $\HNeu$
      iff $\lambda$ is isolated in the spectrum of the operator pencil
      $\Lambda(\cdot)$, i.e.,
      \begin{equation*}
        \text{$\lambda$ is isolated in $\spec \HNeu$}
        \quad \Leftrightarrow \quad
        \text{$\lambda$ is isolated in the operator pencil spectrum 
            $\spec {\Lambda(\cdot)}$}.
      \end{equation*}
      If one of the conditions is fulfilled, then the
      estimate~\eqref{eq:krein1.x} holds.
      
    \item
      \label{krein2.v}
      Assume additionally, that the boundary pair is \emph{positive},
      then
      \begin{equation*}
        \text{$\lambda$ is isolated in $\spec \HNeu$}
        \quad \Leftrightarrow \quad
        \text{$0$ is isolated in the individual spectrum 
          $\spec {\Lambda(\lambda)}$}
      \end{equation*}
      (i.e., isolated in the spectrum of the \emph{individual}
      operator $\Lambda(\lambda)$).  In particular, the spectral
      relation~\eqref{eq:krein2.ib} is also true for the discrete and
      essential spectrum, i.e., with $\spec \cdot$ replaced by
      $\spec[disc] \cdot$ resp.\ $\spec[ess] \cdot$.
    \end{myenumerate}
%  \end{subequations}
\end{theorem}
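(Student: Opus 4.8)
The plan is to deduce all four assertions from Krein's resolvent formula (\Thm{krein.res} and \Cor{krein.res}), from the point-spectrum relation and the pole estimate \eqref{eq:krein1.x} of \Thm{krein1}, and from the operator-pencil lemma \Prp{op.pencil}. First observe that the case $\lambda \in \C \setminus \R$ is trivial: then $\lambda \notin \spec \HNeu$ because $\HNeu$ is self-adjoint, while $\Lambda(\lambda)^{-1}$ is bounded by \Thmenum{dn.z.inv}{dn.z.inv.iv}, so $0 \notin \spec{\Lambda(\lambda)}$ and all four equivalences hold with both sides false; so we may assume $\lambda \in \R \setminus \spec \HDir$. For assertion~\eqref{krein2.ii}, the implication ``$\lambda \notin \spec \HNeu \Rightarrow 0 \notin \spec{\Lambda(\lambda)}$'' is just \Thmenum{dn.z.inv}{dn.z.inv.iv} applied at $z=\lambda$. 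For the converse, assume $\Lambda(\lambda)$ is boundedly invertible; by \Rem{op.pencil} we have $\Lambda(z) = \Lambda(\lambda)\bigl(\id_\HSaux - (z-\lambda)\Lambda(\lambda)^{-1}(Q(\lambda) + (z-\lambda)A_{2,\lambda}(z))\bigr)$ with bounded holomorphic correction terms, so a Neumann-series argument makes $z \mapsto \Lambda(z)^{-1}$ holomorphic and bounded on a full neighbourhood of $\lambda$. Then the right-hand side $\RDir(z) + \eSDir(z)\Lambda(z)^{-1}\eSDir(\conj z)^*$ of \eqref{eq:krein.res} is bounded and holomorphic near $\lambda$ (using \Prp{bd2.b.z}, \Thm{dsol.z} and $\lambda \notin \spec \HDir$) and agrees with $\RNeu(z)$ for $z \ne \lambda$; passing to the limit $z \to \lambda$ in $(\HNeu-z)\RNeu(z) = \id = \RNeu(z)(\HNeu-z)$ exhibits the limit as a bounded two-sided inverse of $\HNeu-\lambda$, whence $\lambda \notin \spec \HNeu$.

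For assertion~\eqref{krein2.ia}: if $\Gamma$ is compact then $\Lambda(z)^{-1}$ is compact for all $z \in \C \setminus (\spec \HDir \cup \spec \HNeu)$ by \Thm{nd.comp}, so by \eqref{eq:krein.res} the difference $\RNeu(z)-\RDir(z)$ is compact; taking $z=-1$ and using Weyl stability of the essential spectrum (together with the spectral mapping theorem for resolvents) gives $\essspec \HNeu = \essspec \HDir$, hence $\spec \HNeu \setminus \spec \HDir \subseteq \disspec \HNeu$. A Robin shift — pick $a \ge 0$ with $\lambda \notin \spec{\HNeu_a}$ by \Prp{robin.nspec} and apply \Thm{nd.comp} to the boundary pair associated with $\qf h_a$ — shows $(\Lambda(\lambda)+a)^{-1}$ is compact, so $\Lambda(\lambda)$ has discrete spectrum; combined with \eqref{krein2.ii} and the multiplicity identity \Thmenum{krein1}{krein1.i} this gives \eqref{eq:krein2.ib} with $\spec$ replaced by $\disspec$. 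For assertion~\eqref{krein2.iii}: if $\lambda$ is isolated in $\spec \HNeu$ then $0 \in \spec{\Lambda(\lambda)}$ by \eqref{krein2.ii}, and \eqref{eq:krein1.x} (\Thmenum{krein1}{krein1.v}) shows $\Lambda(z)$ is invertible for $z \ne \lambda$ in a punctured neighbourhood inside $\C \setminus \spec \HDir$, so $\lambda$ is isolated in $\spec{\Lambda(\cdot)}$; conversely, if $\lambda$ is isolated in $\spec{\Lambda(\cdot)}$, then $\Lambda(\lambda)$ is not invertible, so $0 \in \spec{\Lambda(\lambda)}$ and hence $\lambda \in \spec \HNeu$ by \eqref{krein2.ii}, while invertibility of $\Lambda(z)$ on a punctured neighbourhood forces $z \notin \spec \HNeu$ there, again by \eqref{krein2.ii}, so $\lambda$ is isolated in $\spec \HNeu$; in both cases \eqref{eq:krein1.x} holds by \Thmenum{krein1}{krein1.v}.

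For assertion~\eqref{krein2.v} I would apply \Prpenum{op.pencil}{op.pencil.ii} to the holomorphic self-adjoint pencil $\Lambda(\cdot)$ on $D = \C \setminus \spec \HDir$ (a pencil by \Rem{op.pencil}), with $A_0 = \Lambda(\lambda)$ and $A_1 = -\Lambda'(\lambda) = Q(\lambda)$; here $Q(\lambda)$ is bounded by \Thmenum{ell.bd2}{ell.bd2.iv} and uniformly positive by the assumed uniform positivity together with \Thmenum{pos.bd2}{pos.bd2.iv}, so the hypothesis of \Prpenum{op.pencil}{op.pencil.ii} is met. If $\lambda$ is isolated in $\spec \HNeu$, then by \eqref{krein2.iii} it is isolated in $\spec{\Lambda(\cdot)}$ and \eqref{eq:krein1.x} shows $(z-\lambda)\Lambda(z)^{-1}$ is bounded and holomorphic on a punctured disc, hence extends holomorphically (removable singularity for operator-valued functions), giving $\Lambda(z)^{-1}$ the simple-pole form \eqref{eq:ass.op.pencil}; thus \Prpenum{op.pencil}{op.pencil.ii} yields that $0$ is isolated in $\spec{\Lambda(\lambda)}$. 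The reverse implication is the other direction of \Prpenum{op.pencil}{op.pencil.ii} followed by \eqref{krein2.iii}. Finally, under these equivalent conditions $\ker(\HNeu-\lambda) = \SDir(\lambda)\ker\Lambda(\lambda)$ with dimensions preserved (\Thmenum{krein1}{krein1.i}), so $\lambda \in \disspec \HNeu$ iff $0 \in \disspec{\Lambda(\lambda)}$, and passing to complements within the respective spectra (using \eqref{krein2.ii} on the non-isolated part) gives $\lambda \in \essspec \HNeu$ iff $0 \in \essspec{\Lambda(\lambda)}$.

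The step I expect to be the main obstacle is assertion~\eqref{krein2.v}: one must recognise that the pole estimate \eqref{eq:krein1.x} is exactly the extra input needed to promote ``$\lambda$ isolated in $\spec{\Lambda(\cdot)}$'' to ``$0$ isolated in the spectrum of the single operator $\Lambda(\lambda)$'' through \Prp{op.pencil}, and that uniform positivity of $Q(\lambda)$ is genuinely indispensable here — \Ex{op.pencil} shows the conclusion can fail for a bounded, injective, but not uniformly positive $A_1$. A pervasive minor difficulty throughout is keeping track of the several spectral notions (point, discrete and essential spectrum; single operator versus operator pencil) and of the conventions for the extended Dirichlet resolvent in the large-boundary-space case.
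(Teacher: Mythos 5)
Most of your proposal is sound and, for parts~\eqref{krein2.iii} and~\eqref{krein2.v}, essentially coincides with the paper's proof (pencil $\Lambda(\cdot)$ with $A_0=\Lambda(\lambda)$, $A_1=Q(\lambda)$, \Prpenum{op.pencil}{op.pencil.ii}); the only cosmetic difference is that you obtain the simple-pole representation~\eqref{eq:ass.op.pencil} from~\eqref{eq:krein1.x} by a removable-singularity argument, whereas the paper reads it off directly from~\eqref{eq:dtn.pole}. In part~\eqref{krein2.ii} your treatment of the nontrivial direction is genuinely different from the paper's: you prove the contrapositive ``$0\notin\spec{\Lambda(\lambda)}\Rightarrow\lambda\notin\spec\HNeu$'' by inverting $\Lambda(z)$ near $\lambda$ via the expansion of \Rem{op.pencil} and a Neumann series, and then bounding $\RNeu(z)$ through the strong Krein formula~\eqref{eq:krein.res}; the paper instead proves ``$\lambda\in\spec\HNeu\Rightarrow 0\in\spec{\Lambda(\lambda)}$'' directly, transporting a weak spectral approaching sequence through $\Gamma$ using \Prp{reg.ev} and \Lem{pi.iota.z} (this is where the paper uses that elliptic regularity makes $\qf l_\lambda$ closed and semibounded, cf.\ \Thm{ell.bd2.impl}). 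Your route is valid: it uses elliptic regularity through the boundedness of $Q(\lambda)$, $A_{2,\lambda}(z)$ and $\eSDir(z)$, so the hypotheses match, and the limiting argument for the resolvent is standard.

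The genuine gap is in part~\eqref{krein2.ia}. You invoke \Prp{robin.nspec} (and, implicitly, \Thmenum{nd.comp}{comp.d}--(\ref{comp.e})) to find $a$ with $\lambda\notin\spec{\HNeu_a}$ and to conclude that $(\Lambda(\lambda)+a)^{-1}$ is compact; but these statements carry the \emph{additional} hypothesis that $\RNeu$ is compact, which is not assumed in~\eqref{krein2.ia} and does not follow from compactness of $\Gamma$ (the half-cylinder of \Sec{cyl.mfd}, cf.\ \Rem{compact}, is elliptically regular with $\Lambda^{-1}$ compact, hence $\Gamma$ compact by \Thm{nd.comp}, while $\RNeu$ is not compact). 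So the discreteness of $\spec{\Lambda(\lambda)}$ is not justified as written. The fix stays inside your toolkit: by \Prpenum{bd2.b.z}{bd2.b.z.iii} the difference $\Lambda(\lambda)-\Lambda$ is a bounded self-adjoint operator, and $\Lambda^{-1}$ is compact by \Thm{nd.comp}; a bounded symmetric perturbation of an operator with compact resolvent again has compact resolvent, so $\Lambda(\lambda)$ has purely discrete spectrum, and your concluding step (Weyl for $\RNeu-\RDir$, $\essspec\HNeu=\essspec\HDir\not\ni\lambda$, part~\eqref{krein2.ii} and the multiplicity identity of \Thmenum{krein1}{krein1.i}) then goes through. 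Note that the paper's own proof of~\eqref{krein2.ia} is silent on why $0$ is isolated in $\spec{\Lambda(\lambda)}$, so your instinct that something must be supplied here is correct — just not via \Prp{robin.nspec}.
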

%----------------------------------------------------------------------
\begin{proof}
  \itemref{krein2.ii}~Note first that by elliptic regularity,
  $\Lambda(\lambda)$ is closed for all $\lambda \in \C \setminus \spec
  \HDir$ (i.e., also for $\lambda \in \spec \HNeu$) by
  \Thmenum{ell.bd2.impl}{ell.bd2.impl.iii}, and it makes sense to
  speak of $\spec{\Lambda(\lambda)}$.

  ``$\Rightarrow$'': We argue by contraposition: If $0 \notin
  \spec{\Lambda(\lambda)}$, then $\Lambda(\lambda)^{-1}$ exists and is
  bounded.  Moreover, the limit of
  $(\Lambda(z)^{-1}-\Lambda(\lambda)^{-1})/(z-\lambda)$ as $z \to
  \lambda$ exists in $\BdOp \HSaux$ by \Prpenum{bd2.b.z}{bd2.b.z.vi}
  (the boundary pair is elliptic!).  Note that we only know $\lambda
  \notin \spec \HDir$ here, but we do not know yet that $\lambda
  \notin \spec \HNeu$, therefore we cannot apply the formula of
  \Prpenum{nd.z}{nd.z.i} here.

  In particular, the map $\map {\Lambda(z)^{-1}}\HSaux \HSaux$ is
  holomorphic in $z \in \C \setminus \spec \HDir$ and also in
  $z=\lambda$.  Since also $\lambda \notin \spec \HDir$ and since
  again the boundary pair is elliptically regular, $\map{\eSDir(z)}
  \HSaux \HS$ is bounded and also depends holomorphically on $z$ near
  $\lambda$.  Therefore, the operator
  \begin{equation}
    \label{eq:krein.res'}
    \map{\RDir(z)+\eSDir(z)\Lambda(z)^{-1}\SDir(\conj z)^*}
    \HS \HS
  \end{equation}
  depends holomorphically on $z$ and is still a bounded operator as $z
  \to \lambda$.  By \Thmenum{krein.res}{krein.res.iii}, this operator
  equals $\RNeu(z)$ for $z \in \C \setminus \spec \HNeu$.  As
  $\RNeu(z)$ is then still a bounded operator on $\HS$ for $z \to
  \lambda$, we have $\lambda \notin \spec \HNeu$ (the resolvent set
  $\C \setminus \spec \HNeu$ is the \emph{maximal} set of holomorphy,
  see \cite[Thm. III.6.7]{kato:66}).

  ``$\Leftarrow$'' was already shown in \Thmenum{krein1}{krein1.i0}.

  \itemref{krein1.ia}~By \Thm{nd.comp}, $\Lambda^{-1}$ is compact, and
  by~\eqref{eq:krein.res} and the elliptic regularity, it follows that
  the resolvent difference $\RNeu - \RDir$ is compact, too.  In
  particular, the essential spectra agree.  The spectral relation
  follows from \Thmenum{krein1}{krein1.i}, since the spectrum now
  consists of eigenvalues of finite multiplicity only.

  \itemref{krein2.iii}~That $\lambda$ is an eigenvalue of $\spec
  \HNeu$ iff $\ker \Lambda(\lambda)$ is nontrivial follows already
  from \Thmenum{krein1}{krein1.i}.

  ``$\Rightarrow$'' follows from \Thmenum{krein1}{krein1.v}, but a
  shorter proof is as follows: If $\lambda$ is isolated in $\spec
  \HNeu$, then $\Lambda(z)^{-1}$ exists and is bounded by
  \Prpenum{dn.z.inv}{dn.z.inv.iv} for $z \in \C \setminus \spec \HDir$
  with $0<\abs{z-\lambda}$ small enough, i.e., $\lambda$ is isolated
  in $\spec{\Lambda(\cdot)}$.

  ``$\Leftarrow$'' Using~\eqref{eq:krein.res'}, Krein's resolvent
  formula~\eqref{eq:krein.res} (and the elliptic regularity), we
  conclude that $\RNeu(z)$ is defined for $z$ with $0 <
  \abs{z-\lambda}$ small enough, and also has a pole at $z=\lambda$,
  hence $\lambda \in \spec \HNeu$ is isolated.

  \itemref{krein2.v}~is a consequence of~\itemref{krein2.iii} and
  \Prp{op.pencil}.  For the representation~\eqref{eq:ass.op.pencil} we
  refer to~\eqref{eq:dtn.pole}.
\end{proof}
%----------------------------------------------------------------------

%----------------------------------------------------------------------
\begin{remark}
  \label{rem:krein2}
  Let us comment on the elliptic regularity condition and some other
  aspects:
  \begin{myenumerate}
  \item
    \label{rem.krein2.ii}
    The elliptic regularity condition for the spectral equivalence
    in~\Thmenum{krein2}{krein2.ii} is needed for the implication
    ``$\Rightarrow$'': We give a counterexample of a non-elliptically
    regular boundary pair in \Ex{non-ell.bd2.spec}, where $0 \in \spec
    \HNeu$ but $0 \notin \spec {\Lambda(0)}$.

    For the opposite implication it is enough to assume that
    $\Lambda(\lambda)$ is closed.

  \item Without the elliptic regularity assumption, the conclusion of
    \Thmenum{krein2}{krein2.ia} is also generally false: From Krein's
    resolvent formula~\eqref{eq:krein.res0} the compactness of
    $\Lambda^{-1}$ does not in general imply that $\RNeu - \RDir$ is
    compact: In \Ex{dir.comp.neu.not} we have a non-elliptically
    regular boundary pair for which $\RDir$ is compact, but $\RNeu$ is
    not, even though $\Lambda^{-1}$ may be be compact.

  \item
    \label{rem.krein2.iii}
    The implication ``$0 \in \spec {\Lambda(\lambda)}$ isolated
    $\Rightarrow$ $\lambda \in \spec \HNeu$ isolated'' in
    \Thmenum{krein2}{krein2.v} is generally false for non-elliptically
    regular boundary pairs: In \Ex{non-ell.bd2.dis.spec} we give an
    example of a boundary pair which is positive, but not elliptically
    regular.  For this example, $0 \in \spec {\Lambda(0)}$ is
    isolated, while $0 \in \spec \HNeu=[0,\infty)$ is not.

  \item Note that the spectral characterisations of
    \Thms{krein1}{krein2} are void if $\spec \HNeu = \spec \HDir =
    [0,\infty)$ as in the example of a non-compact cylindrical
    manifold in \Sec{cyl.mfd}.  In this case, the \DtN\ operator is not
    defined for $\lambda \in [0,\infty)$.

  \item
    \label{special.d2n}
    For bounded and positive boundary pairs (hence for ordinary
    boundary triples, see \Thm{bd3.intro}), there is also a
    characterisation for the absolutely and singular continuous
    spectrum (see~\cite{bgp:08}) if the \DtN\ operator has the special
    form
    \begin{equation*}
      \Lambda(z) = \frac {\dlapl - m(z)}{n(z)},
    \end{equation*}
    where $\dlapl$ is a \emph{bounded}, self-adjoint operator on
    $\HSaux$ and where $m,n$ are functions holomorphic on $\C
    \setminus \spec \HDir$.  We believe that this assertion remains
    true for elliptically regular and positive boundary pairs (and
    maybe even in more general settings), but where $\dlapl$ may be
    unbounded.  We hope to come back to the analysis of the absolutely
    continuous spectrum in a forthcoming publication.
  \end{myenumerate}
\end{remark}
%----------------------------------------------------------------------

%----------------------------------------------------------------------
% eeee
\section{Boundary pairs constructed from other boundary pairs}
\label{sec:bd2.constr}
%
%----------------------------------------------------------------------
In this section, we give classes of of boundary pairs constructed from
others, such as Robin-type perturbations (where we change the
quadratic form with which the boundary pair is associated), coupled
boundary pairs or the bounded modification of an unbounded boundary
pair (where we change the boundary space).

%----------------------------------------------------------------------
\subsection{Robin boundary conditions}
\label{sec:robin}
%----------------------------------------------------------------------
We start explaining how to use our concept of boundary pairs also for
more general ``boundary conditions'' than Neumann.  The basic idea is
to change the underlying quadratic form $\qf h$, but leave the
boundary pair $(\Gamma,\HSaux)$ as it is.  For simplicity, we consider
only constants $a$ and no operators on $\HSaux$, here.

Let $(\Gamma,\HSaux)$ be a boundary pair associated with a quadratic
form $\qf h$.  For $a \ge 0$, we define
\begin{equation}
  \label{eq:qf.robin}
  \qf h_a(f) := \qf h(f) + a \normsqr[\HSaux]{\Gamma f}
\end{equation}
for $h \in \dom \qf h_a:= \HS^1$.  Since
\begin{equation*}
  (\qf h + \qf 1)(f)
  \le \qf (h_a + \qf 1)(f)
  \le (1+a \normsqr \Gamma) (\qf h + \qf 1)(f),
\end{equation*}
the norms associated with $\qf h$ and $\qf h_a$
(see~\eqref{eq:norm.qf}) are equivalent, hence $\qf h_a$ is also a
closed quadratic form on $\HS^1$.  We will now derive the objects
arising from the boundary pair $(\Gamma,\HSaux)$ associated with $\qf
h_a$, denoted with a subscript $(\cdot)_a$:

%----------------------------------------------------------------------
\begin{proposition}
  \label{prp:bd2.robin}
  \indent
  \begin{myenumerate}
  \item
    \label{bd2.robin.i}
    The Dirichlet operator is unchanged, i.e.,
    $\HS^{1,\Dir}_a=\HS^{1,\Dir}=\ker \Gamma$ and $\HDir_a=\HDir$.
  \item
    \label{bd2.robin.ii}
    The Neumann operator $\HNeu_a$ has domain
      \begin{equation*}
        \dom \HNeu_a
        = \bigset{f \in \WS}
        {\Gamma'f+a\Gamma f=0},
      \end{equation*}
      where $\WS$ is the domain of $\Gamma'$ for the maximal boundary
      triple associated with $(\Gamma,\HSaux)$ (see \Def{ass.bd3} and
      \Sec{rel.bd3} for the notation).
  \item The range of the boundary map is unchanged, as well as the
    Dirichlet solution operator; i.e.,
    $\HSaux_a^{1/2}=\HSaux^{1/2}=\ran \Gamma$ and
    $\SDir_a(z)=\SDir(z)$.
  \item The \DtN\ form $\qf l_{z,a}$ for the boundary pair associated
    with $\qf h_a$ is given by
    \begin{equation*}
      \qf l_{z,a}(\phi) = \qf l_z(\phi) + a \normsqr[\HSaux] \phi
      \quadtext{or, in operator form,}
      \wLambda_a(z) = \wLambda(z)+a.
    \end{equation*}
  \item The boundary pair associated with $\qf h_a$ is elliptically
    regular resp.\ positive iff the boundary pair associated with $\qf
    h$ is.

  \item
    \label{nd.z.v}
    We have $\RNeu \ge \RNeu_a \ge \RDir$.  In particular, if $\map
    \RNeu \HS \HS$ is compact, then $\map {\RNeu_a} \HS \HS$ and $\map
    \RDir \HS \HS$ are also compact, and the eigenvalues fulfil
    \begin{equation*}
      \lambda_k(\HNeu) \le \lambda_k(\HNeu_a) \le \lambda_k(\HDir)
    \end{equation*}
    (labelled in increasing order respecting their multiplicity).
  \end{myenumerate}
\end{proposition}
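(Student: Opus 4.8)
\emph{Proof plan.} The whole statement should follow quickly once one observes that the added term $a\normsqr[\HSaux]{\Gamma f}$ vanishes on $\ker\Gamma$ and that the boundary map $\Gamma$ is unchanged, so that every object built only out of $\ker\Gamma$, $\ran\Gamma$ and the ambient $\HS$-norm (rather than the $\HS^1$-norm) is literally the same for $\qf h$ and $\qf h_a$. For the Dirichlet operator I would note that $\qf h_a\restr{\ker\Gamma}=\qf h\restr{\ker\Gamma}=\qf h^\Dir$, whence $\HS^{1,\Dir}_a=\ker\Gamma=\HS^{1,\Dir}$ and $\HDir_a=\HDir$. For the Dirichlet solution operator, $\HSaux^{1/2}_a=\ran\Gamma=\HSaux^{1/2}$ as sets is immediate, and I would check that if $\Gamma h=\phi$ then $\qf h_a(h,f)=\qf h(h,f)+a\iprod[\HSaux]{\phi}{\Gamma f}=\qf h(h,f)$ for every $f\in\ker\Gamma$, so the defining identity $(\qf h_a-z\qf 1)(h,f)=0$ for the weak Dirichlet problem at $z$ coincides with the one for $\qf h$; uniqueness of the weak solution (\Prpenum{wsol.dec.z}{wsol.dec.z.i}) then gives $\SDir_a(z)=\SDir(z)$. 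The only subtlety is that the norm \eqref{eq:norm.1/2} on $\HSaux^{1/2}$ is thereby replaced by an equivalent one, since the $\HS^1$- and $\HS^1_a$-norms are equivalent but not equal.

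For the \DtN form I would substitute $\SDir_a(z)=\SDir(z)$ into \eqref{eq:def.dn.z}: picking $g\in\HS^1$ with $\Gamma g=\psi$ and using $\Gamma\SDir(z)\phi=\phi$,
\begin{equation*}
  \qf l_{z,a}(\phi,\psi)=(\qf h_a-z\qf 1)(\SDir(z)\phi,g)
  =\qf l_z(\phi,\psi)+a\iprod[\HSaux]{\phi}{\psi},
\end{equation*}
hence $\wLambda_a(z)=\wLambda(z)+a$. The equivalence of elliptic regularity (resp.\ uniform positivity) for $\qf h$ and $\qf h_a$ is then immediate, since \Defs{ell.bd2}{pos.bd2} involve only $\norm[\HS]{\SDir\phi}$ and $\norm[\HSaux]\phi$, both unaffected, and $\SDir$ itself is unchanged, so the same constants $c$, $C$ work.

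For the domain of $\HNeu_a$ I would invoke Green's formula and the identification of $\WS$ from \Sec{bd2.bd3}: for $f\in\WS$ and all $g\in\HS^1$,
\begin{equation*}
  \qf h_a(f,g)=\qf h(f,g)+a\iprod[\HSaux]{\Gamma f}{\Gamma g}
  =\iprod[\HS]{\Hmax f}{g}+\iprod{\Gamma'f+a\Gamma f}{\Gamma g},
\end{equation*}
so (using $\dom\HNeu_a\subseteq\WS$ from that section and density of $\ran\Gamma$) $\qf h_a(f,\cdot)$ extends to an $\HS$-bounded functional exactly when $f\in\WS$ and $\Gamma'f+a\Gamma f=0$, and then $\HNeu_a f=\Hmax f$. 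Finally, for the resolvent chain: $\qf h\le\qf h_a$ on the common form domain $\HS^1$, so by form monotonicity $\RNeu=(\HNeu+1)^{-1}\ge(\HNeu_a+1)^{-1}=\RNeu_a$; and $\qf h_a$ is a form extension, with the same values, of $\qf h^\Dir=\qf h_a\restr{\ker\Gamma}$, so by the minimax principle $\HNeu_a\le\HDir$ and hence $\RNeu_a\ge\RDir$ (with $\RDir$ extended by $0$ off $\HS^{0,\Dir}$ as usual); the eigenvalue interlacing then follows by applying minimax to the three forms, and compactness of $\RNeu$ forces that of $\RNeu_a$ and $\RDir$ via $0\le\RDir\le\RNeu_a\le\RNeu$. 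I do not expect a genuine obstacle here; the only points needing care are getting the direction of the form/operator ordering right in the last part and, since the Neumann-domain statement borrows $\WS$, $\Gamma'$ and Green's formula from the later \Sec{bd2.bd3}, acknowledging that dependency, along with the standing extension-by-zero convention for $\RDir$ when the boundary space is large.
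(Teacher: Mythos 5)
Most of your proposal is sound and, where the paper gives details at all, follows the same route: parts (i), (iii), (iv) and (v) are exactly the observation that the extra term $a\normsqr[\HSaux]{\Gamma f}$ vanishes on $\ker\Gamma$, so $\qf h_a^\Dir=\qf h^\Dir$, $\LS^1_a(z)=\LS^1(z)$, $\SDir_a(z)=\SDir(z)$ and $\qf l_{z,a}=\qf l_z+a\qf 1_{\HSaux}$, and since \Defs{ell.bd2}{pos.bd2} only involve $\norm[\HS]{\SDir\phi}$ and $\norm[\HSaux]\phi$ the regularity/positivity equivalence is immediate; for (vi) your ordering $\qf h\le\qf h_a\le\qf h^\Dir$ (the Dirichlet form being larger because its domain is smaller) with $\RNeu\ge\RNeu_a\ge\RDir\ge 0$, min--max for the eigenvalues and domination for compactness is precisely the paper's argument, including the extension-by-zero convention for $\RDir$.

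The one genuine gap is in part (ii), in the inclusion $\dom\HNeu_a\subseteq\bigset{f\in\WS}{\Gamma'f+a\Gamma f=0}$. You justify it by ``$\dom\HNeu_a\subseteq\WS$ from that section'', but \Sec{bd2.bd3} contains no such statement: $\WS$ is constructed there from the boundary pair for $\qf h$ and from $\Hmax=(\HDir\cap\HNeu)^*$, and $\HNeu_a$ never appears (not even $\dom\HNeu\subset\WS$ is recorded there). Moreover, your Green's-formula display $\qf h_a(f,g)=\iprod[\HS]{\Hmax f}{g}+\iprod[\HSaux]{\Gamma'f+a\Gamma f}{\Gamma g}$ is only available once $f\in\WS$ is known, so as written your ``exactly when'' delivers only the inclusion ``$\supseteq$'' (which is correct, and which the paper leaves implicit). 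The substantive content of the paper's proof is precisely the missing step: starting from $f\in\dom\HNeu_a$, i.e.\ $\qf h(f,g)+a\iprod[\HSaux]{\Gamma f}{\Gamma g}=\iprod[\HS]{h}{g}$ for all $g\in\HS^1$, one first tests with $g\in\dom\Hmin$ (where the boundary term vanishes) to conclude $f\in\dom\Hmax$ with $\Hmax f=h$; then with $g\in\HS^{1,\Dir}$ to verify the defining condition of $\WS_0^{1,\max}$ in~\eqref{eq:bd3.ex.w}; and only then does comparison with Green's formula~\eqref{eq:green.qf} give $\check\Gamma'f=-a\Gamma f\in\HSaux$, hence $f\in\WS$ by~\eqref{eq:def.w2} (after which your density-of-$\ran\Gamma$ argument identifies $\Gamma'f+a\Gamma f=0$). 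Supply this three-step test-function argument and part (ii) is complete; the rest of your write-up stands.
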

%----------------------------------------------------------------------
\begin{proof}
  We only indicate some of the arguments here: For the domain
  inclusion ``$\subset$'' of~\itemref{bd2.robin.ii} note that $f \in
  \dom \HNeu_a$ implies that there is $h \in \HS$ such that
  \begin{equation*}
    \qf h(f,g) + a\iprod{\Gamma f}{\Gamma g} = \iprod h g
  \end{equation*}
  for all $g \in \HS^1$.  If we assume $g \in \dom \Hmin=\dom \HDir
  \cap \dom \HNeu$ in the last equation, then the boundary term
  vanishes and $\qf h(f,g)=\iprod f {\HNeu g}$, hence $f \in \dom
  \Hmax$ and $\Hmax f = h$.  Moreover, the defining equation for $\WS$
  of the associated maximal boundary triple (see the forthcoming
  publication~\cite{post:pre14a} for details)
  % $\WS^{1,\max}_0$ in~\eqref{eq:bd3.ex.w}
  is fulfilled.
  %(again, for $g \in \HS^{1,\Dir}$, the boundary term vanishes).
  Finally, comparing the above formula with Green's first
  identity~\eqref{eq:green}, we see that $\Gamma' f = -a \Gamma f$.
  Therefore, we have shown that $f \in \WS$.

  \itemref{nd.z.v}~For the last assertion note that $(0 \le) \qf h \le
  \qf h_a \le \qf h^\Dir$ in the sense of quadratic forms
  (cf.~e.g.~\cite[Sec.~4.4]{davies:95}).  Therefore, $\RNeu \ge
  \RNeu_a \ge \RDir (\ge 0)$, i.e., $\RNeu_a$ and $\RDir$ are also
  compact.
\end{proof}
%----------------------------------------------------------------------

The following proposition is useful when proving statements for
$\lambda$ \emph{inside} the Neumann spectrum $\HNeu$ stating that one
can always find an $a>0$ such that $\lambda$ is not in the spectrum of
$\HNeu_a$, even if $\lambda \in \spec \HNeu$, provided the Dirichlet
and Neumann spectra are purely discrete.
%----------------------------------------------------------------------
\begin{proposition}
  \label{prp:robin.nspec}
  Let $(\Gamma,\HSaux)$ be a boundary pair and $\lambda \in
  [0,\infty)$.  Assume in addition that $\RNeu$ is compact.  If
  $\lambda \notin \spec \HDir$ then there exists $a_0>0$ such that
  $\lambda \notin \spec {\HNeu_a}$ for all $a\ge a_0$.
\end{proposition}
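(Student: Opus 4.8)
The plan is to argue entirely at the level of quadratic forms together with the min–max principle, exploiting that compactness of $\RNeu$ makes all of the relevant operators have purely discrete spectrum.

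First I would record the easy reductions. By \Prpenum{bd2.robin}{bd2.robin.i} the Dirichlet operator is unchanged, $\HDir_a=\HDir$, so the hypothesis just says $\lambda\notin\spec\HDir$; and by \Prpenum{bd2.robin}{nd.z.v}, compactness of $\RNeu$ forces $\RNeu_a$ to be compact for every $a\ge 0$, so each $\HNeu_a$ has purely discrete spectrum. I write $\lambda_1(\HNeu_a)\le\lambda_2(\HNeu_a)\le\dots$ for its eigenvalues and $\lambda_k(\HDir)$ for those of $\HDir$ (each repeated according to multiplicity, and set to $+\infty$ past the last one, in accordance with the Courant--Fischer formula). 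Since $a\mapsto\qf h_a$ is pointwise non-decreasing (see~\eqref{eq:qf.robin}), min--max shows $a\mapsto\lambda_k(\HNeu_a)$ is non-decreasing, and \Prpenum{bd2.robin}{nd.z.v} already gives $\lambda_k(\HNeu_a)\le\lambda_k(\HDir)$. Putting $N:=\#\{k:\lambda_k(\HDir)<\lambda\}$, which is finite, and using $\lambda\notin\spec\HDir$, one has $\lambda_k(\HDir)<\lambda$ for $k\le N$ and $\lambda_{N+1}(\HDir)>\lambda$. The whole statement then reduces to the claim that there is $a_0>0$ with $\lambda_{N+1}(\HNeu_{a_0})>\lambda$: granting it, monotonicity gives $\lambda_{N+1}(\HNeu_a)>\lambda$, hence $\lambda_k(\HNeu_a)>\lambda$ for all $k\ge N+1$, whenever $a\ge a_0$, while $\lambda_k(\HNeu_a)\le\lambda_k(\HDir)<\lambda$ for $k\le N$; so $\lambda$ is not an eigenvalue of $\HNeu_a$, and since $\spec{\HNeu_a}$ is discrete this means $\lambda\notin\spec{\HNeu_a}$ for all $a\ge a_0$.

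The proof of the claim is the only non-routine step; the key difficulty is to upgrade ``almost-Dirichlet'' trial subspaces to a genuine Dirichlet subspace, and this is exactly where the compactness of $\RNeu$ (equivalently, of the embedding $\HS^1\hookrightarrow\HS$) is used. Suppose the claim fails; by the monotonicity above, $\lambda_{N+1}(\HNeu_n)\le\lambda$ for all $n\in\N$. For each $n$ I take $V_n\subset\HS^1$ to be the span of the first $N+1$ eigenfunctions of $\HNeu_n$, chosen $\HS$-orthonormal, say $e^{(n)}_1,\dots,e^{(n)}_{N+1}$; then $\qf h(f)+n\,\normsqr[\HSaux]{\Gamma f}=\qf h_n(f)\le\lambda\,\normsqr[\HS]{f}$ for all $f\in V_n$, so in particular $\qf h(e^{(n)}_j)\le\lambda$ and $\normsqr[\HSaux]{\Gamma e^{(n)}_j}\le\lambda/n$ for each $j$. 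Hence the $e^{(n)}_j$ are bounded in $\HS^1$; since $\RNeu$ is compact the embedding $\HS^1\hookrightarrow\HS$ is compact, and (using also reflexivity of the Hilbert space $\HS^1$) after passing to a subsequence $e^{(n)}_j\to e_j$ in $\HS$ and $e^{(n)}_j\rightharpoonup e_j$ weakly in $\HS^1$ for each $j$. The $e_j$ remain $\HS$-orthonormal, so $V:=\operatorname{span}\{e_1,\dots,e_{N+1}\}$ has dimension $N+1$; and since $\Gamma\colon\HS^1\to\HSaux$ is bounded, hence weak--weakly continuous, and $\Gamma e^{(n)}_j\to 0$ in $\HSaux$, we get $\Gamma e_j=0$, i.e.\ $V\subset\ker\Gamma=\HS^{1,\Dir}$.

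Finally, for $f=\sum_j c_j e_j\in V$ and $f^{(n)}:=\sum_j c_j e^{(n)}_j\in V_n$, one has $f^{(n)}\to f$ in $\HS$ and $f^{(n)}\rightharpoonup f$ in $\HS^1$, so weak lower semicontinuity of the $\HS^1$-norm yields $\normsqr[\HS^1]{f}\le\liminf_n\normsqr[\HS^1]{f^{(n)}}\le\liminf_n(1+\lambda)\normsqr[\HS]{f^{(n)}}=(1+\lambda)\normsqr[\HS]{f}$, that is $\qf h^\Dir(f)=\qf h(f)\le\lambda\,\normsqr[\HS]{f}$ for all $f\in V$. By the min--max principle applied to $\HDir$ (on $\HS^{0,\Dir}$) this forces $\lambda_{N+1}(\HDir)\le\lambda$, contradicting $\lambda_{N+1}(\HDir)>\lambda$. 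This proves the claim, and with it the proposition. (For $\lambda<0$ everything is trivial since $\HNeu_a\ge 0$; the limiting argument above also degenerates harmlessly when $\HS^{0,\Dir}$ is finite-dimensional, as then no $(N+1)$-dimensional subspace of $\HS^{1,\Dir}$ with the displayed bound can exist.)
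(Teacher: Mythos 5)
Your argument is correct, but it takes a genuinely different route from the paper. The paper's proof is short and rests on an external result: it checks that $\Lambda^{1/2}\Gamma \RNeu$ is compact (using compactness of $\RNeu^{1/2}$), invokes the large-coupling convergence theorem \cite[Thm.~2.6]{bbb:11} to get $\norm{\RNeu_a-\RDir}\to 0$ as $a\to\infty$, i.e.\ norm-resolvent convergence of $\HNeu_a$ to $\HDir$, and then concludes via the standard spectral consequence of norm-resolvent convergence \cite[Thm.~VIII.23]{reed-simon-1}. You instead give a self-contained min--max argument: compactness of $\RNeu$ yields purely discrete spectra for all $\HNeu_a$ and $\HDir$ (as in \Prpenum{bd2.robin}{nd.z.v}), eigenvalues are monotone in $a$ and dominated by the Dirichlet eigenvalues, and the only nontrivial step --- that eventually $\lambda_{N+1}(\HNeu_a)>\lambda$ --- is proved by contradiction, extracting from almost-Dirichlet trial spaces (whose $\Gamma$-traces are $O(a^{-1/2})$) a genuine $(N{+}1)$-dimensional subspace of $\ker\Gamma$ via the compact embedding $\HS^1\hookrightarrow\HS$, weak convergence and lower semicontinuity of the form norm, contradicting $\lambda_{N+1}(\HDir)>\lambda$. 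The trade-off: the paper's route is shorter and, through \cite{bbb:11}, actually yields the stronger quantitative statement of norm-resolvent convergence (useful elsewhere in the paper), whereas your proof avoids any appeal to the large-coupling literature and makes the role of compactness transparent; on the other hand it uses discreteness of the spectra essentially, so it is more closely tied to the compact-resolvent setting, while the convergence-based argument would survive in any situation where $\RNeu_a\to\RDir$ in norm. All the individual steps you use (monotonicity $\qf h\le\qf h_a\le\qf h^{\Dir}$, the bound $\lambda_k(\HNeu_a)\le\lambda_k(\HDir)$, compactness of $\RNeu_a$ and $\RDir$) are available in \Prp{bd2.robin}, and your handling of the degenerate cases ($\lambda$ below the bottom of the spectrum, finite-dimensional $\HS^{0,\Dir}$) is adequate, so I see no gap.
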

%----------------------------------------------------------------------
\begin{proof}
  The operator
  \begin{equation*}
    \Lambda^{1/2} \Gamma \RNeu^{1/2}
    \colon \HS \stackrel {\RNeu^{1/2}} \longrightarrow
    \HS \stackrel \Gamma \longrightarrow
    \HSaux^{1/2} \stackrel {\Lambda^{1/2}} \longrightarrow
    \HSaux
  \end{equation*}
  is bounded and $\RNeu^{1/2}$ is compact as operator in $\HS$, since
  $\RNeu$ is compact.  In particular, the operator $\map{\Lambda^{1/2} \Gamma \RNeu
    = \Lambda^{1/2} \Gamma \RNeu^{1/2} \RNeu^{1/2}} \HS \HSaux$ is
  compact.  By~\cite[Thm.~2.6]{bbb:11}, we have $\norm{\RNeu_a -
    \RDir} \to 0$ as $a \to \infty$, i.e., $\HNeu_a$ converges in norm
  resolvent sense to $\HDir$.  This implies in particular, that if
  $\lambda \notin \spec \HDir$, then there exists $a_0>0$ such that
  $\lambda \notin \spec {\HNeu_a}$ for all $a \ge a_0$ (see
  e.g.~\cite[Thm~VIII.23]{reed-simon-1}).
\end{proof}
%----------------------------------------------------------------------

%----------------------------------------------------------------------
\subsection{Coupled boundary pairs}
\label{sec:coupl.bd2}
%----------------------------------------------------------------------
We present in this subsection two procedures of coupling boundary
pairs.  Such couplings have already been treated e.g.\ in
\cite{dhms:00}.

Assume that $(\Gamma_i,\HSaux)$ is a boundary pair associated with
$\qf h_i$ ($\dom \qf h_i = \HS_i^1$) in the Hilbert space $\HS_i$ for
$i=1,2$.  Note that the boundary space is the \emph{same} for both
boundary pairs.  We assume additionally that
\begin{equation}
  \label{eq:bd2.coupl}
  \HSaux^{1/2} := \ran \Gamma_1 \cap \ran \Gamma_2
  \quad\text{is dense in $\HSaux$.}
\end{equation}
We set $\HS:=\HS_1 \oplus \HS_2$ and $\HS^{1,\dec}:=\HS^1_1 \oplus
\HS^1_2$.  It follows easily from the boundedness of
$\map{\Gamma_i}{\HS_i^1} \HSaux$ that
\begin{equation}
  \label{eq:hs1.coupl}
  \HS^1 := \bigset {f \in \HS^{1,\dec}} {\Gamma_1 f_1 = \Gamma_2 f_2}
\end{equation}
is a closed subspace of $\HS^{1,\dec}$, and $\qf h := (\qf h_1 \oplus
\qf h_2) \restr{\HS^1}$ is a non-negative, closed form in $\HS$ with
associated operator $H$.  We call $\qf h$ the \emph{coupled form}
obtained from $\qf h_1$ and $\qf h_2$.

Set
\begin{equation*}
  \map \Gamma {\HS^1} \HSaux,\quad
  \Gamma f := \Gamma_1 f_1 = \Gamma_2 f_2.
\end{equation*}

%----------------------------------------------------------------------
\begin{proposition}
  \label{prp:bd2.coupl}
  Assume that $(\Gamma_i,\HSaux)$ are boundary pairs for $i=1,2$ such
  that the density condition~\eqref{eq:bd2.coupl} holds.  Then the
  following assertions are true:
  \begin{myenumerate}
  \item
    \label{bd2.coupl.i}
    The pair $(\Gamma,\HSaux)$ is a boundary pair associated with the
    coupled quadratic form $\qf h$, called here the \emph{(Neumann-)coupled
      boundary pair}.
  \item
    \label{bd2.coupl.ii}
    The Dirichlet operator associated with the coupled boundary pair
    is decoupled, i.e., $\HDir = \HDir_1 \oplus \HDir_2$, while the
    Neumann operator (the operator associated with $\qf h$) is (in
    general) coupled.  Moreover, the Dirichlet solution operator and
    the \DtN\ operator of the coupled boundary pair are given by
    \begin{equation*}
      \SDir(z)\phi = \SDir_1(z) \phi \oplus \SDir_2(z) \phi
      \quadtext{and}
      \wLambda(z) \phi
      = \wLambda_1(z) \phi + \wLambda_2(z) \phi
    \end{equation*}
    for $\phi \in \HSaux^{1/2}=\ran \Gamma$, where $z \in \C \setminus
    \spec \HDir = \C \setminus (\spec {\HDir_1} \cup \spec
    {\HDir_2})$.
  \item 
    \label{bd2.coupl.iii}
    We have
    \begin{equation*}
      \normsqr[\HSaux^{1/2}] \phi 
      := \normsqr[\HS^1]{S\phi}
      = \normsqr[\HS_1^1]{S_1\phi} + \normsqr[\HS_2^1]{S_2\phi}
      \ge \normsqr[\HSaux_i^{1/2}] \phi,
    \end{equation*}
    i.e., the embedding $\HSaux_i^{1/2} \hookrightarrow \HSaux^{1/2} =
    \ran \Gamma$ is bounded for $i=1,2$.  If in addition $\ran
    \Gamma_1=\ran \Gamma_2$, then the embedding is surjective, and the
    norms on $\HSaux^{1/2}$, $\HSaux_1^{1/2}$ and $\HSaux_2^{1/2}$ are
    mutually equivalent.
  \item
    \label{bd2.coupl.iv}
    If the boundary pairs $(\Gamma_i,\HSaux)$ are elliptically regular
    resp.\ positive, then the coupled boundary pair $(\Gamma,\HSaux)$
    is elliptically regular resp.\ positive.
  \item
    \label{bd2.coupl.v}
    Krein's resolvent formula in this context reads as
    \begin{equation}
      \label{eq:krein.res.coupl}
      \map{\wRNeu(z) = \iota_1 \wRDir_1(z) \iota_1^* 
        \oplus \iota_2 \wRDir_2(z))\iota_2^*
        + \SDir(z) \wLambda(z)^{-1} \SDir(\conj z)^*}
      {\HS^{-1}} {\HS^1}
    \end{equation}
    (with $\map{\wRDir_i(z)=(\wHDir_i -
      z)^{-1}}{\HS^{-1,\Dir}_i}{\HS^{1,\Dir}_i}$ and $\embmap
    {\iota_i} {\HS^{1,\Dir}_i} {\HS^1_i}$), i.e., the resolvent of the
    coupled operator can be expressed by operators of the individual
    boundary pairs only, namely, the direct sum of the Dirichlet
    resolvents and a coupling term.
  \end{myenumerate}
\end{proposition}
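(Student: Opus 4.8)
The plan is to verify each of the five assertions in turn, mostly by unravelling the definitions and reusing the abstract machinery already established for a single boundary pair. The underlying principle is that the coupled object $(\Gamma,\HSaux)$ sits inside the ``decoupled'' boundary pair $(\Gamma_1\oplus\Gamma_2,\HSaux\oplus\HSaux)$ on $\HS^{1,\dec}$, restricted to the ``matching'' subspace $\HS^1$ where $\Gamma_1 f_1=\Gamma_2 f_2$. First I would check that $(\Gamma,\HSaux)$ really is a boundary pair associated with $\qf h=(\qf h_1\oplus\qf h_2)\restr{\HS^1}$: boundedness of $\Gamma$ on $\HS^1$ follows from boundedness of each $\Gamma_i$; the kernel is $\ker\Gamma_1\oplus\ker\Gamma_2$ which is dense in $\HS=\HS_1\oplus\HS_2$ since each $\ker\Gamma_i$ is dense in $\HS_i$; and the range of $\Gamma$ is exactly $\HSaux^{1/2}=\ran\Gamma_1\cap\ran\Gamma_2$, dense in $\HSaux$ by hypothesis~\eqref{eq:bd2.coupl}. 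That $\qf h$ is closed and non-negative is immediate since $\HS^1$ is closed in $\HS^{1,\dec}$ and $\qf h_1\oplus\qf h_2$ is closed there. This settles~\eqref{bd2.coupl.i}.

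For~\eqref{bd2.coupl.ii}, the Dirichlet form is $\qf h\restr{\ker\Gamma}=(\qf h_1^\Dir)\oplus(\qf h_2^\Dir)$ since $\ker\Gamma=\ker\Gamma_1\oplus\ker\Gamma_2$ and the matching constraint becomes vacuous ($0=0$), giving $\HDir=\HDir_1\oplus\HDir_2$ and hence $\spec\HDir=\spec\HDir_1\cup\spec\HDir_2$. For the Dirichlet solution operator, given $\phi\in\HSaux^{1/2}$ one sees that $\SDir_1(z)\phi\oplus\SDir_2(z)\phi$ lies in $\HS^1$ (the components match since both equal $\phi$ under $\Gamma_i$) and is a weak solution in $z$ for the coupled form: testing against $f=f_1\oplus f_2\in\ker\Gamma_1\oplus\ker\Gamma_2$ splits as a sum of the two individual weak-solution conditions, each zero. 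By uniqueness (\Prpenum{wsol.dec.z}{wsol.dec.z.i}) this is $\SDir(z)\phi$. The \DtN form then follows from the definition~\eqref{eq:def.dn.z}: choosing $g=g_1\oplus g_2$ with $\Gamma g=\psi$ gives $\qf l_z(\phi,\psi)=(\qf h_1-z\qf 1)(\SDir_1(z)\phi,g_1)+(\qf h_2-z\qf1)(\SDir_2(z)\phi,g_2)=\qf l_{z,1}(\phi,\psi)+\qf l_{z,2}(\phi,\psi)$, i.e. $\wLambda(z)=\wLambda_1(z)+\wLambda_2(z)$ on $\HSaux^{1/2}$. Assertion~\eqref{bd2.coupl.iii} is then the norm identity $\normsqr[\HSaux^{1/2}]\phi=\normsqr[\HS^1]{\SDir\phi}=\normsqr[\HS_1^1]{\SDir_1\phi}+\normsqr[\HS_2^1]{\SDir_2\phi}$, which is just~\eqref{eq:norm.1/2} applied to the coupled pair together with the orthogonal decomposition of $\HS^1$ inside $\HS_1^1\oplus\HS_2^1$; the inequality $\ge\normsqr[\HSaux_i^{1/2}]\phi$ and the equivalence of norms when $\ran\Gamma_1=\ran\Gamma_2$ follow by comparing with each summand and invoking the open mapping / closed graph theorem.

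For~\eqref{bd2.coupl.iv}: if each $(\Gamma_i,\HSaux)$ is elliptically regular with constant $C_i$, then for $\phi\in\HSaux^{1/2}$,
\begin{equation*}
  \normsqr[\HS]{\SDir\phi}
  = \normsqr[\HS_1]{\SDir_1\phi} + \normsqr[\HS_2]{\SDir_2\phi}
  \le (C_1^2 + C_2^2)\,\normsqr[\HSaux]\phi,
\end{equation*}
so the coupled pair is elliptically regular with $C=\sqrt{C_1^2+C_2^2}$; for uniform positivity one has $\normsqr[\HS]{\SDir\phi}\ge\normsqr[\HS_1]{\SDir_1\phi}\ge c_1^2\normsqr[\HSaux]\phi$, so $c=c_1$ works (indeed positivity of a single summand suffices). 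Finally~\eqref{bd2.coupl.v} is obtained by applying the abstract Krein formula \Thm{krein.res} to the coupled boundary pair: $\wRNeu(z)-\iota\wRDir(z)\iota^*=\SDir(z)\wLambda(z)^{-1}\SDir(\conj z)^*$, and then using $\HDir=\HDir_1\oplus\HDir_2$ together with the block structure of $\iota$ and $\iota^*$ to rewrite $\iota\wRDir(z)\iota^*=\iota_1\wRDir_1(z)\iota_1^*\oplus\iota_2\wRDir_2(z)\iota_2^*$; rearranging gives~\eqref{eq:krein.res.coupl}. The main obstacle I anticipate is purely bookkeeping rather than conceptual: one must be careful that $\HS^1$ is genuinely the subspace~\eqref{eq:hs1.coupl} of $\HS^{1,\dec}$ (and that its closure in $\HS$ is all of $\HS$, i.e. that the boundary space is not ``large'') and that the adjoints $\Gamma^*$, $\SDir(\conj z)^*$ and the embeddings $\iota_i^*$ interact correctly with the block decomposition — in particular that the pairing $\HS^{-1}\times\HS^1$ restricts compatibly from $\HS^{-1,\dec}\times\HS^{1,\dec}$, so that the off-diagonal terms in $\iota\wRDir(z)\iota^*$ genuinely vanish. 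None of this is deep, but it is where an error would most easily creep in.
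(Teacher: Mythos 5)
Your proposal is correct and follows essentially the same route as the paper's own (rather terse) proof: verify the boundary-pair axioms directly, identify $\SDir(z)\phi=\SDir_1(z)\phi\oplus\SDir_2(z)\phi$ via uniqueness of weak solutions and sum the \DtN forms, get the norm identity from the restricted direct-sum norm plus the open mapping theorem, and deduce (iv) and (v) by summing the defining estimates and applying \Thm{krein.res} to the coupled pair with the decoupled $\HDir$. Your bookkeeping is if anything more careful than the paper's (e.g.\ the constant $C_1^2+C_2^2$, and the explicit check that the decoupled test functions split the weak-solution condition); the only slight imprecision is in your closing caveat, where ``large boundary space'' concerns density of $\ker\Gamma$ in $\HS$ (which you already verified in (i)), not the closure of $\HS^1$.
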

%----------------------------------------------------------------------
\begin{proof}
  \itemref{bd2.coupl.i}~The boundedness of $\Gamma$ is obvious, as well
  as the density of
  \begin{equation*}
    \HS^{1,\Dir} := \ker \Gamma 
    = \ker \Gamma_1 \oplus \ker \Gamma_2
    = \HS^{1,\Dir}_1 \oplus \HS^{1,\Dir}_2.
  \end{equation*}
  Moreover, $\ran \Gamma=\HSaux^{1/2}$ is dense in $\HSaux$ by
  assumption~\eqref{eq:bd2.coupl}. 

  \itemref{bd2.coupl.ii}~That $\HDir$ is decoupled is obvious, as well
  as the formula for the coupled Dirichlet solution operator.  The
  formula for the coupled solution operator is obvious.  The
  corresponding Neumann operator is (in general) \emph{coupled} (i.e.,
  not a direct sum of the individual Neumann operators) For the
  coupled \DtN\ operator, note that
  \begin{align*}
    \iprod{\wLambda(z) \phi} \psi
    &= (\qf h - z \qf 1)(\SDir(z),g)\\
    &= (\qf h_1 - z \qf 1)(\SDir_1(z),g_1)
    + (\qf h_1 - z \qf 1)(\SDir_1(z),g_2)%\\
    = \iprod{\wLambda_1(z) \phi} \psi
    + \iprod{\wLambda_2(z) \phi} \psi
   \end{align*}
   for $\phi \in \HSaux^{1/2}$ and any $g=g_1 \oplus g_2 \in \HS^1$
   with $\Gamma g = \psi$ (see~\eqref{eq:def.dn.z} and \Def{dn.z}).
   
   \itemref{bd2.coupl.iii}~The equivalence of the norms follows from the
   open mapping theorem (a bounded bijective operator has also a
   bounded inverse).

   \itemref{bd2.coupl.iv}~The last assertion is also obvious, using
   \Defs{ell.bd2}{pos.bd2}.   We have e.g.\
   \begin{equation*}
     \qf q(\phi) 
     = \normsqr[\HS] {\SDir \phi}
     = \normsqr[\HS_1] {\SDir_1 \phi} + \normsqr[\HS_1] {\SDir_1 \phi}
     = \qf q_1(\phi) + \qf q_2(\phi)
     \le (C_1 + C_2) \normsqr[\HSaux] \phi
   \end{equation*}
   if $\qf q_1$, $\qf q_2$ resp.\ $C_1$, $C_2$ are the solution forms
   resp.\ constants in the estimate of \Def{ell.bd2} for the
   individual boundary pairs.
\end{proof}
%----------------------------------------------------------------------

In many applications, the RHS of Krein's resolvent
formula~\eqref{eq:krein.res.coupl} in the coupled case can be
calculated explicitly, hence we have a formula for the resolvent of
the coupled operator (see \Rem{coupl.mfd} for an example).

There is another way of coupling two boundary pairs: Let $\qf h^\dec =
\qf h_1 \oplus \qf h_2$ with domain $\dom \qf h^\dec =
\HS^{1,\dec}=\HS_1^1 \oplus \HS_2^2$.  As boundary operator, we define
here
\begin{equation*}
  \wt \Gamma f := \Gamma_1 f_1 - \Gamma_2 f_2.
\end{equation*}
It is again easily seen that $(\wt \Gamma, \HSaux)$ is a boundary pair
associated with $\qf h^\dec$.  Then the associated Neumann operator is
$\wtHNeu = \HNeu_1 \oplus \HNeu_2$, hence decoupled.  Moreover, $\ker
\wt \Gamma$ equals $\HS^1$ defined in~\eqref{eq:hs1.coupl}, and the
Dirichlet operator $\wtHDir$ associated with this boundary pair is the
coupled operator.  We call this boundary pair the
\emph{Dirichlet-coupled boundary pair}, since the Dirichlet operator
is coupled here.

It is now straightforward to calculate the associated Dirichlet
solution operators and the \NtD\ operator of the coupled boundary pair
as
\begin{equation}
  \label{eq:bd2.coupl.dir}
  \wt S \phi
  = \SDir_1(\wLambda_1+ 
       \wLambda_2)^{-1} \wLambda_2 \phi 
  \oplus \SDir_2(\wLambda_1+ \wLambda_2)^{-1} 
      \wLambda_1 \phi 
  \quadtext{and}
  \weakop {\wt \Lambda}(z)^{-1} \phi
  = \wLambda_1(z)^{-1} \phi + \wLambda_2(z)^{-1} \phi
\end{equation}
for $\phi \in \wt \HSaux^{1/2} = \HSaux_1^{1/2} + \HSaux_2^{1/2}$.

%----------------------------------------------------------------------
\subsection{Direct sum of boundary pairs}
%----------------------------------------------------------------------
\label{sec:sum.bd2}

Another way of obtaining a new boundary pair from two boundary pairs
$(\Gamma_i,\HSaux_i)$ associated with $\qf h_i$ on $\HS_i$ ($i=1,2$)
is by taking the direct sum of all objects, i.e., $\HS:=\HS_1 \oplus
\HS_2$, $\HSaux:= \HSaux_1 \oplus \HSaux_2$, $\Gamma := \Gamma_1
\oplus \Gamma_2$ etc.  We call this boundary pair the \emph{direct
  sum} of the boundary pairs $(\Gamma_1,\HSaux_1)$ and
$(\Gamma_2,\HSaux_2)$.  The corresponding derived objects and the
properties of the direct sum can easily be derived; e.g. $\Lambda(z) =
\Lambda_1(z) \oplus \Lambda_2(z)$ and its spectrum is the union of the
spectra of $\Lambda_i(z)$.  Note that the direct sum is different from
the coupled pairs defined in \Sec{coupl.bd2}.

%----------------------------------------------------------------------
\subsection{Regularisation: Making a boundary pair bounded}
%----------------------------------------------------------------------
\label{sec:bdd.bd2}

Let us finally define a bounded boundary pair $(\wt \Gamma,\wt
\HSaux)$ constructed from an unbounded boundary pair $(\Gamma,\HSaux)$
associated with $\qf h$ as follows: We set
\begin{equation*}
  \wt \HSaux := \HSaux^{1/2} \quadtext{and}
  \map{\wt \Gamma} {\HS^1} {\wt \HSaux},
\end{equation*}
where $\wt\HSaux$ is endowed with the norm $\norm[\wt \HSaux]\phi =
\norm[\HSaux^{1/2}] \phi = \norm[\HS^1]{\SDir\phi}$, i.e., we just
change the range space of $\Gamma$, and obviously, $\ran \wt
\Gamma=\wt \HSaux$, i.e., $(\wt \Gamma, \wt \HSaux)$ is a
\emph{bounded} boundary pair.  For the new boundary pair, called the
\emph{bounded modification} or \emph{regularisation of
  $(\Gamma,\HSaux)$}, we have $\norm[1 \to 0]{\wt \Gamma} = 1$.
Moreover its weak Dirichlet solution operator and \DtN\ operator are
given as follows:

%----------------------------------------------------------------------
\begin{proposition}
  \label{prp:bd2.bdd}
  Assume that $(\Gamma,\HSaux)$ is an unbounded boundary pair
  associated with a quadratic form $\qf h$.  Denote by $(\wt \Gamma,
  \wt \HSaux)$ its bounded modification, given by $\wt \HSaux =
  \HSaux^{1/2}$, $\map {\wt \Gamma}{\HS^1}{\wt \HSaux}$, $\wt \Gamma
  f=\Gamma f$, where the objects without tilde refer to
  $(\Gamma,\HSaux)$ and the objects with tilde refer to $(\wt
  \Gamma,\wt \HSaux)$.  Then the following assertions are true:
  \begin{myenumerate}
  \item The Neumann and Dirichlet operators remain unchanged, i.e.,
    $\wtHDir=\HDir$ and $\wtHNeu=\HNeu$.

  \item We have
    \begin{equation*}
      \map{\wt \SDir(z)}{\wt \HSaux} {\HS^1}, \quad
      \wt \SDir(z) \phi = \SDir(z) \phi, \qquad
      \wt \Lambda = \id_{\wt \HSaux} \quadtext{and}
      \map{\wt \Lambda(z) = \wLambda^{-1}\wLambda(z)}{\wt \HSaux}{\wt \HSaux}.
    \end{equation*}
  
  \item The boundary pair $(\wt \Gamma, \wt \HSaux)$ is bounded and in
    particular elliptically regular.  Moreover $\map {\wt S(z)} {\wt
      \HSaux} \HS$ and $\map{\wt \Lambda(z)}{\wt \HSaux}{\wt \HSaux}$
    are bounded operators, the norm of the latter is bounded by $L(z)$
    (cf.~\eqref{eq:def.m.z}).
  \item If $(\Gamma, \HSaux)$ is not positive, then $(\wt \Gamma, \wt
    \HSaux)$ is not either.
  \end{myenumerate}
\end{proposition}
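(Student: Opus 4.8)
The plan is to establish the four assertions in turn, each resting on the single observation that forming the bounded modification changes nothing but the \emph{target} of the boundary map: $\wt\Gamma$ is the very same map $\HS^1\to\ran\Gamma$ as $\Gamma$, only now regarded as landing in $\wt\HSaux=\HSaux^{1/2}$ equipped with the norm~\eqref{eq:norm.1/2}.

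First, since $\ker\wt\Gamma=\ker\Gamma=\HS^{1,\Dir}$, the restricted form $\qf h^\Dir$ is unchanged, so $\wtHDir=\HDir$; and the Neumann operator is by definition the one associated with $\qf h$, which is untouched, so $\wtHNeu=\HNeu$. This settles (i). For (ii), the solution spaces $\LS^1(z)$ (see \Def{weak.dsol.z}) and $\LS^1=\LS^1(-1)$ (see \Def{dsol}) depend only on $\qf h$ and $\HS^{1,\Dir}$, and $\ran\wt\Gamma=\ran\Gamma=\HSaux^{1/2}=\wt\HSaux$; hence $\wt\SDir(z)=(\wt\Gamma\restr{\LS^1(z)})^{-1}$ is literally the map $\SDir(z)$, and by the definition of the norm on $\wt\HSaux$ it is isometric from $\wt\HSaux$ into $\HS^1$. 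Consequently the $z=-1$ form of the modification is $\phi\mapsto\normsqr[\HS^1]{\wt\SDir\phi}=\normsqr[\wt\HSaux]\phi$, whose associated operator (\Def{dn}) is $\id_{\wt\HSaux}$, i.e.\ $\wt\Lambda=\id_{\wt\HSaux}$.

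For the \DtN operator at a general $z$, the \DtN \emph{form} of the modification, $\wt{\qf l}_z(\phi,\psi)=(\qf h-z\qf 1)(\wt\SDir(z)\phi,g)$ with $\wt\Gamma g=\psi$ (cf.~\eqref{eq:def.dn.z}), equals $\qf l_z(\phi,\psi)$ since $\wt\SDir(z)=\SDir(z)$ and $\wt\Gamma g=\Gamma g$. The crucial point is that the inner product of $\wt\HSaux=\HSaux^{1/2}$ is, by polarisation of~\eqref{eq:norm.1/2} together with \Def{dn}, exactly $\iprod[\wt\HSaux]{\phi}{\psi}=\qf l(\phi,\psi)=\iprod[-1/2,1/2]{\wLambda\phi}{\psi}$; pairing this with the defining relation $\iprod[\wt\HSaux]{\wt\Lambda(z)\phi}{\psi}=\wt{\qf l}_z(\phi,\psi)=\iprod[-1/2,1/2]{\wLambda(z)\phi}{\psi}$ and using density of $\HSaux^{1/2}$ in $\wt\HSaux$ yields $\wLambda\,\wt\Lambda(z)=\wLambda(z)$, hence $\wt\Lambda(z)=\wLambda^{-1}\wLambda(z)$, completing (ii). Assertion (iii) is then routine: $\ran\wt\Gamma=\wt\HSaux$ makes the modified pair bounded, hence elliptically regular by \Prp{ell.bd2.simple}; $\wt\SDir(z)=\SDir(z)$ is bounded as a map $\wt\HSaux\to\HS^1\hookrightarrow\HS$ by \Thmenum{dsol.z}{dsol.z.iia} (noting $\norm[\HSaux^{1/2}]\cdot=\norm[\wt\HSaux]\cdot$); and since $\wt\Lambda(z)$ is the operator on $\wt\HSaux$ associated with the bounded form $\wt{\qf l}_z=\qf l_z$, the bound $\abs{\qf l_z(\phi,\psi)}\le L(z)\norm[1/2]\phi\norm[1/2]\psi=L(z)\norm[\wt\HSaux]\phi\norm[\wt\HSaux]\psi$ from \Thmenum{dn.z.qf}{dn.z.qf.iii} gives $\norm[\Lin{\wt\HSaux}]{\wt\Lambda(z)}\le L(z)$.

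Finally, (iv) I would prove by contraposition. Suppose $(\wt\Gamma,\wt\HSaux)$ is uniformly positive, so there is $c>0$ with $\norm[\HS]{\wt\SDir\phi}\ge c\norm[\wt\HSaux]\phi$ for all $\phi\in\ran\wt\Gamma=\HSaux^{1/2}$. Since $\wt\SDir\phi=\SDir\phi$ and, by~\eqref{eq:hsaux.0.12}, $\norm[\wt\HSaux]\phi=\norm[\HSaux^{1/2}]\phi\ge\norm[\HSaux]\phi/\norm[1 \to 0]\Gamma$, we get $\norm[\HS]{\SDir\phi}\ge(c/\norm[1 \to 0]\Gamma)\norm[\HSaux]\phi$ for all $\phi\in\HSaux^{1/2}$, i.e.\ $(\Gamma,\HSaux)$ is uniformly positive; equivalently, if $(\Gamma,\HSaux)$ is not uniformly positive then neither is $(\wt\Gamma,\wt\HSaux)$. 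The one genuinely delicate step is the identification of $\wt\Lambda(z)$ in (ii): one must track carefully which Hilbert space and which duality pairing is in force when passing from a sesquilinear form to its associated operator, because the scale $\wt\HSaux^{k}$ attached to $\wt\Lambda$ is generated by the new inner product $\iprod[-1/2,1/2]{\wLambda\,\cdot}{\cdot}$ rather than by $\iprod[\HSaux]{\cdot}{\cdot}$; everything else is bookkeeping.
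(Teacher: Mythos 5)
Your proof is correct. The paper states \Prp{bd2.bdd} without proof (it is treated as a routine consequence of the construction), and your verification — the kernel and weak solution spaces are unchanged, the polarised $\wt\HSaux$-inner product is $\qf l(\cdot,\cdot)=\iprod[-1/2,1/2]{\wLambda\,\cdot}{\cdot}$ so that $\wt\Lambda=\id$ and $\wt\Lambda(z)=\wLambda^{-1}\wLambda(z)$, boundedness of $\qf l_z$ on $\wt\HSaux\times\wt\HSaux$ giving the bound $L(z)$ and the everywhere-defined associated operator, and the contrapositive argument via~\eqref{eq:hsaux.0.12} for uniform positivity — is exactly the intended argument.
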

%----------------------------------------------------------------------

%----------------------------------------------------------------------
\begin{remark}
  \label{rem:why.bd2.useful}
  Note that although we could only work with bounded boundary pairs,
  there is not always an associated ordinary boundary triple (for this
  we need that the new (and hence the old) boundary pair is positive,
  see \Thm{bd3.intro}).  The bounded modification of an unbounded
  boundary pair is obviously elliptically regular (because it is
  bounded), but not necessarily positive (in particular the bounded
  modification is not positive if the original boundary pair is not).
  In \Ex{bdd.non-pos} we present a bounded modification of a boundary
  pair which is not positive.

  Moreover, the unbounded boundary pair is in many examples more
  ``natural'' like in the manifold example in \Sec{lapl.mfd} since the
  regularised boundary pair involves the \DtN\ operator in the norm of
  the new boundary space (see also the second last paragraph of
  \Sec{ext.th.intro}: ``Applications of \dots'').
\end{remark}
%----------------------------------------------------------------------

%----------------------------------------------------------------------
% ffff
\section{Examples}
\label{sec:examples}
%
%----------------------------------------------------------------------

All our examples
% (except the trivial ones of the next
% subsection) 
are of the form $\HS=\Lsqr {X,\mu}$ and $\HSaux := \Lsqr {Y,\nu}$
where $(X,\mu)$ and $(Y,\nu)$ are measure spaces and $Y \subset X$, as
explained in \Ex{basic.ex}.

%----------------------------------------------------------------------
\subsection{Examples with finite-dimensional boundary space}
\label{sec:2dim}
%----------------------------------------------------------------------

We treat here a simple example where $X=I$ is a compact interval and
$Y=\bd I$ consists of two points only.  The corresponding boundary
space is two-dimensional and the boundary pair is bounded and
positive, hence associated with an ordinary boundary triple (see
\Thm{bd3.intro}).

More precisely, let $I=[0,\ell]$ for some $\ell \in (0,\infty)$ and
set $\HS:= \Lsqr I$, $\HS^1 := \Sob X$, $\qf h(f) := \normsqr[\Lsqr
I]{f'}$.  As boundary operator, we choose $\Gamma f = (f(0),
f(\ell))$.  It follows now from standard assertions on Sobolev spaces
that $(\Gamma,\HSaux)$ is a boundary pair.  Moreover, the Neumann and
Dirichlet operators are the usual Neumann and Dirichlet Laplacians on
$[0,\ell]$, and the Dirichlet solution operator is given by
\begin{equation}
  \label{eq:dsol.2dim}
  \SDir(z) \phi
  = \phi_0 \frac{\sin (\sqrt z(\ell - s))}{\sin (\sqrt z \ell)}
  + \phi_1 \frac{\sin (\sqrt z s)}{\sin (\sqrt z \ell)}
\end{equation}
for $z \notin \spec \HDir = \set{k^2 \pi^2/\ell^2}{k=1,2,\dots}$,
where $\phi=(\phi_0,\phi_1) \in \C^2$ and where the complex square
root is suitably chosen.  If $z=0$, we use the continuous extension of
the above expressions.  The \DtN\ operator is represented by the matrix
\begin{equation}
  \label{eq:dtn.2dim}
  \Lambda(z) = \frac {\sqrt z}{\sin (\sqrt z \ell)}
  \begin{pmatrix}
    \cos (\sqrt z \ell) & -1\\
    -1 & \cos (\sqrt z \ell)
  \end{pmatrix}
\end{equation}
with eigenvalues $-\sqrt z \tan(\sqrt z \ell/2)$ and $\sqrt z \cot
(\sqrt z \ell/2)$.  For $z = -\kappa^2<0$ ($\kappa>0$), the former
eigenvalue, i.e., $\kappa \tanh (\kappa \ell/2)$, is smaller than the
latter one, i.e., $\kappa \coth (\kappa \ell/2)$.  The corresponding
eigenvectors are $(1,1)$ and $(-1,1)$.  The eigenvalues of
$\Lambda(0)$ are $0$ and $2/\ell$.  It follows that $\normsqr[1 \to 0]
\Gamma = (\inf \spec \Lambda)^{-1} = (\tanh
(\ell/2))^{-1}=\coth(\ell/2)$ ($\kappa=1$; see \Thmenum{dn}{dn.ii}.

The matrix $Q(z) = \SDir(z)^* \SDir(z)$ has the same eigenvectors and
the eigenvalues for $z=-\kappa^2$ ($\kappa>0$) are 
\begin{equation}
  \label{eq:sol.form.2dim}
  \xi_+^\ell(-\kappa^2) 
  = \frac{\tanh(\kappa\ell/2)} {2\kappa} 
       + \frac \ell{4\cosh^2(\ell \kappa/2)}
  \ge
  \xi_-^\ell(-\kappa^2) 
  = \frac{\coth(\kappa\ell/2)}{2\kappa} 
    - \frac \ell {4 \sinh^2(\ell \kappa/2)}
\end{equation}
with asymptotics $\xi_+^\ell(-1) \approx \ell/2$ and $\xi_-^\ell(-1)
\approx \ell/6$ as $\ell \to 0$ and $\xi_+^\ell(-1) \to 1/2$ and
$\xi_-^\ell(-1) \to 1/2$ as $\ell \to \infty$.  If $z=0$, then
$\xi_+^\ell(0)=\ell/2$ and $\xi_-^\ell(0)=\ell/6$.

We call $(\Gamma,\HSaux)$ the boundary pair associated with
$I=[0,\ell]$ and $\bd I=\{0,\ell\}$.

%----------------------------------------------------------------------
\subsection{Examples with Jacobi operators}
\label{sec:jacobi}
%----------------------------------------------------------------------

We present here a boundary pair which mainly serves as a ``zoo'' of
examples in which $X=[0,\ell)$ and $Y$ is a countable subset of $X$
accumulating only at $\ell \in (0,\infty]$.  It will turn out that the
associated \DtN\ operator (for certain real values of $z$) is actually
a Jacobi operator in $\lsqr \N$ acting as
\begin{equation}
  \label{eq:jacobi.op}
  (J\phi)_n = a_{n-1} \phi_{n-1} + b_n \phi_n +a_n \phi_{n+1}, \qquad
  n=1,2,\dots,
\end{equation}
and $\phi_0=0$.  Here, $a_n$, $b_n$ are suitable real-valued
sequences.  We call $J$ the Jacobi operator associated with $(a_n)_n$
and $(b_n)_n$. 

Note that if $a_n < 0$ and $b_n = -(a_n + a_{n-1})$, then we can
interpret $J$ as a discrete weighted Laplacian with corresponding form
$ \iprod{J\phi} \phi = \sum_{n=1}^\infty (-a_n) \abssqr{\phi_{n+1} -
  \phi_n}$, i.e., we can consider $-a_n$ as a weight of the edge from
vertex $n$ to $n+1$ of the half-line graph $\N$.  If $q_n := b_n + a_n
+ a_{n-1} \ne 0$ then we can interpret $(q_n)_n$ as a discrete
potential, and $J$ is a discrete Schr\"odinger operator with this
potential with associated form
\begin{equation}
  \label{eq:disc.schroe}
  \iprod{J\phi} \phi
  = \sum_{n=1}^\infty (-a_n) \abssqr{\phi_{n+1} - \phi_n}
  + \sum_{n=1}^\infty q_n \abssqr{\phi_n}.
\end{equation}

%----------------------------------------------------------------------
\begin{remark}
  \label{rem:delta.malamud}
  Recently, boundary triple methods have also been used
  in~\cite{kostenko-malamud:10} for the spectral analysis of
  Laplacians with infinitely many delta-interactions in dimension $1$
  which become arbitrarily close.  Kostenko and Malamud couple
  infinitely many boundary triples to a new boundary relation (a
  generalisation of boundary triples, see the text after
  \Thm{bd3.intro}).  Kostenko and Malamud provide a theory allowing
  them to describe self-adjoint extensions of a minimal operator of
  the coupled boundary relation, and these can be parametrised by
  Jacobi operators.  Moreover, they provide conditions under which the
  extension is bounded from below and under which resolvent
  differences are in Schatten-von Neumann classes.  In our context, we
  can understand these Laplacian with delta-interactions as Robin-type
  perturbations of the Neumann operator, and the \DtN\ operator is then
  a Jacobi operator.  We will not analyse such perturbations in this
  work, but Kostenko and Malamud's result could also be recovered by
  our boundary pair method.  There are similar results for Dirac-type
  operators with infinitely many delta-interactions in dimension $1$
  (\cite{cmp:13}) and also for infinitely many delta-interactions in
  dimension $3$ (\cite{malamud-schmuedgen:12}).
\end{remark}
%----------------------------------------------------------------------
Let $I:=[0,\ell)$ for some $\ell \in (0,\infty]$ and set $\HS := \Lsqr
I$.  As quadratic form, we choose $\qf h(f)=\normsqr[\Lsqr I] {f'}$
with domain
\begin{equation*}
  \HS^1 := \bigset{f \in \Sob X} {f(0)=0}.
\end{equation*}
As boundary $Y$, we choose a sequence of points $(x_n)_n$ such that
$x_0=0$, $\ell_n := x_{n+1} - x_n >0$ and $\lim_{n \to \infty} x_n =
\ell$.  We set $I_n := [x_n,x_{n+1}]$.  As boundary space and operator
we set $\HSaux := \lsqr \N$ and $(\Gamma f)_n := \rho_n^{1/2} f(x_n)$,
respectively, where $(\rho_n)_n$ is a sequence of positive numbers.
To simplify some estimates, and to assure that $(\Gamma,\HSaux)$ will
be a boundary pair associated with $\qf h$, we assume that there are
constants $\tau_+$, $\rho_\pm \in (0,\infty)$ such that
\begin{equation}
  \label{eq:jacobi.ass0}
  \tau_+ 
  := \sup_n \frac {\rho_n}2 \coth \Bigl(\frac{\ell_n}2\Bigr)
  < \infty
%  \lim_{n \to \infty} \ell_n = 0, \qquad
%  \ell_n \le \ell_+
  \qquadtext{and}
  \rho_- \le \frac {\rho_n}{\rho_{n+1}} \le \rho_+
\end{equation}
for all $n \in \N$.  From the first condition we can conclude the
following: since $\tanh y \le y$ for $y > 0$, we have $1/y \le
\coth(y/2)/2$ or
\begin{equation}
  \label{eq:quot.rho.ell}
  \frac{\rho_n}{\ell_n}  
  \le \frac {\rho_n}2 \coth (\ell_n/2) 
  \le \tau_+.
\end{equation}
The second condition of~\eqref{eq:jacobi.ass0} allows us to replace
$\rho_{n\pm1}$ by $\rho_n$ in estimates.  We also set
\begin{equation}
  \label{eq:jacobi.conseq}
  \ell_n^-:=\min\{\ell_n,1\},
  \quadtext{hence $\tau_+<\infty$ is equivalent with} 
  \sup_n \frac{\rho_n}{\ell_n^-}<\infty.
\end{equation}
%----------------------------------------------------------------------
\begin{proposition}
  \label{prp:jacobi}
  Assume that~\eqref{eq:jacobi.ass0} holds then the following
  assertions are true:
  \begin{myenumerate}
  \item
    \label{jacobi.i}
    The operator $\map \Gamma {\HS^1} \HSaux$ is bounded and
    $\normsqr[1 \to 0] \Gamma=2\tau_+$; moreover, $(\Gamma,\HSaux)$ is
    a boundary pair associated with $\qf h$.
  \item
    \label{jacobi.ia}
    The associated Neumann operator $\HNeu$ is the Laplacian with
    Dirichlet condition at $0$ and Neumann condition at $\ell$ (if
    $\ell<\infty$).  Its spectrum is purely discrete and given by
    $\set{(k+1/2)^2\pi^2/\ell^2}{k=0,1,\dots}$ if $\ell< \infty$, and
    purely absolutely continuous and given by $\spec \HNeu=[0,\infty)$
    if $\ell=\infty$.
  \item
    \label{jacobi.ii}
    The associated Dirichlet operator is given by $\HDir = \bigoplus_n
    {\Delta_{I_n}^\Dir}$, where $\Delta_{I_n}^\Dir$ denotes the
    Dirichlet operator on the interval $I_n$ acting as
    $\Delta_{I_n}^\Dir f = -f''$.  In particular, $\HDir$ is decoupled
    and has spectrum
    \begin{equation*}
      \spec \HDir
      = \clo{\bigset{(k \pi/\ell_n)^2} {k=1,2,\dots, \;\; n=0,1,\dots}}.
    \end{equation*}
    We can omit the closure if $\ell_n \to 0$.  If $\ell_n \to
    \infty$, then $\spec \HDir = [0,\infty)$.
  \item
    \label{jacobi.iii}
    Assume that $0 \notin \spec \HDir$, then the \DtN\ operator
    $\Lambda(0)$ is a Jacobi operator associated with
    \begin{equation*}
      a_n = a_n(0)
      = -\frac 1 {\ell_n} \cdot \frac 1 {(\rho_n \rho_{n+1})^{1/2}}
      \quadtext{and}
      b_n = b_n(0)
      = \Bigl(\frac 1 {\ell_{n-1}} +  \frac 1 {\ell_n} \Bigr) 
      \cdot \frac 1 {\rho_n}.
    \end{equation*}
  \item
    \label{jacobi.v}
    The boundary pair is bounded iff $\inf_n \ell_n^- \rho_n>0$.  If
    $\lim_n \ell_n=0$, then the boundary pair is unbounded.
  \item
    \label{jacobi.iv}
    The operator $Q$ associated with the solution form $\qf q$ is
    bounded from below and above by a constant times multiplication
    with $(\ell_n^-/\rho_n)_n$.
    % We have $\frac 1 6(1+\rho_-) \frac \ell \rho \le Q \le \frac 1
    % 2(1+\rho_+) \frac \ell \rho$, where $\ell/\rho$ denotes the
    % multiplication operator by $(\ell_n/\rho_n)$, and $Q$ is the
    % operator associated with $\qf q(\phi)=\normsqr[\HS]{\SDir \phi}$.
    % In particular, the boundary pair is positive.
    In particular, the boundary pair is positive.
  \item
    \label{jacobi.vi}
    The boundary pair is elliptically regular iff $\tau_-:= \inf_n
    \rho_n/\ell_n^- > 0$.
  \end{myenumerate}
\end{proposition}
%----------------------------------------------------------------------
\begin{proof}
  Let us denote the objects of the boundary pair associated with $I_n$
  and $\{x_n, x_{n+1}\}$ using the subscript $(\cdot)_{I_n}$ (see
  \Sec{2dim}).

  \itemref{jacobi.i}~We have
  \begin{equation*}
    \normsqr{\Gamma f}
    = \sum_{n=1}^\infty \rho_n \abssqr{f(x_n)}
    \le \sum_{n=1}^\infty \rho_n \coth(\ell_n/2) 
           \normsqr[\Sob {I_n}] f
    \le 2\tau_+ \normsqr[\Sob I] f
  \end{equation*}
  using the optimal bound $\abssqr{f(x_n)} \le \coth (\ell_n/2)
  \normsqr[\Sob {I_n}] f$ from the two-dimensional boundary pair
  $(\Gamma_{I_n},\C^2)$ in \Sec{2dim}.  That $2\tau_+$ is the optimal
  constant follows by a standard argument.  Moreover, it is easily
  seen that $\ker \Gamma = \bigoplus_n \Sobn {I_n}$ is dense in $\Lsqr
  I$ as well as $\ran \Gamma$ is dense in $\lsqr \N$ (the sequences
  with finite support are obviously in $\ran \Gamma$).

  \itemref{jacobi.ia}~is obvious.  \itemref{jacobi.ii}~The form of the
  associated Dirichlet operator is clear.  Note that the set 
  \begin{equation*}
    \set{(k \pi/\ell_n)^2}
    {k=1,2,\dots, n=0,1,\dots} \cap [0,\lambda]
  \end{equation*}
  is finite for any $\lambda >0$ if $\ell_n \to 0$, hence we can omit
  the closure in this case. If $\ell_n \to \infty$, then for given
  $\mu \ge 0$ and $\eps>0$ choose $n \in \N$ such that $\pi/\ell_n <
  \eps$.  Now choose $k \in \N$ such that $k\pi/\ell_n \le \mu <
  (k+1)\pi/\ell_n$.  Clearly, $\abs{\mu-k\pi/\ell_n}<\eps$, hence
  $\bigcup_n\sqrt{\spec \Delta_{I_n}^\Dir}$ is dense in $[0,\infty)$.
  As $\mu \mapsto \mu^2$ is a homeomorphism of $[0,\infty)$ the result
  follows.

  \itemref{jacobi.iii}~ The Dirichlet solution operator is given as
  follows: Let $h=\SDir(z) \phi$ for $\phi \in \HSaux^{1/2}$.  Then
  $h_n := h \restr {I_n} = \SDir_{I_n}(z) \Phi_n$, where $\Phi_n=(\wt
  \phi_n,\wt \phi_{n+1})$ and $\wt \phi_n= \rho_n^{-1/2}\phi_n$.
  Moreover, the \DtN\ operator is given by
  \begin{align*}
    \iprod[\lsqr \N] {\Lambda(z) \phi} \phi
    &= (\qf h - z \qf 1)(\SDir(z) \phi, \SDir \phi)
    = \sum_{n=0}^\infty (\qf h_{I_n} - z \qf 1)
                (\SDir_{I_n}(z) \Phi_n, \SDir_{I_n} \Phi_n)\\
    &= \sum_{n=0}^\infty \iprod[\C^2]{\Lambda_{I_n}(z) \Phi_n} {\Phi_n}
    = \sum_{n=1}^\infty 
        \bigl(
           a_{n-1}(z) \phi_{n-1} +
           b_n(z) \phi_n +
           a_n(z) \phi_{n+1}
        \bigr) \conj {\phi_n}
  \end{align*}
  for suitable $\phi \in \HSaux^{1/2}$, where
  \begin{equation}
    \label{eq:jacobi.a.b}
    a_n(z) = -\frac {\sqrt z}{\sin (\sqrt z \ell_n)}
            \cdot \frac 1 {(\rho_n \rho_{n+1})^{1/2}}
    \quadtext{and}
    b_n(z)
    = \sqrt z \bigl(\cot(\sqrt z \ell_{n-1}) + \cot(\sqrt z \ell_n)\bigr)
            \cdot \frac 1 {\rho_n}
  \end{equation}
  (see \eqref{eq:dtn.2dim}).  The formula for $z=0$ follows by taking
  $z \to 0$.

  % \itemref{jacobi.iiia} The asymptotics for $a_n(z)$, $b_n(z)$ and
  % $q_n(z)$ follow by a straightforward calculation.

  \itemref{jacobi.v}~The boundary pair is bounded iff $\Lambda$ is
  bounded (see \Thmenum{dn}{dn.iii}); and the Jacobi operator
  $\Lambda$ is bounded iff the Jacobi sequences $(a_n(-1))_n$ and
  $(b_n(-1))_n$ are both bounded.  Since $1/\sinh \ell_n \le
  \coth(\ell_n)$ and using the second condition
  of~\eqref{eq:jacobi.ass0}, we see that both Jacobi sequences are
  bounded iff $(b_n(-1))_n$ is bounded, i.e., iff $\sup_n (\coth
  \ell_n)/\rho_n <\infty$, hence iff $\inf_n \ell_n^-\rho_n>0$.

  Since $\ell_n \rho_n \le \tau_+ \ell_n^2$
  by~\eqref{eq:quot.rho.ell}, we have that $\lim_n \ell_n=0$ implies
  $\ell_n^-=\ell_n$ eventually and $\inf_n \ell_n^- \rho_n =0$, i.e.,
  the boundary pair is not bounded.

  \itemref{jacobi.iv} and~\itemref{jacobi.vi}: The solution form is
  given by
  \begin{equation*}
    \qf q(\phi)
    = \normsqr[\Lsqr I]{\SDir \phi}
    = \sum_{n=0}^\infty \normsqr[\Lsqr {I_n}] {\SDir_{I_n} \Phi_n}
    = \sum_{n=0}^\infty \iprod[\C^2] {Q_{I_n}(-1) \Phi_n} {\Phi_n}
  \end{equation*}
  and this is bounded from above and below by
  $(1+\rho_\pm)\sum_{n=0}^\infty \abssqr[\C^2]{\phi_n}
  \xi_\pm^{\ell_n}(-1)/\rho_n$ using~\eqref{eq:sol.form.2dim} of
  \Sec{2dim} and~\eqref{eq:jacobi.ass0}.  Since $\xi_\pm^\ell(-1)$ is
  of order $\ell$ as $\ell \to 0$ and $\xi_\pm^\ell(-1) \to 1/2$ as
  $\ell \to \infty$, we can bound $\qf q(\phi)$ from below and above
  by multiplying with $(\ell_n^-/\rho_n)_n$.

  \itemref{jacobi.iv}~In particular, we can bound $\qf q(\phi)$ from
  below by a positive constant times $(\inf_n
  \ell_n^-/\rho_n)\normsqr[\lsqr \N]\phi$.  But $\inf_n
  \ell_n^-/\rho_n>0$ by~\eqref{eq:jacobi.conseq}, hence the boundary
  pairs positive.

  \itemref{jacobi.vi}~Similarly, $\qf q(\phi)=\normsqr{\SDir\phi}$ is
  bounded from above by a constant times $(\sup_n
  \xi_+^{\ell_n}(-1)/\rho_n) \normsqr[\lsqr \N] \phi$.  Since
  $\xi_+^\ell(-1)$ has the same behaviour as $\xi_-^\ell(-1)$ for
  $\ell \to 0$ and $\ell \to \infty$, the solution form $\qf q$ is
  bounded iff $\sup_n \ell_n^-/\rho_n<\infty$.
\end{proof}
%----------------------------------------------------------------------

Let us now provide a list of examples:

%----------------------------------------------------------------------
\begin{example}[Unbounded, positive and elliptically regular boundary
  pair]
  \label{ex:bd2.ell.pos}
  Let $\ell_n$ and $\rho_n$ be of the same order ($0< \tau_- \le
  \rho_n/\ell_n \le \tau_+ < \infty$) and $\lim_n \ell_n =0$, then
  $\ell_n^-=\ell_n$ eventually and the boundary pair is unbounded and
  elliptically regular (and of course positive), see
  \PrpenumS{jacobi}{jacobi.v}{jacobi.vi}.  The Neumann operator in
  this case has purely discrete spectrum iff $\sum_n \ell_n < \infty$.

  This example shows that the spectral characterisation in
  \Thmenum{krein2}{krein2.v} can be actually used in a slightly wider
  class than ordinary boundary triples (see \Thm{bd3.intro}).
\end{example}
%----------------------------------------------------------------------

%----------------------------------------------------------------------
\begin{example}[Not elliptically regular, positive boundary pair]
  \label{ex:non-ell.bd2}
  If $(\ell_n)_n$ and $(\rho_n)_n$ are chosen such that we have
  $\sup_n \rho_n/\ell_n < \infty$, but $\inf_n \rho_n/\ell_n = 0$,
  then the boundary pair is not elliptic (in particular not bounded).
  For example, if $\rho_n=q^n$ ($0<q<1$) or $\rho_n=n^{-\gamma}$ and
  $\ell_n = n^{-\beta}$, $\gamma> \beta>0$, then the boundary pair is
  not elliptic.
\end{example}
%----------------------------------------------------------------------

For further examples, let us specify $\ell_n=n^{-\beta }$ and
$\rho_n=n^{-\gamma}$ with $\beta \le \gamma$ and $\gamma \ge 0$.  In
particular,~\eqref{eq:jacobi.ass0} is then fulfilled and
$(\Gamma,\HSaux)$ is a boundary pair by \Prpenum{jacobi}{jacobi.i}.
It is now a straightforward calculation using~\eqref{eq:jacobi.a.b} to
see that
\begin{equation*}
  -a_n(z)=\frac {n^\gamma\sqrt z}{\sin(n^{-\beta}\sqrt z)}(1+\err(1))
  \sim
  \begin{cases}
    n^\alpha,&\beta \ge 0, \quad z \in \C \setminus \spec \HDir,\\
    n^\gamma\e^{-n^{-\beta}}, & \beta<0, \quad z=-1,
  \end{cases}
  \qquad
  \alpha:=\beta+\gamma,
\end{equation*}
(where $a_n \sim b_n$ means that $a_n/b_n$ is bounded from above and
below by positive constants) and
\begin{equation*}
  q_n(z)=b_n(z)+a_n(z)+a_{n-1}(z)
  \sim
  \begin{cases}
    n^{\alpha-1}, & \beta\ge 1/2,\\ 
    & \text{or}\quad \beta>0,\\
    (-2z)n^{\alpha-2\beta} + \beta n^{\alpha-2\beta-1},&0<\beta < 1/2,\\
    n^{\alpha-\beta}=n^\gamma,& \beta \le 0,
  \end{cases}
  \begin{array}{l}
    z \in \C \setminus [0,\infty),\\
    z=0,\\
    z \in \C \setminus \spec \HDir,\\
    z<0.
  \end{array}
\end{equation*}
Moreover, the squared norm of $\HSaux^{1/2}$ is equivalent with
\begin{equation}
  \label{eq:sob.weight}
  \normsqr[\sob{\N,r,w}] \phi
  := \sum_{n \in \N}
    \bigl(\abssqr{(\de \phi)_n} r_n + \abssqr{\phi_n}w_n \bigr)
  \quadtext{with}
  r_n=-a_n(-1), \quad w_n=q_n(-1),  
\end{equation}
where $(\de \phi)_n=\phi_{n+1}-\phi_n$ is the discrete derivative.

%----------------------------------------------------------------------
\begin{example}[\DtN\ form unbounded from both sides]
  \label{ex:dtn.unbdd.below}
  For a non-elliptically regular boundary pair, the \DtN\ form can be
  unbounded from both sides: let $\ell_n = n^{-\beta}$ and
  $\rho_n=n^{-\gamma}$ with $\gamma> \beta>0$ and $0<\beta < 1/2$, and
  let $z \in (0,\infty)\setminus \spec \HDir$: Since the boundary pair
  is not bounded, $\qf l_\lambda$ is not bounded from above.  To see
  that $\qf l_z$ is not bounded from below, take
  $\phi^k=(\phi^k_n)_n$ with $\phi^k_n=1$ if $1 \le n \le k$ and $0$
  otherwise.  Then, using the asymptotics stated above, we obtain
  \begin{align*}
    \qf l_z(\phi^k)
    = \sum_{n \in \N} \bigl(-a_n(z)
      \abssqr{\phi^k_{n+1} - \phi^k_n}
      + q_n(z)\abssqr{\phi^k_n}
    \Bigr) \sim k^\alpha 
     - 2 z \sum_{n=1}^k n^{\alpha-2\beta}.
  \end{align*}
  Since $0<\beta<1/2$, the second sum, of order $k^{\alpha-2\beta+1}$,
  is dominant, and negative.  Moreover, $\normsqr[\HSaux] {\phi^k}=k$,
  and therefore $\qf l_\lambda(\phi^k)/\normsqr[\HSaux] {\phi^k} \sim
  -z k^{\alpha-2\beta}) \to -\infty$ as $k \to \infty$.  In
  particular, we have shown that $\qf l_\lambda$ is neither bounded
  from above nor from below (see also \Remenum{dtn-qf}{dtn-qf.ii}).
\end{example}
%----------------------------------------------------------------------

Let us now have another choice for $(\ell_n)_n$ and $(\rho_n)_n$.  In
particular, we want the corresponding Jacobi coefficients to have the
form $a_n=a_n(0)=-n^\alpha$ and $b_n=b_n(0) = -(a_n + a_{n-1})$.  Then
the corresponding Jacobi operator $J=\Lambda(0)$ is a \emph{pure}
(discrete) Laplacian, while for other values $\lambda \in \R \setminus
\spec \HDir$, $\Lambda(\lambda)$ is a discrete Schr\"odinger
operator with an additional potential of order $-\lambda
\ell_n/\rho_n$ (and this is of order $-\lambda n^{\alpha-2\beta}$, see
below), hence unbounded if $\alpha-2\beta=\gamma-\beta>0$.  Therefore,
we have another example where the form $\qf l_0$ is \emph{not} closed
on $\HSaux^{1/2}$ but only on a larger space $\HSaux^{1/2}_0
\supsetneq \HSaux^{1/2}$. % (see also \Ex{dtn.qf.0.not.closed}).

We use the ansatz $\ell_n = n^{-\beta} L_n^{-1}$ and $\rho_n =
n^{-\gamma} R_n^{-1}$ with $\alpha=\beta+\gamma>0$ and $\gamma>\beta$.
It can then be shown that for $a_n=n^\alpha$ and $b_n=-(a_n+a_{n-1})$
the sequences $(L_n)_n$ and $(R_n)_n$ defined above actually converge
to $1$ as $n \to \infty$.

This ansatz allows us to use known results on the spectrum of this
special Jacobi operator (see e.g.~\cite[Thm~1.1]{sahbani:08} and
references therein; as well as~\cite{janas-naboko:01} for the case
$\alpha=1$ and the general ideas of the spectral analysis).  The
spectrum of $J$ is purely discrete if $\alpha>2$, and absolutely
continuous if $0 < \alpha \le 2$.  If $\alpha < 2$ then $\spec J =
[0,\infty)$ and if $\alpha=2$ then $\spec J = [1/4,\infty)$.  In the
latter case ($\alpha=2$), the spectrum is \emph{purely} absolutely
continuous.

%----------------------------------------------------------------------
\begin{example}[Counterexamples to the compactness]
  \label{ex:dir.comp.neu.not}
  Here, we show that $\RNeu$ can be non-compact, while $\RDir$ is
  compact and $\Lambda(0)^{-1}$ can be either compact or non-compact.

  If we choose $\gamma \ge \beta=1$ then the Neumann operator $\HNeu$
  has purely absolutely continuous spectrum $[0,\infty)$ since $\sum_n
  \ell_n=\infty$, while the Dirichlet operator $\HDir$ has purely
  discrete spectrum.

  If $\gamma > \beta=1$, i.e., if $\alpha = \beta+\gamma > 2$ then the
  \DtN\ operator $\Lambda(0)=J$ has purely discrete spectrum.  By the
  monotonicity (\Thmenum{dn.z.qf}{dn.z.qf.vi}), $0 \le \qf l_0 \le \qf
  l=\qf l_{-1}$, and this inequality remains true for the closure of
  the form $\qf l_0$ (see~\cite[Sec.~4.4]{davies:95} for an order of
  quadratic forms).  In particular, the associated non-negative
  operators fulfil $0 \le \Lambda(0) \le \Lambda=\Lambda(-1)$, hence
  $\Lambda(0)^{-1} \ge \Lambda^{-1}\ge 0$, and $\Lambda^{-1}$ is also
  compact.  In this case, the boundary pair is not elliptically
  regular, and \Thmenum{krein2}{krein2.ia} is no longer true, as $\map
  \Gamma{\HS^1}\HSaux$ is compact by \Thm{nd.comp}, but $\essspec
  \HNeu=[0,\infty) \ne \emptyset = \essspec \HDir$.

  If $\gamma=\beta=1$, then $\alpha=2$ and the \DtN\ operator
  $\Lambda(0)$ has purely absolutely continuous spectrum
  $[1/4,\infty)$.  The boundary pair then is elliptically regular.
\end{example}
%----------------------------------------------------------------------

%----------------------------------------------------------------------
\begin{example}[Example violating the spectral
  relation~\Thmenum{krein2}{krein2.ii}]
  \label{ex:non-ell.bd2.spec}
  Choose $0 < \beta < 1/2$ and $\gamma=2-\beta>0$.  Then $\alpha=2$,
  and $\spec {\Lambda(0)}=[1/4,\infty)$, but the spectrum of the
  Neumann operator is $[0,\infty)$ (and purely absolutely continuous,
  see \Prpenum{jacobi}{jacobi.ia}); the Dirichlet spectrum is again
  discrete.  In particular, the implication ``$0 \in \spec \HNeu$
  $\Rightarrow$ $0 \in \spec {\Lambda(0)}$'' is not true (note that $0
  \notin \spec \HDir$).  Since $\beta < \gamma$, the boundary pair is
  not elliptic.  It can be seen as in \Ex{dtn.unbdd.below} that $\qf
  l_\lambda$ is even unbounded from below for all $\lambda>0$ (not in
  the Dirichlet spectrum).

  Let us illustrate what goes wrong in the proof of
  \Thmenum{krein2}{krein2.ii}: We argued by contraposition, so our
  assumption is $0 \notin \spec {\Lambda(0)}$ (which is true here).
  In order to show that $0 \notin \spec \HNeu$, we would have to show
  that $D(z):=\SDir(z)\wLambda(z)^{-1}\SDir(\conj z)^*$ is
  holomorphic in $z=0$ as function with values in $\BdOp \HSaux$.  But
  $D(0)$ is not bounded, as we need to use the weak version of
  $\Lambda(z)^{-1}$.
\end{example}
%----------------------------------------------------------------------

%----------------------------------------------------------------------
\begin{example}[Example violating the spectral
  relation~\Thmenum{krein2}{krein2.v}]
  \label{ex:non-ell.bd2.dis.spec}
  We can actually modify \Ex{non-ell.bd2.spec} such that the
  implication ``$0 \in \disspec {\Lambda(0)}$ $\Rightarrow$ $0 \in
  \disspec \HNeu$'' is false, although the boundary pair is positive
  (but \emph{not} elliptically regular): Take the direct sum
  $(\Gamma,\HSaux)$ (see \Sec{sum.bd2}) of the boundary pair of the
  previous example (denoted now $(\Gamma_1,\HSaux_1)$) and any
  boundary pair $(\Gamma_2,\HSaux_2)$ such that $0$ is a simple and
  isolated eigenvalue in $\spec{\Lambda_2(0)}$ and $\spec {\HNeu_2}$
  (e.g., the boundary pair on $[0,1]$ as in \Sec{2dim}).  Then $0$ is
  a discrete eigenvalue of $\spec {\Lambda(0)}=\spec{\Lambda_1(0)}
  \cup \spec {\Lambda_2(0)}$, but $0$ is not isolated in $\spec
  \HNeu=\spec {\HNeu_1} \cup \spec{\HNeu_2}=[0,\infty)$.
\end{example}
%----------------------------------------------------------------------

%----------------------------------------------------------------------
\subsection{Laplacian on Lipschitz domains}
\label{sec:lapl.mfd}
%----------------------------------------------------------------------

We consider now a compact $d$-dimensional Riemannian manifold $X$ with
its natural $d$-dimensional volume measure $\mu$.  Moreover, we assume
that $X$ has a Lipschitz boundary $Y=\bd X$ in the following sense:
let $\wt X$ be a complete smooth Riemannian manifold.  A
\emph{(smooth) manifold with Lipschitz boundary} or a \emph{Lipschitz
  domain} $X$ in $\wt X$ is the closure $X$ of an open subset of $\wt
X$ such that $\bd X$ can locally be written as graph of a Lipschitz
function (for details see~\cite[App.~A]{mitrea-taylor:99}).  It can be
shown that $\bd X$ has a natural measure $\nu$, the
$(d-1)$-dimensional Hausdorff measure.

%----------------------------------------------------------------------
\begin{remark}
  \label{rem:lipschitz.mfd}
  A manifold with Lipschitz boundary $X$ is locally $\Contspace
  [\infty]$-diffeomorphic with the set below the graph of a Lipschitz
  function, hence all results on Lipschitz domains in $\R^d$, which
  are invariant under such diffeomorphisms, remain true on a smooth
  manifold with Lipschitz boundary.  If we choose the half-space
  $\R^{d-1}\times \R$ as local model space (i.e., if $X$ has a local
  parametrisation into the half-space), the transition functions are
  only bi-Lipschitz.  Therefore, we can only carry over results for
  \emph{smooth} boundaries onto smooth manifolds with Lipschitz
  boundary which are invariant under bi-Lipschitz transformations.
  
  The boundary $\bd X$ is only a \emph{Lipschitz manifold}, i.e., the
  transition maps are only bi-Lipschitz functions, and no longer
  smooth.  A very nice introduction to Lipschitz manifolds (and even
  differential forms in this context) is given
  in~\cite[App.~A]{mmt:01} (we refer also to the discussion
  in~\cite[Sec.~1.2.1]{grisvard:85}).
\end{remark}
%----------------------------------------------------------------------
Denote by $\Ci X$ the space of functions, which are smooth on the
interior $\ring X := X \setminus \bd X$ such that all derivatives
extend continuously onto $X$.  We set $\HS:= \Lsqr X$ (with respect to
the volume measure $\mu$).  Moreover, $\HS^1:=\Sob X$ denotes the
completion of $\Ci X$ with respect to the norm given by $\normsqr[\Sob
X] u := \normsqr[\Lsqr X] u + \normsqr[\Lsqr X] {\de u}$, where $\de
u$ denotes the exterior derivative of $u$.  We consider the form $\qf
h$ given by $\qf h(u):=\normsqr{\de u}$, $u \in \HS^1$.

We set $\HSaux := \Lsqr Y$ (with respect to the $(d-1)$-dimensional
Hausdorff measure $\nu$ on $Y=\bd X$).  Moreover, for smooth functions
$u$ we set $\Gamma u := u \restr {\bd X}$.  For the definition of
fractional Sobolev spaces $\Sob[s]{\bd X}$ ($0 \le s \le 1$), see
e.g.~\cite[Sec.~1.3.3]{grisvard:85}
or~\cite[App.~A]{gesztesy-mitrea:09}).

Our main result here is the following:
%----------------------------------------------------------------------
\begin{theorem}
  \label{thm:lip.mfd.bd2}
  The boundary pair $(\Gamma, \HSaux)$ associated with the quadratic
  form $\qf d$ is unbounded with $\HSaux^{1/2}=\Sob[1/2]{\bd X}$,
  elliptically regular and not positive.  The Dirichlet and Neumann
  operators $\HDir$ and $\HNeu$ are the usual Dirichlet and Neumann
  Laplacians $\Delta^\Dir_X$ and $\Delta^\Neu_X$ on $X$, respectively
  (with the sign convention $\Delta^\Dir_X,\Delta^\Neu_X\ge 0$).
  Moreover, the \DtN\ operator $\Lambda(z)$ ($z \notin \spec
  {\Delta^\Dir}$) has the usual interpretation, i.e., $\psi=\Lambda(z)
  \phi$ iff $\psi$ is the normal derivative of the solution of the
  Dirichlet problem $(\Delta - z) h=0$ and $h \restr {\bd X} = \phi$
  (provided $\phi$ is smooth enough).  Finally, $\HNeu$, $\HDir$ and
  $\Lambda(z)$ all have compact resolvents.
\end{theorem}
%----------------------------------------------------------------------
\begin{proof}
  For the boundedness of $\map \Gamma{\Sob X}{\Lsqr {\bd X}}$, note
  that $\Sob X$ and $\Lsqr {\bd X}$ are both invariant under
  bi-Lipschitz transformations, hence the boundedness follows from the
  corresponding result for smooth boundaries.  Moreover, smooth
  functions with support away from $\bd X$ are in $\ker \Gamma =:
  \Sobn X$, and also dense in $\HS=\Lsqr X$, hence $\ker \Gamma$ is
  dense in $\HS$.  In addition, $\Gamma(\Ci X)$ is dense in $\Lsqr{\bd
    X}$.  In particular, $(\Gamma,\HSaux)$ is a boundary pair.  It is
  also well-known, that the range of the Sobolev trace map $\Gamma$ is
  $\Sob[1/2]{\bd X} \subsetneq \Lsqr {\bd X}$ (see
  e.g.~\cite[Thm.~1.5.1.3]{grisvard:85}), hence the boundary pair is
  unbounded.

  In order to show the elliptic regularity, we have to check that the
  solution operator $\map S {\Sob[1/2]{\bd X}} {\Sob X}$ extends to
  the corresponding $\Lsqrspace$-spaces, i.e., to
  \begin{equation}
    \label{eq:dsol.lsqr}
    \map \eSDir {\Lsqr{\bd X}} {\Lsqr X};
  \end{equation}
  for Lipschitz domains in Riemannian manifolds, this has been shown
  in~\cite[Thm.~4.1]{mitrea-taylor:03} (see also \cite[Prop.~3.7 and
  its proof]{mitrea-taylor:05}).
  
  If the boundary pair was positive, then $\map \SDir {\HSaux^{1/2}}
  {\LS^1}$ would extend to a topological isomorphism $\map \eSDir
  \HSaux {\LS^0}$ by \Rem{bd2.ell}~(\ref{ell.bd2.iv}') and
  \Thmenum{pos.bd2}{pos.bd2.iii}, where $\LS^0$ is the closure of
  $\LS^1$ in $\HS$.  It can be seen that $\LS^0=\ker(\Hmax+1)$, where
  $\Hmax=\Deltamax$ is the Laplacian in the distributional sense ($u
  \in \dom \Deltamax$ iff $u$ and $\Deltamax u$ are in $\Lsqr X$).  In
  particular, any $h=\eSDir \phi \in \ker (\Hmax+1)$ would have a
  boundary value $\phi \in \Lsqr Y$ which is known not to be true.

  The compactness of the resolvents is a standard fact for
  (pseudo-)differential operators on compact manifolds.
\end{proof}
%----------------------------------------------------------------------

We call $(\Gamma,\HSaux)$ the \emph{boundary pair associated with the
  manifold $X$ and boundary $\bd X$}.

%----------------------------------------------------------------------
\begin{remark}
  \label{rem:why.bd2.simpler}
  Note that in the context of non-smooth domains, questions of
  regularity for the operator are rather delicate.  For more details
  on Sobolev spaces and elliptic boundary value problems on Lipschitz
  domains we refer e.g.\ to~\cite{jerison-kenig:95,mitrea-taylor:99,
    mmt:01, mitrea-taylor:03, mitrea-taylor:05,
    gesztesy-mitrea:09,gesztesy-mitrea:11,behrndt-micheler:14} and
  references therein.  Our approach only needs the first order spaces,
  as we only have to check that the solution operator extends to an
  operator on the corresponding $\Lsqrspace$-spaces, and we believe
  that this is generally simpler to check.  In this context, the
  solution operator (at $z=0$) is also called \emph{Poisson operator}.
\end{remark}
%----------------------------------------------------------------------

%----------------------------------------------------------------------
\begin{remark}
  \label{rem:why.ell.reg}
  The notion ``elliptically regular'' for a boundary pair has its
  motivation from this manifold example: The boundary triple
  $(\Gamma,\Gamma',\HSaux)$ associated with the boundary pair
  $(\Gamma,\HSaux)$ (see \Sec{rel.bd3}) is called \emph{elliptically
    regular} if $\dom \HDir \subset \WS$ and $\dom \HNeu \subset \WS$
  (in \cite[Def.~3.4.21]{post:12} we actually used additional
  assumptions about the range of the boundary maps $\Gamma$ and
  $\Gamma'$); and a (maximal) boundary triple is elliptically regular
  iff the corresponding boundary pair is.  Here, $\WS$ is a space on
  which $\Gamma'$ is defined and bounded ($\map {\Gamma'}{\WS}
  \HSaux$) and on which Green's identity~\eqref{eq:green} holds.  If
  we assume (for simplicity) that $\bd X$ is smooth then we can choose
  $\WS = \Sob[2] X$.  The condition $\dom \HDir \subset \WS$ is then
  equivalent to an ``elliptic regularity estimate''
  \begin{equation}
    \label{eq:ell.reg.mfd}
    \norm[{\Sob[2]X}] u
    \le C \bigl( \norm[\Lsqr X] {u}
    + \norm[\Lsqr X] {\laplacianD X u}
    \bigr)
  \end{equation}
  for all $u \in \dom \laplacianD X\cap \Sob[2] X$ and similarly for
  the Neumann operator $\HNeu=\laplacianN X$.
  
  Moreover, we have indicated in \Rem{why.ell.reg.intro} that a
  boundary pair is elliptically regular iff a ``normal derivative''
  $\Gamma'$ can be defined such that Green's identity~\eqref{eq:green}
  holds and such that $\Gamma'u \in \HSaux$ for all $u \in \dom
  \HDir$.  We will treat such questions, namely boundary triples
  associated with quadratic forms and the relation with boundary pairs
  in a forthcoming publication~\cite{post:pre14a} (see
  also~\cite[Sec.~3.4]{post:12}).
\end{remark}
%----------------------------------------------------------------------

Krein's resolvent formula now is valid for the Neumann and Dirichlet
Laplacian, i.e.,
\begin{equation}
  \label{eq:krein.lip}
  (\Delta^\Neu_X-z)^{-1} - (\Delta^\Dir_X - z)^{-1}
  = \eSDir(z) \Lambda(z)^{-1} \eSDir(\conj z)^*,
\end{equation}
Moreover, the (extension of the) solution operator $\map
{\eSDir(z)}{\Lsqr {\bd X}} {\Lsqr X}$ is usually called
\emph{Poisson} operator in this context.  In addition, we have the
characterisation of the spectrum
\begin{equation}
  \label{eq:spec.rel.mfd}
  \lambda \in \spec {\Delta^\Neu} \quad\Leftrightarrow\quad
  0 \in \spec {\Lambda(\lambda)}
\end{equation}
provided $\lambda \notin \spec {\Delta^\Dir}$.  Since the spectrum of
$\Delta^\Neu$ is purely discrete, and since $\map \Gamma {\Sob
  X}{\Lsqr {\bd X}}$ is a compact operator, the spectra of
$\Delta^\Dir$ and $\Lambda(\lambda)$ are purely discrete, too (see
\Prpenum{bd2.robin}{nd.z.v} and \Thm{nd.comp}).  Moreover, the
multiplicities of the eigenvalues are preserved (\Thm{krein1}).

%----------------------------------------------------------------------
\begin{remark}
  Most of our results extend to the case when $X$ is non-compact but
  $\bd X$ is compact, e.g.\ products (see \Sec{cyl.mfd} and also
  \Rem{coupl.mfd}) or warped products $X = [0,\infty) \times Y$ with
  metric $g=\dd s^2 + r(s)^2 h$, where $(Y,h)$ is a compact Riemannian
  manifold.  The only problem here is that the essential spectra of
  the Dirichlet and Neumann operator are the same
  (\Thmenum{krein2}{krein2.ia}), and in many cases just the entire
  half-axis $[0,\infty)$.  Nevertheless, the \DtN\ operator might be
  extended analytically into $[0,\infty)$; we come back to this
  situation in a forthcoming publication.
\end{remark}
%----------------------------------------------------------------------

Let us illustrate how coupling of boundary pairs can be used in the
manifold case
%----------------------------------------------------------------------
\begin{remark}
  \label{rem:coupl.mfd}
  A prominent example of a coupled boundary pair (see \Sec{coupl.bd2})
  we have in mind is a smooth manifold $X=X_1 \cup X_2$ without
  boundary such that $Y=X_1 \cap X_2$ is a smooth submanifold of
  co-dimension $1$, $X_1$ is a compact manifold with boundary $Y$ and
  $X_2=I \times_r Y$ is a warped product over an interval $I$, i.e., a
  manifold with metric $g=\dd s^2 + r(s)^2 h$ ($\map r I
  {(0,\infty)}$, $h$ a metric on $Y$).  For a warped product, we have
  explicit formulae for the solution and the \DtN\ operators (in terms
  of solutions of some ODEs related with $r$).  As boundary pairs we
  now choose $(\Gamma_i,\HSaux)$ associated with the quadratic forms
  $\qf h_i(u)=\normsqr[X_i]{\de u}$, $u \in \HS_i^1 = \Sob {X_i}$,
  where $\HSaux = \Lsqr Y$ and $\Gamma_i u = u \restr Y$.  The coupled
  form and operator (i.e., the Neumann operator) is now the form and
  Laplacian on the \emph{entire} manifold $X$.  Moreover, for the
  boundary pairs $(\Gamma_1,\HSaux)$ on the compact part of the
  manifold one can derive explicit formulae for the Dirichlet solution
  operators and \DtN\ maps, as well as for the (possibly non-compact)
  warped product.  Hence we have rather explicit formulae for the
  resolvent of the entire Laplacian on $X$ in terms of simpler
  building blocks.  We will come back to these ideas, treating also
  more complicated coupled structures, in a forthcoming publication.
\end{remark}
%----------------------------------------------------------------------

%----------------------------------------------------------------------
\subsubsection*{Regularisation of the manifold boundary pair}
%----------------------------------------------------------------------

%----------------------------------------------------------------------
\begin{example}[A bounded, but not positive boundary pair]
  \label{ex:bdd.non-pos}
  Let $(\wt \Gamma,\wt \HSaux)$ be the \emph{bounded modification} or
  \emph{regularisation} of the above boundary pair $(\Gamma,\HSaux)$
  associated with $X$ (see \Sec{bdd.bd2}): it follows from
  \Prp{bd2.bdd} that $(\wt \Gamma, \wt \HSaux)$ is not positive,
  although bounded.
\end{example}
%----------------------------------------------------------------------

%----------------------------------------------------------------------
\begin{example}[Non-compact \DtN, but compact Dirichlet and Neumann
  operator]
  \label{ex:neu.comp.dtn.not}
  If we assume (with the notation of the previous example) that the
  manifold $X$ is compact, then $\wtRNeu=\RNeu$ and $\wtRDir=\RDir$ are
  compact.  But since $\wt \Lambda=\id_{\wt \HSaux}$ and since $\wt
  \HSaux=\Sob[1/2]{\bd X}$ is infinite-dimensional, $\wt \Lambda^{-1}$
  is not compact.
\end{example}
%----------------------------------------------------------------------

%----------------------------------------------------------------------
\subsection{Laplacian on a non-compact cylindrical manifold}
\label{sec:cyl.mfd}
%----------------------------------------------------------------------

Let us consider here a simple example in which the space $X$ is a
product manifold $X=[0,\infty) \times Y$ with corresponding product
metric $g=\dd s^2 + h$, where $(Y,h)$ is a compact Riemannian manifold
without boundary.  Similar cases were considered e.g.\
in~\cite[Sec.~2.5]{gorbachuk-gorbachuk:91} or \cite[Ex.~6.8]{dhms:06}.
 
In this case, we have again $\HS=\Lsqr X$, $\HS^1=\Sob X$, $\qf
h(u)=\normsqr{\de u}$ and $\HSaux= \Lsqr{Y,h}$.  Identifying a
function $\map u X \C$ with the corresponding vector-valued function
$s \mapsto u(s)$ on $[0,\infty)$, we set $\Gamma u = u(0)$, $u \in
\HS^1$.  It can be seen similarly as before that $\Gamma$ is bounded
and that $(\Gamma,\HSaux)$ is an unbounded, elliptically regular, but
not positive boundary pair.

This example can be seen as a vector-valued version of the interval
case in \Sec{2dim} (except that $I=[0,\infty)$ is non-compact here and
has only one boundary point).  Namely, we can write
\begin{equation*}
  \qf h(u)
  =\int_I
  \bigl(
    \normsqr[\Lsqr {Y,h}]{u'(s)} 
    + \normsqr[\Lsqr {Y,h}] {\de_Y {u(s)}}
  \bigr)
  \dd s,
\end{equation*}
where $\de_Y \phi$ denotes the exterior derivative on $Y$.  Moreover,
all objects can be calculated rather explicitly using separation of
variables (denoting the eigenvalues and eigenfunctions of the
Laplacian on $Y$ by $\kappa_k \ge 0$ and $\Phi_k$, respectively).  For
example, we have
\begin{equation*}
  \SDir(z) \phi = \sum_k f_{z,k} \otimes \Phi_k,
\end{equation*}
where $f_{k,z}(s)=\exp(\im s \sqrt{z - \kappa_k})$ (the square root is
cut along the positive real line).  Moreover,
\begin{equation*}
  \Lambda(z) \phi
  = \sum_k \iprod[\Lsqr {Y,h}] \phi {\Phi_k} f_{z,k} \otimes \Phi_k
  = -\im \bigl(\sqrt{z - \laplacian Y}\bigr) \phi
\end{equation*}
(see~\cite[Sec.~3.5]{post:12} for details, e.g., the type of
convergence of the sums).  In particular, for $z=-1$ we have
$\Lambda=\sqrt{\laplacian Y + 1}$.

Similarly, we can treat more general cases like \emph{warped} products
(i.e., $X=I \times Y$ with metric $g=\dd s^2 + r(s)^2 h$ for some
function $\map r I {(0,\infty)}$).  We will come back to this point in
a forthcoming publication.

%----------------------------------------------------------------------
\begin{example}[$\RNeu$, $\RDir$ non-compact, $\Lambda^{-1}$ compact]
  \label{ex:cyl.comp}
  The compactness of $\Lambda^{-1}$ does not imply the compactness of
  $\RNeu$: In the example above, the Neumann and Dirichlet operators
  are the Laplacians on the non-compact cylinder $X=\R_+ \times Y$
  with Neumann resp.\ Dirichlet conditions at $\bd X=\{0\}\times Y$,
  hence their resolvents are not compact.  On the other hand,
  $\Lambda^{-1}=(\laplacian Y+1)^{-1/2}$ is compact.
\end{example}
%----------------------------------------------------------------------

%----------------------------------------------------------------------
\begin{example}[$\RNeu$, $\RDir$ non-compact, $\Lambda^{-1}$ non-compact]
  \label{ex:cyl.comp.mod}
  If we take the bounded modification of the above example, we obtain
  the case where none of the operators $\RNeu$, $\RDir$ and
  $\Lambda^{-1}(=\id_{\wt \HSaux})$ is compact.
\end{example}
%----------------------------------------------------------------------

%----------------------------------------------------------------------
\subsection{\DtN\ operator supported on a metric graph: leaky graphs
  and photonic crystals}
\label{sec:dtn.mg}
%----------------------------------------------------------------------
Let us consider here a case of a \DtN\ operator defined on a singular
space $Y$, where $Y$ is a metric graph embedded in a $2$-dimensional
Riemannian manifold $X$, i.e., $Y = \bigcup_{e \in E} Y_e$, and
each $Y_e$ is a closed one-dimensional (smooth) submanifold in $X$,
called \emph{edge segment}.  We assume for simplicity that $X$ is
compact, but under suitable uniformity assumptions the results below
remain true; e.g.\ if $X$ is a covering manifold with compact
quotient.

We call the closure of each connected component of $X \setminus Y$ a
\emph{face} of $Y$ in $X$, and label the faces by $(X_f)_{f \in F}$.
We assume that each face is compact in $X$, and that the boundary of
each face, consisting of the adjacent edges $E_f$, is Lipschitz (if
$X$ is non-compact, one needs e.g.\ that the Lipschitz constants are
globally bounded).  Let $(\Gamma_f,\HSaux_f)$ be the boundary pair
associated with the manifold $X_f$ and boundary $\bd X_f$.  Note that
each function $\phi_f \in \HSaux_f:=\Lsqr{\bd X_f}$ decomposes into its
components $\phi_f = (\phi_{e,f})_{e \in E_f}$ of the adjacent edge
segments $Y_e$, i.e., $\phi_{e,f} \in \Lsqr{Y_e}$.

A global boundary map is now defined on
\begin{equation}
  \label{eq:h1.coupl}
  \HS^1
  := \bigset{u \in \bigoplus_{f \in F} \Sob{X_f}}
            {(\Gamma u)_{e,f_1}=(\Gamma u)_{e,f_2}
            \text{ whenever $X_{f_1} \cap X_{f_2} = Y_e$}}
\end{equation}
by $\Gamma u := u \restr Y$ ($u \in \HS^1$).  This map is well-defined
since the boundary values of $u$ from different sides on an edge agree
by definition.  It is not difficult to see that $\HS^1 = \Sob X$, and
that $\map \Gamma {\HS^1} {\HSaux := \Lsqr Y}$ is bounded, since
$\Gamma$ is the restriction of the direct sum of the boundary maps
$\map{\Gamma_f}{\Sob{X_f}} {\Lsqr{\bd X_f}}$ to $\HS^1$ after suitable
identifications.

As quadratic form, we consider $\qf h(u) := \normsqr{\de u}$, $u \in
\HS^1=\Sob X$.
%----------------------------------------------------------------------
\begin{proposition}
  \label{prp:dtn.mg.bd2}
  The boundary pair $(\Gamma, \HSaux)$ is unbounded, elliptically
  regular and not positive.  The Neumann operator $\HNeu$ is the
  Laplacian on $X$, and the Dirichlet Laplacian is given by the direct
  sum of the Dirichlet Laplacians on $X_f$, i.e.,
  \begin{equation*}
    \HDir = \bigoplus_{f \in F} \Delta_{X_f}^\Dir
  \end{equation*}
  and $\HDir$ is in particular decoupled.  The \DtN\ operator
  $\Lambda(z)$ acts as follows: if $\phi$ is a (suitably smooth)
  function on $Y$, then $\psi=\Lambda(z) \phi$ is given on $Y_e$ as
  the sum of the normal derivatives of the solutions of the Dirichlet
  problem on the two adjacent faces of $e$ (i.e., $\psi_e$ is the jump
  in the derivative when crossing $Y_e$ form one face to the other).
\end{proposition}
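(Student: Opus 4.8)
The strategy is to reduce the statement to the manifold boundary pair result \Thm{lip.mfd.bd2} applied to each face $X_f$ together with the coupling construction of \Sec{coupl.bd2}. First I would record that the data $(\Gamma_f,\HSaux_f)$ are boundary pairs associated with $\qf h_f(u) := \normsqr[X_f]{\de u}$ on $\HS_f^1 = \Sob{X_f}$ (this is \Thm{lip.mfd.bd2} since each $X_f$ is a compact Riemannian manifold with Lipschitz boundary), and that they are unbounded, elliptically regular, and not uniformly positive. The subtle point here is that the coupling in \Sec{coupl.bd2} is formulated for \emph{two} boundary pairs with a \emph{common} boundary space; here we have a whole family $(X_f)_{f \in F}$ and the gluing happens edge by edge. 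So I would either invoke the iterated/multi-factor version of the coupled boundary pair (the evident generalisation of \Prp{bd2.coupl}, where $\HS = \bigoplus_f \HS_f$, $\HSaux = \Lsqr Y = \bigoplus_{e \in E} \Lsqr{Y_e}$, and $\HS^1$ is the subspace of $\bigoplus_f \Sob{X_f}$ on which the traces from the two faces adjacent to each edge $Y_e$ agree, as in~\eqref{eq:h1.coupl}), or set it up directly. The density condition analogous to~\eqref{eq:bd2.coupl} must be checked: $\ran \Gamma$ contains, after identification, all tuples of sufficiently regular functions on the $Y_e$ matching across edges, which is dense in $\Lsqr Y$; and $\ker \Gamma = \bigoplus_f \Sobn{X_f}$ is dense in $\HS$.

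The next step is to identify the objects. That $\HS^1 = \Sob X$ is a standard gluing fact for Sobolev spaces across Lipschitz interfaces: a function in $\bigoplus_f \Sob{X_f}$ whose traces match across every $Y_e$ lies in $\Sob X$, and conversely. Given this, $\qf h(u) = \normsqr{\de u}$ on $\Sob X$ is exactly the Dirichlet energy form on $X$, so its associated operator $\HNeu$ is the Laplacian on $X$ --- this is the ``Neumann operator'' in our terminology, and since $X$ has no boundary it is just the (non-negative) Laplace--Beltrami operator. The Dirichlet operator is $\HDir = \HNeu \restr{\ker\Gamma}$, i.e.\ the operator associated with $\qf h \restr{\bigoplus_f \Sobn{X_f}}$, which decouples as $\bigoplus_{f\in F}\Delta_{X_f}^\Dir$ by the direct-sum structure; this is the content of \Prpenum{bd2.coupl}{bd2.coupl.ii} in the multi-factor form. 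Elliptic regularity and the failure of uniform positivity then transfer from the individual faces by \Prpenum{bd2.coupl}{bd2.coupl.iv} (elliptic regularity passes to the coupled pair) together with the observation, as in the proof of \Thm{lip.mfd.bd2}, that $\eGamma$ restricted to $\LS^0(z) = \ker(\Hmax - z)$ cannot map into $\Lsqr Y$ --- solutions of $(\Delta_X - z)h = 0$ on the faces need not have $\Lsqr{}$-traces on $Y$, so uniform positivity fails by \Thmenum{pos.bd2}{pos.bd2.iii}. Unboundedness is immediate since each $\Gamma_f$ already has non-surjective (non-closed) range.

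Finally, the interpretation of $\Lambda(z)$ follows from \Prp{why.dn} together with the coupled-\DtN formula. By the multi-factor analogue of \Prpenum{bd2.coupl}{bd2.coupl.ii}, for $z \notin \spec\HDir = \bigcup_f \spec{\Delta_{X_f}^\Dir}$ the coupled solution operator is $\SDir(z)\phi = \bigoplus_f \SDir_f(z)(\phi\restr{\bd X_f})$ and the weak \DtN operator is $\wLambda(z)\phi = \sum_f \iota_f^* \wLambda_f(z)(\phi\restr{\bd X_f})$, where $\iota_f^*$ reassembles the edge contributions. Applying \Thm{why.dn} on each face: if $\phi$ is smooth enough that $h_f := \SDir_f(z)(\phi\restr{\bd X_f}) \in \WS_f$, then $\Lambda_f(z)(\phi\restr{\bd X_f}) = \Gamma_f' h_f = \normder h_f\restr{\bd X_f}$, the normal derivative of the face solution. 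Summing over the (two) faces adjacent to a given edge $Y_e$, the value of $\Lambda(z)\phi$ on $Y_e$ is $\normder h_{f_1} + \normder h_{f_2}$ with the two outward normals; since the outward normal of $X_{f_1}$ along $Y_e$ is the inward normal of $X_{f_2}$, this is precisely the jump of the normal derivative of the Dirichlet solution across $Y_e$, as claimed.

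\textbf{Main obstacle.} The delicate part is not the functional-analytic bookkeeping but the two ``soft'' analytic facts specific to the geometry: (i) that $\HS^1 = \Sob X$, i.e.\ matching traces across Lipschitz interfaces genuinely characterise the global Sobolev space (one must be careful that the edge segments $Y_e$ meet at vertices, where several faces come together, so the ``interface'' is only piecewise a smooth hypersurface --- but since each $Y_e$ is a smooth one-dimensional submanifold and the vertex set has measure zero in $Y$, the standard gluing lemma still applies, and the boundedness of $\Gamma$ follows from boundedness of each $\Gamma_f$ as in \Thm{lip.mfd.bd2}); and (ii) the explicit failure of uniform positivity, which rests on the known fact (quoted in the proof of \Thm{lip.mfd.bd2}) that harmonic functions with $\Lsqr{}$-boundary data form a proper subspace. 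Both are imported rather than proved here, so the proof reduces, modulo these, to invoking \Thm{lip.mfd.bd2}, \Prp{bd2.coupl} (multi-factor version) and \Prp{why.dn}.
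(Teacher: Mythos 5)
Your plan is sound and in fact more detailed than what the paper supplies: the paper's own ``proof'' is an explicit sketch that defers everything except elliptic regularity to a forthcoming publication, and for elliptic regularity it argues directly via the criterion \Thmenum{bd2.q-bd3}{bd2.q-bd3.i}: for $u \in \dom \HDir$ the functional $\check\Gamma' u$ is the sum of the normal derivatives of the facewise components $u_f \in \dom \laplacianD{X_f}$, hence lies in $\Lsqr Y$. You reach the same conclusion by a different route, namely applying \Thm{lip.mfd.bd2} to each face and transporting ellipticity through a multi-factor version of the coupling construction of \Sec{coupl.bd2}. Both routes ultimately rest on the same facewise regularity of the Dirichlet Laplacian, but yours has to pay for the detour: \Prp{bd2.coupl} as stated covers only two pairs with a \emph{common} boundary space, so the ``evident generalisation'' (with $\HSaux = \Lsqr Y$, facewise boundary spaces $\Lsqr{\bd X_f}$, extension-by-zero identifications, and the bookkeeping $\sum_f \normsqr[\Lsqr{\bd X_f}]{\phi\restr{\bd X_f}} = 2\normsqr[\Lsqr Y]{\phi}$ because every edge has exactly two adjacent faces) would have to be formulated and checked; it is not available off the shelf in the paper. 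The paper's direct verification avoids this. What your approach buys is a cleaner derivation of the remaining claims (decoupled $\HDir$, the formula $\SDir(z)\phi = \bigoplus_f \SDir_f(z)(\phi\restr{\bd X_f})$, and the jump interpretation of $\Lambda(z)$ via \Prp{why.dn} applied facewise), all of which the paper omits.

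One point deserves more care than your write-up gives it: the failure of uniform positivity does \emph{not} transfer formally from a single face by the coupling proposition (that proposition only passes uniform positivity in the positive direction), and the bare assertion that solutions ``need not have $\Lsqr{}$-traces'' is importing exactly what has to be exhibited for the \emph{coupled} pair. A clean fix is to take boundary data supported on a single edge $Y_e$, away from the vertices, and highly oscillating; then the Dirichlet solutions on the two adjacent faces are small in $\Lsqr{}$ and vanish on all other faces, so $\norm[\HS]{\SDir\phi^{(n)}}/\norm[\HSaux]{\phi^{(n)}} \to 0$; alternatively one can run the $\LS^0(z)$-trace argument of \Thm{lip.mfd.bd2} on one face with such localised data. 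Since the paper omits this part altogether, this is a gap relative to a complete proof rather than relative to the paper, but it is worth closing explicitly.
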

%----------------------------------------------------------------------
\begin{proof}
  We omit the details here, since we will consider these questions in
  a forthcoming publication.  We only indicate how to prove the
  elliptic regularity of the boundary pair: this can be seen by noting
  first that the solution form is given by
  \begin{equation*}
    \qf q(\phi) = \sum_{f \in F} \qf q_f(\phi_f),
  \end{equation*}
  where $\qf q_f$ is the solution form of the boundary pair
  $(\Gamma_f,\HSaux_f)$ associated with the face $X_f$.  As these
  boundary pairs are all elliptically regular (the face $f$ is assumed
  to have a Lipschitz boundary), each of the solution forms is
  bounded by $C_f^2$ in $\HSaux_f=\Lsqr{\bd X_f}$, the entire solution
  form is then bounded by $\max_{f \in F} C_f^2$, and hence
  $(\Gamma,\HSaux)$ is elliptically regular.
\end{proof}
%----------------------------------------------------------------------

%----------------------------------------------------------------------
\begin{remark}[Leaky graphs and photonic crystals]
  \label{rem:leaky.graphs}
  Let $\qf h_a$ be the Robin-type perturbation of the form $\qf h$
  (i.e., $\qf h_a(u):= \qf h(u) + a \normsqr{\Gamma u}$ for $a \ge 0$,
  see \Sec{robin}).  Then $\qf h_a$ is non-negative and closed, and we
  can consider $(\Gamma,\HSaux)$ associated with the form $\qf h_a$.
  The associated (Neumann) operator $H_a$ then has a Robin-type
  boundary condition of the type $\Gamma' u + a \Gamma u=0$, where
  $\Gamma' u$ on $Y_e$ is the sum of the normal (outwards) derivative
  of $u$ on the two adjacent faces.

  The resolvent difference of $H_a$ and $\HDir$ (the latter is still
  decoupled) can be expressed by a Krein-type formula, and $H_a$
  converges to $\HDir$ in norm resolvent sense as $a \to \infty$.
  Moreover, the associated \DtN\ operator of $(\Gamma,\HSaux)$
  associated with $\qf h_a$ is $\Lambda_a(z)=\Lambda(z)+a$.

  This situation is closely related to a model called \emph{leaky
    graph} (see the overview article~\cite{exner:08}).  Note that in
  the situation of a leaky graph, one has $X=\R^2$ and $a < 0$ (this
  needs some modifications of our arguments).  Moreover, some faces
  may be non-compact with finitely many adjacent edges, some of them
  having infinite length.  One is interested e.g.\ in the asymptotic
  behaviour of the negative eigenvalues as $a \to -\infty$.  If the
  graph is just a curve embedded in $\R^2$, then the asymptotics are
  typically of the form $\lambda_k(a)=-a^2/4 + \mu_k + \Err(\abs
  a^{-1} \ln \abs a)$ where $\mu_k$ is the $k$-th eigenvalue of a
  Schr\"odinger operator on the curve with (negative) potential given
  by the curvature of the curve in the plane.  \emph{Only very few
    results are known} if the curve is replaced by a graph.  In
  particular, it would be very interesting to relate $\mu_k$ with the
  above defined \DtN\ operator (or any other related one) on the graph
  (see~\cite[Sec.~7.13]{exner:08}).

  In~\cite{kuchment-kunyansky:02} (see also the references therein),
  Kuchment and Kunyansky consider the above-mentioned operator $H_a$
  appearing as the limit operator in the analysis of \emph{photonic
    crystals} supported on a \emph{periodic} (hexagonal) lattice
  $\Gamma$ embedded in $X=\R^2$.  As in our approach, they reduce the
  eigenvalue problem for $H_a$ to an eigenvalue problem for
  $\Lambda(z)$.  Then they investigate the nature of the operator
  $\Lambda(z)$ on $Y$, and try find a good candidate of a
  \emph{differential} operator on $Y$ being close to the
  \emph{pseudo-differential} operator $\Lambda(z)$.  This problem is
  still not yet completely understood.  \emph{We believe that our
    method helps to analyse these problems further.}
\end{remark}
%----------------------------------------------------------------------

%----------------------------------------------------------------------
\subsection{Laplacian with mixed boundary conditions: the Zaremba
  problem}
\label{sec:zaremba}
%----------------------------------------------------------------------

%----------------------------------------------------------------------
\subsubsection*{An elliptically regular Zaremba problem}
%----------------------------------------------------------------------
Let $X$ be a compact Riemannian manifold with smooth boundary $\bd
X$.  Let $Y \subset \bd X$ be a compact submanifold of the same
dimension as $\bd X$, with \emph{smooth} boundary in $\bd X$, and let
$Z := \clo{\bd X \setminus Y}$.  We call the Laplacian on $X$ with
Dirichlet condition on $Z$ and Neumann condition on $Y$ the
\emph{Zaremba Laplacian}, denoted by $\Delta_X^Z$ (in particular,
$\Delta^\Dir = \Delta_X^{\bd X}$ and $\Delta^\Neu =
\Delta_X^\emptyset$).

Let us first compare the Zaremba Laplacian with the Dirichlet Laplacian
on $X$.  We will see that we can again treat this problem with our
boundary pair method. 

Set $\HS:=\Lsqr X$ and set
\begin{equation*}
  \HS^1 := \Sobx Z X 
  := \bigset {u \in \Sob X} {u \restr Z = 0},
  \qquad
  \qf h(u) := \normsqr{\de u}.
\end{equation*}
As boundary operator we choose $\Gamma u := u \restr Y$.

%----------------------------------------------------------------------
\begin{theorem}
  \label{thm:zaremba.bd2}
  The boundary pair $(\Gamma, \HSaux)$ associated with the form $\qf
  h$ is unbounded, elliptically regular and not positive.  The Neumann
  operator $\HNeu$ is the Zaremba Laplacian $\Delta_X^Z$ on $X$ with
  Dirichlet condition on $Z$ and Neumann condition on $Y$.  Moreover,
  the Dirichlet operator is the Laplacian $\Delta_X^\Dir=\Delta_X^{\bd
    X}$ with (pure) Dirichlet condition on $\bd X$.

  The range of the boundary map is $\HSaux^{1/2}=\set{\phi \in \Lsqr
    Y}{\wt \phi \in \Sob[1/2]{\bd X}}$, where $\wt \phi=\phi \oplus 0$
  is the extension of $\phi \in \Lsqr Y$ by $0$ on $Z$.  The Dirichlet
  solution operator is given by $\SDir(z) \phi = \SDir_{(X,\bd X)}(z)
  \wt \phi$, where $\SDir_{(X,\bd X)}(z)$ is the Dirichlet solution
  (Poisson) operator for the boundary pair associated with $X$ and the
  \emph{entire} boundary $\bd X$.

  Finally, the Zaremba Laplacian and the \DtN\ operator $\Lambda(z)$
  ($z \notin \spec {\Delta_X^\Dir}$) have discrete spectrum.
\end{theorem}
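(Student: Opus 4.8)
The plan is to verify the four assertions of Theorem~\ref{thm:zaremba.bd2} in order: that $(\Gamma,\HSaux)$ is a boundary pair, that it is unbounded, elliptically regular and not uniformly positive, the identification of the Neumann and Dirichlet operators, the formula for $\SDir(z)$, and finally the discreteness of the spectra. The boundary pair properties largely parallel the proof of Theorem~\ref{thm:lip.mfd.bd2}, so I would first reduce to standard Sobolev trace theory: the map $\Gamma u = u\restr Y$ is bounded from $\Sobx Z X$ to $\Lsqr Y$ because it factors through the full trace map $\Sob X \to \Lsqr{\bd X}$ followed by restriction to $Y$; its kernel contains $\Cci{\ring X}$ and is therefore dense in $\HS=\Lsqr X$; and $\ran\Gamma$ is dense in $\Lsqr Y$ since smooth functions vanishing near $\bd Y = Z\cap Y$ but supported in $Y$ belong to the range. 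That $\HNeu$ is the Zaremba Laplacian $\Delta_X^Z$ and $\HDir = \Delta_X^{\bd X}$ is immediate from the definitions: the Neumann operator is the operator associated with $\qf h$ on $\Sobx Z X$, which is exactly the variational form of the mixed problem; and $\ker\Gamma = \Sobx{\bd X} X = \Sobn X$, so its associated operator is the pure Dirichlet Laplacian.

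For the Dirichlet solution formula, I would observe that for $\phi\in\HSaux^{1/2}$ and $\wt\phi = \phi\oplus 0$ on $\bd X = Y\cup Z$, the function $h:=\SDir_{(X,\bd X)}(z)\wt\phi$ is by construction the weak solution in $\LS^1_{(X,\bd X)}(z)$ with full boundary value $\wt\phi$; since $h\restr Z = 0$ we have $h\in\Sobx Z X = \HS^1$, and for every $f\in\HS^{1,\Dir}=\Sobn X\subset\Sobn[(X,\bd X)] X$ the defining identity $(\qf h - z\qf 1)(h,f)=0$ holds, so $h\in\LS^1(z)$ with $\Gamma h = \phi$; uniqueness from Proposition~\ref{prp:wsol.dec.z}~\eqref{wsol.dec.z.i} gives $\SDir(z)\phi = h$. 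Elliptic regularity then follows exactly as in Theorem~\ref{thm:lip.mfd.bd2}: I would invoke Theorem~\ref{thm:bd2.q-bd3}~\eqref{bd2.q-bd3.i}, which reduces it to showing $\check\Gamma' u\in\HSaux$ for all $u\in\dom\HDir$; but $\dom\HDir = \dom\Delta_X^{\bd X}\subset\Sobx[3/2]\Delta X$ (Jerison--Kenig, as already cited), so the normal derivative restricted to the open piece $Y$ lies in $\Lsqr Y$. Failure of uniform positivity is the same argument as before via Theorem~\ref{thm:pos.bd2}~\eqref{pos.bd2.iii}: if $\Gamma$ extended boundedly to $\LS^0 = \ker(\Delta_X^{\bd X,\max}+1)$ into $\Lsqr Y$, then harmonic-type functions would have $\Lsqr Y$ boundary traces, which is false. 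Unboundedness of the boundary pair follows because $\ran\Gamma$ is the set of restrictions to $Y$ of functions in $\Sob[1/2]{\bd X}$ vanishing on $Z$, a proper subspace of $\Lsqr Y$.

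The last assertion, discreteness of $\spec{\Delta_X^Z}$ and of $\spec{\Lambda(z)}$, I would handle as follows. The Zaremba Laplacian has compact resolvent because $\HS^1 = \Sobx Z X$ embeds compactly into $\HS = \Lsqr X$ (Rellich, since $X$ is compact and $\Sobx Z X\subset\Sob X$ is closed), so $\RNeu$ is compact; hence $\spec\HNeu$ is purely discrete. For $\Lambda(z)$ I would apply Theorem~\ref{thm:nd.comp}: the trace map $\map\Gamma{\HS^1}\HSaux$ is compact, being the composition of the compact embedding $\Sob X\hookrightarrow\Sob[1/2]{\bd X}$ (or directly $\Sob X \to \Lsqr{\bd X}$ compact by Rellich on the boundary) with restriction to $Y$; therefore $\Lambda^{-1}$ is compact, and since additionally $\RNeu$ is compact, Theorem~\ref{thm:nd.comp}~\eqref{comp.e} gives that $\Lambda(z)$ is closed with purely discrete spectrum for all $z\in\C\setminus\spec\HDir$. (The closedness of $\Lambda(z)$ also follows from elliptic regularity via Theorem~\ref{thm:ell.bd2.impl}~\eqref{ell.bd2.impl.iii}.)

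The main obstacle I anticipate is not the functional-analytic bookkeeping but the regularity input: confirming that $\dom\Delta_X^{\bd X}\subset\Sobx[3/2]\Delta X$ really does give $\check\Gamma' u\restr Y\in\Lsqr Y$ even though $Y$ has a boundary inside $\bd X$. Here one must be slightly careful that it is the \emph{Dirichlet} operator $\HDir = \Delta_X^{\bd X}$ whose domain enters the elliptic regularity criterion of Theorem~\ref{thm:bd2.q-bd3}, not the Zaremba operator $\HNeu$ itself --- and $\Delta_X^{\bd X}$ is the \emph{smooth} Dirichlet Laplacian, for which $\dom\HDir = \dom\Delta_X^{\bd X}\subset\Sob[2] X$ when $\bd X$ is smooth, so $\normder u$ is in $\Sob[1/2]{\bd X}\subset\Lsqr{\bd X}$ and a fortiori its restriction to $Y$ lies in $\Lsqr Y$. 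This is what makes the Zaremba boundary pair elliptically regular even though the Zaremba operator itself has a domain with low regularity ($\Sobx[3/2-\eps]\Delta X$ but not $\Sobx[3/2]\Delta X$), which is the conceptual point the theorem is designed to illustrate --- contrast with Theorem~\ref{thm:zaremba2.bd2}, where the roles of $Y$ and $Z$ are interchanged and elliptic regularity fails.
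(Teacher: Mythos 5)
Your proposal is correct and follows essentially the same route as the paper's proof: trace theory for the boundary-pair axioms and unboundedness, elliptic regularity via Theorem~\ref{thm:bd2.q-bd3}~(\ref{bd2.q-bd3.i}) using the $\Sob[2]X$-regularity of the \emph{smooth} pure Dirichlet Laplacian (the conceptual point you correctly isolate at the end), the zero-extension argument for $\SDir(z)$, failure of uniform positivity as in \Thm{lip.mfd.bd2}, and compactness of $\map\Gamma{\HS^1}\HSaux$ together with \Thm{nd.comp} for the discreteness of $\spec{\Lambda(z)}$. The only step you leave implicit is that $\wt\phi=\phi\oplus 0$ indeed lies in the range of the full trace map, so that $\SDir_{(X,\bd X)}(z)\wt\phi$ is defined; the paper settles this in one line (any $u\in\Sobx Z X$ with $u\restr Y=\phi$ has $u\restr{\bd X}=\wt\phi$), and your description of $\ran\Gamma$ shows you have this observation in hand.
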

%----------------------------------------------------------------------
\begin{proof}
  Clearly, $\map \Gamma \HS^1 {\HSaux:=\Lsqr Y}$ is bounded since $u
  \mapsto u \restr {\bd X}$ is bounded, as well as the restriction map
  $\Lsqr {\bd Y} \to \Lsqr Y$.  Moreover, that $(\Gamma,\HSaux)$ is an
  unbounded boundary pair follows from the existence of $\wt \phi \in
  \Sob[1/2]{\bd X} \setminus \Lsqr{\bd X}$ such that $\wt \phi \restr
  Z=0$.
  
  The assertion on $\HSaux^{1/2}$ and the Dirichlet solution operator
  is easily seen by noting that for $\phi \in \HSaux^{1/2}$ there
  exists $u \in \Sobx Z X$ such that $\phi = u \restr Y$.  In
  particular, the extension by $0$ is just $\wt \phi = u \restr{\bd
    X}$, and hence in $\Sob[1/2]{\bd X}$.  The elliptic regularity
  follows from
  \begin{equation*}
    \qf q(\phi)
    = \normsqr[\Lsqr X]{\SDir \phi}
    = \normsqr[\Lsqr X]{\SDir_{(X,\bd X)} \wt \phi}
    \le C^2 \normsqr[\Lsqr{\bd X}]{\wt \phi}
    = C^2 \normsqr[\Lsqr Y]{\wt \phi}
  \end{equation*}
  for all $\phi \in \HSaux^{1/2}$, where we used the elliptic
  regularity of the boundary pair associated with the manifold $X$ and
  boundary $\bd X$ (see \Sec{lapl.mfd}).  The non-positivity can be
  seen as in \Sec{lapl.mfd}.

  As $\Sobn X \subset \Sobx Z X \subset \Sob X$ we have
  $\HDir=\Delta_X^\Dir \ge \HNeu=\Delta_X^Z \ge \Delta_X^\Neu$, where
  the latter is the Laplacian on $X$ with Neumann boundary conditions
  on the entire boundary $\bd X$.  The latter has compact resolvent,
  so the same is true for the Zaremba and Dirichlet Laplacian $\HNeu$
  and $\HDir$, respectively.  Moreover, $\Gamma$ is a compact
  operator, since $\Gamma u = \wt \Gamma u \restr Y$, and $\map{\wt
    \Gamma}{\Sob X} {\Lsqr {\bd X}}$ is compact.  In particular,
  $\Lambda(z)$ has discrete spectrum by \Thm{nd.comp}.
\end{proof}
%----------------------------------------------------------------------

Krein's resolvent formula here relates the resolvent of the Zaremba
Laplacian with the pure Dirichlet Laplacian
\begin{equation}
  \label{eq:krein.zaremba}
  (\Delta_X^Z-z)^{-1} - (\Delta_X^\Dir - z)^{-1}
  = \eSDir(z) \Lambda(z)^{-1} \eSDir(\conj z)^*.
\end{equation}
Since the boundary pair is elliptic, the operators on the RHS all act
in the Hilbert spaces $\HSaux=\Lsqr Y$ and $\HS=\Lsqr X$.

%----------------------------------------------------------------------
\begin{remark}
  \label{rem:zaremba.reg}
  The domain of the Zaremba Laplacian is contained in $\Sob[3/2-\eps]
  X$ for all $\eps>0$, but not contained in $\Sob[3/2] X$ itself.  The
  latter can be seen in the following situation where $X=[0,\infty)
  \times \R$ (or some bounded subset containing $0$) and $u(x,y):= \Im
  \sqrt{x + \im y}$ (see~\cite{shamir:68}); $u$ fulfils a Dirichlet
  condition on the positive $x$-axis $Z$ and a Neumann condition on
  the negative $x$-axis $Y$, and $\Delta u = 0$.  Moreover, $\normder
  u \restr Z$ is not in $\Lsqr Y$, and hence $u \notin \Sob[3/2] X$.
  At first sight surprisingly, the Zaremba problem is less regular for
  smooth boundaries than for certain boundaries with corners (see the
  discussion in \cite[Sec.~4.3]{grubb:11} and
  also~\cite{mitrea-mitrea:07}).
\end{remark}
%----------------------------------------------------------------------

%----------------------------------------------------------------------
\subsubsection*{A non-elliptically regular Zaremba problem}
%----------------------------------------------------------------------
If we use the pure Neumann Laplacian as reference operator (by
choosing $\HS^1 := \Sob X$) and again, $\Gamma u = u \restr Y$, then
the Dirichlet operator $\HDir$ is the Zaremba Laplacian $\Delta_X^Y$,
now with Dirichlet condition on $Y$ and Neumann condition on $Z$, and
the Neumann operator $\HNeu$ is the pure Neumann Laplacian
$\Delta_X^\Neu$.
%----------------------------------------------------------------------
\begin{theorem}
  \label{thm:zaremba2.bd2}
  The boundary pair $(\Gamma, \HSaux)$ associated with the form $\qf
  h$ corresponding to the pure Neumann Laplacian on $X$ is unbounded,
  and \emph{not} elliptically regular.  The Dirichlet operator $\HDir$
  is the Zaremba Laplacian $\Delta_X^Y$, now with Dirichlet condition
  on $Y$ and Neumann condition on $Z$.  The boundary map range space
  $\HSaux^{1/2}$ is $\Sob[1/2] Y := \set{\psi \restr Y}{\psi \in
    \Sob[1/2] {\bd X}}$.  Finally, the Zaremba Laplacian and the \DtN\
  operator $\Lambda(z)$ ($z \notin \spec{\Delta_X^Y}$) have discrete
  spectrum.
\end{theorem}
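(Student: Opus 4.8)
The plan is to mirror the proof of \Thm{zaremba.bd2}, tracking carefully which assertions survive the reversal of roles between $Y$ and $Z$, and which one breaks. First I would check that $(\Gamma,\HSaux)$ with $\HS^1 = \Sob X$ and $\Gamma u = u\restr Y$ is indeed a boundary pair: boundedness of $\Gamma$ follows as before from boundedness of the trace map $\Sob X \to \Lsqr{\bd X}$ composed with restriction $\Lsqr{\bd X}\to\Lsqr Y$; density of $\ker\Gamma = \Sobx Z X$-type functions (functions vanishing near $Y$) in $\HS = \Lsqr X$ is clear since smooth compactly supported functions in $\ring X$ lie in the kernel; and density of $\ran\Gamma$ in $\Lsqr Y$ follows by restricting smooth functions on $X$. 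The Neumann operator associated with $\qf h(u)=\normsqr{\de u}$ on all of $\Sob X$ is by definition the pure Neumann Laplacian $\Delta_X^\Neu$, and the Dirichlet operator, associated with $\qf h$ restricted to $\ker\Gamma = \{u\in\Sob X : u\restr Y = 0\}$, is exactly the Zaremba Laplacian $\Delta_X^Y$ with Dirichlet condition on $Y$ and Neumann condition on $Z$. Unboundedness of the boundary pair follows as in \Sec{lapl.mfd} from non-surjectivity of the Sobolev trace map onto $\Lsqr Y$.

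Next I would establish discreteness of the relevant spectra. The Neumann Laplacian $\Delta_X^\Neu$ on a compact manifold with Lipschitz boundary has compact resolvent, hence purely discrete spectrum; since $(0\le)\ \qf h \le \qf h^\Dir$ in the form sense (the Zaremba form sits between Neumann and Dirichlet), \Prpenum{bd2.robin}{nd.z.v} or the standard monotonicity argument gives $\RNeu \ge \RDir \ge 0$, so $\HDir = \Delta_X^Y$ also has compact resolvent and discrete spectrum. For the \DtN operator, I would invoke \Thm{nd.comp}: since $\Gamma$ factors as $u \mapsto (\wt\Gamma u)\restr Y$ with $\wt\Gamma\colon \Sob X \to \Lsqr{\bd X}$ compact (Rellich) and restriction $\Lsqr{\bd X}\to\Lsqr Y$ bounded, $\Gamma$ is compact, so $\Lambda^{-1}$ is compact; then since $\RNeu$ is compact, assertion \itemref{comp.e} of \Thm{nd.comp} gives that $\Lambda(z)$ is closed with purely discrete spectrum for all $z\notin\spec\HDir$.

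The delicate part is the \emph{failure} of elliptic regularity, which is the genuinely new content and the main obstacle. By \Thmenum{bd2.q-bd3}{bd2.q-bd3.i}, the boundary pair is elliptically regular iff $\check\Gamma' u \in \HSaux = \Lsqr Y$ for all $u \in \dom\HDir = \dom\Delta_X^Y$; here the abstract normal derivative $\check\Gamma'$ restricts, along $Y$, to the usual outward normal derivative. So I must exhibit $u \in \dom\Delta_X^Y$ with $\normder u\restr Y \notin \Lsqr Y$. The plan is to use the explicit local model already quoted in \Sec{zaremba}: take $X$ to contain a neighbourhood of an interface point, work in the half-space model $X_{\mathrm{loc}} = [0,\infty)\times\R$ with $Z$ the positive $x$-axis and $Y$ the negative $x$-axis, and set $u(x,y) = \Im\sqrt{x+\im y}$ (suitably cut off away from the singularity by a smooth bump, and extended harmonically or by solving a Dirichlet problem elsewhere so as to land in $\dom\Delta_X^Y$). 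One checks $\Delta u = 0$ locally, $u\restr Z = 0$ (Dirichlet on the positive axis) and $\normder u\restr Y = 0$ in a distributional sense away from the origin, so that the cutoff version lies in $\dom\Delta_X^Y \subset \Sobx[3/2-\eps]\Delta X$ for every $\eps>0$; but the tangential trace of $u$ on $Z$, or equivalently $\normder u\restr Z$ for the roles-swapped problem, has a $|x|^{-1/2}$-type singularity at the origin, which is exactly $\Lsqr{}$-borderline-divergent in dimension one. The main obstacle is making this rigorous: one must (i) confirm the cutoff function $u$ genuinely belongs to $\dom\Delta_X^Y$ (i.e. $\Delta u \in \Lsqr X$ and the mixed boundary conditions hold), using the regularity statement $\dom\Delta_X^Y \subset \Sobx[3/2-\eps]\Delta X$ from \cite{grubb:11}, and (ii) verify that the normal derivative along $Y$ fails to be square-integrable near $\bd Y = \bd Z$, using the explicit asymptotics of $\Im\sqrt{\cdot}$. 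Once this single example is in hand, \Thmenum{bd2.q-bd3}{bd2.q-bd3.i} immediately yields that the boundary pair is not elliptically regular. (Uniform positivity is not claimed here and need not be addressed; and the weak form of Krein's formula, \Eq{krein.res.w}, holds automatically for any boundary pair by \Thm{krein.res}.)
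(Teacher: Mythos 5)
Your proposal is correct and follows essentially the same route as the paper: identify $\HNeu=\Delta_X^\Neu$ and $\HDir=\Delta_X^Y$, obtain discreteness of the spectra from compactness of $\RNeu$ (compact $X$, smooth boundary), the monotonicity $\RNeu\ge\RDir$, and compactness of $\Gamma$ via \Thm{nd.comp}, and disprove elliptic regularity by exhibiting, via the Shamir-type local model $u=\Im\sqrt{x+\im y}$, a function $u\in\dom\Delta_X^Y$ with $\check\Gamma' u\notin\Lsqr Y$ and invoking \Thmenum{bd2.q-bd3}{bd2.q-bd3.i}. The paper's own proof is exactly this sketch (it simply cites the example "as above" and says the compactness "can be seen as before"), so your additional remarks on cutting off the local solution only spell out details the paper leaves implicit.
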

%----------------------------------------------------------------------
\begin{proof}
  That the boundary pair is not elliptically regular can be seen as
  follows: As in \Rem{zaremba.reg}, where $X=[0,\infty) \times \R$,
  one can find functions $u \in \dom \HDir$ (the Zaremba domain) such
  that $\check \Gamma' u$ is not contained in $\HSaux$, and hence by
  \Remenum{bd2.ell}{bd3-ell} the boundary pair is not elliptic.

  The Neumann operator has discrete spectrum since $X$ is compact with
  smooth boundary.  The compactness of $\Lambda^{-1}$ can be seen as
  before.
\end{proof}
%----------------------------------------------------------------------
Krein's resolvent formula in this case still holds, but only in its
``weak'' form
\begin{equation}
  \label{eq:krein.zaremba2}
   (\Delta_X^\Neu - z)^{-1} - (\Delta_X^Y-z)^{-1}
  = \SDir(z) \wLambda(z)^{-1} \SDir(\conj z)^*,
\end{equation}
since now, the operators on the RHS map as $\HS \to \HSaux^{-1/2} \to
\HSaux^{1/2} \to \HS$.  Similar formulae have also been shown
in~\cite{grubb:11} and \cite{pankrashkin:06b}.  Moreover, since
$\RNeu$ and $\Lambda^{-1}$ are compact, we have the spectral relation
\begin{equation*}
  \lambda \in \spec {\laplacianN X}
    \qquad \Leftrightarrow \qquad
  0 \in \spec {\Lambda(\lambda)},
\end{equation*}
if $\lambda \notin \spec {\Delta_X^Y}$.  In other words, if the
spectrum and eigenfunctions of $\laplacianN X$ are known, we know the
$0$-eigenspace of the \DtN\ operator (cf.\
\Thmenum{krein1}{krein1.ia}).

%----------------------------------------------------------------------
\begin{remark}
  \label{rem:why.bd2}
  We would like to stress here that the boundary pair of
  \Thm{zaremba.bd2} can also be treated with the methods of
  quasi-boundary triples, according to
  \Thmenum{bd3.intro}{bd3.intro.i}.  On the other hand, the boundary
  pair of \Thm{zaremba2.bd2} does not correspond to a quasi boundary
  triple, and can hence be treated only by our boundary pair concept.
\end{remark}
%----------------------------------------------------------------------

%----------------------------------------------------------------------
\subsection{Example of a generalised boundary pair: discrete
  Laplacians}
\label{sec:large.bd2}
% ----------------------------------------------------------------------
Let us present here another class of examples; in this case, the
boundary pair is a generalised one, i.e., $\ker \Gamma$ is no longer
dense in $\HS$.

Let $(V,E,\bd)$ be a discrete graph, i.e., $V$ denotes the set of
vertices, $E$ the set of edges and $\map \bd E {V \times V}$ maps $e$
onto $(\bd_-e,\bd_+e)$, the \emph{initial} and \emph{terminal vertex}
of $e$; fixing therefore also an orientation.  Denote by $E_v$ the set
of edges $e$ adjacent with the vertex $v \in V$ (i.e., $e \in E_v$ iff
$v=\bd_+e$ or $v=\bd_-e$.  If $e \in E_v$, we denote by $v_e$ the
vertex on the other end of $e$. 

We assume for simplicity here that the graph is \emph{finite}.  Let
$\map \mu V {(0,\infty)}$ and $\map \rho E {(0,\infty)}$ be functions,
the \emph{vertex} and \emph{edge weights}.  Let
\begin{equation*}
  \HS := \lsqr{V,\mu}, \qquad
  \normsqr[\lsqr{V,\mu}] f 
    := \sum_{v \in V} \abssqr{f(v)} \mu(v),
\end{equation*}
and set
\begin{equation*}
  \qf h(f)
  := \sum_{e \in E} \abssqr{f(\bd_+e)-f(\bd_-e)}\rho(e)
\end{equation*}
with $\dom \qf h = \HS^1 = \HS$.  Since this form is bounded, we can
omit the subscripts $(\cdot)^1$ indicating the form domain.  The
Neumann operator $\HNeu$, i.e., the operator associated with $\qf h$
acts as
\begin{equation*}
  (H f)(v)
  = \frac 1{\mu(v)} \sum_{e \in E_v} \rho(e)\bigl(f(v)-f(v_e)\bigr).
\end{equation*}
If we choose $\mu(v)=1$ and $\rho(e)=1$ then we arrive at the
\emph{combinatorial} Laplacian; if we choose $\mu(v)=\deg v = \card
{E_v}$ and $\rho(e)=1$, then we arrive at the \emph{normalised}
Laplacian.

We now declare a subset of $V$ as \emph{boundary} of the graph, i.e.,
let $\bd V \subset V$ be the set of \emph{boundary vertices}.  The
vertices in its complement, $\ring V := V \setminus \bd V$, are called
\emph{inner vertices}.  We set
\begin{equation*}
  \HSaux := \lsqr{\bd V,\mu}, \qquad
  \Gamma f := f \restr {\bd V}.
\end{equation*}
Note that $\HS^\Dir := \ker \Gamma = \lsqr {\ring V,\mu}$ is
\emph{not} dense in $\HS=\lsqr {V,\mu}$.  Therefore, $(\Gamma,\HSaux)$
is a generalised bounded boundary pair associated with $\qf h$.  The
Dirichlet operator acts formally as $H$, but only on $\lsqr{\ring
  V,\mu}$; if $\embmap \iota {\lsqr{\ring V,\mu}}{\lsqr{V,\mu}}$
denotes the natural embedding, and $\pi:=\iota^*$ the corresponding
projection, then $\HDir = \pi \HNeu \iota$.  Note that this example
corresponds to $X=V$, $Y=\bd V$ and $\nu = \mu \restr {\bd V}$ in the
notation of \Ex{basic.ex}.

Before giving a formula for the Dirichlet solution operator, let us
represent the operator $\HNeu$ in block structure
\begin{equation*}
  \HNeu =
  \begin{pmatrix}
    A & B\\ B^* & D
  \end{pmatrix}
\end{equation*}
with respect to the splitting $\HS=\HSaux \oplus \HS^\Dir$, i.e.,
$\lsqr{V,\mu}= \lsqr{\bd V,\mu} \oplus \lsqr{\ring V,\mu}$.  Here,
$D=\HDir$ is the Dirichlet operator, and $\map A \HSaux \HSaux$, $\map
B {\HS^\Dir} \HSaux$.  Let $z \notin \spec D$, then $h \in \LS(z) :=
\LS^1(z)$ iff $(H-z)h \restr {\ring V}=0$.  Denote by $\map \Hmax \HS
{\HS^\Dir}$ the operator $\HNeu$ restricted to $\HS^\Dir$ (this is
actually consistent with $\Hmin:=\HDir \cap \HNeu$, the minimal
operator $\Hmin$, which acts as $\HS^\Dir \to \HS$, and $\Hmax =
(\Hmin)^*$).  Using the matrix decomposition, we have $\Hmax f = B^*
f_\bd + D f_0$, where $f=f_\bd \oplus f_0 \in \HSaux \oplus \HS^\Dir$.

Moreover,
\begin{align*}
  \qf h(f,g)
  = \iprod[\HS] {Hf} g
  &= \iprod[\HS^\Dir]{B^* f_\bd + D f_0} {g_0}
  + \iprod[\HSaux] {A f_\bd + B f_0} {g_\bd}\\
  &= \iprod[\HS^\Dir]{\Hmax f} {g_0}
  + \iprod[\HSaux] {\Gamma' f} {\Gamma g},
\end{align*}
which can be interpreted as Green's (first) identity~\eqref{eq:green},
where the ``normal derivative'' $\map {\Gamma'} \HS \HSaux$ is given
by $\Gamma' f:= A f_\bd + B f_0$, i.e.,
\begin{equation*}
  (\Gamma' f)(v)
  = \frac 1 {m(v)} \sum_{e \in E_v} \rho(e) \bigl(f(v)-f(v_e)\bigr),
  \quad v \in \bd V.
\end{equation*}

It is now easily seen that the Dirichlet solution operator is given by
\begin{equation*}
  S(z) \phi = \phi \oplus (-(D-z)^{-1} B^* \phi),
\end{equation*}
(since $(\Hmax - z) S(z)=0$ on $\HS^\Dir$ and $\Gamma S(z)
\phi=\phi$); note that the inverse exists since $z \notin \spec D$.
The \DtN\ operator is defined as
\begin{align*}
  \iprod[\HSaux]{\Lambda(z)\phi} \psi
  &=\iprod[\HS]{(\HNeu-z)S(z)\phi} g\\
  &=\iprod[\HSaux]{(A-z -B(D-z)^{-1}B^*)\phi} \psi
  +\iprod[\HS^\Dir]{B^* \phi - (D-z)(D-z)^{-1} B^*\phi} {g_0}
\end{align*}
where $g \in \HS$ is arbitrary with $g \restr {\bd V}=\psi$.  Since
the latter summand vanishes, we obtain for the \DtN\ operator
\begin{equation*}
  \map{\Lambda(z) = (A-z) - B(D-z)^{-1} B^*} \HSaux \HSaux.
\end{equation*}
Moreover, the interpretation is the same as in the manifold case: We
have $\Lambda(z) \phi = \Gamma' S(z) \phi$, i.e., the \DtN\ operator
associates to the boundary data $\phi$ the ``normal'' derivative of
the Dirichlet solution $h=S(z)\phi$.  Note that the \DtN\ operator can
also be understood as the \emph{Schur complement} of the block
operator $\HNeu-z$ with respect to the lower left ($\HS^\Dir \times
\HS^\Dir$)-block.

Finally, the spectral characterisation reads as follows: if $\lambda
\notin \spec \HDir$ (i.e., not an eigenvalue of $D$), then $\lambda
\in \spec \HNeu$ iff $0 \in \spec \Lambda(\lambda)$, i.e., if
$\Lambda(\lambda)$ is a singular matrix ($\det \Lambda(\lambda)=0$).
This fact can also be seen directly in this simple situation.

Finally, Krein's resolvent formula is just a variant of the inversion
of the block operator $\HNeu-z$, namely,
\begin{align*}
  \RNeu(z)-\iota^*\RDir(z)\iota
  &= \begin{pmatrix}
    A-z & B\\
    B^* & D-z
  \end{pmatrix}^{-1}
  - \begin{pmatrix}
    0 & 0\\
    0 & (D-z)^{-1}
  \end{pmatrix}\\
  &=
  \begin{pmatrix}
    \id_\HSaux\\
    -(D-z)^{-1}B^*
  \end{pmatrix}
  \bigr((A-z) -B (D-z)^{-1}B^*\big)^{-1}
  \bigr(\id_\HSaux, -B(D-z)^{-1}\bigl)\\
  &= S(z) \Lambda(z)^{-1} S(\conj z)^*
\end{align*}

If we allow infinite graphs, then we may also have unbounded forms
$\qf h$ (if, e.g., $\mu(v)=1$, $\rho(v)=1$ and $\deg v$ is unbounded
on the graph) and the above spectral characterisation and Krein's
formula become less obvious.  Such cases and even more general ones
(``discrete Dirichlet forms'') are considered in~\cite{hklw:12}.  We
can also use different weights for the boundary space and therefore
also have unbounded boundary pairs.  We hope to come back to the
unbounded case in a forthcoming publication.

%----------------------------------------------------------------------
% yyyy
%
% Bibliography
%
%----------------------------------------------------------------------

%----------------------------------------------------------------------
%\bibliographystyle{my-amsalpha}
%\bibliography{/home/post/Aktuell/BibTeX/literatur}
%\printbibliography
%----------------------------------------------------------------------
\providecommand{\bysame}{\leavevmode\hbox to3em{\hrulefill}\thinspace}

\end{document}